\newtheorem{thm}{Theorem}  
\newtheorem{conj}[thm]{Conjecture}
\newtheorem{cor}[thm]{Corollary}  
\newtheorem{lemma}[thm]{Lemma}  
\newtheorem{remark}[thm]{Remark}  
\newtheorem{defn}[thm]{Definition}  
\newtheorem{prop}[thm]{Proposition}  
\newtheorem{claim}[thm]{Claim}  
\newtheorem{example}[thm]{Example}
\numberwithin{thm}{section}  
\def\pf{\noindent\emph{Proof: }}  
\def\stop{\hfill$\square$}
\providecommand{\totl}[1]{\ensuremath{\lceil #1\rceil }}
\providecommand{\totb}[1]{\ensuremath{\underline{ #1}}}
\DeclareMathOperator{\End}{\mathcal E}
\DeclareMathOperator{\rend}{End}
\DeclareMathOperator{\Aut}{Aut}
\DeclareMathOperator{\Spec}{Spec}
\newcommand{\ro}{{}^{r}\Omega}
\newcommand{\rh}{{}^{r}H}
\newcommand{\ex}{\bold}
\providecommand {\e}[1]{\mathfrak t^{#1}}
\providecommand{\C}[2]{\ensuremath {C^{#1,\underline{#2}}}}
\newcommand{\Lag}[1]{Lag\left(#1\right)}
\DeclareMathOperator{\expl}{Expl}
\newcommand{\dbar}{\bar{\partial}}
\providecommand{\et}[2]{\ensuremath{\bold T^{#1}_{#2}}}
\providecommand{\lrb}[1]{\ensuremath{\left(#1\right)}}
\providecommand{\abs}[1]{\left\lvert #1\right\rvert}  
\author{Brett Parker}
\email{ }  
\thanks{}
\title[Holomorphic lagrangian Correspondences]{Gromov--Witten invariants of log Calabi--Yau 3-folds are holomorphic lagrangian correspondences}
\begin{document}
\maketitle

\begin{abstract} 
We introduce a holomorphic version of Weinstein's symplectic category, in which objects are holomorphic symplectic manifolds, and morphisms are holomorphic lagrangian correspondences. We then extend this category to log schemes, and prove that Gromov--Witten invariants of log Calabi--Yau 3-folds are naturally encoded as holomorphic lagrangian correspondences. Gromov--Witten invariants and Donaldson--Thomas invariants are then conjecturally related by a natural unitary lagrangian correspondence. \end{abstract}

\tableofcontents

\section{Introduction}

This paper constructs various holomorphic versions of Weinstein's symplectic category; \cite{WSC}. An object $(X,\omega)$ in such a category consists of a smooth complex algebraic space or log scheme $X$ with a holomorphic symplectic form, $\omega$. A morphism $l:(X_1,\omega_1)\longrightarrow (X_2,\omega_2)$ is a formal linear sum of holomorphic lagrangian subvarieties of $(X_1,-\omega_1)\times(X_2,\omega_2)$; we think of such a lagrangian correpondence as a half-dimensional homology class supported on these holomorphic lagrangian subvarieties.  Composition of morphisms $l_1: X_1\longrightarrow X_2$ and $l_2:X_2\longrightarrow X_3$ is defined using a star product, which is a canonical chain level  version of  pushforward and pullback.
\[\begin{tikzcd}  l_1\star_{X_2}l_2:= \pi_*\iota^!(l_1\times l_2) & X_1\times X_2\times X_3\dar{\iota} \rar{\pi} & X_1\times X_3
\\  & X_1\times X_2\times X_2 \times X_3 \end{tikzcd}\]

Our main result, proved and precisely stated in Theorem \ref{GW lagrangian}, is as follows:

\begin{thm}Gromov--Witten invariants of  log Calabi--Yau 3-folds are canonically encoded by holomorphic lagrangian correspondences. 
\end{thm}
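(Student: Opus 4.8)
The plan is to build the statement out of three pieces: attach a holomorphic symplectic log scheme to each boundary facet of a log Calabi--Yau $3$-fold; show that the relative evaluation of the virtual class of log stable maps is a half-dimensional homology class supported on a holomorphic lagrangian subvariety of the resulting product; and identify the star-product composition of these morphisms with the log degeneration formula.

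\emph{Boundary symplectic structures.} Let $(X,\partial X)$ be log Calabi--Yau with simple normal crossings boundary $\partial X=\sum_iD_i$, and fix a nowhere-vanishing log volume form $\Omega\in H^0\!\big(X,\Omega^3_X(\log\partial X)\big)$, noting that $\Omega$ is canonical up to a $\mathbb C^\times$-scaling which we include in the Calabi--Yau datum. Adjunction gives $\Omega^2_{D_i}(\log\partial D_i)\cong\mathcal O_{D_i}$ with $\partial D_i=\sum_{j\ne i}D_i\cap D_j$, and the residue $\mathrm{Res}_{D_i}\Omega$ is then a nowhere-vanishing log $2$-form on the surface $D_i$ — a holomorphic symplectic structure, making $D_i$ an object of our category in its log-scheme incarnation. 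The same construction applied to $D_i$ (log Calabi--Yau of dimension $2$), then to the curves $D_i\cap D_j$ (log Calabi--Yau of dimension $1$), and so on, is the recursion that forces one to enlarge the holomorphic symplectic category of the Introduction to log schemes in the first place.

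\emph{Half-dimensionality and the lagrangian condition.} The moduli space $\mathcal M_{g,\vec\sigma}(X,\beta)$ of genus-$g$ log stable maps of class $\beta$ with prescribed boundary contact data $\vec\sigma$ and all $n$ marked points on $\partial X$ has virtual dimension $\int_\beta c_1\big(T_X(-\log\partial X)\big)+(\dim X-3)(1-g)+n=n$, because $c_1(T_X(-\log\partial X))=0$. The relative evaluation sends the $a$-th marked point, which meets some facet $D_{i(a)}$, to a point of the surface $D_{i(a)}$, so $\mathrm{ev}=\prod_a\mathrm{ev}_a\colon\mathcal M\to\prod_aD_{i(a)}$ has $2n$-dimensional target and $\mathrm{ev}_*[\mathcal M]^{\mathrm{vir}}$ is exactly half-dimensional. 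To promote ``half-dimensional'' to ``lagrangian'' I would invoke the self-duality of the log deformation--obstruction theory of curves in a Calabi--Yau $3$-fold, in which deformations pair with obstructions through $\Omega$: this is a lagrangian structure, in the sense of shifted symplectic geometry, on the relative evaluation morphism, whose classical shadow is exactly the vanishing of $\mathrm{ev}^*\!\big(\bigoplus_a\mathrm{Res}_{D_{i(a)}}\Omega\big)$ on $\mathrm{ev}_*[\mathcal M]^{\mathrm{vir}}$; equivalently, a residue/Stokes argument over one-parameter families of log curves. Summing over $g$, $\beta$, $n$ and all contact orders (a symmetric, Fock-type bookkeeping over the facets) yields a single morphism in the category, and choosing a partition of $\partial X$ into incoming and outgoing facets — the orientation of the conormal directions supplies the needed sign on each $\mathrm{Res}\,\Omega$ — presents it as a holomorphic lagrangian correspondence. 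Interior insertions fail to be half-dimensional when $\dim X=3$ and are best viewed as operators acting on these correspondences.

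\emph{Composition is the degeneration formula.} The last step, and what I expect to be the main obstacle, is to show that when $X$ degenerates — deformation to the normal cone along a facet, or any toric-type log degeneration — to a union $X_1\cup_DX_2$ along a shared facet $D$, the associated morphisms compose: $\mathrm{GW}(X)=\mathrm{GW}(X_1)\star_D\mathrm{GW}(X_2)$ with the symplectic log scheme $D$ as intermediate object. This requires matching the refined Gysin pull-back $\iota^!$ and proper push-forward $\pi_*$ in the definition of $\star$ with the virtual gluing of the log degeneration formula (in the style of Abramovich--Chen--Gross--Siebert and Kim--Lho--Ruddat, or through the author's exploded-manifold gluing) \emph{at the level of cycles rather than numbers}, and checking that the multiplicity and automorphism weights of the relative theory are precisely what the symplectic pairing on the Fock space of $D$ produces — i.e.\ that $\star$ is an honest chain-level lift of ``cut and reglue''. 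Establishing associativity of $\star$ in this holomorphic setting, so that one genuinely has a category, and controlling the excess-intersection terms that make $\iota^!$ well-behaved, are the technical heart of the argument.
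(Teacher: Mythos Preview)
Your outline has the right architecture for the first two steps, but it misidentifies where the technical weight sits and leaves out the ingredient that makes the result \emph{canonical}.

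\textbf{The isotropic condition.} The paper proves this exactly by your ``equivalently'' clause: a direct residue/Stokes computation (Lemmas~\ref{Res is pullback} and~\ref{Res vanishes}) showing $\sum_k\mathrm{Res}_{s_k}\Omega=0$ on any $C^{\infty,\underline 1}$ family of holomorphic curves, hence $ev^*\omega=0$. No shifted symplectic geometry is invoked; the elementary integration-by-parts argument you sketch is the actual proof.

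\textbf{The gap: algebraicity and canonicity.} You do not explain why $ev_*[\mathcal M]^{\mathrm{vir}}$ is a \emph{canonical chain-level} lagrangian cycle rather than merely a half-dimensional homology class that happens to pair trivially with $\omega$. The paper's point is that the image $ev(\mathcal M_\bullet)$ is a \emph{logarithmically proper} (i.e.\ algebraic in the log/exploded sense) isotropic subset, so its top Borel--Moore homology is \emph{freely generated} by the irreducible lagrangian components, and the pushforward of the virtual class therefore has a unique representative as a formal $\mathbb Q$-linear combination of lagrangian subvarieties (Theorem~\ref{GW lagrangian}, via Lemma~\ref{free generation}). Establishing this algebraicity is the genuine technical core: it requires relating the exploded-manifold moduli stack to the log-scheme moduli stack of Gross--Siebert (Proposition~\ref{explodable family}), which in turn rests on a resolution-of-singularities statement for fs log schemes (Proposition~\ref{explodable resolution}). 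Your proposal does not touch this, and without it you have a real cycle, not a holomorphic lagrangian correspondence in the paper's sense.

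\textbf{Part 3 is not part of the theorem.} The compatibility of $\star$ with log degeneration is stated as motivation and deferred to \cite{gfgw}; it is not used in, and not proved as part of, Theorem~\ref{GW lagrangian}. The associativity of $\star$ is established separately (Lemma~\ref{associative} and its log/exploded analogues), independently of any gluing formula. So your ``last step and main obstacle'' is in fact a separate result; the theorem you are asked to prove ends once you have the canonical lagrangian cycle.
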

 The importance of this theorem for Gromov--Witten theory is that we obtain a \emph{canonical} chain level virtual fundamental class. Moreover, the canonical, chain-level composition of morphisms in the holomorphic Weinstein category corresponds to the operations required for the tropical gluing formula for Gromov--Witten invariants, \cite{gfgw}. 
 
 \
 
 The plan of the paper is as follows. In Section \ref{symplectic Weinstein}, we give a brief history and motivation of Alan Weinstein's proposed symplectic category.  In Section \ref{holomorphic section}, we introduce a holomorphic version of Weinstein's symplectic category, restricting to smooth complex algebraic spaces to make arguments easier. The importance of this algebraic setting is that intersection of algebraic subvarieties is not as pathological as intersection of smooth submanifolds.  In Section \ref{local intersection section}, we observe that there is a well-defined intersection theory of local homology classes supported on algebraic subvarieties.  We explain this both in terms of Chow theory and algebraic topology. Then in Section \ref{Star product section}, we use this local intersection product to define a composition of lagrangian correspondences, called the star product. We observe that the holomorphic symplectic Weinstein category has many nice properties, with products, duals, adjoints, units and counits all defined.  
 
 In Section \ref{correspondence section}, we give an important example of a  holomorphic lagrangian correspondence $\mathcal L$ related to the Nakajima basis for the homology of the Hilbert scheme of points in a Calabi--Yau surface. This lagrangian correspondence is unitary in the sense that its adjoint $\mathcal L^\dagger$ is equal to its inverse. We return to this $\mathcal L$ at the end of the paper, where we conjecture that Gromov--Witten invariants and Donaldson--Thomas invariants of logarithmic Calabi--Yau 3-folds  are related by this holomorphic lagrangian correspondence.
 
In Section \ref{log section}, we extend the definition of the holomorphic Weinstein category to log schemes, $(X,D)$ with $X$ a smooth complex variety and $D$ a simple normal crossing divisor. Here, we must use the log Chow ring or refined cohomology, and the correct intersection theory requires logarithmic modifications of $(X,D)$.  Nevertheless, a lagrangian subvariety of $(X,D)$ has a simple interpretation as a lagrangian subvariety of the smooth variety $X\setminus D$. To have a geometric interpretation of the star product in this setting, we introduce the exploded and tropical perspectives in Section \ref{exploded section}.

In Section \ref{GW section}, we study the moduli space of holomorphic curves in a log Calabi--Yau 3-fold, $(X,D)$.
We first give an overview using families of smooth curves in the interior of $X$, then use exploded manifolds to construct  a natural holomorphic symplectic evaluation space for holomorphic curves in  the explosion $\expl(X,D)$ of $(X,D)$. The holomorphic symplectic form on this evaluation space is induced as a kind of residue from the holomorphic volume form on $(X,D)$. It is also possible to construct this evaluation space using log schemes, but such a construction requires choices of logarithmic modifications of $(X,D)$, and is slightly less natural. We use algebraic results from the theory of log Gromov--Witten invariants to deduce that the image of the moduli stack of holomorphic curves in our evaluation space is algebraic, and use exploded manifold arguments to deduce that this image is isotropic. A key technical result is Proposition \ref{explodable family}, which relates the moduli stacks of holomorphic curves in the exploded manifold and logarithmic settings. This result relies on a version of resolution of singularities, Proposition \ref{explodable resolution}. Once we have that the image of the moduli stack of holomorphic curves is algebraic and isotropic, we can then deduce that the image of the virtual fundamental class is canonically represented by a holomorphic lagrangian correspondence.

 We conclude  by explaining how to encode the  Gromov--Witten partition function $Z_{GW}$ as a holomorphic lagrangian correspondence, and sketching how to encode Pandharipande--Thomas invariants and Donaldson--Thomas invariants as holomorphic lagrangian correspondences, $Z'_{DT}$ and $Z_{PT}$. Conjecturally, $Z_{PT}=Z'_{DT}$, but  the holomorphic lagrangian correspondences $Z_{GW}$ and $Z_{PT}$ lie in different spaces, related by the natural unitary lagrangian correspondence $\mathcal L$, from Example \ref{HSD2c}.  We then state Conjecture \ref{GWPT}, which asserts that, after a change of variables, $q^\frac 12=ie^{i\hbar}$,  the correspondence between Gromov--Witten invariants and Donaldson--Thomas invariants is  
 \[Z_{GW}=\mathcal L\star Z_{PT} \text{ and } \mathcal L^\dagger \star Z_{GW}=Z_{PT}\ . \]

\subsection{The symplectic Weinstein `category'}\label{symplectic Weinstein}

\

A  symplectic manifold is a smooth manifold $M$ with a non-degenerate, closed $2$-form $\omega$. A lagrangian submanifold of $M$ is a half--dimensional submanifold on which the symplectic form vanishes. 

\begin{example}The graph of a diffeomorphsm $f:(M,\omega_M)\longrightarrow (N,\omega_N)$ is a lagrarangian submanifold of \[M^- N:=(M\times N, -\omega_M+\omega_N)\] if and only if $f^*\omega_N=\omega_M$.
\end{example}

\begin{example}Classical mechanics takes place on phase space, or the cotangent bundle $T^*X$ of a smooth configuration space $X$.  The cotangent space has a natural symplectic structure,  and examples of lagrangian submanifolds of $T^*X$ are cotangent fibres, and the graphs of closed $1$--forms.   
\end{example}

Alan Weinstein's symplectic category has been described as the best idea in symplectic topology that didn't work. 
In the 1970's  Alan Weinstein suggested that a natural domain for geometric quantisation consists of a symplectic category whose morphisms $(M,\omega_M)\longrightarrow (N,\omega_N)$ are lagrangian correspondences, or lagrangian submanifolds of $M^-N$.

Weinstein's idea was partially motivated by pioneering work of H\"ormander on the semiclassical limit of operators; \cite{Hormander}.  H\"ormander constructed  `classical' lagrangian correspondences in $(T^*X)^-  T^*Y$  as wave-front  sets of certain `quantum'  operators between the spaces of functions on $X$ and $Y$. Moreover, when lagrangian correspondences are sufficiently transverse, the classical composition of correspondences coincides with the quantum composition of operators. For a modern symplectic perspective, see \cite{GSSemiclassical}.

\begin{example} The `quantum' Fourier integral operator  \[f\mapsto \int e^{\frac{i}\hbar \phi(x,y)}a(x,y,\hbar)f(x)dx\] is associated  with  a `classical' lagrangian correspondence $\Gamma_\phi\subset (T^*X)^-  T^*Y$ defined using $d\phi$. In particular $\Gamma_\phi$ is  the image of the  graph of $d\phi$ under  the symplectic map $T^*(XY)\longrightarrow (T^*X)^-  T^*Y$ defined by multiplying cotangent fibres of $T^*X$ by $-1$.
\end{example}

 The Weinstein symplectic category is a wonderfully enticing idea, but suffers difficulties: the composition of lagrangian correspondences is only a lagrangian submanifold when the correspondences intersect cleanly.  In this paper, we consider various holomorphic versions of Weinstein's symplectic category. The pathologies  of smooth intersection theory do not occur in the more rigid holomorphic or algebraic setting, so we can overcome the complications plaguing Weinstein's symplectic category. 
 

\section{The holomorphic symplectic Weinstein category}\label{holomorphic section}
 
 In this section, we work with a smooth complex manifold $X$, along with a holomorphic symplectic form $\omega$. To make arguments simpler, we also assume  that $X$ is algebraic; this simplifying assumption is not necessary for the results to hold, but we later use key results from the theory of logarithmic Gromov--Witten theory that are only proved in the algebraic setting.  

 \begin{defn}[Holomorphic symplectic form] A holomorphic symplectic form $\omega$ on $ X$ is a closed, holomorphic $2$--form  which is nondegenerate in the sense that $v\mapsto \iota_v\omega$ is a isomorphism between the holomorphic tangent and cotangent spaces of $X$. 
 \end{defn}

   Throughout the paper $ X^-$ will indicate $X$ with the opposite symplectic form, $-\omega$.

\begin{example}\label{HS1} Suppose that $Y$ is Calabi--Yau 2-fold, meaning that it is an algebraic surface with a holomorphic volume form and hence a holomorphic symplectic form.  Then, $Y^n$ is a holomorphic symplectic manifold. Moreover, the Hilbert scheme of length n, 0--dimensional subschemes  in $Y$ is a smooth holomorphic symplectic manifold $Y^{[n]}$. The holomorphic symplectic form on $Y^{[n]}$ is uniquely determined by its restriction to the dense subset comprised of subschemes supported on  $n$ distinct points. There is a natural $n!$--fold cover of this dense subset by the open subset of $Y^n$ comprised of $n$ distinct points. The holomorphic  symplectic form on $Y^{[n]}$ is the unique form that pulls back to  the holomorphic symplectic form on $Y^n$.  See \cite[Thm 1.17]{Nakajima} and \cite{ Beauville83}. 

 \end{example}

\begin{defn}[Isotropic and Lagrangian subvarieties] A subvariety $V\subset (X,\omega)$ is isotropic if the restriction of $\omega$ to $V$ vanishes; so $f^*\omega=0$ for $f$ any algebraic map $f:U\longrightarrow V$ with $U$ smooth.  Say that $V$ is lagrangian if it is isotropic and $2\dim V=\dim X$.
\end{defn}

\begin{remark} In a $2n$-dimensional symplectic vector space, an isotropic subspace has dimension bounded by $n$. As any subvariety of a smooth space $X$ is generically smooth, isotropic subvarieties of $X$ have dimension bounded by $\frac 12\dim X$.
\end{remark}

\begin{example}\label{HSDT} Suppose that $(X,Y)$ is a log Calabi-Yau 3-fold with $Y\subset X$ a smooth surface. So, there is a non-vanishing holomorphic volume form on $X\setminus Y$ with a simple pole at $Y$. Then $Y$ is Calabi--Yau 2-fold, and  there is an evaluation map from the moduli space of stable rank 2  sheaves on $X$ to the Hilbert scheme of points on $Y$. The image of this evaluation map is a union of lagrangian subvarieties \cite[Thm 1.6.1]{ThomasThesis}.
\end{example}

\begin{example}\label{HS2}  For $Y$ a Calabi--Yau surface, any $1$--dimensional subvariety $\Sigma\subset Y$ is automatically a lagrangian subvariety of  $Y$, moreover, there are interesting lagrangian subvarieties $L^{\mu}\Sigma$ of the Hilbert scheme of points $Y^{[n]}$ for partitions $\mu$ of $n$; see \cite[Section 9.3]{Nakajima}.

 The Hilbert--Chow morphism $\pi: Y^{[n]}\longrightarrow S^nY$ sends a $0$--dimensional subscheme to the $0$--dimensional cycle it represents. The fibres of $\pi$ are isotropic. Moreover,   given any lagrangian subvariety $\Sigma\subset Y$, we have that $\pi^{-1}S^n\Sigma$, is a union of isotropic subvarieties. 
 
 In fact, \cite[Thm 1.17, Thm 5.11, Thm 5.12]{Nakajima}, $\pi^{-1}S^n\Sigma$ is a union of lagrangian subvarieties, $L^\mu\Sigma$, one for each partition $\mu$ of $n$,  defined as follows: Let $S^n_\mu(Y)$ indicate the space of $0$--cycles in the form $\sum \mu_i y_i$ where $y_i$ are distinct points on $Y$. This manifold inherits a natural holomorphic symplectic structure as the quotient of a dense subset of $\prod_i (Y,\mu_i\omega)$. The Hilbert--Chow morphism restricted to the inverse image of $S^n_{\mu}(Y)$ is a smooth submersion with irreducible isotropic fibres of dimension $\sum_i (\mu_i-1)= n-\frac 12\dim S^n_{\mu}(Y)$,  and the holomorphic symplectic form restricted to $\pi^{-1}(S^n_{\mu}(Y))$ is  the pullback of the natural symplectic form.   Similarly, let $S^n_{\mu}(\Sigma)$ indicate those cycles in $S^n_\mu(Y)$ that are supported on $\Sigma$. The closure of $\pi^{-1}S^n_{\mu}(\Sigma)$ is a lagrangian subvariety $L^\mu\Sigma \subset Y^{[n]}$.
\end{example}
%

So long as a lagrangian subvariety of $X^-  Y$ satisfies a properness condition, it will define a morphism from $X$ to  $Y$ in our holomorphic version of the Weinstein category. We will also include formal linear combinations of such morphisms.

\begin{defn}[Lagrangian correspondence] A lagrangian subvariety of $X^-  Y$ is a correspondence from $X$ to $Y$ if its projection to $X^-$ is proper. 
The group of lagrangian correspondences  $ \Lag {X, Y}$ is the $\mathbb Z$--module generated by lagrangian subvarieties $\ell\subset X^-  Y$ that are correspondences from $X$ to $Y$.

Let $p$ be a point, and define  
\[\Lag{X}:=\Lag{p,X}\] and 
 \[\Lag{X}^-:=\Lag{X,p}\ .\] 
\end{defn}

\begin{defn}[Adjoint] If a lagrangian subvariety $\ell$ of $X^-  Y$ has proper projection to both $X^-$ and $Y$, define the adjoint $\ell^\dagger\in Lag(Y, X)$ of $\ell\in Lag(X, Y)$ to be the image of $\ell$ under the map which swaps the order of coordinates. Similarly, define the adjoint of a lagrangian correspondence $\sum c_k\ell_k$ to be $\sum c_k \ell_k^\dagger$ when each $\ell_k$ has proper projection to $Y$.   
\end{defn}

  Note that in this context of subvarieties of algebraic manifolds over $\mathbb C$, the algebraic notion of proper coincides with the topological notion using the classical topology on $X^-  Y$ as a complex manifold. So,  $\Lag{X}$ consists of the $\mathbb Z$--module generated by compact lagrangian subvarieties of $X$ whereas $\Lag{X}^-$ is the $\mathbb Z$--module generated by all lagrangian subvarieites. In the main case of interest for this paper, $X$ is compact so $\Lag{X}$ and $\Lag{X}^-$ coincide, and taking adjoints provides an isomorphism between $\Lag{X,Y}$ and $ \Lag{Y,X}$.
  
In general, we have that
\[\Lag{A,B  C}\subset \Lag{A  B^-,C}\]
because both are generated by lagrangian subvarieties of $A^-  B  C$, but the requirement that these subvarieties have proper projection to $A$ is stronger than the requirement that they have proper projection to $A  B$. 

\begin{example} The graph of the inclusion of an open subset $U\subsetneq Y$ is a lagrangian subvariety of both $U^-  Y$ and $Y^-  U$, and is a correspondence in $\Lag{U,Y}$, but is not a correspondence in $\Lag{Y,U}$. 
\end{example}

\begin{example}\label{HSD} Let $(Y,\omega_Y)$ be a Calabi--Yau 2-fold. For a partition $\mu=(\mu_1,\mu_2,\dotsc)$ of $n$, define a holomorphic symplectic manifold $Y^\mu$ as follows. 
\[Y^\mu:=\prod_i (Y,\mu_i\omega_Y)\] 
There is a cycle map $Y^\mu\longrightarrow S^n Y$ sending $\prod_i y_i$ to $\sum_i \mu_i y_j$. We can define a lagrangian correspondence  $\Delta^\mu\subset (Y^{\mu})^-  Y^{[n]}$, as the set theoretic fibre product of $Y^\mu$ with $Y^{[n]}$ over $S^n(Y)$.

\[\begin{tikzcd}\Delta^\mu\dar \rar & Y^{[n]}\dar 
\\ Y^\mu \rar & S^n(Y)\end{tikzcd}\]

To see that $\Delta^\mu$ is isotropic, note that over $S^n_{\mu}(Y)\subset S^n(Y)$, both maps are submersions, and the restriction of their symplectic forms are just the pullback of the natural holomorphic symplectic form on the symmetric product $S^n_{\mu}(Y)$. It follows that $\Delta^\mu$ is isotropic restricted to the inverse image of $S^n_\mu(Y)$. It is not obvious that $\Delta^\mu$ is the closure of this isotropic subset, however any extra irreducible component must be generically contained in the inverse image of $S^n_{\mu'}(Y)\subset S^n(Y)$ for a partition $\mu'\leq \mu$ created by joining some parts of $\mu$, and this component must  also be isotropic by a similar argument. So, we can conclude that $\Delta^\mu$ is isotropic. 

Over $S^n_{\mu}(Y)$, the dimension of $\Delta^\mu$ is $\dim Y^{\mu}$, plus the dimension of fibres the Hilbert--Chow map, so
\[ \dim \Delta^\mu=\dim Y^\mu + \sum_i(\mu_i-1)=\dim Y^\mu+n-\frac 12\dim Y^\mu =\frac 12\dim(Y^\mu  Y^{[n]})\ .\] So, $\Delta^\mu$ is lagrangian, at least restricted to the inverse image of $S^n_{\mu}$. Moreover any extra irreducible components of $\Delta^\mu$ are isotropic, with dimension strictly less than $\frac 12(\dim Y^\mu  Y^{[n]} )$, because the dimension restricted to the inverse image of $S^n_{\mu'}$ is bounded by $\frac 12(\dim Y^{\mu'}  Y^{[n]})<\frac 12(\dim Y^\mu  Y^{[n]} )$.  Note also that $\Delta^\mu$ restricted to the inverse image of $S^n_{\mu}$ is irreducible, because its projection to $Y^{\mu} $ is a submersion with irreducible fibres and irreducible image. So, $\Delta^\mu$ consists of a unique irreducible lagrangian component, possibly together with some isotropic components of lower dimension.

The analogous subset of $(Y^{[n]})^-  Y^\mu$ is also a correspondence, which we regard as the adjoint 
\[(\Delta^\mu)^\dagger\in\Lag{Y^{[n]},Y^\mu}\] of 
\[\Delta^\mu\in \Lag{Y^\mu,Y^{[n]}}\ .\]

\end{example}
In the following we define the standard composition of relations.

\begin{defn}[Composition of relations] Given  subsets $r_1\subset A^-  X$ and $r_2\subset X^-  B$, define 
\[r_1\circ_X r_2\subset A^-  B\]
as  the set of points $(a,b)\in A^-  B$ such that there exists some $x\in X$ satisfying $(a,x)\in r_1$ and $(x,b)\in r_2$.
\end{defn}

\begin{lemma} \label{isotropic}Suppose that $\ell_1\subset  A^-  X$ and $\ell_2\subset X^-  B$ are lagrangian subvarieties and that $\ell_1\in \Lag{A,X}$; so  the projection of  $\ell_1$ to $A^-$ is proper.  Then 
\[ \ell_1\circ_X \ell_2  \]
is a finite union of isotropic subvarieties $W_i$ of $A^-  B$. Moreover if  $\ell_2\in \Lag{X,B}$,  then the projection of these subvarieties to $A^-$ is proper.
\end{lemma}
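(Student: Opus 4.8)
The plan is to analyze $\ell_1 \circ_X \ell_2$ as a set-theoretic projection of an intersection, then apply the dimension and isotropy bounds. First I would form the product $\ell_1 \times \ell_2 \subset A^- \times X \times X^- \times B$, which is a lagrangian subvariety of $(A^-\times X)\times(X^-\times B)$ since a product of lagrangians is lagrangian for the product symplectic form. Intersecting with the diagonal $\Delta_X \subset X \times X^-$ (embedded via the middle two factors) gives a subvariety $Z \subset A^- \times \Delta_X \times B \cong A^- \times X \times B$; set-theoretically, $\ell_1 \circ_X \ell_2$ is exactly the image of $Z$ under the projection $\pi: A^- \times X \times B \to A^- \times B$ that forgets the $X$-factor. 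Since $\ell_1$, $\ell_2$ are subvarieties and $\Delta_X$ is closed, $Z$ is a subvariety, hence a finite union of irreducible components $Z_i$; the images $\pi(Z_i)$ are constructible, but I would argue they are in fact closed subvarieties $W_i$ using the properness hypothesis on $\ell_1$ (see below), so that $\ell_1\circ_X\ell_2 = \bigcup_i W_i$ is a finite union of subvarieties.

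Next I would establish isotropy of each $W_i$. The key observation is that on the diagonal $\Delta_X$ the forms $-\omega_X$ (from $X^-$) and $+\omega_X$ (from $X$) cancel, so the restriction of the ambient symplectic form $(-\omega_A + \omega_X) + (-\omega_X + \omega_B)$ to $Z$ equals the pullback of $-\omega_A + \omega_B$. Concretely, for any smooth $U$ with a map $f: U \to Z_i$, composing with the two projections $Z_i \to \ell_1$ and $Z_i \to \ell_2$ and using that $\ell_1,\ell_2$ are isotropic, one gets $f^*(-\omega_A + \omega_X) = 0$ and $f^*(-\omega_X + \omega_B) = 0$, hence $f^*(-\omega_A + \omega_B) = 0$. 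So $Z_i$ is isotropic in $A^- \times X \times B$ (with the degenerate form pulled back from $A^-\times B$). To descend isotropy to $W_i = \pi(Z_i)$: take a resolution or a smooth chart $U$ dominating a component of $W_i$; since $\pi$ restricted to $Z_i \to W_i$ is dominant, I can lift a smooth generic chart of $W_i$ (after base change / resolution) to a smooth chart mapping to $Z_i$, and then $f^*(-\omega_A+\omega_B)=0$ on that chart pulls down to vanishing of $-\omega_A + \omega_B$ on the smooth locus of $W_i$, which by the Remark on generic smoothness suffices to conclude $W_i$ is isotropic.

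For the properness claims: the projection $Z_i \to \ell_1$ (composition $Z_i \hookrightarrow A^-\times X\times B \to A^-\times X$, landing in $\ell_1$ because the first three coordinates of a point of $Z$ lie in $\ell_1$) followed by the proper projection $\ell_1 \to A^-$ shows $Z_i \to A^-$ factors through a proper map; but I actually need $W_i \to A^-$ proper. The cleanest route: the map $Z_i \to A^-$ is the composite $Z_i \to W_i \to A^-$, and also factors as $Z_i \to \ell_1 \to A^-$ with the second map proper — but the first map $Z_i \to \ell_1$ need not be proper, so this needs care. The genuinely safe argument is to use that $W_i \to A^-$ is proper iff it is universally closed and separated; separatedness is automatic, and for universal closedness I would instead observe that $\ell_1 \circ_X \ell_2 \to A^-$ is a subset of the image of $\ell_1 \to A^-$ (forgetting $X,B$): indeed if $(a,b) \in \ell_1\circ_X\ell_2$ then $(a,x)\in\ell_1$ for some $x$, so $a$ lies in the (proper, hence closed, hence all of) image... this bounds the image but not properness of the total map. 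So the honest statement I would prove is: when $\ell_2 \in \Lag{X,B}$ as well, the projection $\ell_2 \to X$ is proper, hence $Z \to \ell_1$ (which is a pullback of $\ell_2 \to X$ along $\ell_1 \to X$) is proper, and composing with the proper $\ell_1 \to A^-$ gives $Z \to A^-$ proper; then $W_i \to A^-$, being the image of the proper $Z_i \to A^-$ through the closed immersion... no — $W_i \to A^-$ is proper because it is the image of a proper map from $Z_i$ and $A^-$ is separated (image of proper is proper onto its image, and a closed subscheme of a separated scheme). This also re-derives that the $W_i$ are closed subvarieties.

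\textbf{Main obstacle.} The delicate point is the first, weaker properness statement — that $\ell_1 \circ_X \ell_2 \to A^-$ is proper assuming \emph{only} $\ell_1 \in \Lag{A,X}$ (not that $\ell_2$ is a correspondence). Here $\ell_2 \to X$ need not be proper, so $Z \to \ell_1$ need not be proper, and one must extract properness of the projection to $A^-$ by a more hands-on compactness argument: a sequence $(a_n, b_n) \in \ell_1\circ_X\ell_2$ with $a_n \to a$ has lifts to $(a_n, x_n, b_n)\in Z$, and properness of $\ell_1 \to A^-$ forces $x_n$ (a subsequence) to converge in $X$ to some $x$ with $(a,x)\in\ell_1$ — but there is no control on $b_n$, so in fact this statement as I read it only guarantees the $W_i$ are isotropic subvarieties, with properness over $A^-$ asserted \emph{in the "moreover" clause under the extra hypothesis $\ell_2 \in \Lag{X,B}$}. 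So I would prove exactly that: finiteness and isotropy from the diagonal-intersection argument above unconditionally, and properness of $W_i \to A^-$ only in the moreover case via the fibre-product/proper-composition argument. The main work is the descent of isotropy through the projection $\pi$, which requires the generic-smoothness reduction and possibly a resolution of singularities of $W_i$ to produce the needed smooth test chart.
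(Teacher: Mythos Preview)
Your approach is essentially the same as the paper's: form the fibre product $Z = \ell_1 \times_X \ell_2 \subset A^- \times X \times B$, project to $A^- \times B$, and verify isotropy by the cancellation $\omega_A = \omega_X = \omega_B$ on $Z$. The isotropy argument is correct.

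There is one genuine gap. You correctly sense that closedness of the $W_i = \pi(Z_i)$ should follow from properness of $\ell_1 \to A^-$ alone, but you never actually supply the argument, and your detailed discussion conflates ``$Z \to A^-$ proper'' with ``$Z \to A^- \times B$ proper''. These are different: only the latter is needed for the $W_i$ to be closed subvarieties, and only the latter follows from the base hypothesis. The clean argument is that $Z$ is a \emph{closed} subset of $\ell_1 \times B$ (it is the preimage of the closed set $\ell_2 \subset X \times B$ under the map $\ell_1 \times B \to X \times B$, $((a,x),b) \mapsto (x,b)$), and $\ell_1 \times B \to A^- \times B$ is proper because $\ell_1 \to A^-$ is. Hence $Z \to A^- \times B$ is proper and each $W_i$ is closed. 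Your ``main obstacle'' paragraph is chasing a phantom: the lemma does not assert properness of $W_i \to A^-$ under the base hypothesis, and you eventually notice this, but in the process you leave the closedness of $W_i$ unproved in the base case.

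A minor point: your descent of isotropy via resolution of $W_i$ and lifting smooth charts through $\pi$ works but is heavier than necessary. The paper simply observes that $Z_i \to W_i$ is generically a submersion (any dominant map of varieties is smooth over a dense open in the target), so vanishing of $-\omega_A + \omega_B$ on the smooth locus of $Z_i$ immediately gives vanishing on a dense open of the smooth locus of $W_i$, which suffices.
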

\begin{proof}  Consider the fibre product of  $\ell_1$ and $\ell_2$ as sets over the space $X$. This fibre product embeds as a Zarski closed subset of $A^-  X  B$, so is a finite union of subvarieties  $V_i\subset   A^-  X  B$. Moreover $\ell_1\circ_X\ell_2$ is the projection of this fibre product to $A^-  B$. As the projection of $\ell_1$ to $A^-$ is proper,  the projection of this fibre product to $A  B$ is also proper,  so the image of $V_i$ is a subvariety $W_i$  of $A^-  B$. We need to check that the symplectic form $-\omega_A +\omega_B$ vanishes on each of these subvarieties. (Here, we use the notation $\omega_M$ for the symplectic form on $\omega_M$ or its pullback to the product of $M$ with something else.) To check that $-\omega_A+\omega_B$ vanishes on $W_i$, it suffices to check that $-\omega_A+\omega_B$ vanishes on $V_i$, because the projection $V_i\longrightarrow W_i$ is generically a submersion. We have that $\omega_A=\omega_X$ on $\ell_1$, and hence the same holds on the fibre product of $\ell_1$ with $\ell_2$. Similarly, $\omega_X=\omega_B$ on $\ell_2$ and also on the fibre product of $\ell_1$ with $\ell_2$. So, on each component $V_i$ of this fibre product $\omega_A=\omega_X=\omega_B$, so we obtain that $-\omega_A+\omega_B$ vanishes on the image, $W_i$, of $V_i$ in $A^-  B$. We conclude that $\ell_1\circ_X\ell_2$ is a finite union of isotropic subvarieties.

Let us check that the projection of $\ell_1\circ_X\ell_2$ to $A^-$ is proper when both $\ell_1$ and $\ell_2$ are correspondences. As the algebraic and topological notions of properness coincide, we use the topological notion in our argument. As $\ell_1$ is a correspondence, the inverse image of a compact subset  $K\subset A^-$ is a compact subset $K'\subset\ell_1\subset A^-  X$, which has compact image  $K''$ in $X$. If $\ell_2$ is also a correspondence, the inverse image of $K''$ in $\ell_2$ is also compact, and so the inverse image of $K$ within the fibre product of $\ell_1$ and $\ell_2$ is compact. It follows that the projection of $\ell_1\circ_X\circ \ell_2$ to $A^-$ is proper.

\end{proof}

Lemma \ref{isotropic} also implies that if $\ell_i$ consist of a finite union of lagrangian subvarieties, then $\ell_1\circ_X\ell_2$ is a finite union of isotropic subvarieties.


\begin{example}\label{HSD2}Let $\Delta^\mu\subset (Y^\mu)^-  Y^{[n]}$ and $(\Delta^{\mu})^\dagger\subset (Y^{[n]})^-  Y^\mu$ be the lagrangian correspondences from Example \ref{HSD}. Let $\Aut \mu$ be the group of symmetries of the partition $\mu$, acting on  $Y^\mu=\prod_i(Y,\mu_i\omega_Y)$ by permuting coordinates with the same weight $\mu_i$.    Then 
\[\Delta^\mu\circ_{Y^{[n]}}(\Delta^\mu)^\dagger\subset (Y^\mu)^-  Y^\mu\]
consists of the image of the diagonal under the action of $\Aut \mu$. In contrast $\Delta^\mu\circ_{Y^{[n]}}(\Delta^{\mu'})^\dagger$ is isotropic, but has no lagrangian components when $\mu\neq \mu'$. 

The other composition
\[(\Delta^\mu)^\dagger\circ_{Y^\mu}\Delta^\mu\subset (Y^{[n]})^-  Y^{[n]} \] 
is a union of lagrangian subvarieties $\ell^{\mu'}$, one for each partition $\mu'\leq \mu$. To construct $\ell^{\mu'}$, recall that the image of $Y^\mu$ in $S^n(Y)$ is the union of $S^n_{\mu'}(Y)$ for each of these partitions $\mu'\leq \mu$, where $S^n_{\mu'}(Y)$ is  comprised of cycles $\sum \mu'_i y_i$ with $y_i$ distinct. Restricted to the inverse image of $S^n_{\mu'}(Y)$, we have that $(\Delta^\mu)^\dagger\circ_{Y^\mu}\Delta^\mu$ is the inverse image of the diagonal in $S^n_{\mu'}(Y)^-  S^n_{\mu'}(Y)$, and, as in Example \ref{HS2}, we can determine that this  is irreducible and lagrangian, so taking its closure we get an irreducible lagrangian subvariety $\ell^{\mu'}\subset (Y^{[n]})^-  Y^{[n]}$.
\end{example}

\subsection{Local intersection product} \label{local intersection section}

\

In what follows, we use intersection theory, with either the intersection product from differential topology or algebraic geometry sufficient for our purpose. Within algebraic geometry, the set of $k$--dimensional algebraic cycles $Z_k(X)$ is the $\mathbb Z$--module consisting of finite formal $\mathbb Z$--linear combinations of $k$--dimensional subvarieties of $X$. The Chow group $A_*(X)$  is  the quotient of $Z_*(X)$ by the equivalence relation generated by rational equivalence. When $X$ is smooth, the intersection product defined in \cite[section 8]{Fulton} provides a ring structure on $A_*(X)$.

 There is a cycle map from the Chow group to Borel--Moore homology of $X$. Each $n$--dimensional subvariety $V\subset X$  represents a canonical homology class in $H^{BM}_{2n}(X)$,  the $2n$--dimensional homology with closed support. This is because the singular locus  $V^{sing}\subset V$  is a subvariety of strictly lower dimension, and hence has real dimension at most $2n-2$, so both $H_{2n}^{BM}(V^{sing})$ and $H_{2n-1}^{BM}(V^{sing})$ vanish, and  $H_{2n}^{BM}(X)=H_{2n}^{BM}(X\setminus V^{sing})$. We can  therefore define $[V]\subset H_{2n}^{BM}(X)$ as the class represented by the smooth, closed submanifold $V\setminus V^{sing}$ within $H^{BM}_{2n}(X\setminus V^{sing})=H^{BM}_{2n}(X)$. A rational equivalence between subvarieties can be used to construct a cobordism with a real codimension 2 singular locus, so this map $Z_*(X)\longrightarrow H_{2*}^{BM}(X)$ factors through $A_*(X)$. 
 
 When $X$ is compact, $H^{BM}_*(X)=H_*(X)$, and whenever $X$ is smooth or a subvariety, $H^{BM}_*(X)=H_*(X\cup \infty,\infty)$ where $X\cup \infty$ indicates the one point compactification of $X$.  Moreover, when $X$ is smooth, $H^{BM}_*(X)\equiv H^{\dim_{\mathbb R}X -*}(X)$, and using the ring structure from $H^{\dim_{\mathbb R}X -*}(X)$, this cycle map  $A_*(X)\longrightarrow H^{BM}_{2*}(X)\equiv H^{\dim_{\mathbb R}X -2*}(X)$ is a ring homomorphism. 
 
 More generally,  
when $V\subset X$ is a closed algebraic subset, \[H^{BM}_*(V)=H^{\dim_{\mathbb R}X-*}(X,X\setminus V)\ .\]
Moreover, given any open subset $U\subset  X$ retracting onto $V$, we have that 
\[H^{BM}_*(V)=H^{\dim_{\mathbb R}X-*}(X,X\setminus U)\ \]
so we can think of $H^{BM}_k(V)$ as codimension--$k$ cohomology classes on $X$ supported close to $V$.

For $x$ and $y$ subvarieties of a smooth variety $X$, there is a refined intersection product $x\cdot y\in A_*(x\cap y)$ whose pushforward to $A_*(X)$ is the usual intersection product; \cite[Section 8.1]{Fulton}.  An analogue of this refined intersection product can also be defined using  the intersection product in differential topology, but we call this the local intersection product, because a different notion of refinement will occur later in this paper. In what follows, we define the local intersection product.

\begin{defn}[local homology class] A local homology class, $(a,V)$, is a closed algebraic subset $V\subset X$ and a homology class $a\in H^{BM}(V)$. \end{defn}

\begin{remark}An analogous theory works if we replace closed algebraic subsets with closed complex analytic subsets. What is important is that an open neighborhood of $V$ retracts onto $V$, and that intersections and pullbacks of $V$ are still respectively  algebraic or complex analytic. \end{remark}

The following case is of particular interest to us: if each irreducible component of $V$ has dimension bounded by $k$, then  $H^{BM}_{2k}(V)$ is freely generated by the irreducible components of $V$ with dimension $k$. So, each element of $H^{BM}_{2k}(V)$ is represented by a canonical algebraic cycle, and can be interpreted as simply assigning a numerical weight to each $k$--dimensional irreducible component.

The intersection of  two local homology classes $(a,V)$, and $(b,W)$ is the local homology class $(a\cdot b,V\cap W)$, where $a\cdot b$ is the intersection of $a$ and $b$ defined by identifying $H_*^{BM}(V)=H^{\dim_{\mathbb R}X-*}(X,X\setminus V)$, and using the usual product on relative cohomology. 
 \[\begin{tikzcd}H^{\dim_{\mathbb R} X-*_1}(X,X\setminus V)\times H^{\dim_{\mathbb R} X-*_2}(X,X\setminus W)\dar & \lar H^{BM}_{*_1}(V)\times H^{BM}_{*_2}(W)\dar
\\H^{2\dim_{\mathbb R} X-*_1-*_2}(X,X\setminus (V\cap W))\rar  & H^{BM}_{*_1+*_2-\dim_{\mathbb R} X} (V\cap W)\end{tikzcd}\]

Geometrically, we choose transverse smooth cycles representing $a$ and $b$ within neighbourhoods of $V$ and $W$, then the intersection of these cycles is contained in a neighborhood of $V\cap W$. 
Other important operations on homology also work for local homology classes.
\begin{itemize}
\item Given $a_i$ in $H^{BM}_*(V_i)$ for $V_i\subset X$, we have $a+b\in H^{BM}_*(V_1\cup V_2)$.
\item Given $a_i\in H^{BM}_*( V_i)$, for $V_i\subset X_i$, we have $a_1\times a_2\in H^{BM}_*(V_1\times V_2)$. 
\item Given a map $f:X\longrightarrow Y$ so that $f$ restricted to $V$ is proper,  we can push forward $(a,V)$ using   the usual pushforward of Borel--Moore  homology  $f_*: H^{BM}_*(V)\longrightarrow H^{BM}_*(f(V))$. 
\item Given $f:X\longrightarrow Y$, the pullback defined using Poincare duality induces a pullback map 
\[\begin{tikzcd}H^{BM}_*(V)\dar{\equiv}\rar{f^!} & H^{BM}_{*+\dim_{\mathbb R} X-\dim_{\mathbb R} Y}(f^{-1}(V))
\\ H^{\dim_{\mathbb R} Y-*}(Y,Y\setminus V)\rar{f^*} & H^{\dim_{\mathbb R} Y-*}(X, X\setminus f^{-1}(V))\uar{\equiv} \end{tikzcd}\]

\end{itemize}

 The following properties of the local  intersection product follow easily from the analogous properties for the usual intersection product. 

\begin{itemize} 
\item Given local homology classes $a,b$ in $H^{BM}_*(V_i)$, the local intersection product is graded commutative. 
\begin{equation}a\cdot b =(-1)^{(\deg a)(\deg b)}b\cdot a\in H^{BM}_*(V_1\cap V_2)\end{equation}
\item  
\begin{equation}a\cdot(b+c)=a\cdot b+a\cdot c\end{equation}
\item
\begin{equation}a\times (b+c)=a\times b+a\times c\end{equation}
\item Given $a_i$ in $H^{BM}_*(V_i)$,  we have
\begin{equation}\label{pa}(a_1\cdot a_2)\cdot a_3=a_1\cdot (a_2\cdot a_3)\end{equation}  within $H^{BM}_*(V_1\cap V_2\cap V_3)$.
Similarly, 
\begin{equation}(a_1\times a_2)\times a_3=a_1\times(a_2\times a_3)\end{equation}
in $H^{BM}_*(V_1\times V_2\times V_3)$,
and 

\begin{equation}(a_1+a_2)+a_3=a_1+(a_2+a_3)\end{equation}
in $H^{BM}_*(V_1\cup V_2\cup V_3)$.

\item Both pushforward and pullback are functorial:
\[(f\circ g)_*=f_*\circ g_*\]
\[(f\circ g)^!=g^!\circ f^!\]
\item Given even degree local homology classes $a$ and $b$ within $X$  and $c, d$ within  $Y$, we have
\begin{equation}\label{pm}(a\cdot b)\times( c\cdot d)=(a\times c)\cdot (b\times d)\end{equation}
\item If $f:X\longrightarrow Y$ is a proper algebraic map, given local homology classes $a$ within $X$ and $b$ within $Y$, then
\begin{equation}\label{pc}f_*(a)\cdot b=f_*(a\cdot f^! b)\ .\end{equation} 
Moreover, given local homology classes $b$ and $c$ within $Y$, 
\begin{equation}\label{pi} f^!(a\cdot b)=f^!(a)\cdot f^!(b) \ .\end{equation}

\item Given a fibre product diagram of proper algebraic maps:
\[\begin{tikzcd} X\times_Y Z\dar{\pi'} \rar{f'} & Z\dar{\pi}
\\ X\rar{f} &Y
\end{tikzcd}\]
we have 
\begin{equation}\label{fp} f^!\circ \pi_*=\pi'_*\circ (f')^!\ .\end{equation}
\end{itemize}

\begin{defn}[geometric local homology class] Say that a local homology class is geometric if it is $\sum_i c_i[V_i]\in H^{BM}_{2k}(\bigcup_i V_i)$, where each $V_i$ is a different subvariety of dimension $k$, and all coefficients $c_i$ are nonzero. 
\end{defn}

\begin{remark}A lagrangian correspondence $x=\sum_i c_i \ell_i\in\Lag {X,Y}$ represents a unique geometric local homology class $[x]=\sum c_i[\ell_i]\in H^{BM}_{\dim X  Y}(\bigcup_i \ell_i)$. We also say that this cycle represents a (possibly non-geometric) local homology class $\sum c_i[\ell_i]\in H^{BM}_{\dim X  Y}(V)$ when each $\ell_i\subset V\subset X$. So long as $V$ is a finite union of varieties with complex dimension bounded by $\frac 12 \dim X  Y$, each element of $H_{\dim X  Y}(V)$ can be represented by at most one lagrangian cycle.  

Note that it is not necessarily true that $[x]+[y]=[x+y]$ for $x, y\in \Lag{X,Y}$, because $[x]+[y]$ need not be geometric,  but it is always true that $[x]+[y]$ is represented by $x+y$.   \end{remark}

\subsection{Star products}
\label{Star product section}

\

In this section, we define composition of lagrangian correspondences, using an operation on local homology classes called the star product. This operation is a combination of a fibre product and an a pushforward.

\begin{defn}[star product of local homology classes]\label{new star} Suppose that the local homology class $r_1$ on $A  X$ has support $V$ with proper projection to $A$. Given a local homology class $r_2$ within $ X  B$ define the star product of $r_1$ and $r_2$ to be the local homology class $r_1\star_X r_2$ on $A  B$ given by 
\[r_1\star_X r_2:=\pi_*\iota^! (r_1\times r_2) \]
where $\pi$ and $\iota$ are the projection and diagonal inclusion in the following diagram. 
\[\begin{tikzcd} A  X  B\dar{\iota} \rar{\pi} & A  B
\\ (A  X)  (X  B) \end{tikzcd}\]
\end{defn}

In the above definition, we need the support of $r_1$ to be proper over $A$ so that the support of $\iota^!(r_1\times r_2)$ is proper over $A  B$.

\begin{lemma}[star product of lagrangian cycles] Given $\ell_1\in \Lag{A,X}$ and $\ell_2\in\Lag{X  B, C}$, there exists a unique lagrangian cycle $\ell_1\star_X\ell_2\in \Lag{A  B, C}$ representing the local homology class $[\ell_1]\star_X[\ell_2]$.
 \end{lemma}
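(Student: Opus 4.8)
The plan is to assemble three facts that are already at our disposal. Write $[\ell_1]\star_X[\ell_2]=\pi_*\iota^!([\ell_1]\times[\ell_2])$, where $\iota$ is the diagonal inclusion $A\times X\times B\times C\hookrightarrow A\times X\times X\times B\times C$ and $\pi$ forgets the $X$ factor. Since $[\ell_1]\times[\ell_2]$ is supported on $\ell_1\times\ell_2$, the class $\iota^!([\ell_1]\times[\ell_2])$ is supported on the closed subvariety $V:=\iota^{-1}(\ell_1\times\ell_2)=\ell_1\times_X\ell_2\subset A\times X\times B\times C$, so $[\ell_1]\star_X[\ell_2]$ is supported on $\pi(V)=\ell_1\circ_X\ell_2\subset A\times B\times C$. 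The first task is then to see that $\ell_1\circ_X\ell_2$ is a finite union of isotropic subvarieties of $(A\times B)^-\times C$ with proper projection to $(A\times B)^-$; the second is a degree count placing $[\ell_1]\star_X[\ell_2]$ in middle Borel--Moore degree; the conclusion then follows from the fact that the middle-degree Borel--Moore homology of a finite union of at-most-half-dimensional varieties is freely generated by its half-dimensional components.

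For the first task I would rerun the proof of Lemma \ref{isotropic}, now with $B$ replaced by $B\times C$, keeping track of signs. On each irreducible component $V_i$ of $V$ we have $\omega_A=\omega_X$ (because $\ell_1$ is lagrangian in $A^-\times X$) and $\omega_C=\omega_X+\omega_B$ (because $\ell_2$ is lagrangian in $(X\times B)^-\times C$), hence $-\omega_A-\omega_B+\omega_C$ vanishes on $V_i$, and since $V_i\to W_i:=\pi(V_i)$ is generically a submersion it vanishes on $W_i$; that is, $W_i$ is isotropic in $(A\times B)^-\times C$. For properness over $(A\times B)^-$: if $K\subset(A\times B)^-$ is compact, its preimage in $V$ lies over the compact image $K_A$ of $K$ in $A^-$; since $\ell_1\to A^-$ is proper, the $X$-coordinates of such points lie in a compact $K_X\subset X$; and since $\ell_2\to(X\times B)^-$ is proper --- this is where $\ell_2\in\Lag{X\times B,C}$, and not merely that $\ell_2$ is lagrangian, is used --- the points $(x,b,c)$ that arise lie in a compact subset of $\ell_2$ over $K_X\times K_B$. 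Hence $V$, and therefore $\ell_1\circ_X\ell_2$, projects properly to $(A\times B)^-$.

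For the degree count, write lowercase letters for complex dimensions. Then $[\ell_1]\in H^{BM}_{a+x}(\ell_1)$ and $[\ell_2]\in H^{BM}_{x+b+c}(\ell_2)$, both in middle degree; the exterior product adds degrees, $\iota^!$ shifts degree by $-2x$, and $\pi_*$ preserves degree, so $[\ell_1]\star_X[\ell_2]\in H^{BM}_{a+b+c}(\ell_1\circ_X\ell_2)$, which is precisely the middle degree $\dim_{\mathbb C}((A\times B)\times C)$. Since each $W_i$ is isotropic, it has complex dimension at most $\frac12(a+b+c)$, so $H^{BM}_{a+b+c}(\ell_1\circ_X\ell_2)$ is freely generated by the complex-$\frac12(a+b+c)$-dimensional irreducible components of $\ell_1\circ_X\ell_2$, and these components, being isotropic and half-dimensional, are lagrangian. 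Hence $[\ell_1]\star_X[\ell_2]=\sum_j n_j[L_j]$ with the $L_j$ lagrangian and the $n_j\in\mathbb Z$ uniquely determined. Setting $\ell_1\star_X\ell_2:=\sum_j n_j L_j$ gives a lagrangian cycle that lies in $\Lag{A\times B,C}$ by the properness established above and represents $[\ell_1]\star_X[\ell_2]$; uniqueness of the representing lagrangian cycle is the earlier observation that a middle-degree local homology class on a finite union of at-most-half-dimensional varieties is represented by at most one lagrangian cycle.

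The only real work is the bookkeeping in the second paragraph: verifying that the set-theoretic composition over $X$ of a correspondence in $\Lag{A,X}$ with one in $\Lag{X\times B,C}$ is isotropic for the symplectic form on $(A\times B)^-\times C$ --- so that the $B$-factor is conjugated rather than merely carried along --- and that it is proper over $(A\times B)^-$ rather than only over $A^-$. Both are straightforward variants of Lemma \ref{isotropic}, and I do not anticipate any genuine obstacle beyond careful tracking of signs and of the properness hypothesis on $\ell_2$.
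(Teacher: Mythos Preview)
Your proposal is correct and follows essentially the same approach as the paper: invoke (a minor variant of) Lemma \ref{isotropic} to see that the support $\ell_1\circ_X\ell_2$ is a finite union of isotropic subvarieties proper over $(A\times B)^-$, do the degree count to land in middle Borel--Moore degree, and conclude using free generation by lagrangian components. You are in fact more explicit than the paper about the needed adaptation of Lemma \ref{isotropic}---in particular the properness over $(A\times B)^-$ (rather than just $A^-$) from the hypothesis $\ell_2\in\Lag{X\times B,C}$---which the paper leaves implicit.
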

\pf 
Lemma \ref{isotropic} implies that the local homology class $[\ell_1]\star_X [\ell_2]$ is supported on a finite union $V$ of isotropic subvarieties. Moreover, each of these subvarieties has proper projection to $A  B$.  We also  have that the degree of $[\ell_1]\star_X[\ell_2]$ is $\dim A+\dim X+\dim X+\dim B+\dim C-2\dim X=\dim(A  B  C)$, and  the homology classes of $V$ of this degree in $A  B  C$ are freely generated by lagrangian subvarieties.
\stop

\begin{lemma}\label{asp} Given local homology classes $r_1$ within $A  X$, $r_2$ within $ X  B   Y$, and $r_3$ within $ Y  C$, we have 
\[(r_1\star_X r_2)\star_Y r_3=r_1\star_X(r_2\star_Yr_3)\]
\end{lemma}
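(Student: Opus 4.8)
The plan is to unwind both sides of the claimed identity into a single expression involving a fourfold fibre product over $X\times Y$, followed by one pushforward, and then verify equality by bookkeeping with the functoriality properties \eqref{fp}, \eqref{pi}, and the compatibility \eqref{pm} of the cross product with the intersection product. Concretely, writing out $r_1\star_X r_2$ we get $\pi_*\iota^!(r_1\times r_2)$ on $A\times B\times Y$, and then $(r_1\star_X r_2)\star_Y r_3$ is a second pushforward of a second diagonal-pullback of $\bigl(\pi_*\iota^!(r_1\times r_2)\bigr)\times r_3$ on $(A\times B\times Y)\times(Y\times C)$. The idea is to move the outer $\iota^!$ past the interior $\pi_*$ using the base-change formula \eqref{fp} applied to the relevant fibre square, thereby exposing that both $(r_1\star_Xr_2)\star_Y r_3$ and $r_1\star_X(r_2\star_Yr_3)$ equal $\Pi_*\,\Delta^!\,(r_1\times r_2\times r_3)$, where $\Delta$ is the diagonal inclusion
\[
A\times X\times B\times Y\times C\;\hookrightarrow\;(A\times X)\times(X\times B\times Y)\times(Y\times C)
\]
that glues the two $X$'s together and the two $Y$'s together simultaneously, and $\Pi$ is the projection of $A\times X\times B\times Y\times C$ onto $A\times B\times C$. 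Since $\Delta$ factors (in either order) as "glue the $X$'s then glue the $Y$'s," and $\Pi$ factors as "project off $X$ then project off $Y$" (again in either order), the symmetry of this normal form gives the associativity.

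First I would set up notation for all the spaces and the various projections and diagonal inclusions, and draw the fibre-product square that makes \eqref{fp} applicable: the key square has the outer diagonal $\iota_Y$ (gluing the two $Y$-factors) on one side and the interior projection $\pi_X\times\id$ (which kills the first $X$) on the other; its fibre product is again a projection-after-diagonal, which is exactly what \eqref{fp} asserts commutes. Then I would use functoriality of pushforward $(f\circ g)_*=f_*\circ g_*$ and of pullback $(f\circ g)^!=g^!\circ f^!$ to collapse the composite of the two diagonals into the single diagonal $\Delta$ and the composite of the two projections into $\Pi$. The cross-product compatibility \eqref{pm} (together with the fact that all our classes are even-degree, being half-dimensional) is needed to see that $\bigl(\iota^!(r_1\times r_2)\bigr)\times r_3$ and $\iota^!\bigl((r_1\times r_2)\times r_3\bigr)$ agree up to the identification of ambient spaces, i.e.\ that the cross product with $r_3$ passes through the first diagonal pullback unharmed.

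The main obstacle I anticipate is purely organizational rather than conceptual: keeping straight the ordering of the factors $A,X,B,Y,C$ (and their duplicates $X,Y$) as one commutes projections past diagonals, since the relevant fibre-product squares are not literally the squares appearing in \eqref{fp} but squares obtained from them by inserting spectator factors and reordering coordinates. One must check that \eqref{fp} still applies after taking products with the identity on a spectator factor and after permuting coordinates — this follows from the analogous statements for the ordinary refined intersection product in \cite[Section 8]{Fulton}, but it needs to be invoked carefully. A secondary point is to confirm that the properness hypotheses propagate: $r_1$ proper over $A$ (hence $\iota^!(r_1\times r_2)$ proper over $A\times B\times Y$) and $r_2$ proper over its $X\times B$-face are exactly what is needed for every pushforward in sight to be defined, and this is precisely the content already used in Definition \ref{new star}. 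Once the normal form $\Pi_*\Delta^!(r_1\times r_2\times r_3)$ is established symmetrically, the conclusion is immediate.
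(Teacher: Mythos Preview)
Your proposal is correct and follows essentially the same approach as the paper: both arguments reduce to manipulating $r_1\times r_2\times r_3$ on $(AX)(XBY)(YC)$ via the base-change formula \eqref{fp}, functoriality of pushforward and pullback, and compatibility of the cross product with these operations. The paper organizes this through a $3\times 3$ commutative diagram with $M=ABC$ and shows both sides equal a common intermediate expression via \eqref{fp} applied to two fibre squares, whereas you phrase the same content as reducing both sides to the single normal form $\Pi_*\Delta^!(r_1\times r_2\times r_3)$; these are the same argument in slightly different packaging.
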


\begin{proof}
This is a consequence of the compatiblity of pullbacks and pushforwards with products and fibre products, equations (\ref{pc}), (\ref{pi}) and (\ref{fp}).

Let $M=A  B  C$, and consider the following commutative diagram, containing two fibre product diagrams.
\[\begin{tikzcd} MY^2 & MY\lar{\iota'_Y}\rar{\pi'_Y} & M
\\ MXY^2\uar{\pi_X} \dar{\iota_X} & MXY\uar\lar\rar\dar & MX\uar{\pi'_X} \dar{\iota'_X}
\\ MX^2 Y^2 & MX^2 Y\lar{\iota_Y}\rar{\pi_Y} & MX^2 \end{tikzcd}\]
As pushforward and pullbacks are compatible with fibre products as in Equation (\ref{fp}), the above diagram implies that 
\begin{equation}\label{cfp} (\pi_X')_*(\iota_X')^!(\pi_Y)_*\iota_Y^!(r_1\times r_2\times r_3)=(\pi_Y')_*(\iota_Y')^!(\pi_X)_*\iota_X^!(r_1\times r_2\times r_3)\ .\end{equation}
Similarly, the following  cartesian diagram,  
\[\begin{tikzcd}MX^2 Y^2\dar & MX^2 Y\lar{\iota_Y}\rar{\pi_Y}\dar & MX^2\dar 
\\ B Y^2 C & BYC\lar{\iota}\rar{\pi} & BC\end{tikzcd}\]
and the compatiblity of pullbacks with products and fibre products,  Equations (\ref{pi}) and (\ref{fp}),   implies that
\[\iota_Y^!(r_1\times r_2\times r_3)=r_1\times (\iota^!(r_2\times r_3))\]
and then, compatibility of pushforwards with products and fibre products,  Equations  (\ref{pc}) and (\ref{fp}), imply that 
\[(\pi_Y)_*\iota_Y^!(r_1\times r_2\times r_3)=r_1\times (\pi_*\iota^!(r_2\times r_3))=r_1\times (r_2\star_Y r_3)\]
so, 
\[r_1\star_X(r_2\star_Y r_3)=(\pi_X')_*(\iota_X')^!(\pi_Y)_*\iota_Y^!(r_1\times r_2\times r_3)\ .\]
Similarly, we get that 
\[(r_1\star_X r_2)\star_Y r_3=(\pi_Y')_*(\iota_Y')^!(\pi_X)_*\iota_X^!(r_1\times r_2\times r_3)\]
so Equation (\ref{cfp}) implies our required equality. 
\[(r_1\star_X r_2)\star_Y r_3=r_1\star_X(r_2\star_Yr_3)\]

\end{proof}

\begin{lemma}\label{associative}Given lagrangian cycles $\ell_1\in\Lag{A, X}$, and $\ell_2\in \Lag{X  B,Y}$, and $\ell_3\in\Lag{Y, C}$, we have
\[(\ell_1\star_X \ell_2)\star_Y \ell_3=\ell_1\star_X(\ell_2\star_Y\ell_3)\]
\end{lemma}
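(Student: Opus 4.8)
The plan is to deduce Lemma~\ref{associative} from the combination of Lemma~\ref{asp} and the previous "star product of lagrangian cycles" lemma. The point is that the associativity statement at the level of \emph{local homology classes} is already established in full generality by Lemma~\ref{asp}, so what remains is purely a matter of identifying the lagrangian cycle that represents the relevant local homology class, and checking that the two sides of the claimed equality produce the same such cycle.

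First I would observe that by the "star product of lagrangian cycles" lemma, each of the nested star products appearing in the statement is a lagrangian cycle. Concretely, $\ell_1\star_X\ell_2\in\Lag{A\times B,Y}$, and then $(\ell_1\star_X\ell_2)\star_Y\ell_3\in\Lag{A\times B,C}$; on the other side $\ell_2\star_Y\ell_3\in\Lag{X\times B,C}$, and then $\ell_1\star_X(\ell_2\star_Y\ell_3)\in\Lag{A\times B,C}$. So both sides of the asserted equality are \emph{a priori} well-defined lagrangian cycles in the same group $\Lag{A\times B,C}$. Next I would invoke Lemma~\ref{asp} with the substitution of local homology classes $r_i=[\ell_i]$, together with the compatibility $[\ell_1\star_X\ell_2]=[\ell_1]\star_X[\ell_2]$ (and similarly for the other iterated products) coming from the uniqueness clause in the "star product of lagrangian cycles" lemma: this gives
\[\bigl[(\ell_1\star_X\ell_2)\star_Y\ell_3\bigr]=\bigl([\ell_1]\star_X[\ell_2]\bigr)\star_Y[\ell_3]=[\ell_1]\star_X\bigl([\ell_2]\star_Y[\ell_3]\bigr)=\bigl[\ell_1\star_X(\ell_2\star_Y\ell_3)\bigr]\ .\]
Thus the two lagrangian cycles in question represent the same local homology class in $H^{BM}_{\dim A\times B\times C}(V)$ for $V$ the (finite) union of the supporting isotropic subvarieties, which by Lemma~\ref{isotropic} has complex dimension bounded by $\tfrac12\dim A\times B\times C$. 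Finally, by the uniqueness remark following the definition of geometric local homology class — each element of $H_{\dim A\times B\times C}(V)$ is represented by at most one lagrangian cycle — the two lagrangian cycles coincide.

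The only genuinely delicate point, and the step I expect to be the main obstacle, is the bookkeeping needed to see that Lemma~\ref{asp} applies directly: in Lemma~\ref{asp} the middle class lives on $X\times B\times Y$, whereas here $\ell_2$ lives on $(X\times B)\times Y$ viewed as an object of $\Lag{X\times B,Y}$, and one must track which factors are being fibred over $X$ versus $Y$ and verify the propriety hypotheses (properness of the support of $\ell_1$ over $A$, and of $\ell_1\star_X\ell_2$ over $A\times B$) that make the iterated star products defined — these follow from the "moreover" clause of Lemma~\ref{isotropic}. Once the identifications of the ambient products are made carefully, the argument is otherwise a formal consequence of Lemma~\ref{asp} and the uniqueness of lagrangian representatives, with no further computation required.
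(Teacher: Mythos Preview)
Your approach is the same as the paper's --- reduce to Lemma~\ref{asp} and then invoke uniqueness of the lagrangian representative --- but you gloss over precisely the one point the paper isolates as ``the only complication.'' The asserted compatibility $[\ell_1\star_X\ell_2]=[\ell_1]\star_X[\ell_2]$ is \emph{not} literally true as an equality of local homology classes, because a local homology class carries its support as part of the data: $[\ell_1]\star_X[\ell_2]$ is supported on all of $\ell_1\circ_X\ell_2$, which may contain isotropic components of strictly sub-lagrangian dimension, whereas $[\ell_1\star_X\ell_2]$ is by definition the \emph{geometric} class supported only on the lagrangian subvarieties appearing with nonzero coefficient. The ``star product of lagrangian cycles'' lemma only says that $\ell_1\star_X\ell_2$ \emph{represents} $[\ell_1]\star_X[\ell_2]$ in the homology of the larger support, not that the two local classes coincide.

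The paper writes $[\ell_1]\star_X[\ell_2]=[\ell_1\star_X\ell_2]+z$, with $z$ the zero class on the extra isotropic support, and then notes that $z\star_Y[\ell_3]$ is again the zero class on some isotropic support (this uses bilinearity of $\pi_*\iota^!(\,\cdot\,\times\,\cdot\,)$). Hence $(\ell_1\star_X\ell_2)\star_Y\ell_3$ is still the unique lagrangian cycle representing $([\ell_1]\star_X[\ell_2])\star_Y[\ell_3]$, and symmetrically for the other bracketing. Your chain of equalities needs this extra sentence to be valid; once you insert it, your proof and the paper's coincide.
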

\begin{proof} This follows from the associativity of star products of local homology classes; Lemma \ref{asp}.
The only complication is that $[\ell_1]\star_X[\ell_2]$ might not be $[\ell_1\star_X\ell_2]$, and instead we have
\[[\ell_1]\star_X[\ell_2]=[\ell_1\star_X \ell_2]+z\]
where $z$ is the zero homology class supported on some finite union of isotropic subvarieties. Similarly, we have 
\[([\ell_1]\star_X[\ell_2])\star_Y[\ell_3]=[(\ell_1\star_X\ell_2)\star_Y\ell_3]+z\star_Y[\ell_3]+z'\]
where both $z\star_Y[\ell_3]$ and $z'$ are the zero homology class supported on some finite union of isotropic varieties. So, $(\ell_1\star_X\ell_2)\star_Y\ell_3$ is the unique lagrangian cycle representing $([\ell_1]\star_X [\ell_2])\star_{Y}[\ell_3]$. A similar argument shows that $\ell_1\star_X(\ell_2\star_Y\ell_3)$ is the unique lagrangian cycle representing $[\ell_1]\star_X([\ell_2]\star_Y[\ell_3])=([\ell_1]\star_X[\ell_2])\star_Y[\ell_3]$

\end{proof}

A lagrangian cycle $f\in \Lag{ X, Y}$ determines a map $f_\star:\Lag X\longrightarrow \Lag Y$, defined by
\[f_\star (\ell):=\ell\star_X f\in \Lag{Y}\]

As an immediate corollary of Lemma \ref{associative}, we have that, given $f\in \Lag{ X, Y}$ and $g\in \Lag{ Y, Z}$, 
\[ g_\star\circ f_\star= (f \star_Y g)_\star \ .\]
In the case that $f$ is the graph of a holomorphic symplectic map $F:X\longrightarrow Y$, it is easy to verify that $f_\star \ell=F(\ell)$.

\begin{lemma}\label{identity} Given $f\in \Lag{ X, Y}$, 
\[\Delta_X\star_X f=f=f\star_Y\Delta_Y\]
\end{lemma}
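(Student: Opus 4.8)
The plan is to prove both identities $\Delta_X \star_X f = f$ and $f \star_Y \Delta_Y = f$ by unwinding the definition of the star product in terms of the operations on local homology classes. Since $f = \sum c_k \ell_k$ is a formal sum and the star product is additive in each argument (via equations (\ref{pa}), and additivity of $\times$, $\iota^!$, $\pi_*$), it suffices to treat a single lagrangian subvariety $\ell \subset X^- \times Y$; and in fact, by the uniqueness clause in the star-product-of-lagrangian-cycles lemma, it is enough to check the corresponding identity at the level of local homology classes, namely $[\Delta_X] \star_X [\ell] = [\ell]$ in the appropriate Borel--Moore group, and similarly on the other side. So first I would reduce to a statement purely about local homology classes.

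Next I would set up the diagram computing $[\Delta_X] \star_X [\ell]$. Here $\Delta_X \subset p^- \times X \times X$ is the diagonal (thought of in $\Lag{X,X}$, or as a local homology class on $X \times X$), and the star product over $X$ takes $[\Delta_X] \times [\ell]$ on $(X\times X) \times (X \times Y)$, pulls back along the diagonal inclusion $\iota: X\times X\times Y \hookrightarrow X\times X\times X\times Y$ that doubles the middle factor, and pushes forward along the projection $\pi: X\times X\times Y \to X\times Y$ that forgets the middle $X$. The key geometric observation is that the support of $[\Delta_X]\times[\ell]$, intersected with the image of $\iota$, is exactly $\{(x,x,x,y) : (x,y)\in \ell\}$, which maps isomorphically under $\pi$ onto $\ell$; so the fibre product defining $\iota^!$ is transverse (the diagonal $\Delta_X$ meets things cleanly) and $\pi$ restricts to an isomorphism on supports. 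Thus the scheme-theoretic/transversality bookkeeping is trivial and one gets $[\Delta_X]\star_X[\ell] = [\ell]$ on the nose, with no lower-dimensional correction terms. The cleanest way to package this is to note that $\iota^!([\Delta_X]\times[\ell])$ is the pullback of $[\ell]$ along an isomorphism (the graph of $\id$ composed with $\ell$), using functoriality of $f^!$ (equation for $(f\circ g)^! = g^! \circ f^!$) and the fact that for an isomorphism $g^!$ is just the identification of homology. The argument for $f \star_Y \Delta_Y = f$ is the mirror image, with $\Delta_Y$ on the right.

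Alternatively, and perhaps more in the spirit of the paper, I would phrase it via the already-established fact (stated just before the lemma) that if $f$ is the graph of a holomorphic symplectic map $F$, then $f_\star \ell = F(\ell)$: here $\Delta_X$ is the graph of $\id_X$, so $\Delta_X \star_X f$ as a map $\Lag{X}\to\Lag{Y}$ is $f_\star \circ (\id_X)_\star = f_\star$ by the corollary $g_\star \circ f_\star = (f\star g)_\star$; but this only directly gives the equality after applying both sides to arbitrary $\ell \in \Lag{X}$, so to conclude equality of the correspondences themselves I would take $\ell$ to range over points/small cycles, or simply do the direct computation above. I expect the main obstacle to be bookkeeping the product-factor orderings correctly — making sure the middle $X$-factors are the ones being diagonalised and projected, and that signs/degrees work out — together with justifying cleanly that no spurious lower-dimensional isotropic components appear (which follows because the relevant intersection is clean, the diagonal being the graph of an isomorphism), so that the local homology computation descends to the genuine equality of lagrangian cycles.
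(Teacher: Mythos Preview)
Your proposal is correct and matches the paper's proof almost exactly: reduce to a single lagrangian subvariety $\ell$, observe that the image of $\iota$ (namely $X^-\times\Delta_X\times Y$) is transverse to $\Delta_X\times\ell$ inside $X^-\times X\times X^-\times Y$, identify the intersection as $\{(x,x,x,y):(x,y)\in\ell\}$, and note that $\pi$ carries this isomorphically onto $\ell$. One small terminological point: you write ``the diagonal $\Delta_X$ meets things cleanly'' but you mean \emph{transversely}; clean intersection is a weaker notion and would not by itself rule out excess contributions, whereas here the intersection genuinely is transverse (the image of $\iota$ contains the full $X_1$ and $Y$ factors, so transversality reduces to $T\Delta_{23}+T_{x_2}X\times 0 = T_{x_2}X\times T_{x_3}X$, which is immediate).
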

\begin{proof} Without loss of generality, assume that $f$ is represented by a holomorphic lagrangian subvariety $\ell$ of $ X^-  Y$. Then, $( X^-   \Delta_X  Y)\subset X^-  X  X^-  Y$  is transverse to $(\Delta_X\times \ell)$.
 The intersection of these two subvarieties is the set of points $(x,x,x,y)$, where $(x,y)\in \ell$, so the projection of this to $ X^-  Y$ is $\ell$, and $\Delta_X\star_X \ell=\ell$. An analogous argument gives that $f\star_Y\Delta_Y=f$. 
\end{proof}

\begin{remark}\label{stratumstructure}
Lemmas \ref{associative} and \ref{identity} imply that there is a category with objects holomorphic symplectic manifolds, and morphisms $X_1\longrightarrow X_2$ defined by $\Lag{ X_1, X_2}$, where $\Delta_X\subset \Lag{ X, X}$ is the identity. This is a holomorphic version of the symplectic category envisioned by Weinstein. 

 Analogous to tensor products, the usual product of holomorphic symplectic manifolds makes this a strict symmetric monoidal category, with the isomorphism $X  Y\longrightarrow Y  X$ defined by the diagonal lagrangian in $ X^-   Y^-  Y  X$. 
 
 As any lagrangian in $ X^-  Y$ is also a lagrangian in $ Y^-  X$, whenever $X$ is compact,  every morphism $f\in \Lag{ X, Y}$ has an `adjoint' morphism $f^\dagger\in \Lag{ Y, X}$. More generally, the adjoint of $f$ exists whenever $f$ is also proper over $Y$.  It is easy to verify that whenever $f\in\Lag{ X, Y}$ is the graph of a holomorphic symplectomorphism $X\longrightarrow Y$, $f$ is unitary in the sense that $ f^\dagger$ is the inverse to $f$. 
 
  Each object $X$ also has a  dual $ X^-$ and the diagonal lagrangian $\Delta_X\subset X^-  X$ can  be interpreted as a counit --- that is,  a morphism $\epsilon_X\in \Lag{X^-  X}^-$ from $ X^-  X$ to the point. Moreover,  when $X$ is compact, this diagonal lagrangian  can also be interpreted as a unit ---  that is, a morphism $\eta_X\in \Lag{X  X^-}$ from the point to $ X   X^-$.  
  
  Restricting to the subcategory of compact holomorphic symplectic manifolds,  all this structure gives our holomorphic symplectic Weinstein category the structure of a dagger compact closed category; see \cite[Section 7]{selinger} for definitions and a convenient graphical calculus.

\end{remark}

\subsection{A unitary correspondence to the Hilbert scheme of points}
\label{correspondence section}

\

Recall that if $(Y,\omega)$ is a Calabi--Yau surface, we use the notation $Y^{[n]}$ for the Hilbert scheme of $n$ points in $Y$, and given a partition $\mu=(\mu_1,\mu_2,\dotsc)$ of $n$, we use the notation
\[Y^\mu:=\prod_k (Y,\mu_k\omega)\ .\]
This holomorphic symplectic manifold $Y^\mu$ occurs as an evaluation space for relative Gromov--Witten invariants in a log Calabi--Yau 3-fold $(X,Y)$, but there is also a more natural evaluation stack\footnote{The reader need not be familiar with stacks, as our lagrangian correspondences in stacks will always have concrete interpretations using the space $Y^\mu$ instead of $\mathcal Y^\mu$. } $\mathcal Y^\mu$. In this section, we define an important lagrangian correspondence $\mathcal L$ between $Y^{[n]}$ and $\coprod_{\mu} \mathcal Y^\mu$. This lagrangian correspondence is related to the Nakajima basis for the homology of $Y^{[n]}$.  

The exact structure of the evaluation stack $\mathcal Y^\mu$ depends on the normal bundle to $Y\subset X$, but our lagrangian correspondence is insensitive to such subtle details. In the simplest case, when the normal bundle to $Y\subset X$ admits roots of order $\mu_k$,  there exists an etal\'e map $Y^\mu\longrightarrow \mathcal Y^\mu$, which is a $z_\mu$--fold cover, where \[z_\mu=\abs {\Aut \mu}\prod_k\mu_k\ ,\] and $\mathcal Y^\mu$ is isomorphic to the quotient of $Y^\mu$ by the action of an extension, $G_\mu$, of $\Aut\mu$.
\[0\longrightarrow \prod_k\mathbb Z_{\mu_k}\longrightarrow G_\mu\longrightarrow \Aut\mu\longrightarrow 0 \] 
The action of $G_\mu$ on $Y^\mu$ is induced by the action of $\Aut\mu$ permuting the factors of $ Y^\mu=\prod_k(Y,\mu_k\omega)$. So, the action of the cyclic group $\prod_k\mathbb Z_{\mu_k}$ is trivial. More generally, such an etal\'e map only locally exists, and $\mathcal Y^\mu$ is only locally isomorphic to such a quotient. In this more subtle case, we have the maps of stacks
\[\begin{tikzcd} \mathcal Y^\mu\dar 
\\ Y^\mu/\Aut\mu & \lar Y^\mu \end{tikzcd}\]
where the downward map has fibres the classifying stack of $\prod_k\mathbb Z_{\mu_k}$.  In any case, we identify lagrangian subvarieties of $\mathcal Y^\mu$ with $(\Aut\mu)$--orbits of lagrangian subvarieties of $Y^\mu$, so our definition of lagrangian correspondences does not actually depend on the exact structure of the stack $\mathcal Y^\mu$.

\begin{defn}[Lagrangian correspondence to or from $\mathcal Y^\mu$]\label{stack star product}Define a lagrangian correspondence  $\ell_1\in \Lag{A, \mathcal Y^\mu}$ to be an $(\Aut\mu)$--invariant correspondence   $\ell_1'\in \Lag{ A, Y^\mu}$ that is divisible by $\prod_k\mu_k$, and define a lagrangian correspondence  $\ell_2\in\Lag{\mathcal Y^{\mu},B}$ as an $(\Aut\mu)$--invariant correspondence $\ell_2'\in \Lag{Y^\mu,B}$. Given such correspondences,  define the composition 
\[\ell_1\star_{\mathcal Y^\mu}\ell_2=\frac 1{z_\mu}\ell'_1\star_{Y^\mu}\ell'_2\in\Lag{A, B}\]
\end{defn}

The above definition is good enough for our purposes, but we also give the following tentative definition for Lagrangian correspondences in Deligne--Mumford stacks. 

\begin{defn}[Lagrangian correspondences in Deligne--Mumford stacks] Suppose that $\mathcal X$ is a Deligne--Mumford stack with a holomorphic symplectic structure. An element of $\Lag{\mathcal X}^-$ is an element $\ell_U\in \Lag{U}^-$ for each etal\'e map $U\longrightarrow \mathcal X$. These $\ell_U$ must be compatible with pullbacks in the sense that given any commutative diagram of etal\'e maps,
\[\begin{tikzcd}U_1\rar{ f}\ar{dr} &U_2\dar
\\  & \mathcal X\end{tikzcd}\]
$\ell_{U_1}$ is the pullback of $\ell_{U_2}$ using $f$.
Given another Deligne--Mumford stack $\mathcal Y$ with a holomorphic symplectic structure, an element of $\Lag{\mathcal X, \mathcal Y}$ is an element $\ell$ of $\Lag{\mathcal X^-  \mathcal Y}^-$ satisfying the additional condition that there exists a finite collection of integers $n_k$ and  maps of Deligne--Mumford stacks $\mathcal W_k\longrightarrow \mathcal X  \mathcal Y$ that are proper and representable over $\mathcal X$ such that, given an etal\'e map $U\longrightarrow \mathcal X  \mathcal Y$, we have that $\ell_U$  consists  the pushforward of $\sum_k n_k\left[\mathcal W_k\times_{\mathcal X \mathcal Y} U\right]$. 
\end{defn}

\begin{example} The correspondence  $\Delta^\mu\subset (Y^\mu)^-  Y^{[n]}$  from Example \ref{HSD} defines a correspondence in $\Lag{\mathcal Y^\mu,Y^{[n]}}$, whereas its adjoint $(\Delta^\mu)^\dagger$ only defines a correspondence in $\Lag{Y^{[n]}, \mathcal Y^\mu}\otimes \mathbb Q$, because it is not divisible by $\prod_k\mu_k$.\end{example}

 In what follows, we show that
\[\mathcal L:=\sum_\mu (\prod_k i^{\mu_k-1})\Delta^\mu\in \Lag{\coprod_\mu \mathcal Y^{\mu}, Y^{[n]}}\otimes\mathbb Z[i]\]
is unitary, in the sense that the inverse of $\mathcal L$ is $\mathcal L^\dagger$. To interpret this statement, we extend the star product to be complex bilinear, and define $\mathcal L^\dagger:=\sum_\mu (\prod_k i^{\mu_k-1})(\Delta^\mu)^\dagger$, so we are not taking complex conjugates of coefficients.

\begin{example}In this example, we describe the identity $\Delta_{\mathcal Y^\mu}\in\Lag{\mathcal Y^\mu,\mathcal Y^{\mu}}$. This can be regarded as the image of the diagonal map $\mathcal Y^\mu\longrightarrow(\mathcal Y^\mu)^-  \mathcal Y^\mu$. Define $\Delta_{\mathcal Y^\mu}'\subset \Lag{Y^\mu, Y^{\mu}}$ to be $\prod_k\mu_k$ times the image of the diagonal under the action of $\Aut\mu$ on the first factor. In the case that there is an etal\'e map $ Y^\mu\longrightarrow \mathcal Y^\mu$, this can be thought of as the image of $Y^\mu \times_{\mathcal Y^\mu}Y^\mu$. As this lagrangian correspondence is $\Aut\mu$ invariant with respect to the action on both factors, and divisible by $\prod_k\mu_k$, it represents a lagrangian correspondence $\Delta_{\mathcal Y^\mu}\in\Lag{\mathcal Y^\mu,\mathcal Y^{\mu}}$.

Given any lagrangian correspondence $\ell_1\in \Lag{A, \mathcal Y^\mu}$, $\Delta_{\mathcal Y^\mu}'$ is transverse to the lift $\ell_1'$ of $\ell_1$ in $A^-  Y^\mu$, so it is easy to verify that
\[\ell_1'\star_{Y^\mu}\Delta_{\mathcal Y^\mu}'= z_\mu \ell_1'\ .\]
So,
\[\ell_1\star_{\mathcal Y^\mu}\Delta_{\mathcal Y^\mu}=\ell_1 \]
and similarly
\[\Delta_{\mathcal Y^\mu}\star_{\mathcal Y^\mu}\ell_2=\ell_2\ ,\]
so $\Delta_{\mathcal Y^\mu}$ is the identity on $\mathcal Y^\mu$.

\end{example}

\begin{example}\label{HSDc} Let $\Delta^\mu\in\Lag {Y^\mu, Y^{[n]}}$ be the lagrangian correspondence from Example \ref{HSD}. Given a compact lagrangian subvariety $\Sigma\subset Y$, there is a  lagrangian correspondence  $\Sigma^\mu\in \Lag {Y^\mu}$ given by $\Sigma^\mu:=\prod_k\mu_k[\Sigma]$.  Then 
\[\Sigma^\mu \circ_{Y^\mu}\Delta^\mu=\coprod_{\mu'\leq \mu} L^{\mu'}\Sigma\]
where $L^\mu\Sigma\subset Y^{[n]}$ is the lagrangian subvariety from Example \ref{HS2}, and $\mu'\leq \mu$ indicates that a partition $\mu'$ can be created by joining parts of $\mu$. To compute $\Sigma^\mu \star_{Y^\mu}\Delta^\mu$, it remains to compute the coefficient of each subvariety $L^{\mu'}\Sigma$. Away from the big diagonal in $Y^\mu$, $\Delta^\mu$ is transverse to $\Sigma^\mu$, and we get that $z_\mu=\abs{\Aut \mu}\prod_k\mu_k$ is the coefficient of $L^{\mu}\Sigma$. 

The coefficients of $L^{\mu'}\Sigma$ for $\mu'\neq \mu$ vanish for dimension reasons. We can perturb $\Sigma^{\mu}$   to a smooth submanifold $N\subset Y^\mu$  transverse to the big diagonal and therefore transverse to $\Delta^{\mu}$. We then have that $N\times_{Y^\mu} \Delta^\mu\longrightarrow N$ is surjective, and a submersion over each stratum of $N$ (stratified by intersection with the big diagonal in $Y^\mu$).  
 When the length of $\mu$ is $k$ greater than the length of $\mu'$, the intersection of $N$  with the $\mu'$-component of the big diagonal has real codimension $4k$, whereas the fibre of $\Delta^\mu$ over this component has extra real dimension $2k$. It follows that  the coefficient of $L^{\mu'}\Sigma$ is $0$ for $\mu'\neq \mu$. 
  So, we obtain
\[ \Sigma^\mu \star_{Y^\mu}\Delta^\mu= z_\mu L^\mu\Sigma  \ .\]
If we think of $\Sigma^\mu$ as defining a lagrangian corrrespondence in $\Lag {\mathcal Y^\mu}$ and  $\Delta^\mu$ as in $\Lag{\mathcal Y^\mu,Y^{[n]}}$, we have 
\[\Sigma^\mu \star_{\mathcal Y^\mu}\Delta^\mu=L^\mu\Sigma\ .\]

\end{example}

\begin{example} \label{HSD2c}In this example, we construct a unitary Lagrangian correspondence  $\mathcal L\in \Lag{\prod_\mu \mathcal Y^\mu,Y^{[n]}}\otimes \mathbb Z[i]$.

 The correspondence $\Delta^\mu\subset (Y^\mu)^-  Y^{[n]}$ from Example \ref{HSD} defines an element of both $\Lag{Y^\mu, Y^{[n]}}$ and  $\Lag{\mathcal Y^\mu,Y^{[n]}}$. As observed in Example \ref{HSD2}, the set theoretic composition $\Delta^\mu\circ_{Y^{[n]}}(\Delta^\mu)^\dagger$, of $\Delta^\mu$ with its adjoint consists of the image of the diagonal under the action of permuting coordinates in $Y^\mu  Y^\mu$. The fibre product $\Delta^\mu \times_{Y^{[n]}} (\Delta^{\mu})^\dagger$ is not transverse, however it can be computed over the inverse image of $S^n_\mu Y$ using \cite[Thm 9.20]{Nakajima}, getting a factor of $\prod_k \mu_k(-1)^{\mu_k-1}$. This local computation works on a dense subset of $\Delta^\mu\circ_{Y^{[n]}}(\Delta^\mu)^\dagger$, so we obtain
\[\Delta^\mu\star_{Y^{[n]}}(\Delta^\mu)^\dagger=\left(\prod_k \mu_k(-1)^{\mu_k-1}\right)\Delta^\mu\circ_{Y^{[n]}}(\Delta^\mu)^\dagger =\lrb{\prod_k(-1)^{\mu_k-1} }\Delta_{\mathcal Y^\mu}\ .\]
Moreover, $\Delta^\mu\star_{Y^{[n]}}(\Delta^{\mu'})^\dagger=0$ for $\mu'\neq \mu$, because the set theoretic composition has lower dimension than lagrangian cycles. So, we have

\begin{equation}\label {Deltac2}\Delta^{\mu'}\star_{ Y^{[n]}}(\Delta^\mu)^\dagger=\begin{cases}(-1)^{\sum_k(\mu_k-1)}\Delta_{\mathcal Y^\mu} & \text{ if $\mu=\mu'$}\\ 0  & \text{ if $\mu\neq \mu'$}\end{cases}\end{equation}
The sign in the above equation is the sign of permutations of cycle type $\mu$.

From Example \ref{HSD2}, we also have that
\[(\Delta^\mu)^\dagger\circ_{Y^\mu} \Delta ^\mu=\coprod_{\mu'\leq \mu} \ell^\mu\]
where  the lagrangian subvariety $\ell^{\mu'}\subset (Y^{[n]})^-  Y^{[n]}$ is the closure of the inverse image of the diagonal within $S^n_{\mu'}Y$.  Away from the big diagonal in $Y^{\mu}$, the fibre product $(\Delta^\mu)^\dagger\times_{Y^\mu} \Delta ^\mu$ is transverse, so the coefficient of $\ell^\mu$ in $(\Delta^\mu)^\dagger\star_{Y^\mu} \Delta ^\mu$ is readily calculated as $\abs{\Aut \mu}$. From this, we can conclude that 
\begin{equation}\label{Deltac1}(\Delta^\mu)^\dagger\star_{Y^\mu} \Delta ^\mu= \abs{\Aut \mu}\ell^\mu+\sum_{\mu'<\mu} c_{\mu',\mu}\ell^{\mu'} \end{equation}
for some integers $c_{\mu',\mu}$. In particular, this implies that there is a unique linear combination of the above terms summing to $n!\ell^{(1,1,\dotsc)}$, which is the diagonal $\Delta_{Y^{[n]}}\subset (Y^{[n]})^-  Y^{[n]}$.  So,  there are unique coefficients $a_\mu\in\mathbb Q$ such that
\begin{equation}\label{Deltalc}\sum_\mu a_\mu (\Delta^\mu)^\dagger\star_{\mathcal Y^\mu} \Delta ^\mu=\Delta_{Y^{[n]}}\end{equation}
Equations \ref{Deltac2} and \ref{Deltalc}  (and associativity of the star product, Lemma \ref{associative}) then give
\[\Delta^\nu=\Delta^\nu \star_{ Y^{[n]}} \sum_\mu a_\mu (\Delta^\mu)^\dagger\star_{\mathcal Y^\mu} \Delta ^\mu= a_\nu \left(\prod_k(-1)^{\nu_k-1}
\right) \Delta^\nu\]
so $a_\mu$ is the sign of permutations of cycle type $\mu$.

We now define an important lagrangian correspondence between $Y^{[n]}$ and $\coprod_{\mu} \mathcal Y^\mu$.
\[\mathcal L:=\sum_\mu (\prod_k i^{\mu_k-1})\Delta^\mu\in \Lag{\coprod_\mu \mathcal Y^{\mu}, Y^{[n]}}\otimes\mathbb Z[i]\]
\[\mathcal L^\dagger:=\sum_\mu (\prod_k i^{\mu_k-1})(\Delta^\mu)^\dagger \in \Lag{ Y^{[n]},\coprod_\mu \mathcal Y^{\mu}}\otimes\mathbb C\]
We then have 

\[\mathcal L \star_{Y^{[n]}}\mathcal L^\dag=\sum_\mu \Delta_{\mathcal Y^{\mu}}\]
and 
\[\mathcal L^\dagger\star_{\coprod_\mu \mathcal Y^\mu}\mathcal L= \Delta_{Y^{[n]}}\]

\end{example}

\section{Logarithmic Weinstein category }\label{log section}

In this section, we extend the holomorphic Weinstein category to include log schemes.
Let $\mathrm{X}=(X, D_{\mathrm X})$ indicate the log scheme corresponding to a smooth variety $X$ with a simple normal crossing divisor $D_{\mathrm X}$. We will refer to such log schemes as smooth, noting that this is significantly stronger than requiring that $\mathrm X$ be log smooth. Geometrically, $\mathrm X$ can be thought of as the non-compact smooth variety $X\setminus D_{\mathrm X}:=\rm X^\circ $ with some kind of smooth structure at infinity. We shall see that lagrangian subvarieties of $\mathrm X$ correspond to lagrangian subvarieties of $\rm X^\circ$, however, to define composition of lagrangian correspondences, we will need a different intersection theory to account for intersections  at infinity --- geometrically, this intersection theory can be realised by translating subvarieties by generic smooth  global sections of the logarithmic tangent bundle.


  The logarithic structure on $\mathrm{X}$ is encoded in the sheaf $M_{\mathrm{X}}$ of holomorphic functions that are invertible when restricted to $X^\circ =X\setminus D_{\mathrm X}$. The operation of multiplication makes $M_{\mathrm{X}}$ a sheaf of monoids; the formal structure of $\mathrm{X}$ consists of $M_{\mathrm X}$ together with the obvious homomorphism $\alpha:M_{\mathrm X}\longrightarrow O_{X}$.  
  
   Locally, $\mathrm X$ is isomorphic to a Zarski open subset of $\mathbb C^{n}$ with the divisor defined by  $z_1z_2\dotsb z_{n}=0$, and locally, each element of $M_{\mathrm{X}}$ consists of a non-vanishing algebraic function times a monomial in $z_1,\dotsc, z_n$.   A local basis for the (holomorphic) log tangent bundle of $\mathrm X$ is provided by $z_k\partial _{z_k}$, and a basis for the (holomorphic) logarithmic cotangent space is given by the meromorphic $1$-forms $dz_k/z_k$. A  holomorphic symplectic form on $\mathrm X$ is a closed meromorphic $2$--form on $X$ locally given by
\[\omega= \sum h_{ij}\frac{dz_i}{z_i}\wedge \frac{dz_j}{z_j}\]
where the coefficient functions $h_{ij}$ are algebraic and satisfy the non-degeneracy condition  that the top exterior power of $\omega$ is an invertible function times $\frac {dz_1}{z_1}\wedge \dotsb \wedge \frac{dz_{n}}{z_{n}}$. More formally, we have the following definition

\begin{defn}[logarithmic holomorphic symplectic form] A holomorphic symplectic form on $\rm X$ is is a closed holomorphic $2$--form $\omega$ such that $v\mapsto \iota_v\omega$ is an isomorphism between the holomorphic logarithmic tangent and cotangent spaces.\end{defn}

\begin{example}Let $(X,D)$ be an even dimensional toric variety with its toric boundary divisor. In toric coordinates,  $\omega =\sum _k\frac {dz_{2k}}{z_{2k}}\wedge \frac{dz_{2k+1}}{z_{2k+1}}$ is a holomorphic symplectic form. Lagrangian subvarieties of $(X,D)$ correspond to lagrangian subvarieties of $\rm X^\circ=(\mathbb C^*)^{2n}$, and our different intersection theory can be realised by translating subvarieties generically using the $(\mathbb C^*)^{2n}$ action.
\end{example}
Our most important examples will be products of log Calabi--Yau surfaces, as these will arise naturally as evaluation spaces for the moduli stack of curves in log Calabi--Yau 3-folds.

\begin{example} A standard log point $\rm p$ is a point, $\Spec \mathbb C$, with the monoid $\mathcal M_{\rm p}=(\mathbb C^*,\times)\times(\mathbb N,+)$ and  homomorphism $\alpha:\mathcal M_{\rm p}\longrightarrow \mathbb C$ defined as follows. 
\[\alpha(c,n):=\begin{cases}c\text{ if }n=0
\\ 0 \text{ if }n>0\end{cases}\]
The standard log point is not log smooth, but can be obtained by restricting $(\mathbb C,0)$ to the point $0\subset \mathbb C$. This standard log point should be regarded as $1$--dimensional. The vector field $z\partial_z$ restricts from $(\mathbb C,0)$ to $\rm p$, and generates a free action of $\mathbb C^*$ on $\rm p$, which acts on $M_{\rm p}$ by $t*(c,n)=(t^nc,n)$.
\end{example}

\begin{remark} \label{stratumlogstructure} A closed stratum of $(X,D_{\mathrm X})$ is the intersection of some collection of $k$ components $D_1,\dotsc, D_k$ of $D_{\mathrm X}$. Each such closed stratum $S$ is smooth, with a simple normal crossing divisor $D_S$ comprised of the intersection of $S$ with other strata of $D_{\mathrm X}$. This closed stratum also inherits a log structure from $\mathrm X$, but this log structure is not log smooth, and not the log structure from $(S,D_S)$. Instead, $\mathrm S\subset \mathrm X$ should be thought of as a $\mathrm p^k$--bundle  
\[\begin{tikzcd}\mathrm S\rar\dar & \mathrm X 
\\ (S,D_S)\end{tikzcd}\] encoding the extra information from the normal bundles  to $D_1,\dotsc, D_k$. This is because locally, when $D_j$ is the vanishing locus of $z_j$,  the sections $z_1,\dotsc, z_k$ of  $M_{\mathrm X}$ restrict to independent sections of $M_{\mathrm S}$. Moreover, $M_{\mathrm S}$ is the monoid created from $M_{(S,D_S)}$ by adding these $k$ extra sections. Accordingly, the fibres of the bundle $\mathrm S\longrightarrow (S,D_S)$ are log points $\rm p^k$ with monoid  $M_{\rm p^k}=(\mathbb C^*,\times)\times(\mathbb N,+)^k$. 

There is a well--defined $(\mathbb C^*)^k$ action on $\mathrm S$ locally given by scaling these $k$ extra sections of $M_{(S,D_S)}$;  this $(\mathbb C^*)^k$--action is the flow generated by $k$ globally defined sections of $T\mathrm S$ locally given by the restriction of   $z_i\partial _{z_i}$ to $\mathrm S\subset \mathrm X$.       As a log scheme, $\mathrm S$ has the same dimension as $\mathrm X$, and a local basis for the logarithmic cotangent space is given by the restriction of the basis $\frac{dz_j}{z_j}$ from $\mathrm X$. 

 The restriction of a holomorphic symplectic form $\omega$ to $\mathrm S\subset\mathrm X$ is invariant under the $(\mathbb C^*)^k$--action, as are all differential forms on $\mathrm S$. Accordingly, $\iota_{z_i\partial _{z_i}}\omega$ is a nonvanishing closed holomorphic $1$--form on $\mathrm S$ for $i=1,\dotsc, k$. Moreover, we can create a non-vanishing holomorphic volume form on $\mathrm S$ from the wedge product of these 1-forms with a power of $\omega$. When $\omega$ vanishes on some subvariety $V$ of $S$, each of these $1$-forms $\iota_{z_i\partial_{z_i}}\omega$ also vanishes on $V$.

\end{remark}

We need a homology and intersection theory for $\mathrm X$ that properly reflects transverse fibre products within the category of (fine, saturated) log schemes --- this is not usual intersection theory on $X$. In particular, properties we want are as follows:

\begin{enumerate}
\item Given a proper map $f:\rm X\longrightarrow \rm Y$ from a log smooth domain $\rm X$,  $f_*[\rm X]$ should be defined within our homology theory. 

\item If $g:\rm Z\longrightarrow \rm Y$ is transverse to $f$ (with transversality defined using the log tangent bundle), we obtain a fibre product diagram.
\begin{equation}\label{fpd}\begin{tikzcd} \rm X\times_{\rm Y}\rm Z\rar{f'}\dar{g'} & \rm Z\dar{g}
\\ \rm X\rar{f } & \rm Y\end{tikzcd}\end{equation}
Moreover, $\rm X\times_{\rm Y}\rm Z$ is log smooth if $\rm Z$ is, and $f'$ is proper if $f $ is.  In a situation such as this, $g^!(f_*[\rm X])$ should be defined, and equal to $f'_*[\rm X\times_{\rm Y}\rm Z]$. Moreover, in the case that $g$ is also proper,  the intersection of $f_*[\rm X]$ with $g_*[\rm Z]$ should be defined, and equal to $f_*g'_*[\rm X\times_{\rm Y}\rm Z]$.
\item Furthermore, if $g$ is a submersion, and $h:\rm N\longrightarrow \rm X$ is a proper map from a log smooth domain, the following fibre product diagram
\[\begin{tikzcd} \rm N\times_{\rm Y} \rm Z \rar \dar & \rm X\times_{\rm Y}\rm Z\rar{f'}\dar{g'} & \rm Z\dar{g}
\\ \rm N\rar{h} & \rm X\rar{f } & \rm Y\end{tikzcd}\]
implies that 
\[f'_*(g')^!(h_*[N])=g^!f_*(h_*[N])\]
so we should expect that, in the case of fibre product diagrams (\ref{fpd}) 
\begin{equation}f'_*\circ (g')^!=g^!\circ f_* \ .\end{equation}

\end{enumerate}

The analogous compatibility with fibre product diagrams was essential for proving that the star product is associative. Moreover, a homology theory satisfying this property is also essential for gluing Gromov--Witten invariants.

We can construct homology theories satisfying the above properties using either the log Chow ring or refined differential forms; see \cite[Section 9]{dre}. 

The above properties become alarming in light of particularly nice submersions called logarithmic modifications. An algebraic map $(X',D_{\rm X'})\longrightarrow (X, D_{\rm X})$ is a logarithmic modification when it is proper and a bijection between $(\rm X')^\circ$ and $\rm X^\circ$. For example, if $(X,D)$ is toric, any toric blowup gives a logarithmic modification. Given any logarithmic modification $\pi:\rm X'\longrightarrow \rm X$, the following is a fibre product diagram
\[\begin{tikzcd}\rm X'\rar{id}\dar{id} & \rm X'\dar{\pi}
\\ \rm X'\rar{\pi} & \rm X\end{tikzcd}\]
so, assuming the above properties hold, we obtain that $\pi^!\circ\pi_*=id_*\circ id^!$. Under the reasonable assumption that $id^!$ is the identity, this implies that the homology of $\rm X$ must contain the homology of every logarithmic modification $X'$.

\begin{remark}\label{logresolution} If a subvariety $V\subset X$ intersects $\rm X^\circ$,   we can resolve the singularities of $V$ to obtain a proper surjective map  $\pi:N\longrightarrow V$ with $N$ smooth; \cite{hironaka}. We can even choose this resolution so that $\pi^{-1}D_{\mathrm X}\subset N$  is a simple normal crossing divisor, so $\mathrm N=(N,\pi^{-1}D_{\mathrm X})$ is log smooth.  In particular, $V$ is the image of a proper  map $\mathrm N\longrightarrow \mathrm X$ from a log smooth domain, and $V$ is a suitable cycle for a homology theory.

On the other hand, when $V$ does not intersect $\rm X^\circ$, it is not the image of a proper map from a log smooth domain. 
When $V$ is contained in some closed codimension--k stratum $\mathrm S\subset \mathrm X$, and intersects the interior of  this stratum, the image of $V$ within $(S,D_S)$ is the image of a proper  map $\mathrm N\longrightarrow (S,D_S)$ with smooth domain. As explained in Remark \ref{stratumstructure},  $\mathrm S$ is actually a $\mathrm p^k$--bundle over $(S,D_S)$, where $\mathrm p$ is the standard log point. Taking fibre products, we get that $V$ is the image of a $\mathrm p^k$--bundle map over a map with smooth domain.

\[\begin{tikzcd} \mathrm p^k\rtimes \mathrm N\dar \rar &  \mathrm S\rar\dar &   \mathrm X\dar
\\ \mathrm N \rar& V\subset (S,D_S) \rar &V\subset X  \end{tikzcd}\]

This is $\mathrm p^k$ bundle is not log smooth, and it is in fact impossible for such a $V$ to be the image of a log smooth map when $k\neq 0$. 
\end{remark}



\begin{defn}[complete subvariety] \label{complete} Say that a   subvariety $V\subset X$ is complete in  $\mathrm X=(X,D_{\mathrm X})$ if $V$ is not contained in $D_{\mathrm X}$.  Say that $V$ intersects $D_{\mathrm X}$ nicely if it is complete and its intersection with each closed stratum is a union of complete subvarieties.\end{defn}

\begin{remark} Complete subvarieties $V$ of $(X, D_{\mathrm X})$ correspond to  subvarieties $V^\circ\subset \rm X^\circ$. In particular, $V^\circ$ is the intersection of $V$ with the Zarski--open subset $\rm X^\circ\subset X$, and the closure $V$ of any  subvariety $V^\circ$ of $\rm X^\circ\subset X$ is a complete subvariety. 
\end{remark}

\begin{defn}[isotropic, lagrangian subvarieties, lagrangian correspondence] Given a holomorphic  symplectic form $\omega$ on $\mathrm X$, a subvariety  $V\subset X$ is isotropic if the restriction of $\omega$ to $V$ vanishes in the sense that $f^*\omega=0$  for any map $f:\mathrm p^k\rtimes\mathrm N\longrightarrow \mathrm X$ from a $\mathrm p^k$--bundle over a connected smooth domain $N$, with image a dense\footnote{We can remove the requirement that the image of $f$ is dense if $V$ intersects the divisor nicely. See Corollary \ref{allpullbacks}. This definition is complicated by the problem that subvarieties of the divisor $D$ do not carry enough logarithmic information. We later solve this problem by working with exploded manifolds; see Definition \ref{exploded isotropic def}. } subset of $V$.   Say that $V$ is lagrangian if it is isotropic and $2\dim V=\dim X$.

The group of lagrangian cycles, $\Lag{\mathrm X}^-$ is the $\mathbb Z$--module generated by  lagrangian subvarieties of $\mathrm X$. The group of lagrangian correspondences $\Lag{\mathrm X, \rm Y}$ is the $\mathbb Z$ module generated by lagrangian subvarieties of $\rm X^-Y$ with proper projection to $\mathrm X^-$.
\end{defn}

\begin{remark}In the above definition $\dim V$ refers to the dimension of $V$ as a subvariety of $X$ without considering any log structure. As a consequence, $V$ must be complete in order to be lagrangian. To see this, note that when $V$ is contained in a codimension $k$ stratum of $S$, the regular locus of $V$ intersected with the interior of $S$ has a canonical log structure pulled back from $\mathrm S\subset \mathrm X$, and its dimension as a log scheme is $\dim V+k$. Moreover, $\omega$ can only vanish on $V$ if this logarithmic dimension  is at most half the dimension of $\rm X$.
\end{remark}

\begin{remark}\label{densecheck}
To check that a subvariety is isotropic, it suffices to check that $\omega$ vanishes on a dense subset of the regular locus of $V$.   This then implies that given any map $f:\mathrm p^k\rtimes\mathrm N\longrightarrow \mathrm X$ with image dense in $V$, $f^*\omega$ vanishes on a dense subset of $\mathrm p^k\rtimes\mathrm N$, and hence vanishes everywhere.  If $V\subset (X,D)$ is complete, it follows that $V$ is isotropic if and only if the restriction $V^\circ\subset X^\circ$ is isotropic.
\end{remark}

The following is an interesting example of a lagrangian correspondence, realising a non-toric blowup as a morphism in our holomorphic Weinstein category.

\begin{example} \label{ntbc} Let $(X,D)$ be a 2-dimensional toric manifold with its toric boundary divisor. Choose a point $p$ in the interior of a smooth component of $D$, and let $\pi:Bl_p X\longrightarrow X$ be the projection from the (non-toric) blowup of $X$ at $p$. There are two possible divisors we could use to define a log structure on $Bl_p X$, the inverse image $\pi^{-1}D$ of $D$, or the strict transform $D'$ of $D$, which removes the component $\pi^{-1}(p)$ from $\pi^{-1}D$. We get the following diagram of logarithmic maps
\[\begin{tikzcd} & (Bl_p X,\pi^{-1}D)\ar{dr}{\pi_2}\ar{dl}[swap]{\pi_1}
\\ (X,D) && (Bl_p X,D') \end{tikzcd}  \]
however, this can not be completed to a commutative triangle of logarithmic maps. 
Both $(X,D)$  and $(Bl_p X,D')$ are log Calabi--Yau surfaces: In toric coordinates on $X$, $\omega= \frac{dz_1}{z_1}\wedge \frac{dz_2}{z_2}$ is a holomorphic volume form, and hence a holomorphic symplectic form. Moreover there exists a unique holomorphic volume form $\omega_2$ on $(Bl_p X,D')$ such that $\pi_1^*\omega=\pi_2^*\omega_2$. We therefore get a holomorphic lagrangian correspondence $b\in \Lag{(X,D),(Bl_p(X),D')}$ as the image of 
\[(\pi_1,\pi_2):(Bl_p X,\pi^{-1}D)\longrightarrow ( X,D)^-  (Bl_p X,D') \ .\]

\end{example}

\begin{example} Given a $\mathbb C^*$ action on $\rm X$ preserving the holomorphic symplectic form, and an isolated critical point $p$, the unstable manifold of $p$ consists of the points $x$ such that $p=\lim _{t\to -\infty} t*x$. The closure of the unstable manifold of $p$ is a lagrangian subvariety, and similarly, the closure of the stable manifold of $p$ is a lagrangian subvariety. A simple example of such a situation is given by the non-toric blowup of $\mathbb CP^2$ at $[1,1,0]$. The $\mathbb C^*$ action on the last coordinate has a fixed point. The stable manifold of this fixed point is the exceptional locus, and the inverse image of the points $[1,1,z_3]$ breaks up into the stable and unstable manifolds of this point.

 More generally, given a $\mathbb C^*$--action preserving the holomorphic symplectic form, each component of the fixed point set has a holomorphic symplectic structure. Given any lagrangian subvariety $L$ of this fixed point set, we can define a lagrangian subvariety as the closure of the set of points $x$ such that $\lim_{t\to -\infty}t*x\in L$.
\end{example}

Note that a complete subvariety $V$ of $\mathrm X$ is determined by its intersection $V^\circ $ with $\rm X^\circ $.   By blowing up $\mathrm X$ at closed strata, we can obtain many other compactifications $(X',D_{\rm X'})$ of $\rm X^\circ$, and the blowdown maps $\rm X'\longrightarrow \rm X$ are examples of logarithmic modifications. Logarithmic modifications are submersions, in the sense that their derivatives are surjective when we use the logarithmic tangent space.  Given a holomorphic symplectic form $\omega$ on $\mathrm X$, the pullback of $
\omega$  is also a holomorphic symplectic form $\pi^*\omega$ on $\mathrm X'$. Moreover, as complete subvarieties of $\mathrm X$ correspond to subvarieties of $X\setminus D=X'\setminus D_{\mathrm X'}$, compete subvarieties of $\mathrm X$ correspond to complete subvarieties of $\mathrm X'$.  
\begin{defn}[pullback of a complete subvariety] Given a complete subvariety $V$ of  $\mathrm X$ and a proper submersion $f:\mathrm Y\longrightarrow\mathrm X$, define the pullback $f^*V$ of $V$ to be the closure of $f^{-1}(V^\circ)$.
\end{defn}
In general, $f^*V$ is smaller than $f^{-1}V$.

 \begin{remark}\label{propermod} If $V \subset \rm X^-  \rm Y$ is a complete subvariety, and $\pi:\rm M\longrightarrow \rm X^-  \rm Y$ is a logarithmic modification, $V$ is proper over $\rm X$ if and only if $\pi^*V$ is proper over $\rm X$. From Remark \ref{densecheck}, we also have that $\pi^*V$ is a lagrangian subvariety if and only if $V$ is.
 \end{remark}
 
 \begin{example} If $\rm Y$ is a log Calabi--Yau 2-fold, the Hilbert scheme of points $\rm Y^{[n]}$ still makes sense and has a natural holomorphic symplectic structure, \cite{logHS,logDT} but has a log structure that is only defined up to logarithmic modification, and is a Deligne--Mumford stack locally modelled on the quotient of  $(\mathbb C,0)^{2n}$ by a finite group action that is free on the interior. Moreover, the interior of $\rm Y^{[n]}$ is $(\mathrm Y^\circ )^{[n]}$, so lagrangian subvarieties $L^{\mu}\Sigma$, $\Delta^\mu$ and $\ell^\mu$ from examples \ref{HS2}, \ref{HSD}, and \ref{HSD2} can be defined by taking the closure of the corresponding lagrangian subvarieties of $(\mathrm Y^\circ)^{[n]}\subset \rm Y^{[n]}$.
 \end{example}

\begin{lemma}\label{lmodification} Given a complete subvariety $V$ of $\mathrm X$, there exists a logarithmic modification $\pi: \mathrm X'\longrightarrow \mathrm X$ such that the pullback $\pi^*V$ of $V$  intersects $D_{\mathrm X'}$ nicely.  Moreover, given any further logarighmic modification of $\mathrm X'$, the pullback of $\pi^*V$ coincides with the inverse image of $\pi^*V$, and  also  intersects the divisor nicely.
\end{lemma}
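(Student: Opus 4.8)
The plan is to read off $\pi$ from the toroidal geometry of $(X,D_{\mathrm X})$: present log modifications of $\mathrm X$ as subdivisions of its cone complex, and choose the subdivision so that the tropicalization of $V$ becomes a subcomplex. Once that is arranged, both conclusions of the lemma turn into combinatorial statements about subdivisions, valid for all further refinements at once.

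First I would reformulate "nice". Since $V$ is complete, $V=\overline{V^\circ}$, and one checks directly from Definition \ref{complete} that $V$ intersects $D_{\mathrm X}$ nicely precisely when, for every closed stratum $S$ of $\mathrm X$, each irreducible component of $V\cap S$ meets the open stratum $S^\circ=S\setminus D_S$; equivalently $V\cap S=\overline{V\cap S^\circ}$ for every $S$. Badness is thus exactly the presence of a component of some $V\cap S$ sinking into a deeper stratum. The condition is Zariski local on $X$, so I would work in the local toric model: near a point of a codimension-$k$ stratum, $(X,D_{\mathrm X})$ is locally $(\mathbb C^{n},\{z_1\cdots z_k=0\})$; log modifications are locally toric modifications, hence correspond to subdivisions of the cone complex $\Sigma=\Sigma(X,D_{\mathrm X})$; and $V$ determines a finite union $\mathrm{Trop}(V)\subseteq\Sigma$ of rational polyhedral cones, its tropicalization. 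The key observation is that if $\mathrm{Trop}(V)$ is a union of faces of $\Sigma$, then $V$ intersects $D_{\mathrm X}$ nicely: the stratum $S_\sigma$ attached to a cone $\sigma$ is met by $V$ exactly when $\sigma\in\mathrm{Trop}(V)$, it is then met in the expected dimension $\dim V-\dim\sigma$, and the corresponding component of $V\cap S_\sigma$ is never contained in $S_\tau$ for $\tau\succ\sigma$, which is the niceness condition.

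Given this dictionary, existence of $\pi$ reduces to the standard fact from tropical compactification theory that a fan $\Sigma$ admits a refinement $\Sigma'$ in which any prescribed finite rational polyhedral subset is a subcomplex; choosing $\Sigma'$ so that $\mathrm{Trop}(V)$ becomes a subcomplex (compatibly over the toric charts, using that the local $\mathrm{Trop}(V)$'s glue and that toroidal modifications are classified by subdivisions of $\Sigma$) produces the log modification $\pi\colon\mathrm X'\to\mathrm X$, and $\pi^{*}V$ then intersects $D_{\mathrm X'}$ nicely by the observation above. For the moreover clause: a further log modification $\mathrm X''\to\mathrm X'$ corresponds to a further subdivision $\Sigma''\to\Sigma'$, and $\mathrm{Trop}(\pi^{*}V)=\mathrm{Trop}(V)$ is still a subcomplex of $\Sigma''$, so the pullback of $\pi^{*}V$ intersects $D_{\mathrm X''}$ nicely by the same argument; and because $\mathrm{Trop}(\pi^{*}V)$ is already a subcomplex of $\Sigma''$, the inverse image of $\pi^{*}V$ in $\mathrm X''$ meets a new boundary stratum only when that stratum corresponds to a cone inside $\mathrm{Trop}(\pi^{*}V)$, where it already lies in the closure of the preimage of $(\pi^{*}V)^\circ$; so no new components appear over the new boundary, which is precisely the assertion that pullback equals inverse image.

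The hard part will be the bridge between the combinatorics and the geometry: showing that ``$\mathrm{Trop}(V)$ is a subcomplex of $\Sigma$'' is genuinely equivalent to the niceness condition (and implies pullback $=$ inverse image), and that the toroidal-embedding formalism — the classification of log modifications by subdivisions, and the functoriality of tropicalization — applies globally to $(X,D_{\mathrm X})$ and not merely in the toric charts. An alternative that sidesteps the cone complex is to induct directly: if $V$ is not nice, blow up a suitable bad stratum and check that a badness invariant strictly drops, with termination supplied by a version of resolution of singularities \cite{hironaka}; but controlling the interaction of successive blowups with the directions in which $V^\circ$ runs off to infinity — and hence proving the moreover clause — is where the real work lies either way.
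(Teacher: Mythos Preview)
Your proposal is correct and is essentially the paper's argument phrased in classical tropical/toroidal language rather than the exploded-manifold formalism. The paper (Remark~\ref{complete connection}) first resolves $V$ by a proper map $f\colon\mathrm Y\to\mathrm X$ from a log smooth $\mathrm Y$ (Remark~\ref{logresolution}), then applies Lemma~\ref{combinatorially flat} to find a modification $\mathrm X'$ making the induced map $f'$ \emph{combinatorially flat}---i.e.\ each cone of $\totb{\expl\mathrm Y'}$ surjects onto a cone of $\totb{\expl\mathrm X'}$, which is exactly your condition that $\mathrm{Trop}(V)$ become a subcomplex of the subdivided fan; combinatorial flatness then directly yields both nice intersection and the pullback-equals-inverse-image statement, supplying the geometry--combinatorics bridge you flagged as the hard part.
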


We defer the proof of Lemma \ref{lmodification} until Remark \ref{complete connection}, where we use tropical techniques for constructing the required logarithmic modification.

\subsection{Star products using the log Chow ring}

\

The log Chow ring of a smooth log scheme $\mathrm X$ is often defined as the limit of the Chow ring of $X'$ over log modifications of $\mathrm X'\longrightarrow \mathrm X$ with $\mathrm X'$ smooth; see, for example \cite{logarithmicdoubleramificationcycles}. This definition provides a homology theory that obeys  our expectations of compatibility with transverse fibre products in the category of fine saturated log schemes. 

A complete subvariety $V$ of $\mathrm X$ represents a class in the log Chow ring as follows: Choose a smooth log modification $\pi:\mathrm X'\longrightarrow \mathrm X$ such that $V$ intersects the divisor nicely. The class represented by $V$ is then the class in $A_*(X')$ represented by $\pi^*(V)$.  In fact, the log Chow ring is generated by complete subvarieties, because Chow's moving lemma implies that each subvariety in $X'$ is rationally equivalent to a linear combination of varieties that intersect the boundary divisor with the expected dimension. Two complete subvarieties represent the same class if their pullbacks are  rationally equivalent in every log modification of $\mathrm X$ --- in practice, it is enough to check that their pullbacks are rationally equivalent in a log modification where they intersect the divisor nicely.  

Chain level intersection theory in the log Chow ring is not as clean as in the Chow ring of a smooth scheme, because the intersection of complete subvarieties need not be complete. To take the intersection of two complete subvarieties, pull them back to a log modification  $\mathrm X'$ where they intersect the divisor nicely, then take their intersection in the Chow ring of $X'$. As the geometric intersection of these cycles may be in the divisor, we can not always represent this intersection as a sum of complete subvarieties  contained in the geometric intersection. Nevertheless, we will still get a well-defined star product on lagrangian correspondences. 

Complete subvarieties should be regarded as transverse only if they are the image of transverse proper maps from log smooth schemes, where transversality is checked using the log tangent space. In this case, the intersection is always a complete subvariety, and satisfies our expected compatibility with fibre products. When we have transversality, intersections can be determined by the transverse intersection of these subvarieties restricted to $\rm X^\circ\subset \rm X$; note however that transversality can not be checked by restricting to $\rm X^\circ $, as there are non-transverse complete subvarieties that intersect only in $D$.

\begin{lemma}\label{niceisotropic} If an isotropic subvariety $V\subset \mathrm X$ intersects the divisor nicely, then the intersection of $V$ with each stratum of $\mathrm X$ is a finite union of  isotropic subvarieties.
\end{lemma}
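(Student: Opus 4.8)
The plan is to reduce the statement to the interior intersection theory, where isotropy is easy to check, and to exploit the $\mathbf p^k$-bundle structure of the strata described in Remark~\ref{stratumlogstructure}. Suppose $V$ intersects $D_{\mathrm X}$ nicely, and let $\mathrm S\subset\mathrm X$ be the closed stratum cut out by components $D_1,\dots,D_k$ of $D_{\mathrm X}$. By Definition~\ref{complete}, $V\cap S$ is a union of subvarieties complete in $S$, so by Remark~\ref{densecheck} it suffices to show that each such component $W$ is isotropic inside the stratum, i.e.\ that $\omega$ restricted to $\mathrm S$ vanishes on $W$. Since $W$ is complete in $S$, it meets the interior of $\mathrm S$, and I may check vanishing of $f^*\omega$ on a dense subset of $W$, working over the interior $S\setminus D_S$, which by Remark~\ref{stratumlogstructure} is where the $(\mathbb C^*)^k$-action generated by the restricted vector fields $z_i\partial_{z_i}$ lives and where $\mathrm S$ is genuinely a $\mathrm p^k$-bundle over $(S\setminus D_S)$.

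First I would set up, near a generic point of $W$ in the interior of $\mathrm S$, local coordinates $z_1,\dots,z_n$ on $\mathrm X$ in which $D_j=\{z_j=0\}$ for $j\le k$, so that $\frac{dz_1}{z_1},\dots,\frac{dz_n}{z_n}$ is the local logarithmic cotangent frame of $\mathrm X$ and, by Remark~\ref{stratumlogstructure}, its restriction to $\mathrm S$ is a local logarithmic cotangent frame of $\mathrm S$. The key point is now to relate the condition ``$V$ is isotropic in $\mathrm X$'' to ``$W=V\cap S$ is isotropic in $\mathrm S$''. To do this I would take a resolution $\pi:N\longrightarrow V$ as in Remark~\ref{logresolution}, with $\pi^{-1}D_{\mathrm X}\subset N$ simple normal crossing, so that $\mathrm N=(N,\pi^{-1}D_{\mathrm X})$ is log smooth and $V$ is the image of the proper log smooth map $\mathrm N\longrightarrow\mathrm X$; isotropy of $V$ gives that the pullback of $\omega$ to $\mathrm N$ vanishes. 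Because $V$ intersects $D_{\mathrm X}$ nicely, the strict transform in $N$ of $W=V\cap S$ is itself a divisor $N_S\subset N$ mapping properly onto $W$, and $\mathrm N$ restricts over $N_S$ to the $\mathrm p^k$-bundle structure compatible with that of $\mathrm S$ (this is exactly the ``nice intersection'' hypothesis doing its work). Then the restriction of $\omega$ to $\mathrm S$ pulls back, along the induced map $\mathrm p^k\rtimes N_S\longrightarrow\mathrm S$, to the restriction to $N_S$ of the (vanishing) pullback of $\omega$ to $\mathrm N$ — here one uses that the logarithmic cotangent frame of $\mathrm S$ is the restriction of that of $\mathrm X$, so ``restrict then pull back'' equals ``pull back then restrict.'' Hence $f^*\omega=0$ on $\mathrm p^k\rtimes N_S$, and since $N_S$ maps onto $W$, Remark~\ref{densecheck} gives that $W$ is isotropic. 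Finiteness is immediate: $V\cap S$ is a closed subvariety of $S$, hence has finitely many irreducible components.

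I would organize the write-up as: (1) reduce to a single stratum $\mathrm S$ and a single component $W$ of $V\cap S$, using Definition~\ref{complete} and additivity; (2) reduce to checking isotropy on a dense subset of $W$ in the interior of $\mathrm S$ via Remark~\ref{densecheck}; (3) choose a log resolution of $V$ adapted to $S$, using the nice-intersection hypothesis to ensure the strict transform of $W$ carries the induced $\mathrm p^k$-bundle structure; (4) transport the vanishing of the pullback of $\omega$ from $\mathrm N$ down to $W\subset\mathrm S$ using the compatibility of logarithmic cotangent frames under restriction to strata from Remark~\ref{stratumlogstructure}. The main obstacle is step~(3): one must check that the nice-intersection hypothesis genuinely guarantees that the relevant stratum of the resolution $\mathrm N$ is $\mathrm p^k$-bundle-equivalent over its image to $\mathrm S$, rather than, say, carrying extra logarithmic directions that the map to $\mathrm S$ would collapse. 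This is where one needs the hypothesis that $V$ intersects \emph{each} closed stratum in a union of complete subvarieties, not merely that $V$ itself is complete; with that in hand, the compatibility is local and follows from the explicit monomial description of $M_{\mathrm S}$ in Remark~\ref{stratumlogstructure}. An alternative, cleaner route that avoids resolution altogether — and which I would mention as a remark — is to use the observation at the end of Remark~\ref{stratumlogstructure}: the $1$-forms $\iota_{z_i\partial_{z_i}}\omega$ are globally defined on $\mathrm S$ and, together with a power of $\omega|_{\mathrm S}$, recover a volume form on $\mathrm S$; since $\omega|_{\mathrm S}$ and each $\iota_{z_i\partial_{z_i}}\omega$ vanish on any isotropic $V\cap S$, isotropy of the $(S,D_S)$-projection of $W$ combined with the bundle structure forces $\omega|_{\mathrm S}$ to vanish on all of $W$.
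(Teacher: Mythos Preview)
Your main approach is essentially on the right track, but there is an inaccuracy and a gap that you have not fully closed. The claim that the strict transform $N_S$ of $W=V\cap S$ in the log resolution $\mathrm N$ is ``a divisor'' is wrong for $k>1$: it is a union of strata of $\mathrm N$ of codimension at least $k$, and in general the relevant stratum $T$ may have codimension $m>k$ (extra exceptional components over $S$). Your worry about ``extra logarithmic directions that the map to $\mathrm S$ would collapse'' is the right concern, and the nice-intersection hypothesis does not by itself force $m=k$. What actually saves the argument is something you do not write: the induced map $\bar M_{\mathrm S}\cong\mathbb N^k\to\mathbb N^m\cong\bar M_T$ on ghost sheaves is automatically injective (each generator records the order of vanishing of $\pi^*z_j$ along components through $T$, which is nonzero), so the log-tangent map $T\mathrm T\to T\mathrm S$ is generically surjective onto its image, and the vanishing of $\pi^*\omega$ transfers to $W$. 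If you add this observation your argument works; the justification you offer (``compatibility is local and follows from the explicit monomial description'') is too vague to carry the load.

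The paper's proof sidesteps this entirely by inducting on the codimension of the stratum. At the inductive step one passes from a component $V'$ of $V\cap S'$ (codimension $k-1$, isotropic by hypothesis) to a component of $V\cap S$ (codimension $k$), which is a single codimension-one step: resolve $V'$ to get $f:\mathrm p^{k-1}\rtimes\mathrm N\to\mathrm X$ with $f^*\omega=0$, and observe that $f^{-1}(S)$ is a union of codimension-one strata of $\mathrm N$, one of which dominates the component. This makes the log-structure bookkeeping trivial at each step. Your direct approach is more ambitious but requires the ghost-sheaf injectivity argument above; the paper's induction buys simplicity at the cost of one extra paragraph. Finally, your proposed ``alternative route'' via the $1$-forms $\iota_{z_i\partial_{z_i}}\omega$ is circular as stated: you invoke their vanishing on ``any isotropic $V\cap S$,'' which is exactly what you are trying to prove.
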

\begin{proof} We will prove this lemma by induction on the codimension of  strata $S\subset (X,D_{\mathrm X})$. It holds trivially for the codimension $0$ stratum, so assume that it holds for all strata of codimension $k-1$, and let $S$ be a closed stratum of codimension $k$. As $V$ intersects the divisor nicely,  given any codimension $k-1$ stratum $S'$ containing $S$, each irreducible component of $V\cap S$ is a codimension $1$ stratum of some irreducible component $V'$  of $V\cap S'$, which is an isotropic subvariety, by our inductive hypothesis.  

Following Remark \ref{logresolution}, choose a map $f:\mathrm p^{k-1}\rtimes\mathrm N\longrightarrow \mathrm X$ whose image is $V'$. We have that $f^{-1}(S)\subset \mathrm p^{k-1}\rtimes\mathrm N$ is some union of codimension $1$ strata, and at least one such stratum has image that is dense in our component of $V\cap S$. The restriction of $f$ to such a stratum is a submersion with dense image in our component of $V\cap S$, so the fact that $f^*\omega=0$ implies that $\omega$ vanishes on a dense subset of the regular locus of our component of $V\cap S$, so Remark \ref{densecheck} implies that our component is isotropic. The required result follows by induction.

\end{proof}

\begin{cor}\label{allpullbacks} If an isotropic subvariety $V\subset \mathrm X$ intersects the divisor nicely, then $f^*\omega=0$ for every map $f:\mathrm p^k\rtimes\mathrm N\longrightarrow \mathrm X$ whose image is contained in $V$. 
\end{cor}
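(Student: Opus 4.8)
The plan is to reduce, via Lemma~\ref{niceisotropic}, to the statement that a subvariety of an isotropic subvariety is again isotropic, and then to apply the definition of isotropic directly. A $\mathrm p^k$--bundle over a disjoint union is the disjoint union of the bundles over its components, so we may assume $N$ is connected; then $W$, the closure of the image of $f$, is an irreducible subvariety of $X$ contained in $V$. Let $S$ be the smallest closed stratum of $(X,D_{\mathrm X})$ containing $W$. By minimality of $S$, the irreducible variety $W$ meets the open stratum $S^\circ:=S\setminus D_S$, and $W\subseteq V\cap S$. By Lemma~\ref{niceisotropic}, $V\cap S$ is a finite union of isotropic subvarieties, so by irreducibility $W$ is contained in one of them, say $W\subseteq V'$, and $V'$ also meets $S^\circ$.

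Next I would localise to the open stratum. Since the logarithmic cotangent sheaf of $\mathrm p^k\rtimes\mathrm N$ is locally free (cf.\ Remark~\ref{stratumlogstructure}) and $N$ is irreducible, it is enough to show $f^*\omega$ vanishes on the dense open subset $f^{-1}(S^\circ)\subset N$; there $f$ factors through the restriction of $\mathrm S$ to $S^\circ$, which by Remark~\ref{stratumlogstructure} is a $\mathrm p^m$--bundle over the smooth variety $S^\circ$, where $m$ is the codimension of $S$ in $X$, and on which $\omega$ is invariant in the fibre directions. Such an invariant form is built out of the fibre coordinates $dz_i/z_i$ together with a holomorphic $2$--form, finitely many $1$--forms, and finitely many functions on the smooth variety $S^\circ$; in particular $\omega$ vanishes over a subvariety of $S^\circ$ precisely when all of these auxiliary forms and functions vanish on that subvariety.

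It then remains to check that these auxiliary forms and functions vanish along the image of $f$ in $S^\circ$. They vanish along $V'\cap S^\circ$: applying the definition of isotropic to a resolution of $V'$ (Remark~\ref{logresolution}) and using the decomposition above, isotropy of $V'$ forces each of them to pull back to zero on that resolution, hence to vanish on $V'\cap S^\circ$. Since $W\subseteq V'$, they vanish on $W\cap S^\circ$ as well, in particular on its smooth locus; and since $W$ is irreducible with $f$ dominant onto $W$, the generic point of $N$ maps into that smooth locus, so $f$ pulls each term back to zero. Hence $f^*\omega=0$ on $f^{-1}(S^\circ)$, and therefore $f^*\omega=0$.

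The step I expect to be the main obstacle is the bookkeeping in the last two paragraphs: transferring the vanishing of $\omega$ from a resolution of $V'$ down to $W$ and then along $f$, while the resolutions and fibre products involved carry auxiliary $\mathrm p^j$--bundle logarithmic structures of varying rank. The key simplification is again Remark~\ref{stratumlogstructure} --- every differential form on a stratum $\mathrm S$ is invariant in the fibre directions --- which lets one replace each comparison of logarithmic pullbacks by the corresponding comparison of honest pullbacks of forms on smooth varieties. There the claim reduces to the elementary fact that a holomorphic form restricting to zero on a subvariety also restricts to zero on every sub-subvariety (pull both back to resolutions and take a component of the fibre product dominating the smaller one), together with Remark~\ref{densecheck} to pass between vanishing on a dense subset and vanishing everywhere.
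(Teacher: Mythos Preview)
Your proof is correct and follows the same initial reduction as the paper: reduce to connected/irreducible $N$, pass to the minimal stratum $S$, invoke Lemma~\ref{niceisotropic} to find an isotropic $V'\subset V\cap S$ containing the image, and resolve $V'$ via Remark~\ref{logresolution}. The two proofs diverge only at the final transfer step.

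You decompose $\omega$ on the $\mathrm p^m$--bundle over $S^\circ$ into its $(dz_i/z_i)\wedge(dz_j/z_j)$, $(dz_i/z_i)\wedge\alpha_i$, and $\beta$ pieces, read off from $\pi^*\omega=0$ that each $h_{ij}$, $\alpha_i$, $\beta$ vanishes on $V'$ (the forms in the sense of pulling back to zero on the resolution $M$), and then push this down to $W$ via a fibre--product/dominance argument. The paper instead avoids the decomposition entirely: it takes an algebraic constant--rank stratification of $M$, so that over each induced stratum of $V'$ the resolution $\pi$ is a submersion (as a log map, since it is a bundle map on the $\mathrm p^m$ factor). This forces $\omega$ to vanish as a form on the log subscheme over each stratum of $V'$, and then $f$ sends a dense subset of $N$ into one such stratum, giving $f^*\omega=0$ immediately. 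Your route buys a more explicit picture of \emph{what} vanishes; the paper's stratification trick is shorter and sidesteps the bookkeeping you flagged in your last paragraph, since it never needs to compare $\mathrm p^k$ and $\mathrm p^m$ bundle structures or track individual coefficient forms.
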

\begin{proof} Without loss of generality, $N$ is irreducible, and the image of $f$ is contained in some stratum  $\mathrm S\subset \mathrm X$, and intersects the interior $\mathrm S$. As the inverse image of the interior of this stratum is dense in $\mathrm p^k\rtimes\mathrm N$, we can further assume that the image of $f$ is contained in the interior of $\mathrm S$. By Lemma \ref{niceisotropic}, we have that $V\cap S$ is a union of isotropic varieties, and, as $N$ is irreducible, the image of $f$ must be contained in one of them, $V'$. Using resolution of singularities as in Remark \ref{logresolution}, we can choose a proper map $\pi:\mathrm p^m\rtimes\mathrm M\longrightarrow \mathrm S$ with image $V'$, and $\mathrm M$ smooth. We have that $\pi^*\omega=0$. As with any algebraic map between smooth varieties, there exists an algebraic stratification of $M$ such that the interior of each stratum is regular, and $\pi$ restricted to the interior  is constant rank. This induces a stratification of $V'$ so that the interior of each stratum is regular, and $\pi$ restricted to the inverse image of the interior of each stratum is a submersion. As a consequence, $\omega$ vanishes on the interior of each stratum. The restriction of $f$ to a dense subset of $\mathrm p^k\rtimes\mathrm N$ is sent to the interior of some stratum, and therefore $f^*\omega$ vanishes on this dense subset, and therefore $f^*\omega=0$, as required.

\end{proof}

\begin{lemma}\label{logisotropic} 
Suppose that $\ell_1\subset  \rm A^-  \rm X$ and $\ell_2\subset \rm X^-  \rm B$ are lagrangian subvarieties and that  the projection of  $\ell_1$ to $A^-$ is proper.

Let  $\pi:\mathrm M\longrightarrow \rm A^-  \mathrm X  \mathrm B$ be a smooth logarithmic modification, and consider the following maps.
\[\begin{tikzcd} & \ar{dl}{\pi_3} \mathrm M\ar{dr}\ar{drr}{\pi_{2}} \ar{ddr}[swap]{\pi_{1}} 
\\ \rm A^-  \rm B & & \rm A^-  \mathrm X  \mathrm B\dar \rar & \mathrm X^-  \rm B
\\ & & \rm A^-  \mathrm X\end{tikzcd}\]

 Suppose that within $\mathrm M$,  $\pi_{i}^{*}\ell_{i}$  intersects the divisor nicely.  Then
\[\pi_{3}\left(\pi_{1}^{*}\ell_{1}\cap \pi_2^*\ell_2\right) \subset \mathrm A^-  \rm B\]
is a finite union of  isotropic subvarieties, possibly together with some extraneous (not-complete) subvarieties of dimension strictly less that $\frac 12 \dim (\rm A  \rm B)$.

Moreover, if $\ell_2$ is proper over $X$, each of these subvarieties is proper over $\rm A^-$.

\end{lemma}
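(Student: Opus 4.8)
The plan is to run the argument of Lemma~\ref{isotropic} inside the modification $\mathrm M$, where the logarithmic intersection theory is well behaved. Write $\omega_{\mathrm A},\omega_{\mathrm X},\omega_{\mathrm B}$ for the pullbacks to $\mathrm M$ of the symplectic forms on the three factors. Since $\pi_1^*\ell_1$ and $\pi_2^*\ell_2$ are closed subvarieties of the smooth log scheme $\mathrm M$, their intersection is a finite union of irreducible subvarieties $V_i\subset\mathrm M$, and $W_i:=\pi_3(V_i)$ is a subvariety of $\mathrm A^-\times\mathrm B$. These $W_i$ are exactly the pieces in the statement; I must show each is either isotropic or has dimension strictly below $\tfrac12\dim(\mathrm A\times\mathrm B)$, distinguishing the cases according to whether $W_i$ is complete.

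\emph{The forms agree on each $V_i$.} If $V_i$ meets the interior $\mathrm M^\circ=\mathrm A^\circ\times\mathrm X^\circ\times\mathrm B^\circ$, then on that interior $\pi_1^*\ell_1$ restricts to $\ell_1^\circ\times\mathrm B^\circ$, on which $\omega_{\mathrm A}=\omega_{\mathrm X}$ because $\ell_1$ is lagrangian in $\mathrm A^-\times\mathrm X$; as $\pi_1^*\ell_1$ is the closure of its interior part, $\omega_{\mathrm A}=\omega_{\mathrm X}$ on all of $\pi_1^*\ell_1$, and symmetrically $\omega_{\mathrm X}=\omega_{\mathrm B}$ on $\pi_2^*\ell_2$. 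If instead $V_i$ lies in the boundary of $\mathrm M$, let $\mathrm S$ be the smallest closed stratum containing it; here I invoke the hypothesis that $\pi_1^*\ell_1$ and $\pi_2^*\ell_2$ meet the divisor of $\mathrm M$ nicely, together with the evident extension of Lemma~\ref{niceisotropic} to the closed $2$--forms $\omega_{\mathrm A}-\omega_{\mathrm X}$ and $\omega_{\mathrm X}-\omega_{\mathrm B}$, to see that on the interior of $\mathrm S$ the intersections $\pi_1^*\ell_1\cap S$ and $\pi_2^*\ell_2\cap S$ are unions of subvarieties on which $\omega_{\mathrm A}=\omega_{\mathrm X}$, respectively $\omega_{\mathrm X}=\omega_{\mathrm B}$. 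Either way $\omega_{\mathrm A}=\omega_{\mathrm X}=\omega_{\mathrm B}$ on $V_i$, so $\pi_3^*(-\omega_{\mathrm A}+\omega_{\mathrm B})$ pulls back to $0$ on $V_i$.

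\emph{Complete $W_i$.} Such a $W_i$ meets the interior of $\mathrm A^-\times\mathrm B$, and $\pi_3$ maps $V_i$ dominantly onto $W_i$, so in characteristic zero $\pi_3$ is generically a submersion of smooth varieties onto a dense subset of the regular locus of $W_i^\circ$; the vanishing just established then forces $-\omega_{\mathrm A}+\omega_{\mathrm B}$ to vanish on a dense open subset of the regular locus of $W_i^\circ$, whence by Remark~\ref{densecheck} $W_i$ is isotropic and $\dim W_i\le\tfrac12\dim(\mathrm A\times\mathrm B)$. \emph{Non-complete $W_i$.} Now $W_i$ lies in the divisor of $\mathrm A^-\times\mathrm B$; let $\Sigma$ be the smallest stratum containing it, of codimension $j\ge1$. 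The same dominant--map argument shows $-\omega_{\mathrm A}+\omega_{\mathrm B}$ vanishes on $W_i$ inside the interior of $\mathrm\Sigma$, and then, by Remark~\ref{stratumlogstructure}, the $j$ independent closed $1$--forms $\iota_{z_k\partial_{z_k}}(-\omega_{\mathrm A}+\omega_{\mathrm B})$ also vanish on $W_i$; since these, wedged with a suitable power of $(-\omega_{\mathrm A}+\omega_{\mathrm B})|_{\Sigma}$, build a volume form on the interior of $\mathrm\Sigma$, the standard linear--algebra bound for a subspace annihilated both by this symplectic-type form and by $j$ independent extra $1$--forms gives $\dim W_i\le\tfrac12\dim(\mathrm A\times\mathrm B)-j<\tfrac12\dim(\mathrm A\times\mathrm B)$. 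This boundary bookkeeping --- tracking the $\mathrm p^{j}$--bundle structure of the strata and checking that the ``nice intersection'' hypothesis is precisely what makes the Lemma~\ref{niceisotropic} induction run there --- is the step I expect to be the main obstacle; the complete case and the properness claim below are routine transcriptions of Lemma~\ref{isotropic}.

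\emph{Properness.} Finally, suppose $\ell_2$ is proper over $\mathrm X$. As in the last paragraph of the proof of Lemma~\ref{isotropic}, the inverse image in $\ell_1$ of a compact $K\subset\mathrm A^-$ is compact, has compact image in $\mathrm X$, and the inverse image of that image in $\ell_2$ is compact, so the set-theoretic fibre product of $\ell_1$ and $\ell_2$ over $\mathrm X$ is proper over $\mathrm A^-$; since $\pi$ is proper and, by Remark~\ref{propermod}, properness over $\mathrm A^-$ is preserved by the modification, $\pi_1^*\ell_1\cap\pi_2^*\ell_2$ and hence its image $\bigcup_i W_i$ under $\pi_3$ is proper over $\mathrm A^-$.
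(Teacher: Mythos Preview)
Your outline closely follows the paper's own proof: the properness argument and the complete-$W_i$ case are essentially identical, and your ``forms agree on $V_i$'' step via the extension of Lemma~\ref{niceisotropic} is exactly the content of Corollary~\ref{allpullbacks}, which is what the paper invokes. The paper also inserts an extra paragraph arranging (via Lemma~\ref{lmodification}) for $\pi_1,\pi_2$ to factor through modifications where $\ell_1,\ell_2$ individually intersect nicely; this is a minor technical point you absorb into your case analysis.

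The non-complete case, however, has a genuine gap. Your dominant-map argument only shows that $-\omega_A+\omega_B$ vanishes on the image of the log tangent of $V_i$ under $d\pi_3$; it does \emph{not} show that $-\omega_A+\omega_B$ vanishes on the full log tangent of $W_i$, which is $T_qW_i\oplus\mathrm{span}(z_k\partial_{z_k})_{k=1}^j$. These can differ: the vertical directions $z_l\partial_{z_l}$ in $\mathrm M$ need not surject under $d\pi_3$ onto all of $\mathrm{span}(z_k\partial_{z_k})$ in $\mathrm A^-\mathrm B$. The paper flags exactly this: ``$\pi_3V$ might not be isotropic in $\mathrm A^-\mathrm B$.'' Without log-sense isotropy of $W_i$, your appeal to Remark~\ref{stratumlogstructure} to conclude that all $j$ one-forms $\iota_{z_k\partial_{z_k}}(-\omega_A+\omega_B)$ vanish on $W_i$ is unjustified, and the claimed bound $\dim W_i\le\tfrac12\dim(\mathrm A\mathrm B)-j$ does not follow.

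The paper's fix avoids $W_i$ altogether: it works directly with the isotropic subspace $d\pi_3(T^{\log}_pV_i)\subset T^{\log}_{\pi_3(p)}(\mathrm A^-\mathrm B)$, which has dimension at most $\tfrac12\dim(\mathrm A\mathrm B)$. Since $\pi_3(p)$ lies in the boundary divisor, at least one vertical $z_l\partial_{z_l}$ in $\mathrm M$ maps to a nonzero element of $\mathrm{span}(z_k\partial_{z_k})$, so this isotropic subspace meets the kernel of the projection to the ordinary tangent space nontrivially. Hence its image in the ordinary tangent space, which bounds $\dim\pi_3V_i$, is strictly smaller than $\tfrac12\dim(\mathrm A\mathrm B)$. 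You were right that this boundary bookkeeping is the crux; the repair is to track the log tangent of $V_i$ rather than attempting to certify $W_i$ as log-isotropic.
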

\begin{proof} As in the proof of Lemma \ref{isotropic},  $\pi_{3}\left(\pi_{1}^{*}\ell_{1}\cap \pi_2^*\ell_2\right) \subset \mathrm A^-  \rm B$ is a finite union of subvarieties of $\rm A  \rm B$, because $\ell_1$ is proper over $\rm A^-$.  Moreover, if $\ell_2$ is also proper over $X$,  we have that that $\pi_{1}^{*}\ell_{1}\cap \pi_2^*\ell_2$ is proper over $\rm A^-$, so the image of this in $\rm A^-  \rm B$ is a finite union of subvarieties, each of which is proper over $\rm A^-$.

 Lemma \ref{lmodification} implies that we can choose a logarithmic modification $\rm M_1$ of $\rm A^-  \rm X$ such that the pullback of $\ell_{1}$ intersects the divisor nicely, and similarly, we can choose a logarithmic modification $\rm M_2$ of $\rm X^-  \rm B$ so that the pullback of $\ell_2$ intersects the divisor nicely. By making a further logarithmic modification of $\mathrm M$, we can also ensure that $\pi_{1}$ and $\pi_2$ factor through these logarithmic modification. Lemma \ref{lmodification} implies that, for this further logarithmic modification, pullbacks coincide with inverse images, so the image of our intersection in $\rm A^-  \rm B $ is unaffected by this further logarithmic modification.

Given any map $f:\mathrm p^k\rtimes\mathrm N\longrightarrow \mathrm M$ with image in $\pi_{1}^{*}\ell_{1}\cap \pi_2^*\ell_2$, Corollary \ref{allpullbacks} implies that the pullback of both $\omega_{\mathrm X}-\omega_{\rm A}$ and $\omega_{\mathrm B}-\omega_{\mathrm X}$ vanish, so $f^*\omega_{\rm A}=f^*\omega_{\rm B}$, and in particular, $\omega_{\rm A}=\omega_{\rm B}$  on the regular locus of  $\pi_{1}^{*}\ell_{1}\cap \pi_2^*\ell_2$. It follows that $\omega_{\rm A}=\omega_{\rm B}$  on a dense subset of every complete irreducible component of $\pi_{3}\left(\pi_{1}^{*}\ell_{1}\cap \pi_2^*\ell_2\right)$, so all complete irreducible components are isotropic. It remains to check that any extraneous irreducible components that are not complete also have dimension strictly less than $\frac 12\dim (\rm A  \rm B)$. 

Let $V$ be an extraneous irreducible component  of $\pi_{1}^{*}\ell_{1}\cap \pi_2^*\ell_2$ whose image in $\rm A^-  \rm B$ does not intersect the interior of $\rm A^-  \rm B$. The complication we need to deal with is as follows: $\pi_3V$ might not be isotropic in $\rm A^-  \rm B$, even though it is the image of an isotropic subvariety of some logarithmic modification of $\rm A^-  \rm B$. At each regular point of $V$, the image of the logarithmic tangent space of $V$ is isotropic within the logarithmic tangent space of $\rm A^-  \rm B$, and therefore has dimension bounded by $\frac 12\dim(\rm A  \rm B)$. As the image of $V$ does not intersect the interior of $\rm A^-  \rm B$, the projection of this image to the usual tangent space of the smooth scheme $A  B$ has nontrivial kernel. It follows that the image within the tangent space of the smooth scheme has dimension strictly less than $\frac 12\dim (\rm A  \rm B)$, and therefore, $\dim \pi_{3}V<\frac 12 \dim (\rm A  \rm B)$.

\end{proof}

As a corollary of Lemmas \ref{lmodification} and \ref{logisotropic}, we get the following.

\begin{cor}[star product using log Chow ring] Given  lagrangian correspondences $\ell_1\in \Lag{  \rm A, \rm X}$ and $\ell_2\in\Lag{ \rm X  \rm B, \rm C}$,  there exists a unique lagrangian correspondence 
$\ell_{1}\star_{\rm X} \ell_2 :=\sum_i m_iV_i\in \Lag{\rm A \rm B,C}$ satisfying the following condition. Given any smooth logarithmic modification $\pi:\mathrm M\longrightarrow \rm A^-  \mathrm X  \mathrm B  C$, 
\[\begin{tikzcd} & \ar{dl}{\pi_3} \mathrm M\ar{dr}\ar{drr}{\pi_{2}} \ar{ddr}[swap]{\pi_{1}} 
\\ \rm A^- \rm B^- C & & \rm A^- \mathrm X \mathrm B^- C\dar \rar & \mathrm X^- \rm B^- C
\\ & & \rm A^- \mathrm X\end{tikzcd}\]
 with the property that within $\mathrm M$,  $\pi_{i}^{*}\ell_{i}$  intersects the divisor nicely,  we have
 \[[\ell_1\star_{\rm X}\ell_2]= (\pi_3)_*(\pi_1,\pi_2)^![\ell_1\times \ell_2]\] 

%
%

\end{cor}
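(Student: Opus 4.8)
The plan is to combine the two preceding ingredients cleanly: Lemma~\ref{lmodification}, which guarantees that after a suitable logarithmic modification each $\pi_i^*\ell_i$ meets the divisor nicely and that pullbacks agree with inverse images under any further modification, and Lemma~\ref{logisotropic}, which identifies the set-theoretic output $\pi_3(\pi_1^*\ell_1\cap\pi_2^*\ell_2)$ as a finite union of isotropic subvarieties (plus lower-dimensional, non-complete junk) with the correct properness over $\rm A^-$. First I would apply Lemma~\ref{lmodification} to $\ell_1\subset\rm A^-\rm X$ and to $\ell_2\subset\rm X^-\rm B^- C$ to produce logarithmic modifications of $\rm A^-\rm X$ and $\rm X^-\rm B^- C$ on which the respective pullbacks are nice; then choose a common smooth logarithmic modification $\mathrm M$ of $\rm A^-\rm X\rm B^- C$ through which both $\pi_1$ and $\pi_2$ factor (resolving if necessary, e.g. via Hironaka as in Remark~\ref{logresolution}). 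On such an $\mathrm M$ both $\pi_i^*\ell_i$ meet the divisor nicely, so $(\pi_3)_*(\pi_1,\pi_2)^![\ell_1\times\ell_2]$ makes sense in the (log) Chow ring; this is our candidate class.

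Next I would establish \emph{existence} of the lagrangian representative. By Lemma~\ref{logisotropic}, the support of $(\pi_3)_*(\pi_1,\pi_2)^![\ell_1\times\ell_2]$ is a finite union of isotropic subvarieties $V_i$ of $\rm A^-\rm B^- C$, each proper over $\rm A^-$ (using that $\ell_2\in\Lag{\rm X\rm B,C}$ so it is proper over $\rm X$), together with extraneous non-complete components of dimension strictly below $\tfrac12\dim(\rm A\rm B\rm C)$. A degree count exactly as in the smooth case (Lemma on the star product of lagrangian cycles) shows the class lives in degree $\dim(\rm A\rm B\rm C)$, i.e. it is a half-dimensional class; the non-complete low-dimensional pieces therefore contribute nothing to it, and in the appropriate (log) homology the half-dimensional part of $H$ of the complete support is freely generated by the lagrangian subvarieties among the $V_i$. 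Hence the class is represented by a unique integral combination $\sum_i m_iV_i$ of lagrangian subvarieties, which has proper projection to $\rm A^-$, so it defines an element $\ell_1\star_{\rm X}\ell_2\in\Lag{\rm A\rm B,C}$.

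Then I would prove \emph{independence of the choice of modification}, which is where the real content lies. Suppose $\mathrm M$ and $\mathrm M'$ are two smooth logarithmic modifications, both factoring through modifications making the $\pi_i^*\ell_i$ nice. Pass to a common smooth logarithmic modification $\mathrm M''$ dominating both; by the second half of Lemma~\ref{lmodification}, on $\mathrm M''$ pullbacks of the already-nice cycles coincide with inverse images, so the fibre products defining $(\pi_i,\pi_j)^!$ are genuine transverse-type fibre products in the category of fine saturated log schemes. The compatibility of $(\ )^!$ and $(\ )_*$ with such fibre product diagrams — the logarithmic analogue of equations~(\ref{pc}), (\ref{pi}), (\ref{fp}), which is exactly the property the log Chow ring was constructed to satisfy (see \cite[Section 9]{dre}) — then forces the two candidate classes on $\mathrm M$ and $\mathrm M'$ to push forward to the same class in the log Chow ring of $\rm A^-\rm B^- C$. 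Concretely one inserts the identity-type fibre square for the blowdown $\mathrm M''\to\mathrm M$ (as in the discussion of logarithmic modifications in the text) and uses $\pi_*\pi^!=\mathrm{id}$ on the already-nice cycles. Finally, uniqueness of the lagrangian representative of a fixed half-dimensional class (again: the relevant $H$ is freely generated by lagrangian subvarieties) upgrades equality of classes to equality of the cycles $\sum_i m_iV_i$.

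\textbf{Main obstacle.} The crux is the independence-of-modification step: one must be careful that the equality $[\ell_1\star_{\rm X}\ell_2]=(\pi_3)_*(\pi_1,\pi_2)^![\ell_1\times\ell_2]$ is genuinely stable under passing to a further modification, which requires that the cycles involved already meet the divisor nicely so that Lemma~\ref{lmodification}'s ``pullback $=$ inverse image'' conclusion applies and the formal fibre-product compatibilities of the log Chow ring are legitimately invoked. The extraneous non-complete components produced by Lemma~\ref{logisotropic} are a mild nuisance but are handled purely by dimension, since they cannot contribute to a half-dimensional class; the bookkeeping of which $V_i$ are lagrangian versus merely isotropic is likewise controlled by the degree computation together with the freeness of the top-degree part of $H$ of the support.
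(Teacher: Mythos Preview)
Your proposal is correct and follows the same route the paper indicates: the corollary is stated simply as a consequence of Lemma~\ref{lmodification} and Lemma~\ref{logisotropic}, and you have spelled out exactly the two-step argument (existence of a nice modification plus the isotropic/dimension analysis of the image, then stability under further modification via common refinements and the fibre-product compatibilities of the log Chow ring) that those lemmas encode.

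One small imprecision worth fixing: you write that $\ell_2\in\Lag{\rm X\rm B,C}$ implies $\ell_2$ is ``proper over $\rm X$'', and then invoke the properness clause of Lemma~\ref{logisotropic} to get properness over $\rm A^-$. What you actually have is that $\ell_2$ is proper over $\rm X^-\rm B^-$, and what you need is that the resulting cycle is proper over $\rm A^-\rm B^-$ (so that it lies in $\Lag{\rm A\rm B,C}$). This is still the same argument---apply Lemma~\ref{logisotropic} with ``$\rm B$'' replaced by ``$\rm B^-\rm C$'' and then track properness over $\rm A^-\rm B^-$ exactly as in the proof of Lemma~\ref{isotropic}---but the phrasing should be adjusted so that the properness hypothesis and conclusion match.
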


\begin{example}Let $b\in\Lag{(X,D),(Bl_p X,D')}$ be the lagrangian correspondence from Example \ref{ntbc}. Then, $b^\dagger$ is a right inverse to $b$
\[b\star_{(Bl_p X,D')}b^\dagger=\Delta_{(X,D)}\]
but 
\[b^\dagger\star_{(X,D)}b=\Delta_{(Bl_p X,D')}+E\times E\  \in \Lag{(Bl_p X,D'),(Bl_p X,D')}\] where  $E\in \Lag{(Bl_p X,D')}$ the exceptional sphere. 

\end{example}

This logarithmic star product also has a more geometric interpretation in terms of exploded manifolds. We defer proving the associativity of this logarithmic  star product until we have this more geometric interpretation. Sometimes, however, the logarithmic star product can be interpreted concretely and simply by restricting to the interior of our log schemes.  The restriction of $\ell\in \Lag{\rm A, \rm X}$ need not be a lagrangian correspondence in $\Lag{\rm A^\circ,\rm X^\circ }$, because it may not have proper projection to $\rm A^\circ$. In the case that the restriction of $\ell_1$ is proper over $A^\circ$,  the composition of $\ell_1$ and $\ell_2$ is determined by the composition of the restrictions.

\begin{lemma} \label{restricted computation} Suppose that $\ell_1\in \Lag{\rm A, \rm X}$ has restriction 
$\ell_1^\circ\in \Lag{\rm A^\circ,\rm X^\circ }$. Then, for any lagrangian cycle $\ell_2$ in  $\Lag{\rm X, \rm B}$, we have
\[(\ell_1\star_{\rm X}\ell_2)^\circ= \ell_1^\circ\star_{X^\circ } \ell_2^\circ\]
\end{lemma}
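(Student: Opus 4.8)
The plan is to show that the star product and restriction-to-the-interior operations commute, by tracking the support of $\ell_1\star_{\rm X}\ell_2$ and using the fact that restriction to the interior is an open-subset operation that is compatible with pullback $\pi^*$, proper pushforward, and refined pullback $\iota^!$. First I would unwind the hypothesis: $\ell_1\in\Lag{\rm A,\rm X}$ having restriction $\ell_1^\circ\in\Lag{\rm A^\circ,\rm X^\circ}$ means precisely that the projection of $\ell_1$ to ${\rm A}^-$ is proper \emph{and} the projection of $\ell_1^\circ$ to $({\rm A}^\circ)^-$ is still proper — i.e.\ no component of $\ell_1$ that meets ${\rm X}^\circ$ escapes to infinity over ${\rm A}^\circ$. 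This properness is what guarantees that $\ell_1^\circ\star_{{\rm X}^\circ}\ell_2^\circ$ is defined as an ordinary (non-logarithmic) star product in the sense of Definition \ref{new star}, and that its support is proper over $({\rm A}\times{\rm B})^\circ$.

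Next I would choose, via Lemma \ref{lmodification}, a smooth logarithmic modification $\pi:\mathrm M\longrightarrow {\rm A}^-\times{\rm X}\times{\rm B}$ (or the larger space ${\rm A}^-\times{\rm X}\times{\rm B}^-\times{\rm C}$ from the star product corollary, with ${\rm B}$ and ${\rm C}$ trivial here) in which both $\pi_1^*\ell_1$ and $\pi_2^*\ell_2$ intersect the divisor nicely, so that by the corollary on the logarithmic star product, $[\ell_1\star_{\rm X}\ell_2]=(\pi_3)_*(\pi_1,\pi_2)^![\ell_1\times\ell_2]$. Since $\pi$ is a logarithmic modification it is a bijection over the interior, so restricting this identity to $({\rm A}\times{\rm B})^\circ$ simply restricts every cycle involved to the corresponding interior opens; here I would invoke the compatibility of Borel–Moore homology operations (product, $\iota^!$, proper pushforward) with restriction to a Zariski-open subset — these are the elementary excision/naturality statements underlying the bulleted list in Section \ref{local intersection section}. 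The key point is that $(\pi_1^*\ell_1)^\circ=\pi_1^{-1}(\ell_1^\circ)$ and similarly for $\ell_2$, because over the interior $\pi^*$ and $\pi^{-1}$ coincide and the modification is an isomorphism there; thus the right-hand side restricts exactly to the ordinary star product formula $(\pi_3^\circ)_*(\pi_1^\circ,\pi_2^\circ)^![\ell_1^\circ\times\ell_2^\circ]$ computed inside the smooth space $({\rm A}\times{\rm X}\times{\rm B})^\circ$, which by Definition \ref{new star} is $[\ell_1^\circ\star_{{\rm X}^\circ}\ell_2^\circ]$.

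The remaining issue is passing from an equality of local homology classes to an equality of lagrangian cycles: we must check that the ``extraneous'' (non-complete, lower-dimensional) components appearing in Lemma \ref{logisotropic} contribute nothing to the interior. This is immediate, since such components have image disjoint from the interior of ${\rm A}^-\times{\rm B}$ by construction, hence restrict to the empty set in $({\rm A}\times{\rm B})^\circ$; and the complete components of $\ell_1\star_{\rm X}\ell_2$ restrict bijectively to lagrangian subvarieties of $({\rm A}\times{\rm B})^\circ$ which, by the moving-lemma/freeness argument for top-dimensional Borel–Moore classes, are uniquely determined by the class $[\ell_1^\circ\star_{{\rm X}^\circ}\ell_2^\circ]$. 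Hence the unique lagrangian cycle representing this class is $\ell_1^\circ\star_{{\rm X}^\circ}\ell_2^\circ$, giving $(\ell_1\star_{\rm X}\ell_2)^\circ=\ell_1^\circ\star_{{\rm X}^\circ}\ell_2^\circ$.

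\textbf{Main obstacle.} The genuine content, rather than bookkeeping, is verifying that restriction to the interior really does commute with the refined pullback $\iota^!$ and the proper pushforward $\pi_3{}_*$ \emph{at the chain/local-homology level}, and — more subtly — that the properness needed to define the non-logarithmic star product $\ell_1^\circ\star_{{\rm X}^\circ}\ell_2^\circ$ is exactly the hypothesis $\ell_1^\circ\in\Lag{{\rm A}^\circ,{\rm X}^\circ}$, with no further properness of $\ell_2$ required (the properness of the fibre-product support over $({\rm A}\times{\rm B})^\circ$ follows from that of $\ell_1^\circ$ over $({\rm A}^\circ)^-$ alone, exactly as in Lemma \ref{isotropic}). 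I expect everything else to reduce to naturality of excision and the already-established logarithmic star product formula.
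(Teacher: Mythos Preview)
Your approach is workable but takes a heavier route than the paper, and in doing so you gloss over the one genuinely nontrivial point.

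The paper's proof is a direct compactness argument and never invokes the logarithmic star-product machinery. It observes that since $\ell_1^\circ$ is proper over ${\rm A}^\circ$, for any compact $K_1\subset{\rm A}^\circ$ the set $\ell_1^\circ\cap(K_1\times{\rm X}^\circ)$ is already compact, hence closed in $K_1\times X$, hence equals $\ell_1\cap(K_1\times X)$. In other words, \emph{over ${\rm A}^\circ$ the cycle $\ell_1$ never touches the divisor $D_{\rm X}$}. From this it is immediate that over $K_1\times{\rm B}^\circ$ the geometric intersection $\ell_1\cap\ell_2$ (taken in any ambient space) agrees with $\ell_1^\circ\cap\ell_2^\circ$, and since the refined/virtual intersection product is local to the geometric intersection, the two star products agree there. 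Letting $K_1$ exhaust ${\rm A}^\circ$ finishes the argument. No modification $\mathrm M$, no Lemma~\ref{lmodification}, no excision bookkeeping.

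Your route through the log star-product formula $[\ell_1\star_{\rm X}\ell_2]=(\pi_3)_*(\pi_1,\pi_2)^![\ell_1\times\ell_2]$ can be made to work, but the step ``restricting this identity to $({\rm A}\times{\rm B})^\circ$ simply restricts every cycle involved to the corresponding interior opens'' is where the content hides. The preimage $\pi_3^{-1}(({\rm A}\times{\rm B})^\circ)$ in $\mathrm M$ is a modification of ${\rm A}^\circ\times{\rm X}\times{\rm B}^\circ$, \emph{not} of ${\rm A}^\circ\times{\rm X}^\circ\times{\rm B}^\circ=\mathrm M^\circ$; the divisor $D_{\rm X}$ is still present. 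So restricting the pushforward to the target interior does not, by general functoriality alone, reduce the computation to $\mathrm M^\circ$. What you need is that the \emph{support} of $\pi_1^*\ell_1\cap\pi_2^*\ell_2$ over $({\rm A}\times{\rm B})^\circ$ already lies in $\mathrm M^\circ$---and that is exactly the observation above, that $\ell_1$ over ${\rm A}^\circ$ stays inside ${\rm X}^\circ$. You gesture at the right properness hypothesis in your ``Main obstacle'' paragraph, but frame it only as ``properness of the fibre-product support over $({\rm A}\times{\rm B})^\circ$'', which is necessary for the pushforward to be defined but is not by itself the statement that the support avoids $D_{\rm X}$. Once you make that point explicit, your argument goes through; the paper simply extracts it and uses it directly, bypassing the modification entirely.
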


\begin{proof} By assumption, $\ell_1^\circ $ has proper projection to $\rm A^\circ$.  In particular, this implies that $\ell_1^\circ$ restricted to the inverse image of a compact subset $K_1$ of $\rm A^\circ$ is compact, and hence agrees with $\ell_1$ restricted to the inverse image of $K_1$. It follows that over $K_1\times B^\circ $ the geometric intersection of $\ell_1$ with $\ell_2$ coincides with the geometric intersection of $\ell_1^\circ$ and $\ell_2^\circ$. As the virtual intersection is local to each component of the geometric intersection,  it follows that the virtual intersection also coincides over $K_1\times B^\circ $, so $\ell_1\star_{\rm X}\ell_2$ agrees with $\ell_1^\circ \star_{\rm X^\circ}\ell_2^\circ $ over $K_1\times B^\circ$. It follows that $\ell_1\star_{\rm X}\ell_2$ restricted to $\rm A^\circ\times B^\circ$ agrees with $\ell_1^\circ\star_{\rm X^\circ }\ell_2^\circ$, and therefore $(\ell_1\star_{\rm X}\ell_2)^\circ= \ell_1^\circ\star_{X^\circ } \ell_2^\circ$, because $\ell_1\star_{\rm X}\ell_2$ is a weighted sum of complete subvarieties, so has no irreducible components in the complement of $A^\circ   B^\circ$.

\end{proof}

\begin{example} Lemma \ref{restricted computation} implies that the computations from examples \ref{HSDc} and \ref{HSD2c} are also valid for $\rm Y$ a log Calabi--Yau surface. 
\end{example}

\section{Exploded manifold perspective}\label{exploded section}

To understand lagrangian cycles in log schemes as subsets, it is helpful to use exploded manifolds. 
See \cite{scgp} for a short introduction to exploded manifolds, and \cite{elc} for an explanation of the relationship between log schemes and exploded manifolds. In terms of log schemes, an exploded point  $\mathrm p^+$ is a point with the log structure given by the monoid $\mathbb C^*\e{[0,\infty)}:=(\mathbb C^*,\times)\times ([0,\infty),+)$,  and the homomorphism
\[ \mathbb C^*\e{[0,\infty)}\longrightarrow \mathbb C\]
\[  c\e a\mapsto \totl{c\e a}:= \begin{cases} c\text{ if }a=0
\\ 0 \text{ if }a>0\end{cases}  \]
called the smooth part homomorphism. In fact, the monoid structure on $\mathbb C^*\e{[0,\infty)}$ extends to a semiring structure on $\mathbb C\e{[0,\infty)}$ such that the above is a homomorphism of semirings. The addition in this semiring is defined by
\[c_1\e{a_1}+c_2\e{a_2}=\begin{cases}c_1\e{a_1}\ \ \ \ \ \ \ \ \text{ if } a_1<a_2
\\ (c_1+c_2)\e{a_1}\text{ if }a_1=a_2
\\ c_2\e{a_2}\ \ \ \ \ \ \ \  \text{ if }a_1>a_2\end{cases}\]
An exploded point in a log scheme $\mathrm X$ is a map $\mathrm p^+\longrightarrow \mathrm X$ of log schemes.

 For example, the set of exploded points in a standard log point $\mathrm p=(\text{Spec} \mathbb C,\mathbb C^*\times\mathbb N)$  is $\mathbb C^*\e{(0,\infty)}$. The $\mathbb C^*$--action from Remark \ref{stratumlogstructure} is the obvious free $\mathbb C^*$ action on this set of points, and the log tangent space corresponds to the  tangent space of this disjoint union of infinitely many copies of  $\mathbb C^*$. 
 
 When we explode $\mathrm X$, we can interpret a section of $M_{\mathrm X}$ as a $\mathbb C^*\e{[0,\infty)}$--valued function on the set of exploded points in $\mathrm X$. The explosion\footnote{When $\rm X$ is not an explodable log scheme, this version of the explosion of $\mathrm X$ may forget some structure of $\rm X$ --- for example, it forgets any non-reduced structure in the underlying scheme, and any non-saturated structure in the sheaf of monoids $M$.}, $\expl \rm X$ of a log scheme is the set 
 \[\expl \mathrm X:=\hom(\mathrm p^+,\rm X)\] of exploded points in $\mathrm X$, with the topology induced from the underlying scheme $X$ using the natural map $\expl \mathrm X\longrightarrow X:=\totl{\expl X}$, and  a sheaf of $\mathbb C^*\e{\mathbb R}$--valued functions, obtained from $M_{\mathrm X}$ by allowing inverses and multiplication by elements of $\mathbb C^*\e{\mathbb R}$. The explosion $\expl \rm X$ makes sense for any log scheme, but is only an exploded manifold when $\rm X$ is locally isomorphic to a stratum of something log smooth\footnote{A log scheme is log smooth if it is locally isomorphic to an open subset of a toric scheme with defining sheaf of   monoids the sheaf of holomorphic functions that are non-vanishing off the toric boundary divisor. So,  it is locally isomorphic to an open subset of $\Spec (Q\longrightarrow\mathbb C[Q])$ for $Q$ a toric monoid; see \cite[Definition 1.1.8]{Ogus_2018}. }; following \cite[Definition 4.1]{elc}, we call such log schemes explodable. 
 
 \begin{defn}[explodable log scheme] A log scheme is explodable if it is locally isomorphic to an open subset of $\Spec(Q\longrightarrow \mathbb C[Q/Q'])$ for $Q$ a toric monoid, and $Q'\subset Q$ an ideal.
 \end{defn}
  
It is easy to check that $\expl$ is a functor from the category of log schemes over $\mathbb C$ to the category of abstract exploded spaces \cite[Definition 3.1]{iec}, which is a certain category of topological spaces with sheaves of $\mathbb C^*\e{\mathbb R}$--valued functions. In particular, the explosion of a map  $f:\rm X\longrightarrow \rm Y$ of log schemes induces a map $\expl f:\expl \rm X\longrightarrow \expl \rm Y$.

\begin{defn}[logarithmically proper]  A logarithmically proper subset $\ex V\subset \expl \rm X$ is the set of points in the image of the explosion of some proper map $f:\mathrm Y\longrightarrow\mathrm X$ where $\rm Y$ is explodable. It is isotropic if $f^*\omega=0$. Given a map $\pi:\rm X\longrightarrow \rm Z$,  say that  $\ex V$ is proper over $\expl \rm Z$ if $\pi\circ f$ is proper.
\end{defn}

\begin{defn}[lagrangian subvariety, lagrangican cycles] A lagrangian subvariety is an isotropic, logarithmically proper subset $\ex V\subset \expl \rm X$ that is the image of the explosion of a generically injective map from an irreducible smooth \footnote{Requiring that $\ex V$ is the image of the explosion of a proper map $f$ from a smooth log scheme ensures that $\expl f$ is complete, which is analogous to proper in the category of exploded manifods. See \cite[Definition 3.15]{iec}. It also ensures that $\totl{V}\subset  X=\totl{\expl X}$ is complete in the sense of Definition \ref{complete}.} log scheme $\rm Y$, with $\dim Y=\frac 12\dim \rm X$. The group of Lagrangian cycles $\Lag{\expl X,\expl Y}$ is the $\mathbb Z$--module generated by lagrangian subvarieties of $\expl X^-  \expl Y$, whose projection to $\expl X^-$ is proper. 
\end{defn}

Using resolution of singularities, the image of any proper map from a fs log scheme\footnote{A log scheme $\rm X$ is fs if the ghost sheaf $M/O^*$ is a sheaf of finitely generated, saturated monoids, and $\rm X$ is coherent in the sense that it admits standard logarithmic charts. See \cite[Section 2]{kk} or \cite[Chapter 2]{Ogus_2018}. } is also a logarithmically proper subset. This is useful because, in the category of fs log schemes, all fibre products exist; see \cite[Section 2.4]{Ogus_2018}. So, we can take pullbacks and finite intersections of logarithmically proper sets.

\begin{prop}\label{explodable resolution} Suppose that $\mathrm X$ is a fs log scheme. Then, there exists a proper map $\pi:\mathrm X_{res}\longrightarrow \mathrm X$ from an explodable log scheme $\mathrm X_{res}$ such that $\expl \pi$  is surjective.
\end{prop}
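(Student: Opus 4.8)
## Proof proposal for Proposition \ref{explodable resolution}

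The plan is to reduce the problem to a statement about combinatorial geometry of the Kato fan / cone complex of $\mathrm X$, and then invoke a resolution result for toric monoids. Recall that an fs log scheme fails to be explodable precisely because its local charts involve toric monoids $Q$ whose associated cones are not simplicial (equivalently, $Q$ is not a free monoid up to units). An explodable log scheme, by Definition \ref{explodable log scheme}, is locally modelled on $\Spec(Q \to \mathbb C[Q/Q'])$ with $Q$ toric; but since we only need the \emph{explosion} of $\pi$ to be surjective, and $\expl$ is insensitive to non-reduced and non-saturated structure (as noted in the footnote after Definition \ref{explodable log scheme}), the real content is that the cone complex of $\mathrm X$ admits a subdivision into simplicial cones that can be realised by a proper, surjective-on-explosions map of log schemes. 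So the first step is to pass from $\mathrm X$ to its associated cone complex (Artin fan / Kato fan), using that $\mathrm X$ is coherent and fs so that this is well-defined and locally a finite union of rational polyhedral cones $\sigma_Q = \Hom(Q, \mathbb R_{\geq 0})$.

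Second, I would take a simplicial subdivision of this cone complex. This is the classical statement that any rational polyhedral fan admits a projective simplicial refinement (one can even demand regularity/smoothness of each cone if desired, but simplicial suffices for explodability). Combinatorially this is just repeated star subdivision along rays. The key point to verify is functoriality/globality: the subdivisions chosen on overlapping charts must be compatible, which follows because one can perform the subdivision canonically on the cone complex itself (it is a subdivision of the \emph{target} of the characteristic morphism to the Artin fan), and then pull back. A subdivision of the cone complex corresponds, on the level of log schemes, to a \emph{logarithmic modification} $\pi: \mathrm X_{res} \to \mathrm X$ — this is exactly the log-geometric incarnation of toric blowups, and such $\pi$ is proper by construction (properness of toric blowups, glued over the strata). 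After the subdivision, each local chart of $\mathrm X_{res}$ is modelled on a simplicial (hence free, up to saturation and torsion, which $\expl$ ignores) toric monoid, so $\mathrm X_{res}$ is explodable.

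Third, I would check that $\expl \pi$ is surjective. Since $\pi$ is a logarithmic modification, it is a bijection between $\mathrm X_{res}^\circ$ and $\mathrm X^\circ$ (as in the discussion preceding Definition \ref{complete}); on the exploded level, surjectivity over a point $q: \mathrm p^+ \to \mathrm X$ amounts to: given a point of the cone $\sigma_Q$ (the monoid homomorphism $Q \to [0,\infty)$ extracted from $q$, i.e. its tropical part) together with its smooth part in $X$, we must lift it through the subdivision. But every point of $\sigma_Q$ lies in some cone of the refinement, so this lift exists; one then lifts the smooth part using the fact that toric blowups are surjective on the underlying schemes. Assembling these lifts over the (finitely many) charts gives surjectivity of $\expl\pi$.

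The main obstacle I anticipate is the \textbf{globalisation} step: ensuring the locally-chosen simplicial subdivisions glue to an honest map of log schemes $\mathrm X_{res} \to \mathrm X$, rather than just an étale-local construction. The clean way around this is to not subdivide $\mathrm X$ directly at all, but to subdivide the cone complex (equivalently, work relative to the Artin fan $\mathcal A_{\mathrm X}$, à la Abramovich--Chen--Gillam--Marcus--Ulirsch--Wise): a subdivision of $\mathcal A_{\mathrm X}$ pulls back to a canonical logarithmic modification of $\mathrm X$, and canonicity handles the gluing automatically. A secondary, more technical point is checking explodability of the resulting charts carefully against Definition \ref{explodable log scheme}: one must confirm that a simplicial toric monoid $Q$, possibly with a monoid ideal $Q'$ giving the non-reduced/stratum structure, indeed falls under the form $\Spec(Q \to \mathbb C[Q/Q'])$ — but after a simplicial subdivision $Q$ differs from a free monoid only by torsion in the group it generates and by saturation, both of which are invisible to $\expl$, so this causes no real difficulty.
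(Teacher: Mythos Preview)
Your proposal has a genuine gap: it addresses only the combinatorial obstruction to explodability and completely ignores singularities of the underlying scheme. Your opening claim that ``an fs log scheme fails to be explodable precisely because its local charts involve toric monoids $Q$ whose associated cones are not simplicial'' is false. An explodable log scheme must be locally isomorphic to an open subset of $\Spec(Q\to\mathbb C[Q/Q'])$, which constrains the \emph{underlying scheme} to be locally a union of toric strata. An arbitrary fs log scheme has no such constraint. For a concrete counterexample to your argument, take any singular variety $X$ (say a cuspidal curve) with the trivial log structure: the cone complex is a point, every subdivision is trivial, and your logarithmic modification is the identity --- yet $X$ is not explodable. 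More generally, a logarithmic modification only blows up along strata of the log structure; it cannot resolve non-toric singularities of the underlying scheme.

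The paper's proof handles exactly this issue. After passing to the reduced scheme and normalising the closures of log strata, it separates the log structure on the resulting $\mathrm X_2$ into a ``horizontal'' part $M_{\mathrm X_0}$ (those monoid elements mapping to generically nonzero functions) and a ``vertical'' part encoding a bundle of explodable log points. It then applies genuine logarithmic resolution of singularities (citing \cite{ATW2020}) to $\mathrm X_0$ to obtain something log smooth, and finally restores the vertical part by fibre product. The resolution-of-singularities step is essential and has no analogue in your proposal.
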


\begin{proof} 
Let $X_1$ be  the reduced part of the underlying scheme $X$ of $\rm X$, and let $\rm X_1$ be $X_1$ with the pulled back log structure. All maps $p^+\longrightarrow \rm X$ factor through $\rm X_1$.   
 The fs log structure on $\mathrm X_1$ induces a stratification of $X_1$, with the ghost sheaf\footnote{The ghost sheaf $\bar M_{\rm X_1}$ is the quotient of $M_{\rm X_1}$ by the invertible functions  $\mathcal O^*(X_1)\subset M_{\rm X_1}$. } of the log structure constant on each stratum. Let $ X_2$ be the normalisation of  the disjoint union of the closure of each stratum, and let $\rm X_2$ be the corresponding log scheme with the pulled back log structure. Now we have a proper map $\pi_2:\rm X_2\longrightarrow \rm X$ with the property that each point in $\expl X$ is in the image of $\expl \pi_2$ restricted to the interior strata of $\rm X_2$.

  Let $\mathrm X_0$ be the log scheme with underlying scheme $X_2$, but sheaf of monoids $M_{\mathrm X_0}\subset M_{\mathrm X_2}$ comprised of elements of $M_{\mathrm X_2}$ sent to generically nonzero functions on $X_2$. So, $\mathrm X_0$ is normal, reduced, and generically  smooth. Applying logarithmic resolution of singularities \cite[Theorem 1.2.4]{ATW2020}, we obtain  a proper  map
\[\mathrm X'_{res}\longrightarrow \mathrm X_0\]
where $\mathrm X'_{res}$ is log smooth. As $\mathrm X_2$ is normal with fs log structure,  the sheaf $M_{\mathrm X_2}/M_{\mathrm X_0}$ is a locally constant sheaf of toric monoids. As such, the map $\rm X_2\longrightarrow \rm X_0$ can be thought of as a fibration with fibres explodable log points. In particular, taking a fibre product, we obtain our desired explodable log scheme $\mathrm X_{res}$.

\[\begin{tikzcd}\mathrm X_{res}\dar \rar & \mathrm X_2\dar
\\ \mathrm X'_{res}\rar & \mathrm X_0\end{tikzcd}\]

As $\mathrm X_{res}\longrightarrow \mathrm X_2$ is proper, we get that $\mathrm X_{res}\longrightarrow \mathrm X$ is proper. As $\mathrm X_{res}\longrightarrow \mathrm X_2$ is surjective on underlying schemes,  and has the pullback log structure when restricted to the inverse image of the interior strata of $X_2$, we get that the explosion of this map is surjective onto the explosion of the interior strata of $\mathrm X_2$.  As each point in $\expl X$ is in the image of $\expl \pi_2$ restricted to the interior strata of $\rm X_2$, we therefore
obtain that $\mathrm X_{res}\longrightarrow \mathrm X$ is proper, and its explosion is surjective.

\end{proof}

\begin{lemma}\label{fp comparison} The explosion of the fibre product of fs log schemes is the fibre product of the explosions
\[\expl\left(\rm A\times_B C\right)=\rm \expl  A\times_{\expl B}\expl C\ ,\]
where on the righthand side, we take the fibre product in the category of sets. 
\end{lemma}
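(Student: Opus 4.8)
## Proof proposal

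The plan is to reduce the claim to a local, monoid-theoretic computation using the universal property of fibre products in both categories. First I would note that both sides receive natural maps to $\expl \rm A$ and $\expl \rm C$ compatible with the maps to $\expl \rm B$: on the left, by applying the functor $\expl$ to the projections $\rm A\times_B C\to \rm A$ and $\rm A\times_B C\to \rm C$; on the right, by the definition of set-theoretic fibre product. Since $\expl$ is a functor, there is a canonical comparison map $\Phi:\expl(\rm A\times_B C)\longrightarrow \expl \rm A\times_{\expl B}\expl C$. I would then check that $\Phi$ is a bijection of sets (and a homeomorphism for the relevant topologies, and compatible with the structure sheaves of $\mathbb C^*\e{\mathbb R}$-valued functions, but the set-level statement is the crux).

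The key step is the following adjunction-style identity: for any log scheme $\rm T$ and any object $\rm W$, the set of exploded points of $\rm W$ is $\hom(\mathrm p^+,\rm W)$ by definition, and I claim that for an fs fibre product one has a natural bijection
\[\hom(\mathrm p^+,\rm A\times_B C)\ \cong\ \hom(\mathrm p^+,\rm A)\times_{\hom(\mathrm p^+,\rm B)}\hom(\mathrm p^+,\rm C)\ .\]
The left-to-right map is restriction along the two projections; this is automatic. The content is surjectivity and injectivity of this map, i.e. that a compatible pair of maps $\mathrm p^+\to\rm A$, $\mathrm p^+\to\rm C$ agreeing over $\rm B$ glues to a \emph{unique} map $\mathrm p^+\to \rm A\times_B C$. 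Uniqueness is the universal property of the fibre product applied with source $\mathrm p^+$. Existence is where one must work: the fibre product in the \emph{fs} category is the fine-saturation of the naive fibre product of log schemes, so a priori a map into the naive fibre product need not factor through the saturation. The point is that $\mathrm p^+$ is itself saturated (indeed $\mathbb C^*\e{[0,\infty)}$ is a saturated, integral monoid), so any map from $\mathrm p^+$ into the naive fibre product automatically factors through its fs-ification — this is exactly the universal property defining fs-fibre product, since $\mathrm p^+$ is an fs log scheme. Concretely, on underlying schemes $\Spec\mathbb C$ the statement is the trivial fact that $\Spec\mathbb C$-points of a fibre product of affine schemes are the fibre product of $\Spec\mathbb C$-points; on ghost monoids, one chases that a homomorphism from the amalgamated-sum-then-saturate monoid of $\rm B$-side data into $[0,\infty)$ is the same as a compatible pair of homomorphisms out of the two pieces into $[0,\infty)$, using that $[0,\infty)$ is torsion-free and divisible so it sees no difference between a monoid and its saturation.

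Having established the bijection of underlying sets, I would then upgrade it: the topology on $\expl\rm X$ is induced from the underlying scheme $X=\totl{\expl X}$, and $\totl{\expl(\rm A\times_B C)}$ is the underlying scheme of $\rm A\times_B C$, which maps to the fibre product of underlying schemes of $\rm A,\rm B,\rm C$; compatibility of the topologies then follows because $\Phi$ is continuous with continuous inverse on these induced topologies. The sheaves of $\mathbb C^*\e{\mathbb R}$-valued functions agree because on each side they are generated (allowing inverses and $\mathbb C^*\e{\mathbb R}$-multiples) by the pullbacks of $M_{\rm A}$ and $M_{\rm C}$, and the monoid $M_{\rm A\times_B C}$ is by construction generated by the images of $M_{\rm A}$ and $M_{\rm C}$ over $M_{\rm B}$. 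I expect the main obstacle to be the careful handling of fine-saturation: one must argue that passing from the naive to the fs fibre product does not change the set of exploded points, and the cleanest way to see this is precisely that $\mathrm p^+$ is fs, so that $\hom(\mathrm p^+,-)$ does not distinguish a log scheme from its fs-ification. Everything else is formal from functoriality of $\expl$ and the universal property of fibre products.
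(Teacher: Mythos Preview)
Your approach is essentially the same as the paper's: both arguments rest on the observation that $\mathrm p^+$ is a saturated log scheme, so $\hom(\mathrm p^+,-)$ commutes with the fs fibre product. The paper makes this concrete by reducing to log points and explicitly walking through the three-step construction $\mathrm X \leadsto \mathrm X^{int} \leadsto (\mathrm X^{int})^{sat}$ of the fs fibre product, checking at each stage that the set of $\mathrm p^+$-points is unchanged; you instead invoke the universal property directly. Both are fine.

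One genuine slip to fix: you write ``since $\mathrm p^+$ is an fs log scheme.'' It is not. The ghost monoid is $[0,\infty)$, which is integral and saturated but not finitely generated, so $\mathrm p^+$ is not fine. What you actually need --- and what the paper states explicitly --- is that the fibre product in the category of fs log schemes satisfies the universal property for fibre products in the larger category of \emph{saturated} log schemes. That is enough for your argument, but you should state it this way rather than claiming $\mathrm p^+$ is fs.

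A smaller point: your aside that ``on underlying schemes $\Spec\mathbb C$ the statement is the trivial fact that $\Spec\mathbb C$-points of a fibre product of affine schemes are the fibre product of $\Spec\mathbb C$-points'' is misleading. The underlying scheme of the fs fibre product of log points is \emph{not} in general the fibre product of the underlying schemes: the saturation step can split a single point into several (the paper notes there are $n$ such points, one for each extension $\alpha_i:P^{sat}\to\mathbb C$). Your universal-property argument handles this correctly without needing that claim, so you can simply drop it. Finally, the lemma only asserts a bijection of sets, so your paragraph about upgrading to topologies and structure sheaves is unnecessary.
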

\begin{proof} 
This lemma follows from the observation that the fibre product in the category of fs log schemes satisfies the usual universal property for fibre products in the category of saturated log schemes. As $\mathbb C^*\e{[0,\infty)}$ is saturated, it follows that a point in $\expl \rm A\times_B C$ is equivalent to a pair of points in $\expl \rm A$ and $\expl \rm{B}$ send to the same point in $\expl \rm C$.   Further details are given below.

 Because any map of $p^+$ into a fs log scheme factors through a map of a fs log point, it is enough to restrict to the case that $\rm A$, $\rm B$ and $\rm C$ are log points. The construction of $\rm A\times_B C$, explained in \cite[section 2.4]{Ogus_2018}, proceeds in three steps. First, we take the fibre product in the category of all log schemes. In this case, the underlying scheme is the fibre product of the underlying schemes (and hence a point), and  the monoid defining the log structure on the fibre product is determined by  a pushout diagram of monoids
\[\begin{tikzcd} P & M_{\mathrm A}\lar
\\ M_{\rm C}\uar & M_{\rm C}\lar\uar \uar 
\end{tikzcd}\]
The homomorphisms defining the log structure on $\rm A, B$ and $\rm C$ induce a homomorphism $\alpha:P\longrightarrow \mathbb C$ defining a log structure on this first fibre product, $\rm X$.  The universal property of fibre products gives that 
\[\expl \rm X=\expl  A\times_{\expl B}\expl C\ ,\]
Next, to obtain the fibre product in the category of integral log schemes, we take $\mathrm X^{int}$; see \cite[Proposition 2.4.5]{Ogus_2018}. We obtain the monoid $P^{int}$ as the image of $P$ under the homomorphism $P\longrightarrow P^{gp}$. If the homomorphism $P\longrightarrow \mathbb C$ factors through $P^{int}$, $\mathrm X^{sat}$ is the log scheme corresponding to this homomorphism $P^{int}\longrightarrow \mathbb C$, and otherwise $\mathrm X^{int}$ is empty. This $\mathrm X^{int}$ clearly has the property that any morphism from an integral log scheme to $\rm X$ factors uniquely through $\mathrm X^{int}$. As  $p^+$ is an integral log scheme, we have
\[\expl \mathrm X^{int}=\expl X \ .\]
Finally, we construct $\rm A\times_B C$ as $(\mathrm X^{int})^{sat}$, following \cite[Proposition 2.4.5]{Ogus_2018}.   Let $P^{sat}$ be the saturation of $P^{int}\subset P^{gp}$, and let $n$ be the number of homomorphisms $\alpha_i:P^{sat}\longrightarrow (\mathbb C,\times)$ such that the following diagram commutes.
\[\begin{tikzcd} P^{sat}\rar{\alpha_i} & (\mathbb C,\times)
\\ P\uar\ar{ur}{\alpha}\end{tikzcd}\]
The fibre product $(\mathrm X^{int})^{sat}$ consists of these $n$ points with the log structure $\alpha_i$. As $\mathbb C^*\e{[0,\infty)}$ is saturated, any homomorphism $P\longrightarrow \mathbb C^*\e{[0,\infty)}$ factors uniquely through $P^{sat}$. So, given any map $p^+\longrightarrow \rm X$, there exists a unique commutative diagram
\[\begin{tikzcd} \mathbb C^*\e{[0,\infty)}\ar{dr}{\totl{\cdot}}
\\ \uar P^{sat}\rar{h} & (\mathbb C,\times)
\\ \ar[bend left]{uu}P\uar\ar{ur}{\alpha}\end{tikzcd}\]
so, this homomorphism factors uniquely through $(\mathrm  X^{int})^{sat}$, in particular the component that is  the log point with log structure $\alpha_i=h$. We conclude that 
\[\expl X=\expl (X^{int})^{sat}\]
so
\[\expl \rm A\times_B C=\expl  A\times_{\expl B}\expl C\ .\]
\end{proof}

\begin{remark} When a fibre product of explodable log schemes is explodable, its explosion is the fibre product in the category of exploded manifolds. This is because, when this fibre product exists,  a map to $\rm \expl A \times_{\expl B}  \expl C$ is equivalent to a map to $\rm \expl A  \expl B$ whose image is within the set of points send to the same point in $\expl \rm C$; see \cite[Section 9]{iec}.
\end{remark}

\begin{cor}\label{inverse image}Given a map of fs log schemes $f:\rm X\longrightarrow Y$, the inverse image of a logarithmically proper subset of $\expl Y$ under $\expl f$ is a logarithmically proper subset of $\expl X$.
\end{cor}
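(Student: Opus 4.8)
The plan is to realize the logarithmically proper subset and then pull it back along $\expl f$ using the fact that fibre products exist in the category of fs log schemes (Lemma \ref{fp comparison}). First I would unwind the definition: a logarithmically proper subset $\ex V\subset \expl \rm Y$ is, by definition, the set of points in the image of $\expl g$ for some proper map $g:\mathrm Z\longrightarrow \mathrm Y$ with $\mathrm Z$ explodable. The goal is to produce a proper map from an explodable log scheme to $\mathrm X$ whose explosion has image exactly $(\expl f)^{-1}(\ex V)$.

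The natural candidate is the fibre product $\mathrm Z\times_{\mathrm Y}\mathrm X$, taken in the category of fs log schemes (which exists by \cite[Section 2.4]{Ogus_2018}). Let $g':\mathrm Z\times_{\mathrm Y}\mathrm X\longrightarrow \mathrm X$ be the projection; it is proper because $g$ is proper and properness is stable under base change. By Lemma \ref{fp comparison}, $\expl(\mathrm Z\times_{\mathrm Y}\mathrm X)=\expl \mathrm Z\times_{\expl \mathrm Y}\expl \mathrm X$ as sets, so the image of $\expl g'$ is precisely the set of points $x\in\expl \mathrm X$ such that $\expl f(x)$ lies in the image of $\expl g$, i.e.\ exactly $(\expl f)^{-1}(\ex V)$. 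The one remaining point is that $\mathrm Z\times_{\mathrm Y}\mathrm X$ need not itself be explodable; to fix this, apply Proposition \ref{explodable resolution} to obtain a proper surjection $\pi:(\mathrm Z\times_{\mathrm Y}\mathrm X)_{res}\longrightarrow \mathrm Z\times_{\mathrm Y}\mathrm X$ from an explodable log scheme with $\expl \pi$ surjective. Then $g'\circ\pi$ is a proper map from an explodable log scheme to $\mathrm X$, and since $\expl \pi$ is surjective, the image of $\expl(g'\circ\pi)$ equals the image of $\expl g'$, which is $(\expl f)^{-1}(\ex V)$. This exhibits $(\expl f)^{-1}(\ex V)$ as a logarithmically proper subset of $\expl \mathrm X$.

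The only step requiring any care — and thus the main obstacle — is confirming that Lemma \ref{fp comparison} applies and gives the set-theoretic identification we need: one must check that $\mathrm X$, $\mathrm Y$, $\mathrm Z$ (hence $\mathrm Z\times_{\mathrm Y}\mathrm X$) are fs, which is part of the hypotheses, and that "fibre product in the category of fs log schemes" is indeed what Lemma \ref{fp comparison} computes the explosion of. Everything else (stability of properness under base change and composition, surjectivity of $\expl\pi$ transporting the image) is routine. No isotropy or dimension conditions enter, since a logarithmically proper subset — unlike a lagrangian subvariety — carries no such constraints, so there is nothing further to verify.
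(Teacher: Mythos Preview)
Your proposal is correct and follows essentially the same route as the paper's own proof: realize the logarithmically proper subset as the image of $\expl g$ for a proper $g:\mathrm Z\to\mathrm Y$ from an explodable domain, form the fs fibre product $\mathrm Z\times_{\mathrm Y}\mathrm X$, use Lemma \ref{fp comparison} to identify the image of the exploded projection with $(\expl f)^{-1}(\ex V)$, and then apply Proposition \ref{explodable resolution} to replace the fibre product by an explodable log scheme. Your write-up is in fact slightly more careful than the paper's (you explicitly note properness is stable under base change and that $\expl\pi$ being surjective preserves the image), but the strategy is identical.
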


\begin{proof} Any logarithmically proper subset $V\subset \expl \rm Y$ is the image the explosion of a proper map $h:\rm Z\longrightarrow Y$ from an explodable log scheme $\rm Z$. Taking fibre products in the category of fs log schemes, we get a proper map $h': \rm Z\times_Y X\longrightarrow Y$. Moreover Lemma \ref{fp comparison} implies that the image of $\expl h'$  is $\expl f^{-1}V$. We can compose $h'$ with the resolution of $\rm Z\times_Y X$ by an explodable log scheme, from Proposition \ref{explodable resolution}, obtaining that $\expl f^{-1}V$ is the image of the explosion of a proper map from an explodable log scheme.
\end{proof}
\

\begin{cor} The finite intersection or union of logarithmically proper subsets is logarithmically proper. 
\end{cor}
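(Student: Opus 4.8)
The plan is to reduce both statements — closure under finite union and closure under finite intersection — to facts already established in the excerpt, chiefly Lemma \ref{fp comparison}, Corollary \ref{inverse image}, and Proposition \ref{explodable resolution}. For the union, suppose $\ex{V_1},\ex{V_2}\subset\expl\rm X$ are logarithmically proper, so that $\ex{V_j}$ is the image of $\expl f_j$ for proper maps $f_j:\rm Z_j\longrightarrow\rm X$ with $\rm Z_j$ explodable. Then the disjoint union $\rm Z_1\sqcup\rm Z_2$ is again explodable, the induced map to $\rm X$ is still proper, and its explosion has image $\ex{V_1}\cup\ex{V_2}$ since $\expl$ commutes with disjoint unions at the level of underlying sets. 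Hence the union is logarithmically proper, and the general finite case follows by induction.

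For the intersection, the key point is that $\ex{V_1}\cap\ex{V_2}$ is the image of the explosion of the fibre product $\rm Z_1\times_{\rm X}\rm Z_2$ taken in the category of fs log schemes. By Lemma \ref{fp comparison}, $\expl(\rm Z_1\times_{\rm X}\rm Z_2)=\expl\rm Z_1\times_{\expl\rm X}\expl\rm Z_2$ as a fibre product of sets, and the image of this set in $\expl\rm X$ is exactly $\ex{V_1}\cap\ex{V_2}$. The map $\rm Z_1\times_{\rm X}\rm Z_2\longrightarrow\rm X$ is proper, being a composite of base changes of the proper maps $f_1,f_2$. The only subtlety is that $\rm Z_1\times_{\rm X}\rm Z_2$ need not itself be explodable, only fs. This is precisely the gap that Proposition \ref{explodable resolution} is designed to close: we may precompose with a proper map $(\rm Z_1\times_{\rm X}\rm Z_2)_{res}\longrightarrow\rm Z_1\times_{\rm X}\rm Z_2$ from an explodable log scheme whose explosion is surjective. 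The composite $(\rm Z_1\times_{\rm X}\rm Z_2)_{res}\longrightarrow\rm X$ is then proper, has explodable domain, and has image $\ex{V_1}\cap\ex{V_2}$, so the intersection is logarithmically proper. Again the general finite case follows by induction.

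The main obstacle — and really the only nontrivial step — is the passage from the fs fibre product to an explodable resolution, i.e. making sure that after forming $\rm Z_1\times_{\rm X}\rm Z_2$ one can still realize the intersection as the image of an explodable domain; this is handled entirely by Proposition \ref{explodable resolution}, so in the end the corollary is essentially a bookkeeping exercise combining that proposition with Lemma \ref{fp comparison}. One should also note in passing that the argument mirrors the proof of Corollary \ref{inverse image} almost verbatim, the difference being that one base-changes along a map of explodable schemes rather than along $\expl f$.
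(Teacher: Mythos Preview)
Your proposal is correct and follows essentially the same approach as the paper's own proof: the union is handled by the disjoint union of the explodable domains, and the intersection is identified via Lemma \ref{fp comparison} as the image of the exploded fs fibre product, which is then resolved to an explodable domain using Proposition \ref{explodable resolution}. Your write-up is in fact somewhat more detailed than the paper's (you spell out properness of the fibre-product map and the induction), but the argument is the same.
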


\begin{proof} The finite union of logarithmically proper subsets is logarithmically proper, because it is  still the image of a proper map from an explodable log scheme. If logarithmically proper subsets $V_i$ are the image of $\expl f_i$ for  proper maps $f_i:\mathrm X_i\longrightarrow \mathrm Y$ from explodable log schemes, Lemma \ref{fp comparison} implies that $V_1\cap V_2$ is the image $\expl \mathrm X_1\times_{\mathrm Y}\mathrm X_2$. Proposition \ref{explodable resolution} then implies that $V_1\cap V_2$ is logarithmically proper.  \end{proof} 

For the following corollary, we use the notation $\ex X$ for an exploded manifold $\expl \rm X$.
\begin{cor} If $\ex V \subset \ex X^-  \ex Y$  and $\ex W\subset \ex Y^-  \ex Z$ are logarithmically proper,  and $\ex V$ is proper over $\ex X$ then
\[\ex V\circ_{\ex Y}\ex W:=\pi(\ex V\times_\ex Y \ex W)\]
is a logarithmically proper subset of $\ex X  \ex Z$, where $\pi$ is the projection in the following diagram.
\[\begin{tikzcd} \ex X  \ex Y\dar & \lar \ex X  \ex Y  \ex Z\dar \rar{\pi} & \ex X  \ex Z
\\ \ex Y & \lar \ex Y  \ex Z\end{tikzcd}\]
Moreover if $\ex V$ and $\ex W$ are isotropic, then $\ex V\circ_{\ex Y}\ex W$ is also isotropic, and if $\ex W$ is proper over $\ex Y$, $\ex V\circ_{\ex Y}\ex W$ is also proper over $\ex X$.
 \end{cor}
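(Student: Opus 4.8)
The plan is to build an explicit proper map from an explodable log scheme onto $\ex V\circ_{\ex Y}\ex W$ and to read off all three claims from that map. Write $\ex V$ as the image of $\expl f$ for a proper map $f\colon\mathrm{A}\longrightarrow\mathrm{X}^-\mathrm{Y}$ from an explodable log scheme $\mathrm{A}$, and $\ex W$ as the image of $\expl g$ for a proper map $g\colon\mathrm{B}\longrightarrow\mathrm{Y}^-\mathrm{Z}$ from an explodable log scheme $\mathrm{B}$; let $f_{\mathrm{X}},f_{\mathrm{Y}}$ and $g_{\mathrm{Y}},g_{\mathrm{Z}}$ denote the composites of $f$ and $g$ with the coordinate projections. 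Form the fibre product $\mathrm{A}\times_{\mathrm{Y}}\mathrm{B}$ in the category of fs log schemes, taken over $f_{\mathrm{Y}}$ and $g_{\mathrm{Y}}$, with projections $\alpha\colon\mathrm{A}\times_{\mathrm{Y}}\mathrm{B}\longrightarrow\mathrm{A}$ and $\beta\colon\mathrm{A}\times_{\mathrm{Y}}\mathrm{B}\longrightarrow\mathrm{B}$. By Proposition \ref{explodable resolution}, choose a proper map $r\colon\mathrm{R}\longrightarrow\mathrm{A}\times_{\mathrm{Y}}\mathrm{B}$ with $\mathrm{R}$ explodable and $\expl r$ surjective, and let $h\colon\mathrm{R}\longrightarrow\mathrm{X}^-\mathrm{Z}$ be the composite of $r$, the map $\mathrm{A}\times_{\mathrm{Y}}\mathrm{B}\longrightarrow\mathrm{X}^-\mathrm{Y}\mathrm{Z}$ with components $(f_{\mathrm{X}}\circ\alpha,\,f_{\mathrm{Y}}\circ\alpha,\,g_{\mathrm{Z}}\circ\beta)$ (well defined since $f_{\mathrm{Y}}\circ\alpha=g_{\mathrm{Y}}\circ\beta$), and the projection $\pi\colon\mathrm{X}^-\mathrm{Y}\mathrm{Z}\longrightarrow\mathrm{X}^-\mathrm{Z}$; its $\mathrm{X}$-- and $\mathrm{Z}$--components are $f_{\mathrm{X}}\circ\alpha\circ r$ and $g_{\mathrm{Z}}\circ\beta\circ r$.

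The first step is to identify the image of $\expl h$. By Lemma \ref{fp comparison}, $\expl(\mathrm{A}\times_{\mathrm{Y}}\mathrm{B})=\expl\mathrm{A}\times_{\expl\mathrm{Y}}\expl\mathrm{B}$ as sets, so the image of the explosion of the middle map in $\ex X\ex Y\ex Z$ is precisely $\ex V\times_{\ex Y}\ex W$; since $\expl r$ is surjective, the image of $\expl h$ is then $\pi(\ex V\times_{\ex Y}\ex W)=\ex V\circ_{\ex Y}\ex W$. The second step is to check that $h$ is proper. Up to the finite modification of underlying schemes coming from saturation (which is harmless, as properness is a condition on underlying schemes — equivalently the classical-topological one here), the projection $\mathrm{A}\times_{\mathrm{Y}}\mathrm{B}\longrightarrow\mathrm{A}\times\mathrm{Z}$ is the base change of the proper map $g$ along $(f_{\mathrm{Y}},\id)\colon\mathrm{A}\times\mathrm{Z}\longrightarrow\mathrm{Y}\mathrm{Z}$, hence proper, while $f_{\mathrm{X}}\times\id\colon\mathrm{A}\times\mathrm{Z}\longrightarrow\mathrm{X}\times\mathrm{Z}$ is proper because $f_{\mathrm{X}}$ is — this is exactly the hypothesis that $\ex V$ is proper over $\ex X$. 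Composing with the proper map $r$ shows $h$ is proper, so $\ex V\circ_{\ex Y}\ex W$ is logarithmically proper.

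For isotropy, assume $f^*(-\omega_{\mathrm{X}}+\omega_{\mathrm{Y}})=0$ and $g^*(-\omega_{\mathrm{Y}}+\omega_{\mathrm{Z}})=0$, equivalently $f_{\mathrm{X}}^*\omega_{\mathrm{X}}=f_{\mathrm{Y}}^*\omega_{\mathrm{Y}}$ and $g_{\mathrm{Y}}^*\omega_{\mathrm{Y}}=g_{\mathrm{Z}}^*\omega_{\mathrm{Z}}$ on $\mathrm{A}$ and $\mathrm{B}$. Pulling these back along $\alpha\circ r$ and $\beta\circ r$ and using the defining identity $f_{\mathrm{Y}}\circ\alpha=g_{\mathrm{Y}}\circ\beta$, I get $(f_{\mathrm{X}}\circ\alpha\circ r)^*\omega_{\mathrm{X}}=(f_{\mathrm{Y}}\circ\alpha\circ r)^*\omega_{\mathrm{Y}}=(g_{\mathrm{Y}}\circ\beta\circ r)^*\omega_{\mathrm{Y}}=(g_{\mathrm{Z}}\circ\beta\circ r)^*\omega_{\mathrm{Z}}$, that is $h^*(-\omega_{\mathrm{X}}+\omega_{\mathrm{Z}})=0$; since $\expl h$ has image $\ex V\circ_{\ex Y}\ex W$, this exhibits $\ex V\circ_{\ex Y}\ex W$ as isotropic. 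Finally, if $\ex W$ is also proper over $\ex Y$, then $g_{\mathrm{Y}}\colon\mathrm{B}\longrightarrow\mathrm{Y}$ is proper, so $\alpha\colon\mathrm{A}\times_{\mathrm{Y}}\mathrm{B}\longrightarrow\mathrm{A}$ — being, up to the saturation modification, a base change of $g_{\mathrm{Y}}$ — is proper; then $f_{\mathrm{X}}\circ\alpha\circ r\colon\mathrm{R}\longrightarrow\mathrm{X}$ is proper, which says precisely that $\ex V\circ_{\ex Y}\ex W$ is proper over $\ex X$.

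I expect the only real friction to be the properness bookkeeping inside the fs fibre product: because the underlying scheme of $\mathrm{A}\times_{\mathrm{Y}}\mathrm{B}$ is only finite over, rather than equal to, the naive fibre product, one must first record that properness of a morphism depends only on underlying schemes and that saturation is finite, so that each claim reduces to the standard stability of proper maps under base change and composition. Everything else — identifying the image via Lemma \ref{fp comparison}, and the pullback computation for isotropy — is formal.
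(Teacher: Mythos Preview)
Your proof is correct and follows the same strategy as the paper: realise $\ex V\times_{\ex Y}\ex W$ as the image of the explosion of a proper map from an explodable log scheme (via the fs fibre product, Lemma~\ref{fp comparison}, and Proposition~\ref{explodable resolution}), compose with $\pi$, and read off logarithmic properness and isotropy from this single map. Your treatment is in fact more careful than the paper's: you spell out the base-change argument establishing properness of $h$ (and correctly flag the saturation issue), and you explicitly prove the final clause about $\ex V\circ_{\ex Y}\ex W$ being proper over $\ex X$ when $\ex W$ is proper over $\ex Y$, which the paper's proof text leaves implicit.
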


\begin{proof}Proposition \ref{explodable resolution} together with Lemma \ref{fp comparison} give that $\ex V\times_{\ex Y}\ex W$ is the image of the explosion of a proper map 
\[\expl f:\expl (\rm Y)\longrightarrow \ex V\times_{\ex Y}\ex W\]  from an explodable log scheme $\rm Y$. As $\ex V$ is proper over $\ex X$, the map $\pi$ is proper restricted to $\ex V\times_{\ex Y}\ex W$, so $\ex V\circ_{\ex Y}\ex W$ is also the image of the explosion of a proper map from the explodable log scheme $\rm Y$.

Now, suppose that $\ex V$ and $\ex W$ are isotropic. This implies  $\expl f^*\omega_{\ex X}=\expl f^*\omega_{\ex Y}=\expl f^*\omega_{\ex Z}$, so $(\pi\circ \expl f)^*(\omega_Z-\omega_X)=0$, and $\ex V\circ_{\ex Y}\ex W$ is isotropic. 

\end{proof}

\subsection{Tropical perspective}

\

There is a functor assigning each exploded manifold $\ex X$ a tropical part $\totb {\ex X}$ related to the tropicalisation of a log scheme; see \cite[Section 4]{iec}. For example, the tropical part of $\expl (X,D)$ is the dual intersection complex, and the tropical part of $\mathrm p^k$ is $(0,\infty)^k$. There is a surjective map of sets $\ex X\longrightarrow \totb{\ex X}$, related to the tropical part homomorphism $\mathbb C\e{\mathbb R}\longrightarrow \e{\mathbb R}$ from the exploded semiring $\mathbb C\e{\mathbb R}$ to the tropical semiring $\e{\mathbb R}$ 
\[c\e a\mapsto \totb{c\e a}:= \e{a}\]
In particular, when $\rm X$ is an affine log scheme,  each point in $\expl \rm X$ determines a homomorphism $M_{\rm X}\longrightarrow ([0,\infty),+)$ by composing the homomorphism $M_{\rm X}\longrightarrow M_{\rm p^+}=\mathbb C^*\e{[0,\infty)}$ with the tropical part homomorphism $\mathbb C^*\e{[0,\infty)}\longrightarrow \e{[0,\infty)}=([0,\infty),+)$. In this case $\totb{\expl {\rm X}}$ is the set of such homomorphisms $M_{\rm X}\longrightarrow [0,\infty)$, and the map $\expl {\rm X}\longrightarrow \totb{\expl \rm X}$ sends the point $\rm p^+\longrightarrow \rm X$ to the corresponding homomorphism.

 Note the usual definition of the tropicalisation of an affine $\rm X$ is that it is the cone $\hom(M_{\rm X},[0,\infty))$, but in the case that $\rm X$ is not log smooth,  $\totb{\expl {\rm X}}$ is a subset of this tropicalisation  obtained by removing some faces. In particular, if $m\in M_{\rm X}$ is sent by the log structure map to $0$, we remove the face comprised of those homomorphisms sending $m$ to $0\in[0,\infty)$. 
 
 If $\rm U\subset X$ is an open subset, then $\totb{\expl \rm U}\subset \totb{\expl \rm X}$ is a face.    When $\rm X$ is not affine, $\expl X\longrightarrow \totb {\expl X}$ can be defined by taking a quotient by the smallest equivalence relation such that, for affine $\rm U\subset \rm X$,   points sent to the same point in $\totb{\expl \rm U}$ are equivalent. In nice cases, such as when $\mathrm X=(X,D)$ with $D$ a simple normal crossing divisor,  the corresponding maps $\totb {\expl \rm U}\longrightarrow \totb{\expl \rm X}$ are injective, and $\totb{\expl \rm X}$ is a union of cones glued along faces. In general however, the map $\totb {\expl \rm U}\longrightarrow \totb{\expl \rm X}$ may factor through a quotient of the cone $\totb {\expl \rm U}$ by a group of automorphisms.

\begin{example} \label{surjective point map} If  $\rm q$ is a fs log point, the tropical part $\totb{\expl \rm q}$ of $\expl \rm q$ is the open cone $Q$ comprised of sharp\footnote{A sharp homomorphism is one that does not send any non-invertible elements to invertible elements.} homomorphisms $M_{\rm q}\longrightarrow [0,\infty)$, whereas the usual definition of tropicalisation of $\rm q$ is the closure of this cone. Moreover,  $\expl \rm q$ is the set of sharp monoid homomorphisms $M_{\rm q}\longrightarrow \mathbb C^*\e{[0,\infty)}$ that are the identity on $\mathbb C^*\subset M_{\rm q}$. As $\rm q$ is fine and saturated, $\bar M_{\rm q}:=M_{\rm q}/\mathbb C^*$ is a toric monid, and $\bar M_q^{gp}$ is isomorphic to $\mathbb Z^n$. We have that  $Q:=\totb{\expl \rm q}$ is an open cone within $\hom(\bar M_{\rm},\mathbb R)=\mathbb R^n$, and there is a free and transitive action of $\hom(\bar M_q^{gp},\mathbb C^*)=(\mathbb C^*)^n$ on the fibres of the map $\expl \rm q\longrightarrow \totb{\expl \rm q}$. The exploded manifold $\expl \rm q$ is the standard exploded manifold $\et nQ$; see \cite[Example 3.8]{iec}.   A map of fs log points
\[f:\rm q\longrightarrow \rm r\]
is equivalent to a sharp monoid homomorphism $M_{\rm r}\longrightarrow M_{\rm q}$ that is the identity on $\mathbb C^*\subset M_{\rm r}$, and both 
\[\expl f: \expl \rm q\longrightarrow \expl \rm r\]
and 
\[\totb{\expl f}:\totb{\expl \rm q}\longrightarrow \totb{\expl \rm r}\]
are given by pullback via this homomorphism. 

A useful fact is that, for such maps of log points,  $\expl f$ is surjective if and only if $\totb{\expl f}$ is surjective. This follows from the observation that $\expl f$ is equivariant with respect to the action of $\hom(\bar M_{\rm q},\mathbb C^*)$ on $\expl q$ and the induced action on $\expl \rm r$ using the homomorphism $\hom(\bar M_{\rm q}^{gp},\mathbb C^*)\longrightarrow \hom(\bar M^{gp}_{\rm r},\mathbb C^*)$. In particular, for $\totb{\expl f}$ to be surjective, we need the map $\hom(\bar M_{\rm q}^{gp},\mathbb R)\longrightarrow \hom(\bar M^{gp}_{\rm r},\mathbb R)$ to be surjective, which is the case if and only if the homomorphism $\bar M_{\rm r}^{gp}\longrightarrow \bar M_{\rm q}^{gp}$ is injective, so the map $\hom(\bar M_{\rm q}^{gp},\mathbb C^*)\longrightarrow \hom(\bar M^{gp}_{\rm r},\mathbb C^*)$ is surjective. We conclude that when $\totb{\expl f}$ is surjective, there is at least one point in $\expl \rm q$ in each fibre of $\expl \rm r\longrightarrow\totb{ \expl \rm r}$, and  $\hom(\bar M_{\rm q}^{gp},\mathbb C^*)$ acts transitively on each fibre, and therefore $\expl f$ is also surjective.

\end{example}

\begin{remark}It is not true that an infinite intersection of logarithmically proper subsets is logarithmically proper. For example, the set of exploded points in $\expl \mathrm p^2$ is $(\mathbb C^*\e{(0,\infty)})^2$, and we have coordinates $z_1$ and $z_2$ taking values in  $\mathbb C^*\e{(0,\infty)}$. For any two nonzero integral vectors $(a,b)$ and $(c,d)$ in $\mathbb N^2$, there is a map $\mathrm p^2\longrightarrow \mathrm p^2$ whose explosion is given by $(z_1,z_2)\mapsto (z_1^a z_2^b,z_1^c z_2^d)$, and the image of this map consists of those points with tropical part in the interior of the cone spanned by $(a,b)$ and $(c,d)$.
Thus, given any open cone in $(0,\infty)^2=\totb{\expl \mathrm p^2}$ defined by rational inequalities,  there is a logarithmically proper subset of $\expl\mathrm p^2$ consisting of those points with tropical part in this cone. An infinite intersection of such cones can give a cone which is partially closed, or which is defined using an irrational inequality, and the set of exploded points over such a cone is not a logarithmically proper subset. 
\end{remark}

The explosion of a log modification $\mathrm X'\longrightarrow \mathrm X$ is a bijection on the set of exploded points, and an example of a refinement of exploded manifolds; see \cite[Section 10]{iec}.  A logarithmic modification of $\rm X$ is equivalent to a subdivision of $\totb{\expl X}$ into rational cones --- such a logarithmic modification is always log smooth, and is smooth if these cones are all isomorphic to $[0,\infty)^k$ using some invertible $\mathbb Z$--linear transformation.

 Corollary \ref{inverse image} implies  that the notion of being logarithmically proper is the same in both $\mathrm X$ and $\mathrm X'$. In particular, a subset $V\subset \mathrm X$ is logarithmically proper if and only if its inverse image in $\mathrm X'$ is logarithmically proper.

We will have need of logarithmically proper subsets of the exploded manifold $\ex T^n$. This exploded manifold has refinements isomorphic to the explosion of compact toric $n$-folds. As the notion of being logarithmically proper is invariant under refinements, we can make the following definition.

\begin{defn}[logarithmically proper subset of exploded manifold] \label{exploded logarithmically proper}Let $\ex X$ be a holomorphic exploded manifold whose tropical part is a cone over some chosen point.  A logarithmically proper subset of $\ex X$ is a subset which is the image of a logarithmically proper subset of a refinement of $\ex X$, where this refinement subdivides $\totb{\ex X}$ into cones over $0\in\totb{\ex X}$ and is isomorphic to the explosion of an explodable log scheme. 

 \end{defn}
 
 \begin{defn}[lagrangian subvariety of exploded manifold] \label{exploded isotropic def} A logarithmically   proper subset $\ex V\subset \ex X$ is isotropic if, for all maps of exploded manifolds $f:\ex Y\longrightarrow \ex V\subset \ex X$, $f^*\omega=0$.  A logarithmically proper subset $\ex V\subset \ex X$ is a lagrangian subvariety if it is isotropic, and there exists a connected log smooth $\rm Y$, with $\dim \mathrm Y=\frac 12 \dim \ex X$ and a surjective, generically injective map $\expl {\mathrm Y}\longrightarrow \ex V\subset \ex X$.
 
 \end{defn}

Remark \ref{complete connection}  implies that, when $\ex X=\expl \rm X$, lagrangian subvarieties of $\ex X$ correspond to lagrangian subvarieties of $\rm X$.

%

\begin{remark}From the perspective of exploded manifolds, it is unnatural to specify a cone structure on $\totb{\ex X}$ and unnatural to restrict to refinements that preserve this cone structure in Definition \ref{exploded logarithmically proper}. This unnatural restriction ensures that, given two cone-structure-preserving refinements of $\ex X$ to $\expl\rm Y_1$ and $\expl\rm Y_2$, there is a common refinement that is the explosion of a logarithmic modification of $\rm Y_1$ and $\rm Y_2$.

 The explosion of a map between log schemes $f: \mathrm A\longrightarrow\mathrm B$ always has tropical part $\totb{f}:\totb{\expl \rm A}\longrightarrow \totb{\expl \rm B}$ that is a map of cones. There are more maps from $\expl \rm A$ to  $\expl \rm B$ as  exploded manifolds: such a map is the explosion of a map of log schemes if and only it is holomorphic and its tropical part is a map of cones. \end{remark}

\begin{defn}[combinatorially flat] A map $f:\rm X\longrightarrow \rm Y$ of explodable log schemes is combinatorially flat if, the interior of each cone in $\totb{\expl \rm X}$ is sent surjectively onto the interior of a cone in $\totb{\expl \rm  Y}$.
\end{defn}

In light of Example \ref{surjective point map}, a combinatorially flat map has the useful property that the image of $\expl f$ consists of those points in $\expl \rm Y$ that project to the image of $f$ within $Y$.

The following is a simplified version of \cite[Proposition 1.7.2]{logGWDT}

\begin{lemma}\label{combinatorially flat} If $f:\rm X\longrightarrow \rm Y$ is a proper map of explodable log schemes with $\rm Y$ log smooth, there exists a modification $\rm Y'\longrightarrow \rm Y$ with $Y$ smooth such that in the induced fibre product diagram
\[\begin{tikzcd} \rm X'\rar{f'}\dar  & \rm Y'\dar
\\ \rm X \rar{f} & Y \end{tikzcd}\]
the map $f'$ is combinatorially flat. 
\end{lemma}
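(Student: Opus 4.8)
The plan is to tropicalize and reduce the lemma to a standard statement about subdivisions of rational cone complexes. Write $\Sigma_{\rm X}:=\totb{\expl\rm X}$ and $\Sigma_{\rm Y}:=\totb{\expl\rm Y}$, and let $\phi:\Sigma_{\rm X}\longrightarrow\Sigma_{\rm Y}$ be the map of cone complexes induced by $f$; because $\rm Y$ is log smooth, $\Sigma_{\rm Y}$ is a smooth cone complex and, as recorded in the excerpt, smooth subdivisions of $\Sigma_{\rm Y}$ correspond exactly to modifications $\rm Y'\longrightarrow\rm Y$ with $Y'$ smooth. Given such a subdivision, the pulled-back modification $\rm X':=\rm X\times_{\rm Y}\rm Y'\longrightarrow\rm X$ corresponds to a subdivision of $\Sigma_{\rm X}$, so $\rm X'$ is again explodable; and by Lemma \ref{fp comparison}, $\expl\rm X'=\expl\rm X\times_{\expl\rm Y}\expl\rm Y'$, whence $\totb{\expl\rm X'}$ is the fibre product cone complex $\Sigma_{\rm X}\times_{\Sigma_{\rm Y}}\Sigma_{\rm Y'}$, whose cones are the intersections $\sigma\cap\phi^{-1}(\rho)$ for $\sigma\in\Sigma_{\rm X}$ and $\rho\in\Sigma_{\rm Y'}$. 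Thus it suffices to produce a smooth subdivision $\Sigma_{\rm Y'}$ of $\Sigma_{\rm Y}$ for which the induced map $\Sigma_{\rm X}\times_{\Sigma_{\rm Y}}\Sigma_{\rm Y'}\longrightarrow\Sigma_{\rm Y'}$ sends the relative interior of each cone onto the relative interior of a cone of $\Sigma_{\rm Y'}$, which is exactly combinatorial flatness of $f'$ in light of Definition \ref{combinatorially flat} and Example \ref{surjective point map}.

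The combinatorial heart is the choice of $\Sigma_{\rm Y'}$: each restriction $\phi|_\sigma$ is integral-linear, so $\phi(\sigma)$ is a rational polyhedral cone; take $\Sigma_{\rm Y'}$ to refine the common refinement of $\Sigma_{\rm Y}$ with the finitely many cones $\phi(\sigma)$, $\sigma\in\Sigma_{\rm X}$, and then refine once more, by toric resolution of singularities, to a smooth subdivision. Since the last step only refines, each $\phi(\sigma)$ remains a union of cones of $\Sigma_{\rm Y'}$. Now let $\tilde\sigma=\sigma\cap\phi^{-1}(\rho)$ be a cone of the fibre product complex, with $\rho$ the smallest cone of $\Sigma_{\rm Y'}$ whose relative interior meets $\phi(\mathrm{relint}\,\tilde\sigma)$. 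Because $\phi(\sigma)$ is a subcomplex of $\Sigma_{\rm Y'}$ and $\Sigma_{\rm Y'}$ is a fan, either $\rho\subseteq\phi(\sigma)$ or $\rho$ meets $\phi(\sigma)$ only along $\partial\rho$; the latter contradicts the choice of $\rho$, so $\rho\subseteq\phi(\sigma)$. Then $\tilde\sigma$ is the preimage in $\sigma$ of the subcone $\rho$ of $\phi(\sigma)$ under the linear surjection $\phi|_\sigma:\sigma\longrightarrow\phi(\sigma)$, and a routine check on the face structure shows this preimage carries $\mathrm{relint}\,\tilde\sigma$ onto $\mathrm{relint}\,\rho$. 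As $\Sigma_{\rm Y'}$ is smooth, the corresponding $\rm Y'\longrightarrow\rm Y$ has $Y'$ smooth, completing the argument.

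I expect the main obstacle to be careful bookkeeping rather than any deep idea. Three points need attention: that $\rm X'=\rm X\times_{\rm Y}\rm Y'$ really stays explodable, so that combinatorial flatness is even defined for $f'$; that the tropical-part functor commutes with the relevant fibre product, which reduces via Lemma \ref{fp comparison} to the compatibility of $\expl$ with fs fibre products together with explodability of $\rm X'$; and the face-structure verification in the polyhedral step, in particular that the relative interior of each cone $\sigma\cap\phi^{-1}(\rho)$ of the fibre product complex maps into the relative interior of a well-defined cone of $\Sigma_{\rm Y'}$. None of this is hard: the argument is essentially the combinatorial input to toroidal (semistable) reduction, and the statement here is lighter than \cite[Proposition 1.7.2]{logGWDT} precisely because no modification of the source $\rm X$ is permitted, so the only freedom — and the only thing one needs — is the subdivision of the target.
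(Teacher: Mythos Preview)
Your proposal is correct and follows essentially the same approach as the paper's proof: subdivide $\Sigma_{\rm Y}$ so that each image cone $\phi(\sigma)$ is a union of cones, then apply toric resolution of singularities to obtain a smooth subdivision. The paper's proof is a two-sentence sketch that omits the verification you spell out (explodability of $\rm X'$, the description of $\totb{\expl\rm X'}$ as the fibre-product complex, and the relative-interior check), so your write-up is simply a more detailed version of the same argument.
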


\begin{proof} First choose a subdivision of $\totb{\expl \rm Y}$ into cones such that the image of each cone in $\totb{\expl \rm X}$ is some union of cones in this subdivison. By toric resolution of singularities, we can take a further subdivision so that all these cones are standard, so the corresponding log modification $\rm Y'\longrightarrow \rm Y$ has $\rm Y'$ smooth. This logarithmic modification has the required properties. 
\end{proof}

\begin{lemma}\label{Dense subset} If a logarithmically proper subset $V\subset \expl( X,D)$ has dense image $\totl{ V}\subset X=\totl{\expl (X,D)}$, then $V=\expl(X,D)$.

\end{lemma}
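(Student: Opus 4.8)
The plan is to use properness of the map defining $V$ to upgrade ``dense image'' to ``surjective image'' on underlying schemes, and then to invoke the combinatorial structure theory of explosions (Lemma~\ref{combinatorially flat}) to promote this to surjectivity of $\expl f$ itself.

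First I would write $V$ as the image of $\expl f$ for a proper map $f\colon\mathrm W\longrightarrow\mathrm X=(X,D)$ with $\mathrm W$ explodable, as furnished by the definition of logarithmically proper. Since the smooth part map $\lceil\cdot\rceil$ intertwines $f$ with $\expl f$ and is surjective on $\expl\mathrm W$, one has $\lceil V\rceil=f(W)$ as subsets of $X$. As $f$ is proper, $f(W)$ is closed in $X$; being dense, it is all of $X$, so $f$ is surjective on underlying schemes.

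Next, since $D$ is simple normal crossing, $\mathrm X=(X,D)$ is log smooth, so Lemma~\ref{combinatorially flat} applies: there is a logarithmic modification $\pi\colon\mathrm X'\longrightarrow\mathrm X$ with $X'$ smooth such that in the fibre product diagram
\[\begin{tikzcd} \mathrm W'\rar{f'}\dar{\pi'} & \mathrm X'\dar{\pi}\\ \mathrm W\rar{f} & \mathrm X\end{tikzcd}\]
the map $f'$ is combinatorially flat, where $\mathrm W'=\mathrm W\times_{\mathrm X}\mathrm X'$. Since surjectivity of morphisms of schemes is stable under base change, $f'$ is surjective on underlying schemes, so $f'(W')=X'$. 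By the property of combinatorially flat maps noted in light of Example~\ref{surjective point map}, the image of $\expl f'$ is exactly the set of points of $\expl\mathrm X'$ lying over $f'(W')=X'$, i.e.\ all of $\expl\mathrm X'$; so $\expl f'$ is surjective.

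Finally I would transport this conclusion back along $\pi$. Since $\pi$ is a logarithmic modification, $\expl\pi\colon\expl\mathrm X'\to\expl\mathrm X$ is a bijection on exploded points; and by Lemma~\ref{fp comparison}, $\expl\mathrm W'=\expl\mathrm W\times_{\expl\mathrm X}\expl\mathrm X'$, so $\expl\pi'$ is a bijection as well. Chasing the commutative square then gives $V=\operatorname{image}(\expl f)=\expl\pi\bigl(\operatorname{image}(\expl f')\bigr)=\expl\pi(\expl\mathrm X')=\expl\mathrm X=\expl(X,D)$, as required. The one genuinely delicate point is the passage from surjectivity on smooth parts to surjectivity of the explosion: surjectivity of $f$ alone constrains nothing in the tropical directions, and it is precisely the toric resolution of Lemma~\ref{combinatorially flat}---producing a combinatorially flat model whose tropical image must then fill the whole cone complex---that closes this gap. (Note that the hypothesis that $\mathrm X$ has the form $(X,D)$, hence is log smooth, is used essentially here; the analogous statement fails for targets such as $\mathrm p^2$, which is not log smooth.)
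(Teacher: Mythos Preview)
Your proof is correct and follows essentially the same approach as the paper's: both reduce to a combinatorially flat model via Lemma~\ref{combinatorially flat}, upgrade dense image to surjectivity using properness, and then invoke the observation after Example~\ref{surjective point map} that combinatorially flat maps have explosion-image determined by their scheme-image. The only difference is that the paper compresses your explicit fibre-product-and-transport argument into a single ``without loss of generality'' clause; your version is more careful (note that your step~4 implicitly uses that for log modifications the underlying scheme of the fs fibre product agrees with the ordinary fibre product, which is true but worth being aware of).
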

\begin{proof} Without losing generality, by using Lemma \ref{combinatorially flat}, we can assume that $V$ is the image of the explosion of a proper combinatorially flat map $f:\rm Y\longrightarrow X$. As this map is proper, and has dense image in $X$, it surjects onto $X$. As $f$ is combinatorially flat, restricting $\expl f$ to the explosion of each log point  gives a surjective map, as explained in \ref{surjective point map}. So $\expl f$ is surjective. 

\end{proof}
\begin{remark}\label{complete connection} Resolution of singularities implies that each complete subvariety $V\subset  (X,D)=\rm X$  is the image of a proper logarithmic map $f:\rm Y\longrightarrow \rm X$ with $\rm Y$ log smooth. The image $\ex V\subset \expl \rm X$ of $\expl f$ is a logarithmically proper subset whose  image $\totl{\ex V}\subset X$ is $V$. Moreover, if $\ex W\subset \expl \rm X$ is any logarithmically proper subset with $\totl {\ex W}=V$, we have that $\expl f^{-1}(\ex W)$ is  logarithmically proper so Lemma \ref{Dense subset} implies that this is all of $\expl Y$. We conclude that $\ex V$ is the unique minimal logarithmically proper subset with $
\totl{\ex V}=V$.

Given any logarithmic modification $\pi :\rm X'\longrightarrow \rm X$, the pullback $\pi^*V$ of $V$ may be smaller than $\pi^{-1}V$, but using exploded manifolds, the pullback coincides with the inverse image. In particular, we have that
\[\pi^*V=\totl{\expl \pi^{-1}\ex V}\ .\]
Lemma \ref{lmodification} can now be proved using Lemma \ref{combinatorially flat}. Our goal is to find a logarithmic modification in which $\pi^*V$ intersects the divisor nicely. Choose compatible logarithmic modifications 
\[\begin{tikzcd}\rm Y'\dar\rar{f'}&\rm X'\dar{\pi}
\\ \rm Y\rar{f}&\rm X\end{tikzcd}\]
so that $\expl f'$ is combinatorially flat. As $\rm Y'$ is log smooth, every cone in $\totb{\expl \rm Y'}$ is closed, so combinatorial flatness implies that  image of $f'$ intersects the divisor in $\rm X'$ nicely. It also implies that the image of $\expl f'$ is the set of points with smooth part $f'(Y')=\pi^*V$, so pulling back via further logarithmic modifications will coincide with taking inverse images. Moreover, taking the fibre product of $f'$ with any further logarithmic modification of $\rm X'$ gives a map $f''$ satisfying the same conditions as $f'$, so the pullback of $\pi^*V'$ under further logarithmic modifications will also intersect the divisor nicely. 
\end{remark}

\subsection{Star product using refined cohomology} \label{refined cohomology section}

\

We can construct an appropriate homology theory in the category of exploded manifolds using refined differential forms. 
Refined differential forms on exploded manifolds are defined in \cite[Section 9]{dre}. All the usual operations on differential forms work as usual for refined differential forms: pullbacks, wedge products, exterior derivatives, integration, and pushforwards are defined, and satisfy the usual properties.

Of particular interest to us is the following fact: if  $f: \rm A\longrightarrow B$ is a surjective, generically injective map of smooth log schemes, then it is also true that $\expl f$ is surjective and generically injective, and $\int_{\expl \rm A}\expl f^*\theta=\int_{\expl \rm B} \theta$. In particular, if $\ex V\subset \ex X$ is is the image of $\expl f$ with $f$ generically injective, we can define
\[\int_{\bf V}\theta:=\int_{\expl \rm Y}\expl f^*\theta\]
and this definition does not depend on the choice of $f$. So, given a lagrangian correspondence $\ell$ in the form $\ell=\sum_i n_i \ex V_i$, we can define
\[\int_{\ell} \theta:= \sum_i n_i \int_{\ex V_i}\theta\]

\begin{defn}[refined neighbourhood] A refined neighbourhood $U$ of $\ex V\subset \ex X$ is a subset  $U\subset \ex X$ containing $\ex V$, such that $U$ is open in some refinement of $\ex X$.
\end{defn}

In the special case that $\ex V$ is the image of $\expl f$ for $f$ a combinatorially flat map, any refined neighbourhood $U$ of $\ex V$  contains  an open neighborhood of $\ex V$.

When $\ex V\subset \ex X$ is a lagrangian subvariety,  we can  construct the Poincare dual $PD(\ex V)$ of $\ex V$ as a closed refined differential form on $\ex X$ supported within an arbitrarily small refined neigbourhood  $U$ of $\ex V\subset \ex X$; see \cite[Lemma 9.5]{dre}.  Then given  any compactly supported refined differential form $\theta$ on $\ex X$ which is closed in $U$, we have
\[\int_{\ex V}\theta=\int_{\ex X}\theta\wedge PD(\ex V)\]

We can consider $PD(\ex V)$ as an element in  $\rh^{2(\dim X-\dim V)}(\ex X,\ex X\setminus U)$, where $\rh^*(\ex X,\ex X\setminus U)$ indicates the de Rham cohomology defined using refined differential forms on $\ex X$ supported within $U$. We can similarly define the Poincare duals of lagrangian correspondences.

\begin{lemma}\label{free generation} Suppose that $\ex V\subset \expl \rm X$ is a logarithmically proper isotropic subset. Then every refined neighborhood $U$ of $\ex V\subset \expl \rm X$ has a refined subneighbourhood $U'$ such that $\rh^{\dim X}(\expl\mathrm  X,\expl\mathrm X\setminus U')$ is freely generated by the Poincare duals of the lagrangian subvarieties in $\ex V$.
\end{lemma}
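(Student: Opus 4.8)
The plan is to reduce the statement to a local computation near each point of $\ex V$, combined with a Mayer--Vietoris-type argument over the finitely many irreducible components. First I would use Lemma \ref{combinatorially flat} to replace $\expl \rm X$ by a logarithmic modification in which $\ex V$ is the image of a combinatorially flat map from a log smooth scheme; this is harmless because refined cohomology is invariant under refinements, and it guarantees (as noted after Definition \ref{refined neighbourhood}) that any refined neighbourhood $U$ of $\ex V$ actually contains an honest open neighbourhood. By Lemma \ref{niceisotropic} (applied after a further modification so that $\ex V$ intersects the divisor nicely) the intersection of $\ex V$ with every stratum is a union of isotropic subvarieties; in particular each irreducible component $\ex V_i$ of $\ex V$ carrying a top-dimensional piece is a lagrangian subvariety, with $\dim \ex V_i = \frac12\dim\ex X$, and all other components have strictly smaller dimension.

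Next I would choose $U'\subset U$ to be a sufficiently small refined neighbourhood that deformation retracts (in the appropriate refined sense) onto a regular neighbourhood of $\bigcup_i \ex V_i$; concretely, shrink $U$ so that $U'$ meets no stratum of $\expl\rm X$ except along $\ex V$, and so that $U'$ is a disjoint union of tubular neighbourhoods of the components away from their mutual intersections, glued over neighbourhoods of the lower-dimensional intersection loci. The point of the isotropy hypothesis is dimensional: since each $\ex V_i$ is lagrangian, the normal directions account for exactly $\dim X - \frac12\dim X = \frac12\dim X$ of the cohomological degree, so the only contribution to $\rh^{\dim X}(\expl\rm X,\expl\rm X\setminus U')$ in top-complementary degree comes from the fundamental-class (Poincaré dual) generators of the top-dimensional components $\ex V_i$, while the lower-dimensional components contribute only in strictly lower degree and the pairwise intersection loci contribute in still lower degree, hence cannot interfere. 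I would make this precise by a spectral sequence / inductive Mayer--Vietoris argument on the number of components, using that for a single lagrangian $\ex V_i$ the refined Thom isomorphism (via the Poincaré dual form of \cite[Lemma 9.5]{dre}) identifies $\rh^{\dim X}$ of a small neighbourhood with $\mathbb Z\cdot PD(\ex V_i)$.

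The main obstacle will be the bookkeeping at the intersection loci of the components, and more precisely verifying that the refined de Rham cohomology of a small refined neighbourhood of a lower-dimensional isotropic piece vanishes in degree $\dim X$. This requires knowing that the local refined cohomology of the complement of a codimension-$\geq 2$ (hence real-codimension-$\geq 2$) isotropic subvariety behaves like that of its algebraic counterpart, which is where I would lean on the comparison between refined differential forms and ordinary Borel--Moore homology sketched in Section \ref{local intersection section}, together with the fact (Remark following Definition \ref{complete}) that a neighbourhood of such a subvariety retracts onto it. Once that vanishing is in hand, the free generation statement follows by downward induction on dimension, peeling off the top-dimensional components one at a time and checking that the connecting maps in the long exact sequences land in the lower-degree groups that have just been shown to vanish in degree $\dim X$.
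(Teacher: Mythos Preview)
Your outline shares the right starting point with the paper (pass to a modification where $\ex V$ is the image of a combinatorially flat map, then argue by dimension that only the lagrangian components can contribute in degree $\dim X$), but it has two genuine gaps that the paper's proof is specifically designed to close.

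First, you invoke a ``refined Thom isomorphism'' for a single lagrangian $\ex V_i$, and speak of tubular neighbourhoods of the components. But $\ex V_i$ is a \emph{singular} subvariety in general, so no Thom isomorphism or tubular neighbourhood is available. The paper does not treat the components one at a time; instead it defines a single bad locus $V_0 \subset \totl{\ex V}$ consisting of the intersection with the divisor, \emph{together with} the singular locus of $\totl{\ex V}$ and all components of dimension $<k$, and excises an open neighbourhood $N$ of $V_0$ in one step. After removing $\bar N$, what is left of $\totl{\ex V}$ is a genuinely smooth $k$-dimensional complex submanifold of the interior of $X$, with one connected component per lagrangian subvariety, and the computation there is elementary. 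Your Mayer--Vietoris over irreducible components never confronts the singularities of each individual lagrangian.

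Second, and more seriously, you do not address why the refined cohomology agrees with anything computable. Section~\ref{local intersection section} concerns ordinary schemes; it says nothing about $\rh^*$ on exploded manifolds. The paper's crucial observation is that $V_0$ contains the entire intersection of $\totl{\ex V}$ with the divisor, so after excising $\bar N$ the support of the relevant forms lies in the \emph{interior} of $\rm X$, where refined forms are just ordinary smooth forms and $\rh^*$ reduces to ordinary de~Rham cohomology. For the excised piece, one must show $\rh^d(\expl\rm X,\expl\rm X\setminus\expl N)=0$ for $d=2k,2k-1$; this follows from $\dim_{\mathbb C} V_0\leq k-1$ via $H^*(X,X\setminus V_0)\cong H^{BM}_{4k-*}(V_0)$, but since $\rh^*$ is a limit over log modifications, one has to check the same vanishing in every smooth modification $\rm X'$. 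The paper arranges this by choosing the retraction of $N$ onto $V_0$ to be stratum-preserving, so it lifts to every $\rm X'$. Your proposal does not identify this step, and without it the comparison between refined and ordinary cohomology is unjustified.
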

\begin{proof}
\

The proof proceeds by removing any problematic locus in $\ex V$. Let the complex dimension of $X$ be $2k$. Using Lemma \ref{combinatorially flat}, and a logarithmic modification of $\rm X$, we can assume that $\ex V$ is the image of the explosion of a combinatorially flat map. This ensures that $\ex V$ is fully recorded in $\totl{\ex V}\subset X$.  Moreover, it also ensures that there exists some neighbourhood of $\totl{\ex V}\subset X$, whose explosion is contained in the refined neighborhood $U$ of $\ex V$.

Let $V_0\subset \totl{\ex V}\subset X$ be the intersection of $\totl {\ex V}$ with the divisor in $\rm X$, together with every irreducible component of dimension less than $k$, and the singular locus of $\totl {\ex V}$.  We have that $V_0$ is a closed algebraic subset with dimension bounded by $(k-1)$. There therefore exists an open neighbourhood $N$ of $V_0$ that retracts onto $V_0$. There is no loss of generality in assuming that the explosion of $N$ is contained in $U$.  Moreover, we can choose $N$ so that its closure $\bar N$ is a manifold with boundary and also retracts onto $V_0$, we can also ensure that  its boundary $\bar N\setminus N$ is a smooth codimension 1 submanifold whose explosion is a smooth codimension $1$ exploded submanifold of $\expl \rm X$. By choosing $N$ and the retraction starting with the lowest dimensional strata, we can also ensure that this retraction is compatible with the stratification of $\rm X$, in the sense that it restricts a retraction of $N$ and $\bar N$ intersected with the closure of each stratum in $\rm X$. We shall see that removing $\bar N$ from $\rm X$ will not affect our cohomology in degree $2k$.

Choose  an open neighbourhood $O$ of $\totl{\ex V}$ such that
\begin{itemize}\item $\expl O\subset U$, 
 \item  $O\supset N$,
 \item  $O\setminus \bar N$ retracts onto $\totl {\ex V}\setminus \bar N$,
 \item and such that, restricted to lower dimensional strata of $X$,  $O$ coincides with $ N$. 
 \end{itemize}

\begin{claim} \label{set removal} 
\[\rh^{2k}(\expl \mathrm X, \expl \mathrm X\setminus \expl O)=\rh^{2k}(\expl (\mathrm X\setminus \bar N),\expl (\mathrm X\setminus O))\ .\]

\end{claim}

Once this claim is proved, our lemma follows easily, because the locus in $\expl (\mathrm X\setminus \bar N)$ where our refined forms are supported is  $\expl (O\setminus \bar  N)=O\setminus \bar N$, which is  a smooth complex manifold, retracting on to the proper $k$-dimensional complex submanifold  $\totl{\ex V}\setminus \bar N$, and this complex submanifold has one component for each lagrangian subvariety of $\ex V$. On this locus, refined differential forms are just ordinary smooth differential forms, so the degree $2k$ cohomology is generated freely generated by the Poincare duals of these components.

It remains to prove Claim \ref{set removal}. Consider the following short exact sequence of chain complexes 
\[\begin{tikzcd} \ro^*(\expl \mathrm X, \expl X\setminus \expl N) \rar & \ro^*(\expl \mathrm X, \expl (\mathrm X\setminus O))\dar
\\ & \ro^*(\expl (\mathrm X\setminus N),(\expl (\mathrm X\setminus  O)  )\end{tikzcd}\]
where $\ro^*(\expl (\mathrm X\setminus N),(\expl (\mathrm X\setminus  O)) $ indicates the chain complex of refined differential forms on $\expl X$, supported within $\expl O$, up to the equivalence relation that two of these are equivalent if they agree on the closed subset $ \expl X\setminus \expl N$. Because $X\setminus N$ a manifold with boundary and $\expl (\bar N\setminus N)$ is a codimension 1 exploded submanifold, there is a smooth vector field on $\expl X$ whose flow  sends  $\expl (\mathrm X\setminus N)$ inside $\expl (\mathrm X\setminus \bar N)$, so the chain map 
\[\ro^*(\expl (\mathrm X\setminus N),(\expl (\mathrm X\setminus  O)  )\longrightarrow \ro^*(\expl (\mathrm X\setminus\bar  N),\expl (\mathrm X\setminus (\bar N\cup O)  )\]
induces an isomorphism on homology,  and  therefore the homology of the chain complex $\ro^*(\expl (\mathrm X\setminus N),(\expl (\mathrm X\setminus  O)  )$ is $\rh^{2k}(\expl (\mathrm X\setminus \bar N),\expl (\mathrm X\setminus O))$.

Therefore, the short exact sequence above implies that, to check Claim \ref{set removal},  it suffices to check that $\rh^{d}(\expl \mathrm X, \expl \mathrm X\setminus \expl N)=0$ for $d=2k$ and $2k-1$. The de Rham cohomology of $\expl \rm X$ coincides with the ordinary cohomology of $ X$ with real coefficients; see \cite[Corollary 4.2]{dre}, and similarly, $H^*(\expl (\mathrm X\setminus \bar N))=H^*(X\setminus \bar N,\mathbb R)$. As argued above, $H^*(\expl (\mathrm X\setminus \bar N) )$ can also be computed as the homology of the chain complex $\Omega^*(\expl (\mathrm X\setminus N))$ comprised of differential forms on $\mathrm X$ up to the equivalance relation of agreeing on the closed subset $\expl (\mathrm X\setminus N)$. We then have the following short exact sequence for relative de Rham cohomology
\[ \Omega^*(\expl \mathrm X, \expl (\mathrm X\setminus N))\longrightarrow \Omega^*(\expl X)\longrightarrow\Omega^*(\expl(\mathrm X\setminus N) )\]
and  \cite[Corollary 4.2]{dre} then also implies the equivalence of relative de Rham cohomology with ordinary relative cohomology.

\[H^*(\expl \mathrm X, \expl (\mathrm X\setminus N))=H^*( X,  X\setminus N ;\mathbb R)\]
with the lefthand side being de Rham cohomology, and the righthand side being usual cohomology with real coefficients. As $N$ retracts onto $V_0$, we have $H^d(X,X\setminus N)=H^d(X,X\setminus V_0)=H_{4k-d}^{BM}(V_0)$, which vanishes for $d=2k$ and $2k-1$, because the real dimension of $V_0$ is $2k$. So, we conclude that $H^d(\expl \mathrm X, \expl O)=0$ for $d=2k$ and $(2k+1)$.

We can reach the same conclusion for any smooth logarithmic modification $\rm X'\longrightarrow X$. Because $V_0$ is the image of a combinatorially flat map, its inverse image $V'$ in $\rm X'$ still has dimension bounded by $(2k-1)$, and the stratum-preserving  retraction of $N$ and $\bar N$ onto $V_0$ lifts to a stratum preserving retraction of the inverse images,  $N'$ and $\bar N'$ onto $V_0'$. So, as above, we can conclude that $H^d(\expl \mathrm X', \expl (\mathrm X'\setminus N'))=0$ for $d=2k$ and $(2k+1)$. 

The above vanishing implies the analogous vanishing of $\rh^d(\expl \mathrm X, \expl (\mathrm X\setminus N))$, because $\rh^d(\expl \mathrm X, \expl (\mathrm X\setminus N))$ coincides with limit of $H^d(\expl \mathrm X', \expl (\mathrm X'\setminus N'))$ over such smooth refinements. To see this, note that refined cohomology of $\expl \rm X$ coincides with taking the limit of the de Rham cohomology of $\expl \rm X$ over all refinements of $\rm  X$. Not every refinement corresponds to a smooth logarithmic modification of $\rm X$, but each refinement has a further refinement, that is connected by a family of refinements to $\expl \rm X'$ for $\rm X'$ a smooth logarithmic modification of $\rm X$. Invariance in families; \cite[Proposition 11.4]{dre}, then implies the required vanishing result for an arbitrary refinement. We can then conclude Claim \ref{set removal}, completing the proof of Lemma \ref{free generation}.

\end{proof}

\begin{prop}[star product using refined cohomology]\label{exploded star} Suppose that  $\ex V\in \Lag{\ex A,\ex X}$ and $\ex W\in \Lag{\ex X  \ex B ,\ex C}$. Then,  there exists a unique lagrangian correspondence 
\[\ex V\star_{\ex X}\ex W\in \Lag{\ex A  \ex B,\ex C}\]
satisfying the conditions below.  

Consider the following maps.
\[\begin{tikzcd} (\ex A^-  \ex X)  (\ex X^-  \ex B^-  \ex C)&\lar{\iota} \ex A^-  \ex X  \ex B^-  \ex C\rar{\pi} & \ex A^-  \ex B^-  \ex C\end{tikzcd}\]
For each choice of Poincare dual $PD(\ex V\times \ex W)$, supported in a sufficiently small refined neighbourhood $U$ of $\ex V\ex W$, 
\[\int_{\ex V\star_{\ex X}\ex W} \theta=\int_{\ex A^-\ex B^-\ex C}\theta \wedge   \pi_!\iota^*PD(\ex V\times \ex W)\]
for each compactly supported refined form $\theta$ on $\ex A^-\ex B^-\ex C$ that is closed when restricted to $\pi \iota^{-1} U$. 

In particular, 

\[ \pi_! \iota^*PD(\ex V\times \ex W)=PD(\ex V\star_{\ex X}\ex W) \in \rh^*(\ex A^-\ex B^-\ex C, \ex A^-\ex B^-\ex C\setminus \pi \iota^{-1} U) \]


\end{prop}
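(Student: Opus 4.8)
The plan is to imitate the proof of the star product of lagrangian cycles in the smooth setting, replacing ordinary intersection theory by refined Poincar\'e duality on exploded manifolds together with the free generation statement of Lemma \ref{free generation}.

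\textbf{Step 1: the support.} First I would identify the expected support. By the corollary on compositions of logarithmically proper subsets, the set-theoretic composition $\ex V\circ_{\ex X}\ex W\subset\ex A^-\ex B^-\ex C$ is logarithmically proper and isotropic; and by the properness argument of Lemma \ref{isotropic} (using that $\ex V$ is proper over $\ex A^-$ and $\ex W$ over $(\ex X\ex B)^-$) it is proper over $\ex A^-\ex B^-$. Hence its irreducible components of complex dimension $\tfrac12\dim(\ex A\ex B\ex C)$ are lagrangian subvarieties $\ex V_1,\dots,\ex V_r$ defining correspondences in $\Lag{\ex A\ex B,\ex C}$, and every other component has strictly smaller dimension.

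\textbf{Step 2: the pushed-forward form.} Next I would study $\alpha:=\pi_!\iota^*PD(\ex V\times\ex W)$, where $PD(\ex V\times\ex W)$ is a closed refined form on $(\ex A^-\ex X)\times(\ex X^-\ex B^-\ex C)$ supported in an arbitrarily small refined neighbourhood $U$ of $\ex V\times\ex W$. Since pullback and pushforward of refined forms commute with $d$, the form $\alpha$ is closed, and replacing $PD(\ex V\times\ex W)$ by another representative changes $\alpha$ by an exact form, so its class is well defined. The set $\iota^{-1}U$ is a refined neighbourhood of the fibre product $\ex V\times_{\ex X}\ex W$, on which $\pi$ is proper (again because $\ex V$ is proper over $\ex A^-$), so $\pi_!$ is defined and $\alpha$ is supported in $\pi\iota^{-1}U$, an arbitrarily small refined neighbourhood of $\ex V\circ_{\ex X}\ex W$. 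A degree count (the Poincar\'e dual of a lagrangian in a space of complex dimension $d$ has degree $d$, $\iota^*$ preserves degree, and $\pi_!$ lowers degree by $\dim_{\mathbb R}\ex X$) shows $\alpha$ has degree $\dim_{\mathbb C}(\ex A\ex B\ex C)$.

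\textbf{Step 3: extracting the lagrangian correspondence.} Then I would apply Lemma \ref{free generation} to the logarithmically proper isotropic set $\ex V\circ_{\ex X}\ex W\subset\ex A^-\ex B^-\ex C$: after shrinking $U$ to a suitable refined subneighbourhood, ${}^{r}H^{\dim_{\mathbb C}(\ex A\ex B\ex C)}$ of the corresponding pair is freely generated by $PD(\ex V_1),\dots,PD(\ex V_r)$, so $[\alpha]=\sum_i m_i\,PD(\ex V_i)$ for unique real numbers $m_i$. That the $m_i$ are integers will follow by restricting to a dense Zariski-open smooth locus of $\ex V_i$ lying off the other components and off all divisors: there $\ex V\times_{\ex X}\ex W$ is cut out as an ordinary, generically finite intersection of smooth cycles and $m_i$ is the resulting signed point count, exactly as in the smooth star product and the computations of Examples \ref{HSDc} and \ref{HSD2c} (equivalently, $m_i$ agrees with the integer coefficient produced by the log Chow ring star product). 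I then define $\ex V\star_{\ex X}\ex W:=\sum_i m_i\ex V_i\in\Lag{\ex A\ex B,\ex C}$, so that by construction $PD(\ex V\star_{\ex X}\ex W)=[\alpha]=\pi_!\iota^*PD(\ex V\times\ex W)$ in ${}^{r}H^{*}(\ex A^-\ex B^-\ex C,\ \ex A^-\ex B^-\ex C\setminus\pi\iota^{-1}U)$, which is the ``in particular'' assertion. For the integral identity, observe that for $\theta$ compactly supported and closed on $\pi\iota^{-1}U$, Stokes' theorem makes $\int\theta\wedge\beta$ depend only on the class of $\beta$ among closed refined forms supported in $\pi\iota^{-1}U$; hence $\int_{\ex A^-\ex B^-\ex C}\theta\wedge\alpha=\sum_i m_i\int_{\ex A^-\ex B^-\ex C}\theta\wedge PD(\ex V_i)=\sum_i m_i\int_{\ex V_i}\theta=\int_{\ex V\star_{\ex X}\ex W}\theta$ by the defining property of $PD(\ex V_i)$.

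\textbf{Uniqueness and the main obstacle.} For uniqueness, if $\ell$ is another lagrangian correspondence satisfying the stated identities, then $PD(\ell)$ pairs with every admissible $\theta$ exactly as $[\alpha]$ does; since the Poincar\'e duals of distinct lagrangian subvarieties are linearly independent in the relevant refined cohomology (Lemma \ref{free generation}) and a lagrangian correspondence is recovered from its Poincar\'e dual class on a small neighbourhood of its support, $\ell=\ex V\star_{\ex X}\ex W$. I expect the main obstacle to be Step 3: showing that the coefficients $m_i$ are genuinely geometric, that is, that the lower-dimensional components of $\ex V\circ_{\ex X}\ex W$ contribute nothing in degree $\dim_{\mathbb C}(\ex A\ex B\ex C)$ (this is where Lemma \ref{free generation} does the real work) and that the top-dimensional components occur with integer multiplicity, together with the bookkeeping needed to match refined neighbourhoods of $\ex V\times\ex W$, of $\ex V\times_{\ex X}\ex W$, and of $\ex V\circ_{\ex X}\ex W$ under $\iota^*$ and $\pi_!$.
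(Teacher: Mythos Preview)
Your proposal is correct and follows essentially the same route as the paper: reduce to lagrangian subvarieties, verify that $\pi_!\iota^*PD(\ex V\times\ex W)$ is defined and supported near $\ex V\circ_{\ex X}\ex W$, invoke Lemma~\ref{free generation} to write the class uniquely as $\sum m_i\,PD(\ex V_i)$, and deduce the integral identity and uniqueness by pairing.

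Two small differences in execution are worth noting. First, the paper spends a paragraph on the topological ``bookkeeping'' you flag at the end: it fixes complete metrics, uses the thickenings $\ex V_r,\ex W_r$ to show $\iota^{-1}U$ is proper over $\ex A^-\ex B^-\ex C$, and then runs an explicit compactness argument to show that for any refined neighbourhood $O$ of $\ex V\circ_{\ex X}\ex W$ one can choose $U$ with $\pi\iota^{-1}U\subset O$. Second, for integrality the paper does not restrict to a smooth locus of $\ex V\times_{\ex X}\ex W$ (which need not be transverse there); instead it tests against a single half-dimensional smooth surface $S$ meeting $\ex V\circ_{\ex X}\ex W$ transversely once on $\ex V_i$, and identifies $m_i$ with the virtual intersection number of $\pi^{-1}S$ with $\iota^{-1}(\ex V\times\ex W)$, which is integral. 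Your alternative route via the log Chow star product is also valid and gives the same conclusion.
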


\begin{proof}
Without losing generality, we can assume that $\ex V$ and $\ex W$ are lagrangian subvarieties, with the general case following by linearity. 

For $\pi!\iota^*PD(\ex V\times \ex W)$ to be defined using \cite[Theorem 9.2]{dre}, we require that the support of $\iota^*PD(\ex V\times \ex W)$ is proper over $\ex A^-\ex B^-\ex C$. Choose complete metrics on $\ex A$,  $\ex X$, $\ex B$ and $\ex C$. Use the notation $\ex V_r$ and $\ex W_r$  for the set of points of distance at most $r$ from $\ex V$ or $\ex W$ respectively. As $\ex V$ is proper over $\ex A$, we have that $\ex V_r$  is also proper over $\ex A$. We can choose our refined neighborhood $U$ of $\ex V\ex W$ to be contained in $\ex V_r \ex W_r$, and then $\iota^{-1}(U)$ is contained in $\ex V_r \ex B^-\ex C$, which is proper over $\ex A^-\ex B^-\ex C^-$. So, $\pi!\iota^*PD(\ex V\times \ex W)$ is well defined. 

We claim that,  for any refined neighborhood $O$ of $\pi\iota^{-1}\ex V\ex W:=\ex V\circ_{\ex X}\ex W\subset \ex A^-\ex B^-\ex C$, and compact subset $K\subset \ex A^-\ex B^-\ex C$, there exists a refined neigbourhood $U$ of $\ex V\ex W$ such that $\pi^{-1}O\supset \pi^{-1}K\cap \iota^{-1}(U)$.  To prove this claim,  note that $(\iota^{-1}\ex V_r\ex W_r)^c$ for $r>0$ forms an open cover of the complement of $\iota^{-1}\ex V\ex W$, and therefore an open cover of the compact set  $\iota^{-1}(\ex V_{1}\ex W_{1})\cap  \pi^{-1}(K\setminus O)$. It follows that $(\iota^{-1}\ex V_r\ex W_r)^c$ contains  $\pi^{-1}(\ex V_{1}\ex W_{1})\cap  \pi^{-1}(K\setminus O)$ for some $0<r<1$, and therefore $\iota^{-1}\ex V_r\ex W_r\cap \pi^{-1}K$ is contained in $O$. 
 
 As $\ex A^-\ex B^-\ex C$ has an exhaustion by compact subsets, it follows that for any refined open neighborhood  $O$ of $\ex V\circ_{\ex X}\ex W$, there exists an open neighborhood $U$ of of $\ex V\times \ex W$ such that $\iota^{-1}U\subset \pi^{-1}O$, and in particular, $\pi\iota^{-1}U\subset O$. So, by constructing $PD(\ex V\times \ex W)$ supported in $U$, we have that $\pi_!\iota^*PD(\ex V\times \ex W)$ is supported in $O$.

Lemma \ref{free generation} implies that, for $O$ small enough, $\rh^*(\ex A^-\ex B^-\ex C, \ex A^-\ex B^-\ex C\setminus O)$ is freely generated by the Poincare duals of the lagrangian subvarieties of  $\ex V\circ_{\ex X}\ex W$. So, in particular, there exists a unique $\ex V\star_{\ex X}\ex W\in \Lag{\ex A  \ex B,\ex C}\otimes\mathbb R$ such that 
\[PD(\ex V\star_{\ex X}\ex W)=\pi_!\iota^*PD(\ex V\times \ex W) \text{ in }\rh^*(\ex A^-\ex B^-\ex C, \ex A^-\ex B^-\ex C\setminus O)\ .\]
This implies that, \[\int_{\ex V\star_{\ex X}\ex W} \theta=\int_{\ex A^-\ex B^-\ex C}\theta \wedge   \pi_!\iota^*PD(\ex V\times \ex W)\]
for each compactly supported refined form $\theta$ on $\ex A^-\ex B^-\ex C$ that is closed when restricted to $\pi \iota^{-1} U$. Moreover, choosing $\theta$ to locally agree with the Poincare dual of a half-dimensional smooth surface $S$ transversely intersecting $\ex V\circ_{\ex X}\ex W$ exactly once in chosen Lagrangian subvariety, we obtain the weight associated to that subvariety, and therefore this condition uniquely specifies $\ex V\star_{\ex X}\ex W$. Moreover, this weight is integral because it coincides with the virtual intersection of $\pi^{-1}S$ with $\iota^{-1}(\ex V\times \ex W)$, so $\ex V\star_{\ex X}\ex W\in \Lag{\ex A\ex B,\ex C}$.

\end{proof}

\begin{prop}The star product is associative. In particular, 
\[(\ex U\star_{\ex X} \ex V)\star_{\ex Y}\ex W=\ex U\star_{\ex X}(\ex V\star_{\ex Y}\ex W)\ .\]
\end{prop}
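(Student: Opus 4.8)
The plan is to mirror the proof of associativity for the star product of local homology classes, Lemma \ref{asp}, replacing the Borel--Moore operations $\iota^!$ and $\pi_*$ there by the operations $\iota^*$ and $\pi_!$ on refined differential forms, and invoking the compatibility of these operations with products and with cartesian squares recorded in \cite[Section 9]{dre}. Since $PD$ is additive and $\pi_!$, $\iota^*$ and $\times$ are linear, by bilinearity of $\star_{\ex X}$ it suffices to treat the case in which $\ex U\in\Lag{\ex A,\ex X}$, $\ex V\in\Lag{\ex X\ex B,\ex Y}$ and $\ex W\in\Lag{\ex Y,\ex C}$ are lagrangian subvarieties. By Proposition \ref{exploded star} together with Lemma \ref{free generation}, each of the two bracketings is the unique lagrangian cycle whose Poincar\'e dual equals a prescribed class in $\rh^{*}\!\left(\ex A^-\ex B^-\ex C,\ \ex A^-\ex B^-\ex C\setminus O\right)$ for $O$ a sufficiently small refined neighbourhood of the set-theoretic triple composition; so it is enough to prove that these two classes agree, equivalently that the two bracketings pair identically with every compactly supported refined test form on $\ex A^-\ex B^-\ex C$ that is closed near that composition.

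First I would choose Poincar\'e dual forms $PD(\ex U)$, $PD(\ex V)$, $PD(\ex W)$ supported in refined neighbourhoods small enough that --- using the properness of $\ex U$ over $\ex A$ and of $\ex V$ over $\ex X\ex B$, exactly as in the proof of Proposition \ref{exploded star} --- every pushforward occurring below is defined, and set $\Theta:=PD(\ex U)\times PD(\ex V)\times PD(\ex W)$, a refined form on $\ex M\ex X^2\ex Y^2$ with $\ex M:=\ex A^-\ex B^-\ex C$. As in the proof of Lemma \ref{asp}, consider the commutative diagram built from two cartesian squares
\[\begin{tikzcd} \ex M\ex Y^2 & \ex M\ex Y\lar{\iota'_Y}\rar{\pi'_Y} & \ex M
\\ \ex M\ex X\ex Y^2\uar{\pi_X} \dar{\iota_X} & \ex M\ex X\ex Y\uar\lar\rar\dar & \ex M\ex X\uar{\pi'_X} \dar{\iota'_X}
\\ \ex M\ex X^2 \ex Y^2 & \ex M\ex X^2 \ex Y\lar{\iota_Y}\rar{\pi_Y} & \ex M\ex X^2 \end{tikzcd}\]
Base change for refined forms in the two cartesian squares, together with functoriality of $\iota^*$ and $\pi_!$, then gives
\[(\pi'_X)_!(\iota'_X)^*(\pi_Y)_!\,\iota_Y^*\,\Theta=(\pi'_Y)_!(\iota'_Y)^*(\pi_X)_!\,\iota_X^*\,\Theta\]
as refined cohomology classes in a small refined neighbourhood of the triple composition. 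Using compatibility of $\iota^*$ with products in the evident cartesian square, one rewrites $\iota_Y^*\Theta=PD(\ex U)\times\iota^*\!\left(PD(\ex V)\times PD(\ex W)\right)$, and then compatibility of $\pi_!$ with products gives $(\pi_Y)_!\iota_Y^*\Theta=PD(\ex U)\times PD\!\left(\ex V\star_{\ex Y}\ex W\right)$, so that applying $(\pi'_X)_!(\iota'_X)^*$ produces $PD\!\left(\ex U\star_{\ex X}(\ex V\star_{\ex Y}\ex W)\right)$; symmetrically, the right-hand side of the displayed identity is $PD\!\left((\ex U\star_{\ex X}\ex V)\star_{\ex Y}\ex W\right)$. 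Since by Lemma \ref{free generation} the top-degree supported refined cohomology is freely generated by the Poincar\'e duals of the lagrangian components of the set-theoretic composition, equality of these two classes forces equality of the two lagrangian cycles.

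The main obstacle I anticipate is not the formal identity but the bookkeeping of supports. Unlike the Borel--Moore computation of Lemma \ref{asp}, each equality here holds only modulo refined forms supported in an arbitrarily small refined neighbourhood, and the shrinking of neighbourhoods needed to compute the inner star product $\ex V\star_{\ex Y}\ex W$ forces a matching shrinking of the neighbourhood in which the outer identity is asserted; one must check that these choices can be made compatibly and that after every shrinking the relevant supports stay proper over the base, which is where the hypotheses $\ex U\in\Lag{\ex A,\ex X}$ and $\ex V\in\Lag{\ex X\ex B,\ex Y}$ are used, as in the proof of Proposition \ref{exploded star}. Granting this, the remaining steps --- commuting $\iota^*$, $\pi_!$ and $\times$ with one another via the properties of refined differential forms in \cite[Section 9]{dre}, and passing from the cohomology identity to the cycle identity --- are routine and run exactly parallel to the proof of Lemma \ref{associative}.
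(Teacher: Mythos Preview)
Your proposal is correct and follows essentially the same approach as the paper: both mirror the proof of Lemma \ref{asp}, replacing $\iota^!,\pi_*$ by $\iota^*,\pi_!$ on refined forms, using the same two-cartesian-square diagram and the compatibility results from \cite[Section 9]{dre}, and then invoking Proposition \ref{exploded star} together with free generation (Lemma \ref{free generation}) to pass from the Poincar\'e-dual identity to the cycle identity. The paper's version is phrased with $\ex W\in\Lag{\ex Y\ex C,\ex D}$ and $\ex M=\ex A^-\ex B^-\ex C^-\ex D$, but this is only a cosmetic difference, and your remarks about the support bookkeeping match the paper's concluding sentence that the equalities hold in refined cohomology supported on an arbitrarily small neighbourhood of $\ex U\circ_{\ex X}\ex V\circ_{\ex Y}\ex W$.
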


\

\begin{proof}

The proof follows the proof of Lemma \ref{asp}, except now using properties of refined cohomology instead of local homology classes. 

Suppose that $\ex U\in \Lag{\ex A,\ex X}$, $\ex V\in\Lag{\ex X\ex B,\ex Y}$ and $\ex W\in\Lag{\ex Y\ex C,\ex D}$. Let $\ex M=\ex A^-\ex B^-\ex C^-\ex D$,  and consider the following commutative diagram, containing two fibre product diagrams.
\[\begin{tikzcd} \ex M\ex Y^2 & \ex M\ex Y\lar{\iota'_{\ex Y}}\rar{\pi'_{\ex Y}} & \ex M
\\ \ex  M\ex X\ex Y^2\uar{\pi_X} \dar{\iota_{\ex X}} & \ex M\ex X\ex Y\uar\lar\rar\dar & \ex M\ex X\uar{\pi'_{\ex X}} \dar{\iota'_{\ex X}}
\\ \ex M\ex X^2 \ex Y^2 &\ex  M\ex X^2 \ex Y\lar{\iota_{\ex Y}}\rar{\pi_{\ex Y}} & \ex M\ex X^2 \end{tikzcd}\]
As pushforward and pullbacks of refined differential forms are compatible with fibre products, \cite[Lemma 9.3]{dre}, the above diagram implies that 
\begin{equation}\label{ncfp} (\pi_{\ex X}')_!(\iota_{\ex X}')^*(\pi_{\ex Y})_!\iota_{\ex Y}^*PD(\ex U\ex V\ex W)=(\pi_{\ex Y}')_!(\iota_{\ex Y}')^*(\pi_{\ex X})_!\iota_{\ex X}^*PD(\ex U\ex V\ex W)\ .\end{equation}
Similarly, the following  cartesian diagram,  
\[\begin{tikzcd}\ex M\ex X^2 \ex Y^2\dar & \ex M\ex X^2 \ex Y\lar{\iota_{\ex Y}}\rar{\pi_{\ex Y}}\dar & \ex M\ex X^2\dar 
\\ \ex B \ex Y^2 \ex C & \ex B\ex Y\ex C\lar{\iota}\rar{\pi} & \ex B\ex C\end{tikzcd}\]
and the compatibilty of Poincare duals with fibre products, \cite[Lemma 9.5]{dre}   implies that
\[\iota_{\ex Y}^*PD(\ex U\ex V\ex W)=PD(\ex U)\wedge \iota^*PD(\ex V\ex W))\]
and then, using Proposition \ref{exploded star} we get that
\[(\pi_{\ex Y})_!\iota_{\ex Y}^*PD(\ex U\ex V\ex W)=PD(\ex U)\wedge (\pi_!\iota^*PD(\ex V\ex W))=PD(\ex U)\wedge PD(\ex V\star_{\ex Y}\ex W )\]
so, 
\[PD(\ex U\star_{\ex X}(\ex V\star\ex W))=(\pi_{\ex X}')_!(\iota_{\ex X}')^*(\pi_{\ex Y})_!\iota_{\ex Y}^*PD(\ex U\ex V\ex W)\ .\]
Similarly, we get that 
\[PD((\ex U\star_{\ex X}\ex V)\star\ex W)=(\pi_{\ex Y}')_!(\iota_{\ex Y}')^*(\pi_{\ex X})_!\iota_{\ex X}^*PD(\ex U\ex V\ex W)\]
The above equalities hold in refined cohomology supported on an arbitrarily small neighborhood of $\ex U\circ_{\ex X}\ex V\circ_{\ex Y}\ex W$, so Proposition \ref{exploded star} and  Equation (\ref{ncfp}) imply our required equality. 
\[(\ex U\star_{\ex X} \ex V)\star_{\ex Y}\ex W=\ex U\star_{\ex X}(\ex V\star_{\ex Y}\ex W)\]

\end{proof}

\section{Gromov--Witten invariants of log Calabi--Yau 3-folds} \label{GW section}

Let $(\rm X,\Omega)$ be a compact log smooth  Calabi--Yau 3-fold. In other words, $\rm X$ is compact, log smooth, and 3 dimensional, and  $\Omega$ is a non-vanishing holomorphic section of $\bigwedge^{(3,0)}T^*\rm X$, where $T^*\rm X$ indicates the logarithmic cotangent bundle. A simple example of a log Calabi--Yau 3-fold is given by a toric 3-fold with the log structure determined by its toric boundary divisor, and $\Omega=\frac{dz_1}{z_1}\wedge \frac{dz_2}{z_2}\wedge \frac{dz_3}{z_3}$. A more interesting example is provided by a non-toric blowup of this, analogous to Example \ref{ntbc}.   

Gromov--Witten invariants of $\rm X=(X,D_{\rm X})$ are defined using the moduli stack of holomorphic curves in the logarithmic or exploded categories, however we can gain some understanding by considering the moduli stack of smooth holomorphic curves in $X$ whose intersection with $D_X$ consists of some finite collection of marked points. Consider a holomorphic family of such curves parametrised by $F$.
\begin{equation}\label{smooth family}\begin{tikzcd}C\dar{\pi}\rar{f}& X
\\ \uar[bend left]{s_k} F\end{tikzcd}\end{equation}
We have that $f^*\Omega$ is a closed meromorphic $3$--form on the domain $C$,  with poles at the image of the marked point sections $s_k(F)$. By taking the residue of this meromorphic $3$--form at a marked point section, we can define a holomorphic 2-form $Res_{s_k}\Omega$ on $F$ such that
\[Res_{s_k}\Omega=\pi_! d\rho_k\wedge \Omega \]
where $\rho_k:C\longrightarrow \mathbb C$ is any smooth function that is $\frac i{2\pi}$ on $s_k(F)$ and $0$ on all other marked point sections. This 2-form does not depend on the choice of $\rho_k$ because $\Omega$ and the projection $\pi$ are both holomorphic. In particular, given any other choice $\rho'_k$ of $\rho_k$, we have that $\pi_!(\rho_k-\rho_k')\Omega=0$, because we are integrating a $(3,0)$--form along holomorphic fibres. Then, as exterior derivatives commute with integration along the fibre, we get that $\pi_!d\rho_k\wedge\Omega=\pi_!d\rho_k'\wedge \Omega$.   Similarly we have that 
\[\sum_k Res_{s_k}\Omega=0\ .\]

In this section, we will generalise the above argument and interpret $\sum_kRes_{s_k}\Omega$ as the pullback of a natural holomorphic symplectic form from a natural evaluation stack. So, the image of the moduli stack of holomorphic curves will be isotropic. In this setting, the virtual dimension of the moduli space is the number of marked points, and half the dimension of the natural evaluation stack. So, the pushforward of the virtual fundamental class will be a lagrangian cycle.

\begin{remark}
Something can also be said in the case of holomorphic curves with boundary. In this case we require $\rho_k$ to vanish on the boundary $\partial C$ of our holomorphic curves, and Stokes's theorem gives that $\sum_k Res_{s_k}\Omega=-\pi_! \Omega\rvert_{\partial C}$. If, for example, the real part of $\Omega$ vanishes on $\partial C$ because these curves have boundary on a special lagrangian within $\rm X$, this implies that the real part of $\sum_k Res_{s_k}\Omega$ vanishes. As the real part of a holomorphic symplectic form is a real symplectic form, we still have the interpretation that the image of the moduli stack is real isotropic. 
\end{remark}

\subsection{Holomorphic symplectic evaluation space}

\

Suppose that $(X,D,\Omega)$ is a log Calabi--Yau 3-fold. In this section, we construct a holomorphic symplectic evaluation space for the moduli stack of holomorphic curves in $(X, D)$ with specified contact with the divisor $D$. 

The easiest case to describe is the moduli space of curves with labeled marked points, each of which has simple contact with a component $D_\nu$ of the divisor. In this case, there is an evaluation map at such a marked point to the log scheme $\mathrm X_\nu:=(D_\nu,D\cap D_\nu)$. Recall, from Remark \ref{stratumlogstructure}, that we don't have an inclusion of $\rm X_\nu$ into $\rm X$, and instead the log structure of $\rm X$ restricted to $D_\nu$ is actually a $\rm p$--bundle over $X_\nu$
\[\begin{tikzcd} \mathrm S_\nu\rar{\iota}\dar{\pi} & \mathrm X
\\ \mathrm X_\nu \end{tikzcd}\]
The residue of $\Omega$ at $\rm X_\nu$ is a holomorphic symplectic form $\Omega_\nu$ on $\rm X_\nu$ satisfying the following condition. Locally, where $D_\nu$ is the vanishing of a coordinate $z$, 
\[\frac{dz}{z}\wedge\pi^*\Omega_\nu=\iota^*\Omega\ .\]
Given a smooth family of curves in the form (\ref{smooth family}), along with a specified marked point section $s_k$ with simple contact with $D_{\nu(k)}$, we have a natural evaluation map $ev_k:F\longrightarrow \mathrm X_{\nu(k)}$, and we have that
\[ev_k^*\Omega_{\nu(k)}=Res_{s_k}\Omega\ .\]
Similarly, if we have contact of order $d_k$ with $X_{\nu(k)}$, we have
\[ ev_k^*d_k\Omega_{\nu(k)}=Res_{s_k}\Omega\ .\]
Let $\omega$ be the holomorphic symplectic form on the product $\prod_k(X_{\nu(k)},d_k\Omega_{\nu(k)})$, and let 
\[ev:=\prod_k ev_k: F\longrightarrow \prod_k \mathrm X_{\nu(k)}\]
be the product evaluation map. We have
\[ev^*\omega = \sum_k Res_{s_k}\Omega=0 \ .\]
The above equation still holds for log smooth families of logarithmic holomorphic curves with analogous contact data, once we have a suitable generalisation of $Res_{s_k}\Omega$.

 If we stay within the category of log schemes, it becomes harder to describe the evaluation space when curves have points contacting multiple divisors. One fix is to choose a logarithmic modification $\rm X'$ of $\rm X$ so that the moduli space of curves under study have each marked point only contacting a single divisor. The moduli stack of curves in $\rm X'$ can be regarded as a logarithmic modification of the moduli stack of curves in $\rm X$, so we then gain information about the original moduli stack of curves in $\rm X$ by studying the curves in $\rm X'$. This approach provides appropriate holomorphic symplectic evaluation spaces that are defined only up to logarithmic modification; they also suffer the defect that evaluation maps to these evaluation spaces are defined on the moduli stack of curves in a suitable logarithmic modification $\rm X'$ of $\rm X$. With that said, there is an appropriate evaluation stack playing the role of $\prod_k (\mathrm X_{\nu(k)},d_k\Omega_{\nu(k)})$.  Instead of sticking with log schemes, we will work with exploded manifolds.

Let $\ex X$ be an exploded Calabi--Yau 3-fold (such as $\expl \mathrm X$) such that $\totb{\ex X}$ is a cone around some point $0\in\totb{\ex X}$.\footnote{We make the assumption that $\totb{\ex X}$ is a cone around some point $0$ for convenience. All arguments still work for the general case that $\ex X$ is an Calabi--Yau 3-fold, but the construction of the evaluation space is less convenient to describe.}

Consider a family of $\C\infty 1$ exploded curves\footnote{$\C\infty 1$ indicates a level of regularity, which plays the role of `smooth' for exploded manifolds. In particular, we are allowing the possibility that these curves are not holomorphic. See \cite[Section 7 and Definition 11.1]{iec} } in $\ex X$.
\[\begin{tikzcd}\ex C\rar{\hat f}\dar {\pi} & \ex X
\\ \ex F\end{tikzcd}\]

If this family of curves has marked ends, in place of a section $s_k$, we have a subset $\ex C_k\subset \ex C$ with a canonical free $\mathbb C^*\e{[0,\infty)}$--action such that $\pi:\ex C_k\longrightarrow \ex F$ is invariant.  This $\ex C_k$ is a union of strata of of $\ex C$, and the $\ex C^*\e{[0,\infty)}$ action restricts to each stratum.   In the case that this family of curves is the explosion of some family of log curves, $\ex C_k$ is the explosion of the image of a marked point section\footnote{It is also inaccurate to talk about a marked point section in the case of a family of logarithmic curves. There is a section on the level of underlying schemes, but the image of this section has the log structure of a $\mathrm p$--bundle over the base, and there is accordingly no `marked point section' as a logarithimic map. }. We will refer to $\ex C_k\longrightarrow \ex F$ as a $\mathbb C^*\e{[0,\infty)}$--bundle. In the case that $\ex X=\expl \rm X$, fibres of this bundle are always isomorphic to  $\et 1{(0,\infty)}$, but more generally, it is possible that fibres are isomorphic to  $\ex T=\et 1{\mathbb R}$. The map $\hat f$ restricted to $\ex C_k$ lands in some stratum  $\ex  S_k\subset \ex X$.
\[\begin{tikzcd}\ex C\supset\ex C_k\rar{\hat f\rvert_{\ex C_k}}\dar {\pi} & \ex S_k\subset \ex X
\\ \ex F\end{tikzcd}\]
Restricted to each fibre $\totb f:\totb{\ex C_k}\longrightarrow \totb{\ex X}$ gives a map of $(0,\infty)$ or $\mathbb R$ into $\totb{\ex X}$. Using that $\totb{\ex X}$ is a cone over $0\in\totb{\ex X}$, we can identify the derivative of this map with an integral vector $\nu(k)$ in $\totb{\ex X}$. When $\ex X=\expl(X,D)$, $\nu(k)$ encodes the contact order of this family of curves with the divisor $D\subset X$. As this stratum  $\ex S_k\subset \ex X$ depends on $\nu(k)$, we will use the notation  $\ex S_k=\ex S_{\nu(k)}$.  There is then a canonical exploded manifold $\ex X_{\nu(k)}$, constructed in \cite[Section 3]{gfgw} and a canonical projection
\[\pi_{\nu(k)}: \ex S_{\nu(k)}\longrightarrow \ex X_{\nu(k)}\]
which is a submersion with fibres $\et 1{(0,\infty)}$ or $\ex T$. This projection also satisfies the following universal property:
 given any $\mathbb C^*\e{[0,\infty)}$--bundle $\ex A\longrightarrow \ex B$ with a map $h:\ex A\longrightarrow \ex S_\nu$, whose tropical part has derivative $\nu$ in fibre directions, there exists a unique commutative diagram
\[ \begin{tikzcd}\ex A\dar \rar{h} & \ex S_\nu\dar{\pi_k}
\\ \ex B\rar{ev} &\ex X_{\nu}\end{tikzcd}\]
So,   there is an  evaluation map $ev_k$ making the following diagram commute.
\[\begin{tikzcd}\ex C_k\rar{\hat f\rvert_{\ex C_k}}\dar {\pi} & \ex S_{\nu(k)}\dar{\pi_{\nu(k)}}
\\ \ex F\rar{ev_k} & \ex X_{\nu(k)}\end{tikzcd}\]
So long as $\nu$ is a primitive integral vector, the space $\ex X_{\nu}$ can be regarded as a quotient of $\ex S_\nu\subset \ex X$ by the $\mathbb C^*\e{[0,\infty)}$ action with weight $\nu$, however this construction must be treated with some care as $\mathbb C^*\e{[0,\infty)}$ is not a group. Instead this action extends uniquely to an action of  $\ex T=\mathbb C^*\e{\mathbb R}$ on the tropical completion\footnote{See \cite[Section 7]{vfc} for tropical completion.} of $\ex S_\nu$, and $\ex X_{\nu}$ is the quotient by this $\ex T$--action;  for details see \cite[Section 3]{gfgw}.
 
When $\nu(k)$ is a multiple $d_k$ of a primitive  integral vector, this evaluation map $ev_k$ factors through a map $\mathfrak{ev}_k$ to an evaluation stack $\mathcal X_{\nu(k)}$, where the map $\mathcal X_{\nu(k)}\longrightarrow \ex X_{\nu(k)}$ has  fibres the classifying stack of the cyclic group $\mathbb Z_{d_k}$ of order $d_k$. One complication is that this evaluation stack is not the classifying stack for appropriate maps of $\mathbb C^*\e{[0,\infty)}$--bundles, because this classifying stack is not sufficiently well behaved. Instead, it is the classifying stack for maps of $\ex T$--bundles into the tropical completion of $\ex S_{\nu(k)}$, and is the quotient of this tropical completion by a $\ex T$--action with weight $\nu(k)$. Our evaluation map $ev_k$ then factorises as follows.
\[\begin{tikzcd}\ex F\rar{\mathfrak{ev}_k} \ar{dr}[swap]{ev_k}&\mathcal X_{\nu(k)}\dar
\\ & \ex X_{\nu(k)}\end{tikzcd}\]
To construct $\mathfrak {ev}$, one extends a $\mathbb C^*\e{[0,\infty)}$--bundle to a $\ex T$--bundle using a relative version of tropical completion, then extends the original map to $\ex S_{\nu(k)}$ to a map of this $\ex T$--bundle to the tropical completion, obtaining a family in the stack $\mathcal X_{\nu(k)}$.

\begin{example}Consider $\mathbb CP^3$, with its toric boundary divisor. We have coordinates $\tilde z_1$, $\tilde z_2$, $\tilde z_3$ on $\ex X=\expl \mathbb CP^3$ taking values in $\mathbb C^*\e{\mathbb R}$. Corresponding to the vector $\nu=(1,1,2)$, we have the stratum $\ex S_\nu\subset \ex X$ where $\totb{z_i}>0$, whose smooth part is the point $(0,0,0)\in \mathbb CP^3$. To construct the evaluation space $\ex X_\nu$, sit this stratum $\ex S_\nu$ inside its tropical completion,  $\ex T^3$, which has the same coordinates $\tilde z_i$. Then  define $\ex X_\nu$ as the quotient of $\ex T^3$ by the $\ex T$--action of weight $(1,1,2)$, and let $\pi_\nu$ be the restriction of this quotient map to our statum. So, $\ex X_\nu$ is isomorphic to $\ex T^2$, and the projection from our stratum given by functions $\tilde z_2/\tilde z_1$ and $\tilde z_3/\tilde z_1^2$. 
\[\begin{tikzcd} \ex S_\nu\rar[hook]\ar{dr}{\pi_\nu} & \ex T^3\dar{(\tilde z_2/\tilde z_1,\tilde z_3/\tilde z_1^2)}
\\ & \ex T^2=\ex X_\nu
\end{tikzcd}\]

For a multiple  $\nu'$ of $\nu$, such as $(3,3,6)$, we define $\ex X_{\nu'}=\ex X_{\nu}$, but we also have the evaluation stack $\mathcal X_{\nu}$ which is the quotient of $\ex T^3$ by the $\ex T$--action of weight $(3,3,6)$.
\end{example}

The construction of $\mathcal X_{\nu}$ as a quotient gives a canonical map $\varpi_\nu: \ex S_\nu\longrightarrow \mathcal X_{\nu}$ factorising $\pi_\nu$.  

\[\begin{tikzcd} \ex C\dar \rar{\hat f} & \ex S_{\nu(k)}\dar[swap]{\varpi_{\nu(k)}}\ar[bend left]{dd}{\pi_{\nu(k)}} 
\\ \ex F\rar{\mathfrak{ev}_k} \ar[swap]{dr}{ev_k}&\mathcal X_{\nu(k)}\dar
\\ & \ex X_{\nu(k)}\end{tikzcd}\]

We can define a holomorphic symplectic form $\omega_{\nu}$ on $\mathcal X_{\nu}$ as follows:
\[\omega_{\nu}:=(\varpi_{\nu})_!d\rho\wedge \Omega \text{ on }\mathcal X_\nu\]
where $\rho$ is any smooth $\mathbb C$--valued function on a refinement of $\ex S_\nu$ that, restricted to each fibre, is $0$ where $\totb z$ is small, and $\frac i{2\pi}$ where $\totb z$ is sufficiently large (here, $z$ is a standard coordinate on a fibre of $\varpi_\nu$, which is isomorphic to either $\et 1{(0,\infty)}$ or $\ex T$). We will represent $\omega_{\nu}$ by a holomorphic symplectic form on the exploded manifold $\ex X_{\nu}$, which we will also call $\omega_{\nu}$. So that these two forms have the same pullback to $\ex F$, we define
\[\omega_{\nu}:=\abs\nu (\pi_\nu)_!d\rho\wedge\Omega\]
where $\nu$ is $\abs\nu$ times a primitive integral vector, so $\ex X_\nu$ is locally a $\abs\nu$--fold cover of $\mathcal X_\nu$. In other words,   $\omega_\nu$ on the stack $\mathcal X_\nu$ is the pullback of the form $\omega_\nu$ on the exploded manifold $\ex X_\nu$.

\begin{lemma}As defined above, $\omega_\nu$ does not depend on the choice of $\rho$, and is a holomorphic symplectic form on $\ex X_{\nu}$. 
\end{lemma}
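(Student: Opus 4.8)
The plan is to establish two things: first, independence of the choice of $\rho$, and second, that $\omega_\nu$ is closed and nondegenerate. For independence, the key is that $\rho$ enters only through $d\rho$, and two admissible choices $\rho, \rho'$ differ by a function $\rho - \rho'$ which is $0$ where $\totb z$ is small and $0$ where $\totb z$ is large on each fibre — hence compactly supported in the fibre direction of the $\ex T$- or $\et{1}{(0,\infty)}$-bundle $\pi_\nu$ (resp. $\varpi_\nu$). The crucial computation is that $(\pi_\nu)_!\bigl((\rho-\rho')\Omega\bigr) = 0$: this is the exploded-manifold analogue of the statement, used already in Section \ref{GW section} for the family $\rho_k$, that integrating a fibrewise-holomorphic top-degree form along holomorphic fibres vanishes. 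Here the fibres of $\pi_\nu$ are $1$-complex-dimensional and $\Omega$ is of type $(3,0)$ on $\ex S_\nu$, so its restriction to the fibres, multiplied by the function $\rho - \rho'$, integrates to zero; then, since $d$ commutes with $\pi_!$ for refined differential forms (\cite[Lemma 9.3]{dre}), $(\pi_\nu)_! d(\rho - \rho')\wedge \Omega = d\,(\pi_\nu)_!\bigl((\rho-\rho')\Omega\bigr) = 0$, giving $(\pi_\nu)_! d\rho\wedge\Omega = (\pi_\nu)_! d\rho'\wedge\Omega$. The same argument works verbatim with $\varpi_\nu$ in place of $\pi_\nu$, and the two normalizations agree because $\pi_\nu$ factors through $\varpi_\nu$ via a degree-$\abs\nu$ cover, which is exactly the factor $\abs\nu$ inserted in the definition.

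Next I would check that $\omega_\nu$ is closed. Since $\Omega$ is holomorphic and $d\Omega = 0$ (it is the logarithmic volume form on a log Calabi--Yau $3$-fold, a top logarithmic form hence closed), and since pushforward commutes with $d$, we get $d\omega_\nu = \abs\nu\,(\pi_\nu)_! d(d\rho \wedge \Omega) = \abs\nu\,(\pi_\nu)_!(-d\rho \wedge d\Omega) = 0$. I should also confirm $\omega_\nu$ is a refined differential $2$-form of type $(2,0)$: $d\rho\wedge\Omega$ is a $4$-form on $\ex S_\nu$, and integrating along the $1$-complex-dimensional ($2$-real-dimensional) fibres of $\pi_\nu$ lowers degree by $2$, landing in $2$-forms on $\ex X_\nu$; holomorphicity of $\omega_\nu$ follows from holomorphicity of $\Omega$ together with the fact that we may choose $\rho$ so that $\bar\partial\rho$ is supported in the fibre direction, and the $\bar\partial$-part integrates away for the same fibrewise-holomorphic reason (alternatively, one observes $\omega_\nu$ is closed and its pullback to any holomorphic test family is the residue $\sum Res_{s_k}\Omega$, which is holomorphic).

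The main obstacle will be nondegeneracy, which must be checked in the logarithmic tangent bundle of the exploded manifold $\ex X_\nu$. The strategy is to work in local coordinates: near a point of $\ex S_\nu$ where the divisor $D$ is cut out by coordinates $z_1, \dots$ and the $\mathbb C^*\e{[0,\infty)}$-action has weight $\nu$, write $\Omega = \frac{dz_1}{z_1}\wedge\frac{dz_2}{z_2}\wedge\frac{dz_3}{z_3}$ times an invertible function (using the Calabi--Yau condition), pick $z$ a standard fibre coordinate with $\iota_{z\partial_z}$ generating the action, and compute $(\pi_\nu)_! d\rho\wedge\Omega$ explicitly as $\iota_{z\partial_z}\Omega$ (the residue along the fibre), using that $\int_{\text{fibre}} d\rho \wedge (\text{fibre part}) = $ evaluation of the jump of $\rho$, which is $1$. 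This exhibits $\omega_\nu = \abs\nu\,\iota_{z\partial_z}\Omega$ up to identifying the fibre direction with the $\nu$-weighted $\mathbb C^*$-orbit; by the logarithmic nondegeneracy of $\Omega$ (its top power is invertible times $\frac{dz_1}{z_1}\wedge\cdots\wedge\frac{dz_3}{z_3}$), contracting with the nonvanishing vector field $z\partial_z$ and descending to the quotient $\ex X_\nu$ produces a nondegenerate form, exactly as in the mechanism described in Remark \ref{stratumlogstructure} for building a holomorphic volume form on a stratum from $\iota_{z_i\partial_{z_i}}\omega$. I would finish by noting that the top exterior power of $\omega_\nu$ is, up to an invertible function, $\iota_{z\partial_z}\Omega$ wedged appropriately, which is nonvanishing precisely because $\Omega$ is, so $v \mapsto \iota_v\omega_\nu$ is an isomorphism of logarithmic tangent and cotangent bundles of $\ex X_\nu$.
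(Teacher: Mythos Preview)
Your approach is essentially the paper's: independence of $\rho$ via the chain-map property of $(\pi_\nu)_!$ together with vanishing of $(\pi_\nu)_!((\rho-\rho')\Omega)$ from fibrewise holomorphicity, and then closedness, holomorphicity, and nondegeneracy all deduced from the contraction identity $\pi_\nu^*\omega_\nu = \frac{1}{i}\iota_{\partial_{\theta_\nu}}\Omega$ (your $\abs\nu\,\iota_{z\partial_z}\Omega$, up to the $i/2\pi$ normalisation).

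Two small points where the paper is tighter. First, you do not address why $\omega_\nu$ is an honest form on $\ex X_\nu$ rather than merely a refined form: since $\rho$ lives on a refinement $\ex S'$ of $\ex S_\nu$, the pushforward \cite[Thm 9.2]{dre} only produces a form on a refinement of $\ex X_\nu$; the paper handles this by observing that near any $p\in\totb{\ex X_\nu}$ one can choose $\rho$ on a refinement whose projection is surjective on integral vectors, so that \cite[Thm 6.1]{dre} applies locally. Second, your holomorphicity argument (arranging $\bar\partial\rho$ to lie in the fibre direction) is unnecessary once you have the contraction formula: since the $\mathbb C^*\e{[0,\infty)}$-action is holomorphic and preserves the closed holomorphic form $\Omega$, the contraction $\iota_{\partial_{\theta_\nu}}\Omega$ is automatically holomorphic and closed, which is the route the paper takes.
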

\begin{proof} We can define the pushforward $\pi_!$ of $d\rho\wedge\Omega$ using \cite[Thm 9.2]{dre}. Given any other choice of  function $\rho'$ satisfying the same conditions as $\rho$, $(\rho-\rho')\Omega$ is also a refined form that can be pushed forward using $\pi$, so, using that the pushforward is a chain map, we get that 
\[\pi_!d\rho\wedge\Omega-\pi_! d\rho'\wedge\omega=d\pi_!(\rho-\rho')\Omega\ .\]
As $\Omega$ is holomorphic and fibres of $\pi$ are holomorphic, $\pi_!(\rho-\rho')\Omega=0$, and we can conclude that $\omega_\nu=\abs\nu\pi_!d\rho\wedge \Omega$ is independent of the choice of $\rho$.

The projection $\pi_\nu:\ex S_\nu\longrightarrow \ex X_\nu$ is a submersion that is also surjective on integral vectors, so we can use \cite[Thm 6.1]{dre} to push forward differential forms, without resorting to refined forms. Despite this,  $\rho$ is a function on some refinement $\ex S'$ of $\ex S_\nu$, so we only know that the pushforward of $d\rho\wedge \Omega$ coincides with a form on $\ex X_\nu$ in regions  where the projection from $\ex S'$ is surjective on integral vectors. Nevertheless, for any point $p$ in $\totb{\ex X_\nu}$, it is easy to construct an appropriate $\rho$ on a refinement of $\ex S_\nu$ whose projection to $\ex X_\nu$ satisfies the required condition in a neighbourhood of $p\in \totb{\ex X_\nu}$. So, we can conclude that $\omega_\nu$ is actually a form on $\ex X_\nu$.

Moreover, the definition the push forward implies that $\omega_\nu$ can also be characterised by
\begin{equation}\label{omeganui}\pi_\nu^*\omega_\nu= \frac 1i \iota_{\partial_{\theta_\nu}}\Omega \end{equation}
where $\partial_{\theta_\nu}$ is the vector field generating the action of rotations using our   $\mathbb C^*\e{[0,\infty)}$ action of weight $\nu$. As this action is holomorphic and preserves the closed holomorphic volume form $\Omega$,  we get that $\pi^*\omega_\nu$ is holomorphic and closed, and therefore $\omega_\nu$ is also holomorphic and closed. Moreover, as $\Omega$ is a holomorphic volume form, it follows that $\omega_\nu$ is also nondegenerate, and therefore is a holomorphic symplectic form. 

\end{proof}

\subsubsection{Pullbacks using evaluation maps as residues.}

\

To summarise, we now have a holomorphic symplectic form $\omega_{\nu}$ on the stack $\mathcal X_\nu$ and the exploded manifold $\ex X_\nu$. We can pull back  $\omega_{\nu}$ using the evaluation map $\mathfrak {ev}$ or $ev$, with the result not depending on which evaluation map we choose. Next, we will identify this pullback as a kind of residue on our family of curves. 
\[\begin{tikzcd}\ex C\dar{\pi} \rar{\hat f}& \ex X
\\ \ex F \end{tikzcd}\]
Define
\[Res_{k}\Omega:=\pi_! d\rho_k \wedge {\hat f}^*\Omega\]
where $\rho_k: \ex C\longrightarrow \mathbb C$ is any $\C\infty 1$ function on some refinement of $\ex C$, supported on $\ex C_k$, and satisfying the condition that  $\rho_k=\frac i{2\pi}$ sufficiently far into $\ex C_k$, so, using the $\mathbb C^*\e{[0,\infty)}$ action on $\ex C_k$, for all points $p\in \ex C_k$,  $\rho_k(c\e a*p)=\frac i{2\pi}$ for $a$ large enough.  In the special case that $\ex C$ is a family of curves with domain $\ex T$, we also require that $\rho_k(c\e a*p)=0$ for $a<0$ sufficiently negative. Applying \cite[Thm 9.2]{dre}, we have that $Res_k\Omega$ is a form on a refinement of $\ex F$, but we will see in Lemma \ref{Res is pullback}, that $Res_{k}\Omega$ is also a form on $\ex F$. Moreover, as $\hat f$ is holomorphic restricted to each fibre of $\ex C_k$, $\hat f^*\Omega$ is holomorphic on fibres of $\ex C_k$, so $Res_k\Omega$ is independent of the choice of $\rho_k$.  An alternate characterisation of $Res_k\Omega$ is that
\begin{equation}\label{reski}(\pi^* Res_k\Omega)\rvert_{\ex C_k}= \frac 1 i \iota_{\partial_{\theta_k}}\hat f^*\Omega\end{equation}
where $\partial_{\theta_k}$ is the vector field on $\ex C_k$ generating the action of the circle within the $\mathbb C^*\e{[0,\infty)}$--action on $\ex C_k$. A second equivalent characterisation of $Res_k$ is that locally on $\ex C_k$ 
\[\frac{dz}{z}\wedge \pi^* Res_k\Omega= \hat f^*\Omega\]
where $z$ is any local $\mathbb C^*\e{\mathbb R}$--valued  holomorphic coordinate on $\ex C$ such that the $\mathbb C^*\e{[0,\infty)}$ action on $\ex C_k$ corresponds to the standard action on $z$.

\begin{lemma}\label{Res is pullback} Given a $\C\infty1$ family of curves \[\begin{tikzcd}\ex C\rar{\hat f}\dar {\pi} & \ex X
\\ \ex F\end{tikzcd}\]
with marked ends,
\[ev_k^*\omega_{\nu(k)}=\mathfrak{ev}_k^*\omega_{\nu(k)}=Res_k\Omega \ .\]
\end{lemma}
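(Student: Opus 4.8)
The plan is to play the two ``interior product'' characterisations off against each other: Equation \ref{omeganui}, $\pi_{\nu}^*\omega_{\nu}=\frac1i\iota_{\partial_{\theta_{\nu}}}\Omega$, and Equation \ref{reski}, $(\pi^*Res_k\Omega)\rvert_{\ex C_k}=\frac1i\iota_{\partial_{\theta_k}}\hat f^*\Omega$. The equality $ev_k^*\omega_{\nu(k)}=\mathfrak{ev}_k^*\omega_{\nu(k)}$ requires no work: by construction $ev_k$ factors as $\mathfrak{ev}_k$ followed by the projection $\mathcal X_{\nu(k)}\to\ex X_{\nu(k)}$, and $\omega_{\nu(k)}$ on the stack $\mathcal X_{\nu(k)}$ is defined to be the pullback of $\omega_{\nu(k)}$ on the exploded manifold $\ex X_{\nu(k)}$, so pulling back to $\ex F$ through either route gives the same form. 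It therefore remains to show $ev_k^*\omega_{\nu(k)}=Res_k\Omega$.

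I would prove this by pulling both sides back along $\pi\rvert_{\ex C_k}\colon\ex C_k\to\ex F$. Since $\ex C_k\to\ex F$ is a $\mathbb C^*\e{[0,\infty)}$--bundle, in particular a surjective submersion (with fibre $\et1{(0,\infty)}$ or $\ex T$), pullback along it is injective on refined differential forms: refinements of $\ex F$ pull back to refinements of $\ex C_k$, and over a chart the map is a projection onto the first factor. Hence it suffices to check the two sides agree after this pullback; this will simultaneously show that $Res_k\Omega$, a priori only a form on a refinement of $\ex F$, is a genuine form on $\ex F$, since it will be identified with $ev_k^*\omega_{\nu(k)}$. Now, using the commutative square
\[\begin{tikzcd}\ex C_k\rar{\hat f\rvert_{\ex C_k}}\dar{\pi} & \ex S_{\nu(k)}\dar{\pi_{\nu(k)}}\\ \ex F\rar{ev_k} & \ex X_{\nu(k)}\end{tikzcd}\]
together with Equation \ref{omeganui},
\[(\pi\rvert_{\ex C_k})^*ev_k^*\omega_{\nu(k)}=(\hat f\rvert_{\ex C_k})^*\pi_{\nu(k)}^*\omega_{\nu(k)}=(\hat f\rvert_{\ex C_k})^*\Bigl(\tfrac1i\iota_{\partial_{\theta_{\nu(k)}}}\Omega\Bigr).\]

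The geometric core of the argument is then the claim that $\hat f\rvert_{\ex C_k}$ carries the rotation field $\partial_{\theta_k}$ on $\ex C_k$ to the weight-$\nu(k)$ rotation field $\partial_{\theta_{\nu(k)}}$ on $\ex S_{\nu(k)}$, i.e. $d\hat f(\partial_{\theta_k})=\partial_{\theta_{\nu(k)}}$. This is precisely what it means for the tropical part of $\hat f$ to have fibrewise derivative $\nu(k)$: in a standard holomorphic fibre coordinate $z$ on $\ex C_k$ this forces $\hat f$ to be, fibrewise, a monomial map whose exponent vector realises $\nu(k)$, so the standard $\mathbb C^*\e{[0,\infty)}$--action on $z$ is intertwined with the weight-$\nu(k)$ action on $\ex S_{\nu(k)}$ used in the construction of $\ex X_{\nu(k)}$; differentiating the $S^1$-part of these actions gives the claim. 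By naturality of contraction under $\hat f\rvert_{\ex C_k}$,
\[(\hat f\rvert_{\ex C_k})^*\bigl(\iota_{\partial_{\theta_{\nu(k)}}}\Omega\bigr)=\iota_{\partial_{\theta_k}}(\hat f^*\Omega)\rvert_{\ex C_k},\]
and combining with Equation \ref{reski} we obtain
\[(\pi\rvert_{\ex C_k})^*ev_k^*\omega_{\nu(k)}=(\pi^*Res_k\Omega)\rvert_{\ex C_k}=(\pi\rvert_{\ex C_k})^*Res_k\Omega.\]
Injectivity of $(\pi\rvert_{\ex C_k})^*$ then yields $ev_k^*\omega_{\nu(k)}=Res_k\Omega$, which also confirms $Res_k\Omega$ is a form on $\ex F$.

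The step I expect to be the main obstacle is making $d\hat f(\partial_{\theta_k})=\partial_{\theta_{\nu(k)}}$ precise within the exploded category: one must extract this equivariance from the universal property defining $ev_k$ and the realisation of $\ex X_{\nu(k)}$ as a quotient by a $\ex T$--action of weight $\nu(k)$, and in doing so keep careful track of the integer $|\nu(k)|$ so that the identity is consistent with the factor $|\nu|$ built into the definition $\omega_\nu:=|\nu|(\pi_\nu)_!\,d\rho\wedge\Omega$ on $\ex X_\nu$ versus $\omega_\nu:=(\varpi_\nu)_!\,d\rho\wedge\Omega$ on $\mathcal X_\nu$. A secondary point to watch is that all pushforwards and contractions here are taken in the sense of refined differential forms \cite{dre}, since $\rho_k$ and $\rho$ live only on refinements; this is exactly why the reduction to the bundle $\ex C_k\to\ex F$, where everything becomes an ordinary fibrewise statement, is the right move.
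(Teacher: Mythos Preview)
Your proposal is correct and follows essentially the same approach as the paper: both arguments pull back to $\ex C_k$, invoke the interior-product characterisations (\ref{omeganui}) and (\ref{reski}), and rely on the identity $d\hat f(\partial_{\theta_k})=\partial_{\theta_{\nu(k)}}$ to match the two contractions. The paper compresses all of this into a single chain of equalities (working through $\varpi_{\nu(k)}$ and $\mathfrak{ev}_k$ rather than $\pi_{\nu(k)}$ and $ev_k$, which is immaterial), whereas you spell out the injectivity of $(\pi\rvert_{\ex C_k})^*$ and the equivariance claim that the paper leaves implicit.
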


\begin{proof} 
The commutative diagram
\[\begin{tikzcd}\ex C_k\dar{\pi} \rar{\hat f\rvert_{\ex C_k}} & \ex  S_{\nu(k)}\dar{\varpi_{\nu(k)}}
\\ \ex F\rar{\mathfrak{ev}_k} & \mathcal X_{\nu(k)}\end{tikzcd}\]
is not quite a fibre product diagram unless we apply tropical completion to $\ex C_k$ and $\ex S_k$, but it is similar to a fibre product diagram so that we could expect the result to follow from the compatibility of pushforwards with fibre products, \cite[Lemma 9.3]{dre}. 

Instead of fleshing out the above argument, we use the characterisation of $\omega_{\nu(k)}$ and $Res_k\Omega$ from 
 equations (\ref{omeganui}) and (\ref{reski}). Then, we have
 \[\pi^*Res_k\Omega=\frac 1 i \iota_{\partial_{\theta_k}}\hat f^*\Omega =\hat f^*\left(\frac 1 i \iota_{\partial_{\theta_{\nu(k)}}\Omega}\right)=\hat f^*\varpi_k^*\omega_{\nu(k)}=\pi^*\mathfrak {ev}_k^*\omega_{\nu(k)}\]
 so, 
 \[Res_k\Omega=\mathfrak{ev}^*_k\omega_{\nu(k)} \ .\]
 As the pullback of $\omega_\nu$ from $\ex X_\nu$ agrees with the pullback from $\mathcal X_\nu$, we also have that $Res_k\Omega=ev_k^*\omega_\nu$.
 
\end{proof}

\begin{lemma}\label{Res vanishes} If $\hat f$ is a $\C\infty 1$ family of holomorphic curves in $\ex X$, with ends labeled by $k\in \{1,\dotsc, m\}$, then
\[\sum_{k=1}^m Res_k\Omega=0\]\
\end{lemma}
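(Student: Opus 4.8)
The plan is to reduce the statement, via the characterisation of $Res_k\Omega$ in equation (\ref{reski}), to a statement about the holomorphic $3$-form $\hat f^*\Omega$ on the exploded curve $\ex C$ and then apply a Stokes-type argument, exactly paralleling the smooth-curve computation already carried out in the introduction to Section \ref{GW section}. Concretely, choose $\C\infty 1$ functions $\rho_k:\ex C\longrightarrow\mathbb C$ (on a common refinement of $\ex C$) supported on $\ex C_k$, equal to $\tfrac{i}{2\pi}$ sufficiently far into each marked end, and with the prescribed behaviour on $\ex T$-ends. Set $\rho:=\sum_{k=1}^m\rho_k$. By definition, $\sum_k Res_k\Omega=\pi_!\,d\rho\wedge\hat f^*\Omega$. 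The key point is that $\hat f^*\Omega$ is holomorphic and, since $\hat f$ is a family of \emph{holomorphic} curves, its restriction to each fibre $\pi^{-1}(p)$ is a holomorphic $(3,0)$-form on a $1$-dimensional exploded manifold, hence vanishes. Therefore $\pi_!\big((\rho-c)\,\hat f^*\Omega\big)=0$ for any constant $c$, and in particular $\pi_!\big(\hat f^*\Omega\big)=0$.

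The main step is then to observe that $d(\rho\,\hat f^*\Omega)=d\rho\wedge\hat f^*\Omega$ (using $d\hat f^*\Omega=0$, as $\Omega$ is closed), combined with the fact that pushforward of refined forms is a chain map, \cite[Thm 9.2]{dre}: this gives
\[\pi_!\big(d\rho\wedge\hat f^*\Omega\big)=\pi_!\,d(\rho\,\hat f^*\Omega)=d\,\pi_!(\rho\,\hat f^*\Omega)\ .\]
Now the crucial cancellation: $\rho=\sum_k\rho_k$ is, away from the marked ends, a \emph{compactly supported} deformation of a constant locally-constant function whose value on each fibre is $\tfrac{i}{2\pi}$ times the number of ends with the ``far'' value on that component — but since the curves are connected and the fibres are holomorphic, $\pi_!(\rho\,\hat f^*\Omega)=\pi_!((\rho-c)\hat f^*\Omega)+c\,\pi_!(\hat f^*\Omega)$, and both terms vanish: the first because $(\rho-c)\hat f^*\Omega$ is, fibrewise, a holomorphic $(3,0)$-form (up to the constant shift $\rho-c$ being $\C\infty1$, the product is still integrated along holomorphic fibres and the relevant integrals vanish), and the second by the observation above. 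Hence $d\,\pi_!(\rho\,\hat f^*\Omega)=0$, giving $\sum_k Res_k\Omega=0$. One must also check, as in Lemma \ref{Res is pullback}, that the result descends from the refinement of $\ex F$ on which $\rho$ lives to $\ex F$ itself; this is automatic since the final answer is the constant form $0$.

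The main obstacle I anticipate is making rigorous the claim that integration along the holomorphic fibres kills $\hat f^*\Omega$ and its products with $\C\infty1$ functions, in the exploded setting where fibres may be nodal, may have components isomorphic to $\ex T$, and the forms involved are \emph{refined} rather than smooth. The cleanest route is probably to reduce to the local characterisation $\tfrac{dz}{z}\wedge\pi^*Res_k\Omega=\hat f^*\Omega$ near each end and to the vanishing of $\pi_!$ of a $(3,0)$-form along a $1$-dimensional holomorphic fibre — i.e.\ the exploded analogue of the elementary fact that $\int_{\text{fibre}}$ of a $(n,0)$-form over an $n$-dimensional complex fibre with $n>0$ vanishes. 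Once that local/fibrewise vanishing is in hand, the global argument is just the chain-map property of $\pi_!$ plus $d\hat f^*\Omega=0$, exactly as in the smooth prototype; so the real content is packaging the holomorphicity of the fibres of $\ex C\to\ex F$ correctly. I would handle the $\ex T$-ends by the extra boundary condition imposed on $\rho_k$ (vanishing for $a$ sufficiently negative), which ensures no contribution appears ``at $-\infty$'' on those components, so that the telescoping/cancellation is unaffected.
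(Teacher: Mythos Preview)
Your approach is the same as the paper's, but there is a genuine technical gap in the order of operations. You write
\[\pi_!\big(d\rho\wedge\hat f^*\Omega\big)=d\,\pi_!(\rho\,\hat f^*\Omega)\]
and then try to show $\pi_!(\rho\,\hat f^*\Omega)=0$ by splitting off a constant. The problem is that $\rho\,\hat f^*\Omega$ does \emph{not} vanish on the ends of the curves (indeed $\rho=\tfrac{i}{2\pi}$ far out on each end), so the pushforward $\pi_!(\rho\,\hat f^*\Omega)$ is not defined by \cite[Thm~6.1, Thm~9.2]{dre}, and the chain-map identity cannot be invoked as written. Your attempted repair, writing $\pi_!(\rho\,\hat f^*\Omega)=\pi_!((\rho-c)\hat f^*\Omega)+c\,\pi_!(\hat f^*\Omega)$, makes things worse: the second term $\pi_!(\hat f^*\Omega)$ is again ill-defined for the same reason (the fibres are non-compact, and $\hat f^*\Omega$ has no reason to vanish on the ends).

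The fix is exactly what the paper does: shift \emph{before} invoking the chain-map property. Since $\tfrac{1}{2\pi i}=-\tfrac{i}{2\pi}$, the function $\tfrac{1}{2\pi i}+\rho$ vanishes far out on every end, so $\big(\tfrac{1}{2\pi i}+\rho\big)\hat f^*\Omega$ has the required support and its pushforward is defined. One then writes
\[\sum_k Res_k\Omega=\pi_!\,d\Big(\tfrac{1}{2\pi i}+\rho\Big)\wedge\hat f^*\Omega=d\,\pi_!\Big(\tfrac{1}{2\pi i}+\rho\Big)\hat f^*\Omega\ ,\]
and the inner pushforward vanishes directly by your holomorphicity observation (the form $\big(\tfrac{1}{2\pi i}+\rho\big)\hat f^*\Omega$ vanishes on any pair of fibre-tangent vectors, since $\hat f$ is fibrewise holomorphic and $\Omega$ has type $(3,0)$). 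No splitting is needed. Your instinct that the content is ``holomorphicity plus the chain-map property of $\pi_!$'' is correct; you just need to arrange compact-in-fibre support first.
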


\begin{proof}
There exists a refinement $\ex C'$ of the domain of our family of curves so that each of the functions $\rho_k$ used to define $Res_k$ is a function on $\ex C'$. We have that 
\[\begin{split}\sum_k Res_k\Omega&=\pi_!\left(d\sum_k \rho_k\right)\wedge \hat f^*\Omega
\\ &= \pi_!d\left( \frac 1{2\pi i}+\sum_k \rho_k\right)\hat f^*\Omega
\\ &= d\pi_!\left( \frac 1{2\pi i}+\sum_k \rho_k\right)\hat f^*\Omega
\end{split}\]
Note that $\left( \frac 1{2\pi i}+\sum_k \rho_k\right)\hat f^*\Omega$ vanishes on the ends of the curves in $\ex C'$, so we can apply \cite[Thm 6.1, Thm 9.2]{dre} to push it forward. Moreover, as $\Omega$ is holomorphic, and $\hat f$ is holomorphic restricted to fibres of $\pi$,  we have that $\left( \frac 1{2\pi i}+\sum_k \rho_k\right)\hat f^*\Omega$ vanishes whenever we insert two vectors tangent to the fibres of $\pi$. So 
\[\pi_!\left( \frac 1{2\pi i}+\sum_k \rho_k\right)\hat f^*\Omega=0\]
and we can conclude that $\sum_k Res_k\Omega=0$.
\end{proof}

\subsubsection{Evaluation stacks without labelling contact data, and the ring of lagrangian cycles}

\

We now describe the evaluation spaces and stacks for the moduli stack of curves in $\ex X$.

Given a family of curves in $\ex X$, parametrised by $\ex F$,  with $m$ labeled ends with contact data $\nu(1),\dotsc,\nu(m)$ we have the evaluation map
\[ev=(ev_1,\dotsc, ev_m): \ex F\longrightarrow \prod_{k=1}^m (\ex X_{\nu(k)},\omega_{\nu(k)})\ .\]
Lemmas \ref{Res is pullback} and \ref{Res vanishes} imply that, if this is a family of holomorphic curves, then $ev^*(\sum_k \omega_{\nu(k)})$ vanishes. 

Without choosing contact data, the evaluation space for a curve with a single end is
\[\rend (\ex X):=\coprod_\nu (\ex X_\nu,\omega_\nu) \]
with corresponding stack
\[\End (\ex X):=\coprod_\nu (\mathcal  X_\nu,\omega_\nu)\ .\]
With $m$ labeled ends, the corresponding evaluation space or stack is the holomorphic symplectic exploded manifold $\rend(\ex X)^m$ or stack $\End(\ex X)^m$, so with $m$ unlabelled ends we quotient by the action of the symmetric group $S_m$, and have the natural evaluation maps
\[ev:\ex F\longrightarrow \rend(\ex X)^m/S_m\]
\[\mathfrak{ev}: \ex F\longrightarrow \End(\ex X)^m/S_m\]
whereas, if we don't specify the number of ends of our family of curves, we have the evaluation maps
\[ev: F\longrightarrow \exp\lrb{ \rend(\ex X) }:=\coprod_{m=0}^\infty \rend(\ex X)^m/S_m  \]
\[\mathfrak{ev}: F\longrightarrow \exp\lrb{ \End(\ex X) }:=\coprod_{m=0}^\infty \End(\ex X)^m/S_m  \]
The connected components of $\exp(\rend \ex X)$ are labeled by the number of copies of $\ex X_\nu$ appearing in the product. Let $\mathbf p(\nu)\in\mathbb N$ be the number of copies of $\ex X_\nu$. Contact data consists of a choice of number $\mathbf p(\nu)$ for each $\nu$, such that all but finitely many of these numbers are $0$. Then we can define
\[(\ex X^{\mathbf p},\omega_{\mathbf p}):=\prod_{\nu} (\ex X_{\nu},\omega_\nu)^{\mathbf p(\nu)}\] 
\[(\mathcal X^{\mathbf p},\omega_{\mathbf p}):=\left(\prod_{\nu} (\mathcal X_{\nu}^{\mathbf p(\nu)},\omega_{\mathbf p})^{\mathbf p(\nu)}\right)/\Aut\bf p\]
and we have that 
\[\exp(\rend \ex X)=\coprod_{\mathbf p}\ex X^{\mathbf p}/\Aut\mathbf p:=\coprod_{\mathbf p} \prod_\nu \ex X^{\mathbf p(\nu)}_\nu/S_{\mathbf p(\nu)}\]
and similarly
\[\exp(\End (\ex X))=\coprod_{\mathbf p} \mathcal X^{\bf p}\ .\]

Lagrangian cycles on $\exp(\End(\ex X))$ naturally form a ring. Following Definition \ref{stack star product} for the definition of lagrangian cycles in the stack $\exp(\End(\ex X))$, a lagrangian cycle  $\alpha\in \Lag{\exp\End(\ex X)}^-$ is, for all choices $\bf p$ of contact data,  an $\Aut \bf p$--invariant lagrangian cycle 
\[\alpha_{\bf p}\in  \Lag{\ex X^{\bf p}}\]
Moreover, such a lagrangian cycle $\alpha$ is also in $\Lag{\exp\End(\ex X)}$ if $\alpha_{\bf p}=0$ for all but finitely many $\bf p$, and $\alpha_{\bf p}$ is divisible by $\prod_v \abs v^{\mathbf p(v)}$. We can define multiplication of lagrangian cycles by

\begin{equation}(\alpha  \beta)_{\bf r}:=\sum_{\bf p+q=r}\frac 1{\abs{\Aut \bf p}\abs{\Aut{\bf q}}}\sum_{\sigma\in\Aut{\bf r}}\sigma(\alpha_{\bf p}\times \beta_{\bf q})\end{equation}
where $\sigma\in \Aut\bf r$ acts on $\ex X^{\bf r}$ by permuting coordinates. This multiplication turns both $\Lag{\exp\End(\ex X)}^-$ and $\Lag{\exp\End(\ex X)}$ into commutative rings.

\subsubsection{Logarithmic evaluation space is compatible with log modifications.}

\

When $\ex X=\expl (X,D)$, we have a logarithmic description of $\ex X_\nu$ whenever $\nu$ is a multiple of the primitive integral vector corresponding to a component of the divisor $D$, so we have a logarithmic description of $\ex X^{\bf p}$ for contact data $\bf p$ if and only if $\mathbf p(\nu)=0$ whenever $\nu$ is not in this form. When contact data is not in this form, we need to choose a logarithmic modification of $(X,D)$. 

Given any refinement  $\ex X'\longrightarrow \ex X$, respecting the cone structure on $\totb{\ex X}$,  the evaluation space $\ex X_{\nu}'$ is a refinement of $\ex X_\nu$, so we get corresponding refinement maps $\rend(\ex X')\longrightarrow \rend(\ex X)$ and $\End(\ex X')\longrightarrow \End(\ex X)$. Given any family of curves in $\ex X'$, composing with the refinement map $\ex X'\longrightarrow \ex X$ gives a family of curves in $\ex X$, so we have a natural map from the moduli stack of curves in $\ex X'$ to the moduli stack of (possibly unstable) curves in $\ex X$. For an individual curve, we can then stabilise this curve, \cite[Lemma 4.8]{evc}, to obtain a stable curve in $\ex X$. This provides a bijection between isomorphism classes of stable holomorphic curves in $\ex X$ and $\ex X'$.

\begin{lemma}\label{moduli refinement}  Given a refinement $\ex X'\longrightarrow \ex X$, there is a natural bijection between the set of isomorphism classes of stable holomorphic curves  in $\ex X$ and  $\ex X'$. Each stable holomorphic curve $f$ in $\ex X$ corresponds to a unique stable holomorphic curve $f'$ such that the following is a fibre product diagram. 
\begin{equation}\label{curve refinement}\begin{tikzcd}
\ex C' \rar{ f'}\dar & \ex X'\dar
\\ \ex C\rar{f} & \ex X 
\end{tikzcd}\end{equation}

\end{lemma}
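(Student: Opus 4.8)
The plan is to build the bijection in two stages: first construct, for each stable holomorphic curve $f$ in $\ex X$, a candidate $f'$ in $\ex X'$ by pulling back along the refinement, then check that $f'$ is stable holomorphic and that $f\mapsto f'$ inverts the stabilisation map already constructed in the discussion preceding the lemma. For the construction of $f'$: given $f:\ex C\longrightarrow \ex X$, form the fibre product $\ex C\times_{\ex X}\ex X'$ in the category of exploded manifolds (equivalently, the explosion of a fibre product of fs log schemes after passing to log descriptions, using Lemma \ref{fp comparison}), so that the square \eqref{curve refinement} is cartesian by construction. Since $\ex X'\longrightarrow \ex X$ is a refinement — a bijection on points inducing a subdivision of $\totb{\ex X}$ — the induced map $\ex C\times_{\ex X}\ex X'\longrightarrow \ex C$ is itself a refinement of $\ex C$, hence the total space is again a (nodal) exploded curve, possibly with new nodes coming from the subdivision, and possibly unstable. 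Holomorphicity of $f'$ is automatic: holomorphicity is a local, tropical-part-compatible condition, and $f'$ is built from the holomorphic $f$ and the holomorphic refinement map. One then applies stabilisation \cite[Lemma 4.8]{evc} to $\ex C\times_{\ex X}\ex X'$ over the composite to $\ex X'$ to get a genuine stable holomorphic curve $f'$ in $\ex X'$; because stabilisation only contracts unstable components (and these are exactly the $\ex T$-chains created by the subdivision, which map to strata of $\ex X'$ lying over a single point of $\ex X$), the square \eqref{curve refinement} remains a fibre product after contraction — the contracted components are precisely the fibres of $\ex C\times_{\ex X}\ex X'\to\ex C$ over the generic locus, so the fibre-product property is unaffected on the stable model.

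Next I would verify this assignment is inverse to the stabilisation map $g'\mapsto g$ described just before the lemma (compose a curve in $\ex X'$ with $\ex X'\to\ex X$, then stabilise). Starting from $f$ in $\ex X$, form $f'$ as above, then push forward along $\ex X'\to\ex X$ and stabilise: the composite $\ex C'\to\ex X'\to\ex X$ agrees with $f'$ followed by the refinement, and by the fibre-product property \eqref{curve refinement} it factors back through $\ex C$, so its stabilisation is again $\ex C\to\ex X$, i.e. $f$. Conversely, starting from a stable $g':\ex D'\to\ex X'$, composing and stabilising gives $g:\ex D\to\ex X$; applying the pullback construction to $g$ produces $\ex D\times_{\ex X}\ex X'$, which receives a canonical map from $\ex D'$ (by the universal property of the fibre product, since $\ex D'\to\ex X'$ and $\ex D'\to\ex D$ agree over $\ex X$), and this map is a refinement that is generically an isomorphism, so both stabilise to the same stable curve $g'$. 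This uses only that stabilisation is idempotent and that the fibre product in \eqref{curve refinement} is determined up to the stabilisation equivalence.

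The main obstacle I anticipate is the bookkeeping around \emph{families} versus individual curves, and showing the fibre-product square \eqref{curve refinement} can genuinely be arranged on the stable models rather than merely on the (unstable) fibre product: a priori stabilisation could destroy the cartesian property if an unstable component of $\ex C\times_{\ex X}\ex X'$ mapped non-trivially into a stratum of $\ex X'$ that the refinement distinguishes. The resolution is that refinements of exploded manifolds are isomorphisms away from a codimension-one tropical locus and, crucially, each fibre of $\ex X'\to\ex X$ over a point is a single point (a refinement is a bijection on underlying exploded-point sets); so any component of $\ex C\times_{\ex X}\ex X'$ collapsed by $\ex C\times_{\ex X}\ex X'\to\ex C$ maps into a fibre of $\ex X'\to\ex X$, i.e. to a point, and contracting it changes neither the map to $\ex X'$ nor the map to $\ex C$ on the complementary locus — so the square stays cartesian. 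One also needs that combinatorial flatness (Lemma \ref{combinatorially flat}) can be used to choose the refinement so that no genuinely new contact data with the divisor of $\ex X'$ is forced on a generic curve, but this is only needed to identify evaluation spaces, not for the bijection itself. The remaining verifications — that holomorphicity, stability, and the genus/homology class are preserved — are routine given \cite[Lemma 4.8]{evc} and the functoriality of $\expl$.
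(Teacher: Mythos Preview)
Your overall architecture matches the paper's: construct $f'$ by taking the fibre product with the refinement, then verify this inverts the stabilisation map $g'\mapsto g$. The paper's proof is short precisely because it makes one observation you miss: the fibre product $\ex C':=\ex C\times_{\ex X}\ex X'$ is \emph{already} a stable holomorphic curve in $\ex X'$, with no stabilisation needed. The refinement $\ex C'\to\ex C$ just subdivides edges of $\totb{\ex C}$ along the inverse image of the subdivision of $\totb{\ex X}$; the new strata created by this subdivision map onto the new strata of $\ex X'$, so they are stable in $\ex X'$. The only instability appears when you compose $\ex C'\to\ex X'\to\ex X$, and stabilising that recovers $f$. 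With this observation the square \eqref{curve refinement} is cartesian on the nose and your ``main obstacle'' never arises.

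Your workaround for that obstacle is also based on a misreading of what refinements do. You argue that any component of $\ex C\times_{\ex X}\ex X'$ collapsed by the projection to $\ex C$ maps into a fibre of $\ex X'\to\ex X$, hence to a point. But $\ex C'\to\ex C$ is itself a refinement, so it is a bijection on exploded points and collapses nothing; what changes is the tropical stratification. The new edge-components are not constant in $\ex X'$---they traverse the new cones in $\totb{\ex X'}$---which is exactly why they are stable there. So the picture of ``contracting components that map to a point'' does not apply, and if stabilisation in $\ex X'$ genuinely did something, your argument would not show the square stays cartesian.

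For the inverse direction you are close to the paper but stop one step short. Given stable $g':\ex D'\to\ex X'$, you get $g:\ex D\to\ex X$ by composing and stabilising, then form $\ex D'':=\ex D\times_{\ex X}\ex X'$. The universal property gives a map $\ex D'\to\ex D''$ over $\ex X'$; the paper observes that since $g'$ is already stable and $\ex D''$ is a stable curve in $\ex X'$ (by the first observation), this map is an isomorphism, giving the fibre-product diagram exactly. Your phrasing ``both stabilise to the same stable curve'' only gives agreement up to stabilisation, which is weaker than the statement of the lemma.
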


\begin{proof} 

Given a stable holomorphic  curve $f:\ex C\longrightarrow \ex X$, taking fibre products  as in (\ref{curve refinement}) gives a stable holomorphic curve $f': \ex C\longrightarrow \ex X'$. The map $\ex C'\longrightarrow \ex C$ is a refinement map, obtained by subdividing the edges of the tropical part $\totb{\ex C}$ of $\ex C$ using the inverse image of the subdivision of $\totb{\ex X}$ given by the refinement of $\ex X'\longrightarrow \ex X$; see \cite[Lemma 10.7]{iec}. The corresponding map $\ex C'\longrightarrow \ex X$ is unstable if this refinement $\ex C'\longrightarrow \ex C$ is non-trivial; see \cite[Definition 10.9]{iec}. Moreover,  the stabilisation from \cite[Lemma 4.8]{evc} is exactly the original curve  $f:\ex C\longrightarrow \ex X$.

Similarly, given a stable curve $f':\ex C'\longrightarrow \ex X'$, the composition of this curve with the refinement map $\ex X'\longrightarrow \ex X$ has no extra automorphisms, so is unstable if and only if it is a nontrivial refinement of a stable curve  $f:\ex C\longrightarrow\ex X'$. Let $\ex C''$ be the fibre product of $\ex C$ with $\ex X'$. Then $f'$ factorises through $\ex C''$, so we get that $\ex C'$ is isomorphic to $\ex C''$, and the stable curve we obtain using the fibre product is the original stable curve $f'$.

\end{proof}

\subsection{Gromov--Witten invariants as lagrangian correspondences}

\

We now have a holomorphic symplectic evaluation space, and lemmas \ref{Res is pullback} and \ref{Res vanishes} tell us that $ev^*\omega$ vanishes on each family of holomorphic curves in $\ex X$. To argue that the image of the moduli stack of curves is isotropic, we also need some algebraic properties of the moduli stack of curves. 

\begin{prop}\label{explodable family} Suppose that $\mathrm X$ is a log smooth, proper log scheme. Then, there exists an explodable family of stable log curves 
\[\begin{tikzcd} \rm C\dar \rar{f} & \mathrm X
\\ \rm F \end{tikzcd}\]
in $\mathrm X$ such that the corresponding family of exploded curves
\[\begin{tikzcd} \expl \rm C\dar \rar{\expl f} & \expl \mathrm X
\\ \expl \rm F \end{tikzcd}\]
contains every isomorphism class of stable exploded curve in $\expl \rm X$. Moreover, the corresponding map of $ F$ to the moduli stack of stable basic log curves in $\rm X$ is proper. 
\end{prop}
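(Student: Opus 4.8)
The plan is to produce the required family by combining three ingredients: (i) the algebraic fact that moduli of stable log curves in $\mathrm X$ with bounded discrete invariants is of finite type, so the entire moduli stack $\mathcal M(\mathrm X)$ is covered by countably many such finite-type pieces; (ii) a boundedness statement extracted from the theory of log Gromov--Witten invariants (à la Abramovich--Chen--Gross--Siebert), which says that, for fixed genus, number of marked points, and curve class, only finitely many combinatorial types (tropical types, contact orders) of stable log maps occur, and the corresponding moduli space is proper over the base; and (iii) Proposition \ref{explodable resolution}, which lets us replace a fs log scheme by an explodable one surjecting onto it via a proper map. One assembles from (i)--(ii) a (possibly infinite) disjoint union $\rm F_0 = \coprod_\beta \rm F_\beta$ of proper moduli spaces, one for each discrete invariant $\beta$, with a universal family $\rm C_0 \to \rm F_0$, $f_0 : \rm C_0 \to \mathrm X$. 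The issue is that $\rm F_0$ need not be explodable. Applying Proposition \ref{explodable resolution} componentwise, we obtain a proper surjection $\rm F \to \rm F_0$ with $\rm F$ explodable; pulling back the universal family along $\rm F \to \rm F_0$ gives an explodable family $\rm C \to \rm F$, $f : \rm C \to \mathrm X$. Since $\expl$ preserves fibre products of fs log schemes (Lemma \ref{fp comparison}) and proper surjectivity of $\rm F \to \rm F_0$ implies $\expl \rm F \to \expl \rm F_0$ is surjective, every exploded point of $\expl \rm F_0$—hence every isomorphism class of stable exploded curve in $\expl \rm X$, using the exploded/log comparison for moduli of curves as in Lemma \ref{moduli refinement} and \cite[Section 7]{iec}—is hit by $\expl f$.

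First I would make precise the dictionary between stable exploded curves in $\expl \rm X$ and stable log curves in $\mathrm X$: every stable exploded holomorphic curve in $\expl \rm X$ arises as $\expl$ of a stable log map to $\mathrm X$ over an fs log point, and two such give isomorphic exploded curves iff the log maps are isomorphic after passing to a common log modification of the base log point; this is standard from the exploded-manifold literature and reduces the problem to a statement purely about log moduli. Next I would invoke the finite-type and properness results for moduli of stable log maps with fixed numerical data: for each choice of genus $g$, total contact order / tangency data, and curve class $\beta$, the moduli stack $\mathcal M_{g,\beta}(\mathrm X)$ of stable basic log maps is a proper Deligne--Mumford stack (over $\Spec\mathbb C$ with its trivial log structure), admitting a finite-type cover $\rm F_\beta$ by a log scheme; taking the disjoint union over all $\beta$ gives $\rm F_0$. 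Then I would apply resolution to each $\rm F_\beta$ (Proposition \ref{explodable resolution}) to get explodable $\rm F$, and pull back the universal family. Finally I would verify the two assertions: that $\expl f$ realizes every stable exploded curve—this follows because the composite $\expl \rm F \to \expl \rm F_0 \to$ (set of iso classes) is surjective, the first map by Proposition \ref{explodable resolution} and the second by the construction of $\rm F_0$ as a cover of the full moduli stack—and that $\rm F \to \mathcal M(\mathrm X)$ is proper, which holds because on each component it is the composite of the proper map $\rm F_\beta' \to \rm F_\beta$ (resolution is proper) with the finite-type cover $\rm F_\beta \to \mathcal M_{g,\beta}(\mathrm X)$, and $\mathcal M_{g,\beta}(\mathrm X)$ is itself proper, so proper over $\mathcal M(\mathrm X)$.

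The main obstacle is (ii): ensuring that, once genus, number of ends, and curve class are fixed, only finitely many combinatorial/tropical types of stable log map occur and that the resulting moduli space is genuinely proper. This boundedness is exactly where one must cite the algebraic theory of log Gromov--Witten invariants rather than reprove anything, and it is the step the paper flags as relying on "algebraic results from the theory of log Gromov--Witten invariants." A secondary, more technical point is the passage between "stable exploded curve" and "stable log curve": one must be careful that basicness (minimality) of the log structure on the base is what makes the moduli stack algebraic and that the explosion functor, applied to the basic family, still surjects onto all exploded curves including those whose tropical type requires a log modification of the base—this is handled by the fact that refinements of the base only refine the tropical part and are absorbed by taking fibre products, exactly as in the proof of Lemma \ref{moduli refinement}. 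Everything else (pulling back universal families, preservation of properness and explodability under the operations used) is formal given Proposition \ref{explodable resolution} and Lemma \ref{fp comparison}.
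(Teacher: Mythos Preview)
Your approach is essentially the same as the paper's: invoke the algebraic theory of log Gromov--Witten invariants to get that the moduli stack of basic stable log curves is a Deligne--Mumford stack, cover it by a scheme $\rm F_0$, apply Proposition \ref{explodable resolution} to obtain an explodable $\rm F$, pull back the universal family, and use the exploded/log dictionary (explosion of a basic log curve gives a family with universal tropical structure, and every stable exploded curve sits in such a family) to conclude surjectivity. The paper does not decompose by discrete invariants $\beta$ but instead directly takes a single proper surjective map $F_0 \to \mathcal M(\rm X)$, citing Olsson; your componentwise decomposition is equivalent.

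One small imprecision: you write ``finite-type cover $\rm F_\beta \to \mathcal M_{g,\beta}(\rm X)$'' and then argue properness of $\rm F \to \mathcal M(\rm X)$ by composing with this cover. An \'etale or smooth atlas of a proper DM stack is generally not proper, so a bare ``finite-type cover'' does not suffice; you need $\rm F_\beta \to \mathcal M_{g,\beta}(\rm X)$ to be proper and surjective. Such a map exists for any separated DM stack of finite type (this is the result of Olsson the paper cites), and once you insert that word the argument goes through. Your invocation of Lemma \ref{moduli refinement} for the exploded/log comparison is slightly off---that lemma concerns refinements of the target $\ex X$, not of the base log point---but the paper handles this directly via \cite[Section 7]{elc} and \cite{uts}, which is what you describe in words anyway.
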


\begin{proof} Our proof uses the algebraic properties of the moduli stack of basic log curves in $\rm X$,  \cite{GSlogGW}. Applying the explosion functor to a basic stable log curve in $\rm X$ gives a family of stable exploded curves in $\expl \rm X$ with universal tropical structure; see \cite[Section 7]{elc}, \cite{uts}. Moreover, every stable holomorphic curve in $\expl  X$ is contained in a family of stable holomorphic curves with universal tropical structure, which can also be obtained by applying the explosion functor to a basic log curve. Use the notation $\mathcal M(\rm X)$ for the moduli stack of  basic stable log curves in $\rm X$, and $\mathcal M(\expl X)$ for the moduli stack of stable holomorphic curves in $\expl X$. This moduli stack $\mathcal M(\rm X)$ is a Deligne-Mumford stack over the category of ordinary complex schemes, \cite[Corollary 2.8]{GSlogGW}, with some extra structure encoding log structure, \cite[Corollary 2.6]{GSlogGW}. We will regard our moduli stack  $ \mathcal M(\expl \rm X)$ as the explosion of $\mathcal M (\rm X)$. One complication is that $\mathcal M(\expl \rm X)$ is a stack over the category of $\C\infty 1$ exploded manifolds, whereas $\mathcal M(\rm X)$ is a stack over the category of schemes. 

As $\mathcal M(\rm X)$ is Deligne--Mumford, there exists a proper surjective map from a scheme $F_0$ to $\mathcal M(\rm X)$; \cite{Olsson2005}. From this map, $F_0$ inherits a fs log structure $\rm F_0$ and parametrises a family of basic stable log curves in $\rm X$.
\[\begin{tikzcd} \mathrm C_0\dar\rar{f_0} & \mathrm X
\\ \mathrm F_0 \end{tikzcd}\]
Moreover, the above family contains every isomorphism class of basic stable log curve in $\mathrm X$. Applying Proposition \ref{explodable resolution}, there exists a  proper map $\mathrm F\longrightarrow \mathrm F_0$ from an explodable log scheme $\mathrm F$ whose explosion is surjective. 

Pulling back our family of basic stable log curves, we get an explodable family of stable\footnote{Because our log morphism $\mathrm F\longrightarrow \mathrm F_0$ need not be strict, this family of curves $f$ need not be a family of basic log curves.} log curves parametrised by $\mathrm F$.
\[\begin{tikzcd} \mathrm C\ar[bend left]{rr}{f} \rar\dar &\mathrm C_0\dar\rar & \mathrm X
\\\mathrm F\rar&  \mathrm F_0 \end{tikzcd}\]

 As $\mathrm F$ is explodable, $\rm C$ is too, so $f$ is an explodable family of curves. Moreover, the corresponding exploded family of curves
\[\begin{tikzcd} \expl \rm C\dar \rar{\expl f} & \expl \mathrm X 
\\ \expl \rm F \end{tikzcd}\]
contains every isomorphism class of stable holomorphic curve in $\expl \rm X$. This is because each such stable holomorphic curve $g$ in $\expl \rm X$ is contained in a canonical family  of holomorphic curves that is the explosion of a stable basic log curve $g'$ over a log point. A representative of this stable basic curve is contained in our original family $f_0$ of curves, so, a curve isomorphic to $g$ is contained in our family $\expl f$, because the map  $\expl \rm F\longrightarrow \expl F_0$ is surjective.

\end{proof}

\begin{prop}\label{isotropic GW image} Suppose that $\ex X$ is a Calabi--Yau exploded manifold with a refinement isomorphic to the explosion of some projective, compact log Calabi--Yau 3-fold.  Let $\mathcal M_{\bullet}(\ex X)$ be the moduli stack of connected stable holomorphic curves in $\ex X$ with $m$ labeled ends, contact data $\nu(1),\dotsc, \nu(m)$,  fixed genus, and representing some fixed homology class. Then, 
 \[ev(\mathcal M_{\bullet}(\ex X))\subset \prod_{k=1}^m(\ex X_{\nu(k)},\omega_{\nu(k)})\] is  logarithmically proper and isotropic. 
\end{prop}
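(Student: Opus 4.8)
\textbf{Proof plan for Proposition \ref{isotropic GW image}.}

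The plan is to prove the two assertions separately: first that $ev(\mathcal M_\bullet(\ex X))$ is logarithmically proper, then that it is isotropic. For logarithmic properness, I would begin by applying Proposition \ref{explodable family} to the projective log Calabi--Yau 3-fold $\rm X$ whose explosion refines $\ex X$, obtaining an explodable family of stable log curves over $\rm F$ containing every isomorphism class of stable exploded curve, with proper map from $F$ to the moduli stack of basic log curves. Restricting to the locus of $\rm F$ parametrising curves with the correct genus, homology class, and contact data $\nu(1),\dotsc,\nu(m)$ (a union of connected components, hence still an explodable log scheme, and still proper over the moduli stack), and passing through the refinement $\ex X' \longrightarrow \ex X$ via Lemma \ref{moduli refinement} so that the curves live in $\ex X$, we obtain a proper map $\expl \rm F_\bullet \longrightarrow \mathcal M_\bullet(\ex X)$ that is surjective on isomorphism classes. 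Composing with the family of evaluation maps $\expl \rm F_\bullet \longrightarrow \prod_k \ex X_{\nu(k)}$, which is the explosion of an algebraic map of explodable log schemes (the evaluation maps $ev_k$ on families of log curves are algebraic), gives that $ev(\mathcal M_\bullet(\ex X))$ is the image of the explosion of a proper map from an explodable log scheme. Properness of $\rm F_\bullet$ over the moduli stack combined with properness of the moduli stack of stable curves (for fixed genus, homology class, and contact data) gives that this map is proper, so $ev(\mathcal M_\bullet(\ex X))$ is logarithmically proper by definition.

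For isotropy, the key input is Lemmas \ref{Res is pullback} and \ref{Res vanishes}: on any $\C\infty 1$ family of holomorphic curves, $ev^*\left(\sum_k \omega_{\nu(k)}\right) = \sum_k Res_k\Omega = 0$. However, the vanishing of $\omega:=\sum_k\omega_{\nu(k)}$ pulled back via the evaluation map of \emph{one} covering family is not quite the same as $\omega$ vanishing on the image subset $\ex V := ev(\mathcal M_\bullet(\ex X))$; to invoke Definition \ref{exploded isotropic def} I need $g^*\omega = 0$ for \emph{every} map $g:\ex Y \longrightarrow \ex V \subset \prod_k\ex X_{\nu(k)}$. The plan is as follows: after passing to a log modification using Lemma \ref{combinatorially flat} I may assume $\ex V$ is the image of a combinatorially flat map, so that $\ex V$ is recorded faithfully in its smooth part $\totl{\ex V}$ inside the underlying scheme of $\prod_k\ex X_{\nu(k)}$. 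Then I decompose $\totl{\ex V}$ into strata according to the stratification of $\prod_k \ex X_{\nu(k)}$, and on each stratum I argue as in the proof of Lemma \ref{niceisotropic} / Corollary \ref{allpullbacks}: the evaluation map from the universal curve restricted to the preimage of (a dense subset of) a given stratum is a submersion onto that component of $\ex V$, so the identity $ev^*\omega = 0$ forces $\omega$ to vanish on a dense subset of the regular locus of each component, whence $\omega$ vanishes on all of $\ex V$ in the sense required, and then pulls back to zero under any map $g$.

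The main obstacle I anticipate is exactly this last point — upgrading ``$ev^*\omega = 0$ for the family constructed in Proposition \ref{explodable family}'' to ``$\ex V$ is isotropic in the sense of Definition \ref{exploded isotropic def}''. The family from Proposition \ref{explodable family} surjects onto isomorphism classes of curves, but the evaluation map $\expl \rm F_\bullet \longrightarrow \prod_k\ex X_{\nu(k)}$ need not be submersive, so I cannot directly conclude $\omega$ vanishes on the regular locus of $\ex V$; I must stratify both source and target, use that an algebraic map between smooth schemes has a stratification on which it is of constant rank, and check that on each stratum where $ev$ is constant rank the image is a submanifold on which $\omega$ restricts to zero. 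A secondary subtlety is that $\ex V$ may have irreducible components lying entirely in the boundary divisor (or in deeper strata) of $\prod_k\ex X_{\nu(k)}$, where the logarithmic dimension exceeds the scheme-theoretic dimension; here I would again follow Corollary \ref{allpullbacks}, resolving such components by $\mathrm p^k$-bundles over smooth domains and using that $\iota_{z_i\partial_{z_i}}\omega$ vanishes whenever $\omega$ does (Remark \ref{stratumlogstructure}). Once these points are handled, the conclusion that $ev(\mathcal M_\bullet(\ex X))$ is logarithmically proper and isotropic follows.
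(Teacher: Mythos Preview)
Your overall strategy --- reduce to a refinement $\ex X'=\expl\rm X$, apply Proposition~\ref{explodable family} to produce an explodable family over $\rm F$ containing all isomorphism classes, and deduce isotropy from Lemmas~\ref{Res is pullback} and~\ref{Res vanishes} --- matches the paper's. Two points deserve comment.

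For logarithmic properness you assert that ``the evaluation maps $ev_k$ on families of log curves are algebraic'' and leave it at that. This is the genuine technical content of the argument, and it is not automatic: the exploded evaluation space $\ex X_{\nu(k)}$ is only the explosion of a log scheme $\mathrm D_k$ when $\nu(k)$ points into a single component of the divisor. The paper handles this by passing to a further log modification of $\rm X$ so that each $\nu(k)$ corresponds to contact with exactly one divisor component $D_k$, and then explicitly constructs a logarithmic map $\mathrm{ev}_k:\rm F\to \mathrm D_k$ (via the short exact sequence relating $M_{\mathrm C_k}$ and $M_{\mathrm F}$) satisfying $\expl\mathrm{ev}_k=ev_k$. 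You should include this step; without it there is no algebraic map whose explosion realises $ev$.

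For isotropy you are over-thinking. Once you are in the log scheme setting --- that is, once $ev(\mathcal M_\bullet(\ex X'))$ is exhibited as the image of $\expl\mathrm{ev}$ for a proper log map $\mathrm{ev}:\rm F\to\prod_k\mathrm D_k$ from an explodable $\rm F$ --- the relevant definition of isotropic (the one immediately following the definition of ``logarithmically proper'') asks only that $\mathrm{ev}^*\omega=0$ for \emph{that} defining map, not for every map into the image. Lemmas~\ref{Res is pullback} and~\ref{Res vanishes} give exactly this, and the paper stops there. Your proposed stratification argument, modelled on Corollary~\ref{allpullbacks}, would upgrade this to the stronger Definition~\ref{exploded isotropic def}, but the paper does not need (and does not carry out) that upgrade here; the compatibility of the two notions is already absorbed into Remark~\ref{complete connection}. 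So your isotropy plan is correct but unnecessarily elaborate.
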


\begin{proof}

Lemma \ref{moduli refinement} and  Corollary \ref{inverse image} imply that it is enough to prove that $ev(\mathcal M_\bullet (\ex X'))$ is logarithmically proper and isotropic for some refinement $\ex X'$ of $\ex X$. We take $\ex X'$ to be a refinement isomorphic to $\expl \rm X$ for a smooth, projective, proper log Calabi--Yao 3-fold $\rm X$. We have that the corresponding moduli stack of basic stable log curves in $\rm X$ is proper, \cite[Corollary 4.2]{GSlogGW}, so  Proposition \ref{explodable family} implies that there exists a proper explodable family of stable log curves $f$  in $\rm X$ whose explosion  contains every isomorphism class of curve in $\mathcal M_\bullet(\expl\rm X)$.
\[\begin{tikzcd}\rm C\dar \rar{f} & \rm X
\\ \rm F\end{tikzcd}\]
 In particular this implies that $ev(\expl \mathrm F)=ev(\mathcal M_\bullet(\expl\mathrm X))$.
 
To verify that $ev(\expl \mathrm F)$ is logarithmically proper, we need that $ev$ is the explosion of some logarithmic map $\mathrm{ev}$. Assume that each vector $\nu(k)\in\totb{\expl \rm X}$ corresponds to contact with only one component $D_k$ of the divisor $D\subset X$. This can be achieved by taking a further log modification of $\rm X$ if necessary. Then $\ex X'_{\nu(k)}$ is the explosion of the log scheme $\mathrm D_k$, given by $D_k$, relative to its intersection with other components of $D$.  Evaluation at marked points gives an algebraic evaluation map 
\[\mathrm {ev}:\mathrm F\longrightarrow \prod_k \mathrm D_k\]
 compatible with our evaluation map to $\prod_k\ex X'_{\nu(k)}$
in the sense that 
\[\expl \mathrm{ev}=ev\]
In particular, each marked point section gives a commutative diagram of log morphisms
\[\begin{tikzcd}\mathrm C_k\dar \rar{f} &\mathrm S_k\dar
\\  \rm F\rar{\mathrm{ev}_k} & \mathrm D_k \end{tikzcd}\]
where $\mathrm C_k\subset \rm C$ is the locus of the $k$th marked point, and $\mathrm S_k\subset \rm X$ is the closed stratum of $\rm X$ over $D_k$. As a map of underlying schemes, $\mathrm {ev}_k$ and $f$ agree, and the map of log structures is determined by the following commutative diagram of short exact sequences
\[\begin{tikzcd}  0 & 0
\\ \mathbb N\uar & \mathbb N\lar{d_k}\uar
\\ M_{\mathrm C_k}\uar &\lar{f^\flat} M_{\mathrm S_k}\uar
\\ M_{\mathrm F}\uar & \lar{\mathrm{ev}_k^\flat} M_{\mathrm D_k}\uar
\\ 0\uar & 0\uar \end{tikzcd}\]
Then the required logarithmic evaluation map  $\rm {ev}$ is the product of these maps $\mathrm {ev}_k$.

Proposition \ref{explodable family} then implies that $ev\left(\mathcal M_\bullet(\expl \rm X)\right )=ev(\expl \rm F)=\expl\mathrm {ev}(\expl \rm F)$ is logarithmically proper, which implies that $ev(\mathcal M_{\bullet}(\ex X))$ is also logarithmically proper. 
 Lemmas \ref{Res is pullback} and \ref{Res vanishes} imply that $ev(\expl \mathrm F)$ is isotropic, so $ev(\mathcal M_{\bullet}(\ex X))$ is also isotropic.

\end{proof}

\begin{remark}We can now describe log Gromov--Witten invariants of Calabi--Yau 3-folds as canonical lagrangian correspondences. Let $\rm X=(X, D)$ be a proper projective log Calabi-Yau 3-fold, and let $\mathcal M_\bullet (\rm X)$ be the moduli stack of basic stable log curves in $\rm X$ representing a fixed homology class,  with specified genus, and contact data $\nu(1),\dotsc,\nu(m)$, where $\nu(k)$ indicates contact of order $d_k$ with a single component $D_k$ of the divisor $D$. We then have the evaluation map
\[\mathrm{ev}:\mathcal M_\bullet (\mathrm X)\longrightarrow \prod_{k=1}^m \mathrm D_k\]
constructed in the proof of Proposition \ref{isotropic GW image}. As the image of $\rm ev$ is isotropic, each irreducible component has dimension bounded by $m$, and those components  contained in the boundary of $\prod_k\mathrm D_k$ have dimension strictly less than $m$.   The virtual fundamental class $[\mathcal M_\bullet(X)]$ constructed in \cite{GSlogGW} is an element of the (rational) Chow group  $A_m(\mathcal M_\bullet(\mathrm X);\mathbb Q)$, where we regard $\mathcal M_\bullet(\mathrm X)$ as a Deligne--Mumford stack over the category of complex schemes, (not using the log structure). When we forget the log structure on $\mathrm D_k$ we obtain $D_k$ and we have the evaluation map
\[\mathrm{ev}:\mathcal M_\bullet (\mathrm X)\longrightarrow \prod_{k=1}^m  D_k\]
and we can push forward $[\mathcal M_\bullet(\mathrm X)]$ to obtain 
\[\mathrm{ev}_*[\mathcal M_\bullet(\mathrm X)]\in A_{m}\left(\mathrm{ev}(\mathcal M_\bullet(\mathrm X));\mathbb Q\right)\] as a $m$--dimensional element of the rational Chow group of 
\[\mathrm{ev}(\mathcal M_\bullet(\mathrm X))\subset\prod_{k=1}^m D_k\ .\]
This $m$--dimensional Chow group is freely generated by the irreducible lagrangian components of $\mathrm{ev}(\mathcal M_\bullet(\mathrm X))$, and there are no non-trivial rational equivalences, so the pushforward of the virtual fundamental class is uniquely represented by a lagrangian cycle in $\Lag{\prod_k\mathrm D_k}\otimes\mathbb Q$.

Using logarithmic modifications, we can obtain similar results when we relax the conditions on contact data. For a component of the moduli stack $\mathcal M (\mathrm X)$ comprised of curves with some special points intersecting multiple components of $D$, we can choose a logarithimic modification $\mathrm X'\longrightarrow \mathrm X$ so that each special point has contact with only one component of the divisor in $\mathrm X'$. Then, the above argument applies to the push forward of the virtual fundamental class of the corresponding component of $\mathcal M(\mathrm X')$. Morally speaking $[\mathcal M(\mathrm X')]$ is a logarithmic modification of $[\mathcal M(\mathrm X)]$, \cite{ilgw}, so this gives information about $[\mathcal M(\mathrm X)]$.
 \end{remark}

Our goal now is to prove that the pushforward of the virtual fundamental class of the moduli stack of curves in $\ex X$ is represented by a canonical holomorphic lagrangian correspondence. We prove this using the exploded manifold construction of virtual fundamental class. 
 
The moduli stack $\mathcal M(\ex X)$ of stable holomorphic curves in $\ex X$ is a substack of the moduli stack of stable, not-necessarily-holomorphic curves $\mathcal M^{st}(\ex X)$ in $\ex X$; \cite[Section 2,7]{evc}. The virtual fundamental class $[\mathcal M(\ex X)]$ of $\mathcal M(\ex X)$ is constructed in \cite[Definition 4.7]{vfc}, using the Kuranishi structrure from \cite[Theorem 7.3]{evc}. The virtual fundamental class $[\mathcal M(\ex X)]$ is a canonically oriented,  weighted branched family of curves in $\mathcal M^{st}(\ex X)$, which can be constructed as close as we like to the holomorphic curves $\mathcal M(\ex X)\subset\mathcal M^{st}(\ex X)$. Our evaluation map $ev$ is defined on $\mathcal M^{st}(\ex X)$ and the virtual fundamental class. We pull back differential forms using $ev$ to integrate them over $[\mathcal M(\ex X)]$, as in \cite[Definition 5.8]{vfc}, and we can also use $ev$ to push forward the virtual fundamental class, representing this pushforward as a closed differential form, supported as close as we like to $ev(\mathcal M(\ex X))$;  \cite[Definition 5.15]{vfc}. The cohomology we use for these operations is called refined cohomology, and denoted by $\rh^*$; see \cite[Section 9]{dre}. 

Let $\ex X$, $\mathcal M_\bullet (\ex X)$ and $\nu(k)$ be as in Proposition \ref{isotropic GW image}, and let $\mathcal M^{st}_\bullet(\ex X)$ be the moduli stack of stable curves in $\ex X$, with genus, homology class, and contact data the same as $\mathcal M_{\bullet}(\ex X)$. We have an evaluation map
\[ev:\mathcal M^{st}_\bullet(\ex X)\longrightarrow \prod_{k=1}^m(\ex X_{\nu(k)},\omega_{\nu(k)})\]
such that  the image of $\mathcal M(\ex X)\subset\mathcal M^{st}_\bullet(\ex X)$ is logarithmically proper and isotropic. Moreover, as $\ex X$ is a Calabi--Yau 3-fold,  the real dimension  of $[\mathcal M_\bullet(\ex X)]$ is $2m$, and therefore half the dimension $\prod_{k=1}^m\ex X_{\mu(k)}$; this dimension calculation follows from \cite[Theorem 1.2]{reg} and \cite[Theorem 6.8]{evc}. 

\begin{lemma}\label{vfc sequence} Let $[\mathcal M_\bullet(\ex X)]_i$ be any sequence of virtual fundamental classes constructed using a sequence of weighted branched sections  converging to $\dbar$ in $C^1$. Then, for any open neighbourhood $U$ of $ev(\mathcal M_\bullet(\ex X))$ within any refinement of $\prod_k\ex X_{\nu(k)}$, there exists some $N\in\mathbb N$ such that for all $i>N$,   $[\mathcal M_\bullet(\ex X)]_i$ is  within $ev^{-1}U\subset\mathcal M^{st}(\ex X)$, and  the resulting map 
\[\rh^{2m}(U)\longrightarrow \mathbb R\]
\[\theta\mapsto \int_{[\mathcal M_\bullet]_i(\ex X)}ev^*\theta\]
is independent of $i>N$. Moreover this map is defined independent of any choices in the construction of the virtual fundamental classes $[\mathcal M_\bullet(\ex X)]_i$. 
\end{lemma}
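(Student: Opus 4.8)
The plan is to reduce the whole statement to Stokes's theorem for refined differential forms, the only genuine input being the structural fact, established in the construction of \cite[Definition 4.7]{vfc}, that the virtual fundamental class of $\mathcal M_\bullet(\ex X)$ inside $\mathcal M^{st}_\bullet(\ex X)$, as well as any weighted branched cobordism between two such classes, may be built supported inside an arbitrarily small neighbourhood of the locus of holomorphic curves $\mathcal M_\bullet(\ex X)$, this neighbourhood shrinking to $\mathcal M_\bullet(\ex X)$ as the perturbing section tends to $\dbar$. Two supplementary facts are used: $\mathcal M_\bullet(\ex X)$ is compact (as in the proof of Proposition \ref{isotropic GW image} it is $\expl\mathrm{ev}(\expl\mathrm F)$ with $\mathrm F$ a compact explodable log scheme), so the virtual classes and cobordisms below are compact and all integrals converge; and $ev$ is $\C\infty 1$, hence continuous, so $ev^{-1}(U)$ is an open neighbourhood of $\mathcal M_\bullet(\ex X)$ in $\mathcal M^{st}_\bullet(\ex X)$ for any open $U \supset ev(\mathcal M_\bullet(\ex X))$.

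First I would dispose of the refinement: since refined forms, refined cohomology and refined integration are all defined via limits over refinements \cite[Section 9]{dre}, I may pull back the evaluation map and all the weighted branched families below to the refinement in which $U$ is open without altering any integral, and so assume $U$ is open in $\prod_k \ex X_{\nu(k)}$. Now $ev(\mathcal M_\bullet(\ex X)) \subset U$ gives $\mathcal M_\bullet(\ex X) \subset ev^{-1}(U)$, and since the support of $[\mathcal M_\bullet(\ex X)]_i$ is forced into any prescribed neighbourhood of $\mathcal M_\bullet(\ex X)$ once the section is close enough to $\dbar$ in $C^1$, there is $N_0$ so that for $i > N_0$ the support of $[\mathcal M_\bullet(\ex X)]_i$ lies in $ev^{-1}(U)$. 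For such $i$ the pairing $\theta \mapsto \int_{[\mathcal M_\bullet]_i(\ex X)} ev^*\theta$ is defined on closed refined $2m$-forms $\theta$ on $U$, and it descends to $\rh^{2m}(U)$ because $[\mathcal M_\bullet(\ex X)]_i$ is a cycle: if $\theta = d\eta$ on $U$ then $\int_{[\mathcal M_\bullet]_i(\ex X)} ev^* d\eta = \int_{\partial [\mathcal M_\bullet(\ex X)]_i} ev^*\eta = 0$.

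For independence of $i$ and of all remaining choices in the construction, I would take $i,j > N$ with $N \geq N_0$ large enough and compare $[\mathcal M_\bullet(\ex X)]_i$ and $[\mathcal M_\bullet(\ex X)]_j$ — together with whatever Kuranishi and abstract perturbation data were used — by interpolating the two weighted branched perturbations and the auxiliary data through a weighted branched homotopy. This produces a $(2m+1)$-dimensional weighted branched cobordism $W$ in $\mathcal M^{st}_\bullet(\ex X)$ with $\partial W = [\mathcal M_\bullet(\ex X)]_j - [\mathcal M_\bullet(\ex X)]_i$ up to orientation; because the interpolating perturbations stay in the convex hull of two small ones, the same support control from \cite[Definition 4.7]{vfc} keeps $W$ inside $ev^{-1}(U)$ once $N$ is large. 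As $\theta$ is closed on $U$ and $ev(W) \subset U$, Stokes's theorem for refined forms gives $\int_{[\mathcal M_\bullet]_j(\ex X)} ev^*\theta - \int_{[\mathcal M_\bullet]_i(\ex X)} ev^*\theta = \int_W d(ev^*\theta) = \int_W ev^* d\theta = 0$; running the same argument with a cobordism that interpolates two different sets of construction choices shows the pairing is choice-independent. The main obstacle is precisely this uniform support control — that every virtual class and every interpolating cobordism can be confined to the single prescribed neighbourhood $ev^{-1}(U)$ — which is exactly what the construction of \cite[Definition 4.7]{vfc} together with compactness of $\mathcal M_\bullet(\ex X)$ supplies; everything else is formal.
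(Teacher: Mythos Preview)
Your overall strategy---confine the virtual classes and their interpolating cobordisms to $ev^{-1}(U)$, then invoke Stokes---matches the paper's. The gap is in how you handle the refinement. You assert that by pulling everything back to the refinement you may assume $U$ is open in $\prod_k\ex X_{\nu(k)}$, and that then $ev^{-1}(U)$ is open because $ev$ is continuous. But the domain of $ev$ is the moduli stack $\mathcal M^{st}_\bullet(\ex X)$, which is not a single exploded manifold: it is a stack whose parametrising families $\ex F(\hat f)$ each acquire an induced refinement from the target, and $ev^{-1}(U)$ is only open in these \emph{refined} families, not in $\mathcal M^{st}_\bullet(\ex X)$ with its original topology. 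The virtual fundamental classes and cobordisms are constructed inside $\mathcal M^{st}_\bullet(\ex X)$ before any such refinement, so ``pulling them back to the refinement'' is not a well-defined operation on them, and your continuity claim does not give what you need.

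The paper flags exactly this point (``This is not obvious, because $U$ is only an open subset of some refinement'') and does not attempt to refine the domain. Instead it argues directly that $ev^{-1}(U)$, though not open, contains an honest open neighbourhood $\mathcal U$ of $\mathcal M_\bullet(\ex X)$ inside $\mathcal M^{st}_\bullet(\ex X)$. This uses two facts specific to the (non-Hausdorff) topology on the moduli stack: the complement of $ev^{-1}(U)$ has closed image in the tropical part of each family, and any stable curve topologically equivalent to a holomorphic curve is itself holomorphic. Together these force the closure of the complement of $ev^{-1}(U)$ to miss the holomorphic locus entirely, yielding $\mathcal U$. Once that open set is in hand, your cobordism-and-Stokes argument is exactly what the paper does (citing \cite[Lemma 4.5]{vfc} and \cite[Corollary 7.5]{evc}); but producing $\mathcal U$ is the substantive step you skipped.
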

 
\begin{proof}

We first argue that $ev^{-1}(U)$ contains an open neighbourhood of the moduli stack of holomorphic curves within $\mathcal M_\bullet^{st}$. This is not obvious, because $U$ is only a open subset of some refinement of $\prod_k\ex X_{\nu(k)}$. Given a family of curves $\hat f$ in $\mathcal M^{st}_\bullet$ parametrised by $\ex F(\hat f)$, the evaluation map $ev$ defines a map $\ex F(\hat f)\longrightarrow \prod_k \ex X_{\nu_k}$, and any refinement of $\prod_k\ex X_{\nu(k)}$ induces a refinement of $\ex F(\hat f)$. In particular, this implies that the inverse image of $U$ in $\ex F(\hat f)$ is a open subset of some refinement of $\ex F(\hat f)$. The complement of $ev^{-1}(U)$ does not contain any holomorphic curves, and has closed image in $\totb{\ex F(\hat f)}$. The closure of the complement of $ev^{-1}(U)$ consists of all curves topologically equivalent\footnote{The topology on the $\mathcal M_\bullet^{st}$ is not Hausdorff, but comes from a map to a Hausdorff topological space; see \cite[Section 2.6]{evc}. Two curves are topologically equivalent each closed substack either contains both or neither of these curves.   } to curves in the complement of $ev^{-1}(U)$. As all stable curves topologically equivalent to a holomorphic curve are themselves holomorphic curves, the closure of the complement of $ev^{-1}(U)$ contains no holomorphic curves. So, there is an open neighborhood $\mathcal U$ of the holomorphic curves in $\mathcal M_\bullet^{st}$ such that $\mathcal U$ is contained in $ev^{-1}(U)$.

As the virtual fundamental classes $[\mathcal M_\bullet]_i$ are constructed using a sequence of weighted branched sections converging to $\dbar$, and the moduli stack of holomorphic curves $\mathcal M_\bullet$ is compact, for $i$ sufficiently large, $[\mathcal M_\bullet]_i$ is contained in $\mathcal U$. Moreover, there exists some $N\in \mathbb N$ such that for all $i,j> N$, the two virtual fundamental classes $[\mathcal M_\bullet]_i$ and $[\mathcal M_\bullet]_j$ are cobordant within $\mathcal U$; see \cite[Lemma 4.5]{vfc} and \cite[Corollary 7.5]{evc} for the construction of cobordisms. So, the resulting map
\[\rh^{2m}(U)\longrightarrow \mathbb R\]
 defined by 
\[\theta\mapsto \int_{[\mathcal M_\bullet(\ex X)]_i}ev^*\theta\]
is independent of $i>N$. Similarly, \cite[Corollary 7.5]{evc} implies that this map is independent of any choices in the construction of our virtual fundamental classes.

\end{proof}

As explained in Section \ref{refined cohomology section},  we can also integrate differential forms over lagrangian correspondences, and the Poincare dual $PD(\alpha)$ of a lagrangian correspondence $\alpha$ is defined in $\rh^*_c(U)$, where $U$ is an arbitrarily small refined neighborhood of $\alpha$.

\begin{thm}\label{GW lagrangian} There exists a unique lagrangian correspondence 
\[\eta_\bullet\in \Lag{\prod_k\ex X_{\nu(k)}}\otimes \mathbb Q \] supported within $ev(\mathcal M_\bullet(\ex X))$ such that for any neighborhood $U$ of $ev(\mathcal M_\bullet(\ex X))$ within any refinement of $\prod_k\ex X_{\nu(k)}$, and all constructions of virtual fundamental class $[\mathcal M_\bullet(\ex X)]$ sufficiently close to $\mathcal M_\bullet\subset\mathcal M^{st}_\bullet$ we have
\[\int_{\eta_\bullet}\theta=\int_{[\mathcal M_\bullet(\ex X)]}ev^*\theta\]
for all $\theta\in \rh^{2m}(U)$.
\end{thm}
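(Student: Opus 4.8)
The plan is to combine the two key inputs from the preceding material: Proposition \ref{isotropic GW image}, which says $ev(\mathcal M_\bullet(\ex X))$ is logarithmically proper and isotropic, and Lemma \ref{free generation}, which says that on a sufficiently small refined neighbourhood of an isotropic logarithmically proper set the degree-$\dim$ refined cohomology is \emph{freely} generated by the Poincar\'e duals of the lagrangian subvarieties contained in it. Since each irreducible component of $ev(\mathcal M_\bullet(\ex X))$ is isotropic of dimension at most $m$, and since $\dim_{\mathbb R}\prod_k\ex X_{\nu(k)}=2m$, the top cohomology $\rh^{2m}(U')$ of a small enough refined neighbourhood $U'$ is freely generated (over $\mathbb R$, or over $\mathbb Q$ after tensoring) by the classes $PD(\ex V_j)$ where $\ex V_j$ runs over the $m$-dimensional (hence lagrangian) components of $ev(\mathcal M_\bullet(\ex X))$.

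First I would fix, using Lemma \ref{vfc sequence}, the well-defined linear functional $\Phi\colon \rh^{2m}(U)\to\mathbb R$ given by $\theta\mapsto\int_{[\mathcal M_\bullet(\ex X)]_i}ev^*\theta$ for $i>N$; by that lemma this is independent of $i$ and of all choices in the Kuranishi construction, and it is defined on $\rh^{2m}(U)$ for \emph{every} refined neighbourhood $U$ of $ev(\mathcal M_\bullet(\ex X))$, compatibly with the restriction maps $\rh^{2m}(U)\to\rh^{2m}(U')$ for $U'\subset U$. Next, shrinking to a refined neighbourhood $U'$ as in Lemma \ref{free generation}, I would expand $\Phi|_{\rh^{2m}(U')}$ in the dual basis to $\{PD(\ex V_j)\}$: there are unique rationals $\eta_j$ (rationality because $\Phi$ is built from a rational virtual class paired against integral Poincar\'e duals of the lagrangian components) with $\Phi(\theta)=\sum_j \eta_j\int_{\ex V_j}\theta$ for all $\theta\in\rh^{2m}(U')$. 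Set $\eta_\bullet:=\sum_j\eta_j\ex V_j\in\Lag{\prod_k\ex X_{\nu(k)}}\otimes\mathbb Q$, which is visibly supported within $ev(\mathcal M_\bullet(\ex X))$ and satisfies $\int_{\eta_\bullet}\theta=\Phi(\theta)=\int_{[\mathcal M_\bullet(\ex X)]}ev^*\theta$ for $\theta\in\rh^{2m}(U')$.

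Then I would promote this identity from the small neighbourhood $U'$ to an arbitrary neighbourhood $U$: given any $\theta\in\rh^{2m}(U)$, restrict it to $U'$, apply the identity there, and observe that both $\int_{\eta_\bullet}\theta$ and $\Phi(\theta)$ depend only on the restriction of $\theta$ to $U'$ — for $\int_{\eta_\bullet}$ because $\eta_\bullet$ is supported inside $U'$, and for $\Phi$ by Lemma \ref{vfc sequence}'s compatibility statement — so the equality $\int_{\eta_\bullet}\theta=\int_{[\mathcal M_\bullet(\ex X)]}ev^*\theta$ holds on all of $\rh^{2m}(U)$. Uniqueness of $\eta_\bullet$ follows because any lagrangian correspondence supported in $ev(\mathcal M_\bullet(\ex X))$ is, by Lemma \ref{free generation}, determined by its Poincar\'e dual in $\rh^{2m}(U')$, hence by the functional it induces, which is prescribed. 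The main obstacle, I expect, is the bookkeeping around refined neighbourhoods: one must be careful that the Poincar\'e duals $PD(\ex V_j)$ of the lagrangian components can be chosen supported in a common refined neighbourhood on which Lemma \ref{free generation} applies, that the functional $\Phi$ is genuinely insensitive to restriction to such a neighbourhood (this is the content one leans on from Lemma \ref{vfc sequence}), and that the extraneous non-complete components of dimension $<m$ appearing in $ev(\mathcal M_\bullet(\ex X))$ contribute nothing to $\rh^{2m}$ — all of which is handled by the dimension count and Lemma \ref{free generation}, but needs to be stated cleanly.
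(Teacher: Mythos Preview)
Your overall strategy matches the paper's: assemble the well-defined functional $\Phi$ via Lemma \ref{vfc sequence}, identify it with integration over a unique lagrangian cycle supported on $ev(\mathcal M_\bullet)$, then check rationality. But the way you invoke Lemma \ref{free generation} does not quite give what you need.

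Lemma \ref{free generation} describes $\rh^{2m}\bigl(\prod_k\ex X_{\nu(k)},\,\prod_k\ex X_{\nu(k)}\setminus U'\bigr)$, i.e.\ the \emph{compactly supported} refined cohomology of $U'$; the Poincar\'e duals $PD(\ex V_j)$ live there, not in $\rh^{2m}(U')$. To write the functional $\Phi$ on $\rh^{2m}(U')$ as $\sum_j\eta_j\int_{\ex V_j}$ you need the functionals $\int_{\ex V_j}$ to form a basis of $\bigl(\rh^{2m}(U')\bigr)^*$. That would follow from Lemma \ref{free generation} only via a Poincar\'e-duality isomorphism $\rh^{2m}_c(U')\cong\bigl(\rh^{2m}(U')\bigr)^*$ for refined cohomology, which the paper never establishes and which you have not justified. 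So the phrase ``expand $\Phi$ in the dual basis to $\{PD(\ex V_j)\}$'' hides an unproved step.

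The paper sidesteps this entirely: it does not use Lemma \ref{free generation}. Instead it passes to a refinement $\ex X'\cong\expl\rm Y$ with $\rm Y$ smooth, chooses an ordinary open $U$ with $\totl U$ retracting onto $\totl{ev(\mathcal M_\bullet(\ex X'))}$, and works with \emph{unrefined} de Rham cohomology. Then $H^{2m}(U)\cong H^{2m}(\totl{ev(\mathcal M_\bullet)};\mathbb R)$, and the universal coefficient theorem plus finite-dimensionality give $\hom(H^{2m}(U),\mathbb R)\cong H_{2m}(\totl{ev(\mathcal M_\bullet)};\mathbb R)$, which is visibly free on the lagrangian components. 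This produces $\eta_\bullet$ for ordinary forms; the extension to $\rh^{2m}$ is immediate because every refined class is represented by an ordinary form on some further refinement, and the same $\eta_\bullet$ works there.

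Your rationality argument is also too quick. That the virtual class carries rational weights does not by itself force the expansion coefficients $\eta_j$ to be rational; you must isolate each $\eta_j$ as a value of $\Phi$. The paper does this concretely: it takes $\theta$ Poincar\'e dual to a real $2m$-submanifold $N$ meeting $ev(\mathcal M_\bullet)$ transversally exactly once on $\ex V_j$, arranges $[\mathcal M_\bullet]$ transverse to $ev^{-1}N$, and reads off $\eta_j$ as a rational-weighted signed point count.
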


\begin{proof}

Let $\ex X'$ be any refinement of $\ex X$ isomorphic to $\expl \rm Y$ for $\rm Y$ smooth, and let $U\subset  \prod_k \ex X_{\nu(k)}'$ be an open neighbourhood of $ev(\mathcal M_\bullet(\ex X'))$ such that  $\totl U$ retracts onto $\totl {ev(\mathcal M_\bullet(\ex X'))}$. So, we have $H^{2m}(\totl{U};\mathbb R)=H^{2m}(\totl {{ev(\mathcal M_\bullet(\ex X'))}};\mathbb R)$, and the universal coefficient theorem then gives us that $H^{2m}(\totl{U};\mathbb R)$ is isomorphic to $\hom (H_{2m}(\totl {ev(\mathcal M_\bullet(\ex X'))}),\mathbb R)$. Then, \cite[Corollary 4.2]{dre} gives that $H^*(\totl{U};\mathbb R)$ is isomorphic to the de Rham cohomology of $ U$ defined in \cite[Definition 1.3]{dre}. Use $H^*(U)$ to denote this de Rham cohomology of $U$. As $H_{2m}(\totl {ev(\mathcal M_\bullet(\ex X'))})$ is freely generated by the lagrangian subvarieties, we also have that $\hom(H^{2m}(U),\mathbb R)$ is isomorphic to $H_{2m}(\totl {ev(\mathcal M_\bullet(\ex X'))};\mathbb R)$, and each element of $H_{2m}(\totl {ev(\mathcal M_\bullet(\ex X'))};\mathbb R)$ is represented by a unique element of $\Lag{\prod_k\ex X_{\nu(k)}}\otimes\mathbb R$ supported on ${ev(\mathcal M_\bullet(\ex X'))}$. Constructing our virtual fundamental class $[\mathcal M_\bullet (\ex X)]$ with image in $U$, the map
\[\theta\mapsto \int_{[\mathcal M_\bullet(\ex X)]}ev^*\theta\] 
defines a homomorphism from $H^{2m}(U)$ to $\mathbb R$, so there exists a unique \[\eta_\bullet\in \Lag{\prod_k\ex X_{\nu(k)}}\otimes\mathbb R\] such that 
\[\int_{[\mathcal M_\bullet(\ex X)]}ev^*\theta=\int_{\eta_\bullet}\theta\]
for all closed forms  $\theta\in\Omega^{2m}U$. Moreover, Lemma \ref{vfc sequence} implies that, so long as we construct $[\mathcal M_\bullet(\ex X)]$ using a small enough perturbation of $\dbar$, $\eta_\bullet$ is independent of the construction of virtual fundamental class. For any smaller open neighbourhood $U'$ of ${ev(\mathcal M_\bullet(\ex X))}$ in a further refinement of $\ex X'$, we can pull back forms from $U$ to $U'$, so $\eta_\bullet$ is the only option for a lagrangian cycle satisfying the analogous property for forms on $U'$. It follows that $\eta_\bullet$ satisfies the above property for all differential forms defined on any neighbourhood of ${ev(\mathcal M_\bullet(\ex X))}$ in any refinement of $\ex X$. As each element of $\rh^*(U)$ can be represented by a differential form on some refinement of $U$, we have that $\eta_\bullet$ satisfies the required property. 

It remains to show that $\eta_\bullet\in \Lag{\prod_k\ex X_{\nu(k)}}\otimes\mathbb Q\subset \Lag{\prod_k\ex X_{\nu(k)}}\otimes\mathbb R$. Let $\theta$ be Poincare dual to some proper $2m$ dimensional submanifold $N\subset U$ intersecting ${ev(\mathcal M_\bullet(\ex X))}$ transversely exactly once on some irreducible lagrangian component of ${ev(\mathcal M_\bullet(\ex X))}$. The weight assigned to this component is $\int_{[\mathcal M_\bullet]}ev^*\theta$.  We can also construct $[\mathcal M_\bullet(\ex X)]$ transverse to $N$ using \cite[Remark 3.18]{vfc}  and \cite[Theorem 7.3]{evc}, so that this integral coincides with the weighted count of points in $[\mathcal M_\bullet]\cap ev^{-1}N$. As $[\mathcal M_\bullet]$ is constructed using rational weights, this is a rational number. It follows that $\eta_\bullet\in\Lag{\prod_k\ex X_{\nu(k)}}\otimes\mathbb Q$.

\end{proof}

\subsection{ Gromov--Witten generating functions as lagrangian correspondences}\label{generating function section}

\

Let $\ex X$ be an exploded  Calabi--Yau 3-fold with a choice of $0\in\totb {\ex X}$ and a refinement isomorphic to the explosion of a proper projective log Calabi--Yau 3-fold $\rm X$. Define $\rh_*(\ex X)$ to be the limit of the homology of $ X'$ over all smooth log modifications $\rm X'\longrightarrow X$. Each curve in $\ex X$ represents a well-defined element $\beta\in\rh_2(\ex X)$. The curve class $\beta$ does not quite determine the contact data of $\bf p$ of a curve. Instead we have $\sum_k \mathbf p(kv)=\beta\cdot D_v$ for all primitive integral vectors $v\in\totb{\ex X}$, where $D_v$ is the divisor corresponding to $v$ in some smooth log modification $\rm X'$ of $\rm X$ such that $\beta$ represents a homology class in $\rm X'$.

Given contact data $\bf p$, a homology class $\beta$ and a genus $g$,  Theorem \ref{GW lagrangian} gives a canonical, $\Aut \bf p$--invariant rational lagrangian correspondence 
\[\eta_{g,\beta,\bf p}\in \Lag{\ex X^{\bf p}}\otimes \mathbb Q\] 
satisfying the condition that 
\begin{equation}\int_{\eta_{g,\beta,\bf p}}\theta=\left(\prod_v \abs {v}^{\mathbf p(v)}\right)\int_{[\mathcal M_{g,\beta,\bf p}]}ev^*\theta\end{equation}
for all $\theta\in \rh^*(\ex X^{\bf p})$, where $[\mathcal M_{g,\beta,\bf p}]$ indicates the virtual fundamental class of the moduli stack of stable, connected, genus $g$ holomorphic curves in $\ex X$ representing the curve class $\beta$, and with labeled contact data $\bf p$ --- so this moduli stack is an $\abs{\Aut \bf p}$--fold cover of  a component of $\mathcal M(\ex X)$, where $\abs{\Aut\bf p}=\prod_v \mathbf p(v)!$ and the square in the diagram below is a fibre product diagram. The extra factor of $\prod_v \abs {v}^{\mathbf p(v)}$ is because we want $\eta_{g,\beta,\bf p}$ for all $\bf p$ to encode the pushforward of $[\mathcal M_{g,\beta}(\ex X)]$ to $\prod_{\bf p}\mathcal X^{\bf p}$ rather than $\prod_{\bf p}\ex X^{\bf p}/\Aut \bf p$. This factor occurs because the fibres of the map $\mathcal X^{\bf p}\longrightarrow \ex X^{\bf p}/\Aut \bf p$ are $\prod_v (B_{\mathbb Z_{\abs v}})^{\mathbf p(v)}$. These weights ensure that that the tropical gluing formula for Gromov--Witten invariants, \cite{gfgw}, uses the star product over the stacks $\mathcal X^{\bf p}$, and ensure compatibility with the conjectured relationship between Donaldson Thomas invariants and Gromov--Witten invariants.

\[\begin{tikzcd}& \mathcal M_{g,\beta}(\ex X)\dar \ar{dl}& \lar\mathcal M_{g,\beta,\bf p}(\ex X)\dar
\\ \exp(\End(\ex X)):=\coprod_{\bf p}\mathcal X^{\bf p}\rar & \coprod_{\bf p}\ex X^{\bf p}/\Aut\mathbf p & \lar \ex X^{\bf p} \end{tikzcd}\]

As $\eta_{g,\beta,\bf p}$ is $\Aut \bf p$--invariant, this lagrangian cycle determines a (rational) lagrangian cycle in $\mathcal X^{\bf p}$, as in Definition \ref{stack star product}. If we sum up the corresponding lagrangian cycles for all $\bf p$ such that  $\sum_k \mathbf p(kv)=\beta\cdot D_v$, we get a lagrangian cycle in the ring  $\Lag{\exp(\End(\ex X))}\otimes \mathbb Q$.

\[\eta_{g,\beta}\in \Lag{\exp(\End(\ex X))}\otimes \mathbb Q\]

Each curve in $\mathcal M_{g,\beta,\bf p}$ should be regarded as having topological Euler characteristic $2-2g-\abs{\bf p}$ where $\abs{\bf p}:=\sum_v \mathbf p(v)$. We can sum over the contribution of curves of different genus to get
\[\eta_{\beta}:=\sum_g \hbar^{2g-2+\abs{\bf p}}\eta_{g,\beta}\in \left(\Lag{\exp(\End(\ex X))}\otimes \mathbb Q\right)((\hbar))  \]
where, for each $\bf p$ we have
\[\eta_{\beta,\bf p}:=\sum_g \hbar^{2g-2+\abs{\bf p}}\eta_{g,\beta,\bf p}\ .\]

 Gromov compactness in this context  does not imply that  $\eta_{\beta,\bf p}$ is a finite sum of lagrangian subvarieties weighted by coefficients in $\mathbb Q((\hbar))$, only that the coefficient of $\hbar^{n}$ in  $\eta_{\beta,\bf p}$  is finite. A consequence of Conjecture \ref{connected integrality} below,  is that we also expect that $\eta_{\beta, \bf p}$ to be a finite sum of lagrangian subvarieties weighted by coefficients $\mathbb Q((\hbar))$. 

As  $\ex X$ has a refinement that is the explosion of some projective $(X,D)$, there exists an energy grading 
\[E:\rh_2(\ex X)\longrightarrow \mathbb R\] (defined by integrating a real symplectic form on $X$ over $\beta$) such that, for $\beta\neq 0$,  there are no holomorphic curves representing the class $\beta$ unless $E(\beta)\geq 1$. Let $M\subset \rh_2(\ex X)$ be the submonoid comprised of $0$ together with elements $\beta$ with $E(\beta)\geq 1$. Define the commutative ring $\mathcal F$ to be the completion of the monoid ring $\left(\Lag{\exp(\End(\ex X))}\otimes \mathbb Q\right)((\hbar))[M]$ using the energy grading and the grading coming from the exponent of $\hbar$. This ring $\mathcal F$ can be thought of as a generalisation of a Fock space. 
Gromov compactness in this context \cite{cem,GSlogGW} implies that $\eta_{g,\beta}$ is nonzero for only finitely many $(g,\beta )$ with  $g\leq R$ and $E(\beta)\leq R$. So,  we can sum  the contributions of different homology classes to obtain the formal series encoding Gromov--Witten invariants of $\ex X$ counting connected, non-constant stable holomorphic curves. 
\[\eta:=\sum_{\beta\neq 0} t^\beta \eta_\beta \in \mathcal F\]
As the energy grading of $\eta$ is bounded below by $1$, it makes sense to exponentiate $\eta$ to define a partition function $Z_{GW}(\ex X)$. This partition function encodes the Gromov--Witten invariants of $\ex X$ coming from possibly disconnected stable curves, none of which have any constant components.
\[Z_{GW}(\ex X):=\exp \eta:=\sum_{n=0}^\infty \frac 1{n!} \eta^n\in\mathcal F\]
We similarly define $Z_{GW}(\ex X)_{\beta,\chi}\in \Lag{\exp(\End(\ex X))}\otimes \mathbb Q$ representing the pushforward of the virtual fundamental class of the moduli stack of possibly disconnected stable curves in $\ex X$ with topological Euler characteristic $\chi$ and representing the homology class $\beta\in \rh_2(\ex X)$. We can also sum over different topological Euler characteristics to  define
\[Z_{GW}(\ex X)_\beta:=\sum_\chi \hbar^{-\chi} Z_{GW}(\ex X)_{\chi,\beta} \in \Lag{\exp(\End(\ex X))}\otimes \mathbb Q((\hbar))\ .\]
 So we have
\[Z_{GW}(\ex X)=\sum_{\chi,\beta} \hbar^{-\chi}t^\beta Z_{GW}(\ex X)_{\beta,\chi}=\sum_\beta t^\beta Z_{GW}(\ex X)_\beta\ .\]

\subsection{ Gromov--Witten Donaldson-Thomas correspondence}

\

Pandharipande--Thomas invariants, \cite{PT_2009}, are integer invariants of 3-folds that count stable pairs consisting of a sheaf supported on an embedded curve, and a section. Donaldson--Thomas invariants, \cite{Thomas_2000}, are related invariants counting ideal sheaves supported in dimension at most $1$. Both these invariants  can be generalised to logarithmic 3-folds $\rm X$; see  \cite[Section 2]{logGWDT} and \cite{logDT}. In particular, for each $\beta\in H_2(\rm X)$, and integer  $ \chi$, there is a moduli space $PT_{\beta,\chi}(\rm X)$ of stable pairs and a moduli space $DT_{\beta,\chi}(\rm X)$ of ideal sheaves such that the sheaf has holomorphic Euler characteristic\footnote{Note that the holomorphic and topological Euler characteristics are not the same. The holomorphic Euler characteristic of the ideal sheaf of a smoothly embedded curve is half the topological Euler characteristic.} $\chi$, and  represents the homology class $\beta$.  Moreover, both $DT_{\beta,\chi}(\rm X)$ and $PT_{\beta,\chi}(\rm X)$ admit  integral virtual fundamental classes $[DT_{\beta,\chi}(\rm X)]$ and $[PT_{\beta,\chi}(\rm X)]$. In the case that $\rm X=(X,D)$ is a logarithmic Calabi--Yau 3-fold, the dimension of of both of these virtual fundamental classes  is $\beta\cdot D$, and they may be pushed forward to a positive integral sum of lagrangian cycles within a Hilbert scheme of points on components of $D$. 

Let $\ex X$ be an exploded manifold with a refinement isomorphic to the explosion of a log Calabi--Yau 3-fold $\rm X$. Let us construct generating functions $Z_{PT}(\ex X)$ and $Z'_{DT}(\ex X)$ conjecturally related to $Z_{GW}(\ex X)$ by a lagrangian correspondence constructed in Section \ref{correspondence section}. Given a curve class $\beta\in \rh_2(\ex X)$, we can choose a smooth logarithmic modification $\mathrm X'=(X',D)$ of $\rm X$ such that $\beta$ is a homology class on $X'$. Each component $D_v$ of the divisor corresponds to a primitive integral vector $v\in \totb{\ex X}$, and we need that 
\[\beta(v):=\beta\cdot D_v\geq 0\] for $PT_{\beta,\chi}$ or $DT_{\beta,\chi}$ to be nonempty.

There are natural evaluation maps
\[ev: PT_{\beta,\chi}(\ex X)\longrightarrow \prod_{v}D^{[\beta(v)]}_v\]
\[ev: DT_{\beta,\chi}(\ex X)\longrightarrow \prod_{v}D^{[\beta(v)]}_v\]
discussed in \cite[Section 3]{logGWDT}. For sheaves whose support is in general position with respect to $D$, this evaluation map is given by restriction of the sheaf to $D$, and  \cite[Theorem 1.6.1]{ThomasThesis} leads us to expect that the image of these evaluation maps is isotropic. We accordingly consider the pushforward of virtual fundamental classes as lagrangian cycles.
\[ev_*[PT_{\beta,\chi}(\ex X)] \in \Lag{\prod_{v} D^{[\beta(v)]}_v}=\Lag{\prod_{v}(D_v^\circ)^{{[\beta(v)]}}}^-\]

By restricting the interior $D_v^\circ $ of $D_v$, we remove all dependence on a choice of logarithmic modification. Use the notation
\[D^{[\beta]}:=\prod_v (D_v^\circ)^{{[\beta(v)]}} .\]
This is a smooth complex symplectic variety, which has the advantage\footnote{The disadvantage of restricting to the interior like this is that the star product of these invariants becomes less natural, and the star product is used in the gluing formula for such relative invariants.  The proper way to proceed would be to define these sheaf invariants directly for the exploded manifold $\ex X$, and have evaluation spaces $\ex X^{[\beta]}$ constructed as Hilbert scheme analogues of the evaluation stacks $\mathcal X^{\bf p}$. This would involve extending \cite{logHS} to work in this marginally more general setting. } that it does not depend on the choice of logarithmic modification $\rm X'$. 

Summing over the contributions of different holomorphic Euler characteristics, we get
\[Z_{PT}(\ex X)_{\beta}:=\sum_{\chi} ev_*[PT_{\beta,\chi}]q^{\chi-\beta\cdot D/2}\in \Lag{D^{[\beta]}}^-\otimes \mathbb Z[[q^{\frac 12},q^{-\frac 12}]] \]

We can sharpen \cite[Conjecture 3.2]{PT_2009} and \cite[Conjecture 5.3.1]{logGWDT} to the following.

\begin{conj}\label{integral} $Z_{PT}(\ex X)_{\beta}$ is a finite sum of lagrangian cycles with coefficients the Laurent expansions of rational functions in $q^{\frac 12}$. Moreover, the coefficient of each lagrangian cycle is either invariant under $q^{\frac 12}\mapsto q^{-
\frac 12}$ or $q^{\frac 12}\mapsto -q^{\frac 12}$. \end{conj}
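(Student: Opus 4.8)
The plan is to split Conjecture~\ref{integral} into three claims and treat each separately. Fix $\beta\in\rh_2(\ex X)$ and a smooth logarithmic modification $\mathrm X'=(X',D)$ carrying $\beta$. \textbf{Claim A:} for each holomorphic Euler characteristic $\chi$ the class $ev_*[PT_{\beta,\chi}(\ex X)]$ is a finite $\mathbb Z$--combination of lagrangian subvarieties of $D^{[\beta]}$, and across all $\chi$ only finitely many lagrangian subvarieties $\ell_1,\dots,\ell_N$ occur. Granting this we may write $Z_{PT}(\ex X)_\beta=\sum_{j=1}^N f_j(q^{\frac12})\,\ell_j$ with each $f_j\in\mathbb Z[[q^{\frac12},q^{-\frac12}]]$, so the conjecture reduces to \textbf{Claim B:} each $f_j$ is the Laurent expansion of a rational function of $q^{\frac12}$; and \textbf{Claim C:} each such rational function is invariant under $q^{\frac12}\mapsto q^{-\frac12}$ or under $q^{\frac12}\mapsto -q^{\frac12}$.

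For Claim~A I would first prove that $ev(PT_{\beta,\chi}(\ex X))$ is isotropic. On the open locus of pairs whose support meets $D$ transversally this is Example~\ref{HSDT}, i.e.\ \cite[Theorem 1.6.1]{ThomasThesis}; in general I would mimic the residue computation behind Theorem~\ref{GW lagrangian}, realising the pullback under $ev$ of the holomorphic symplectic form on $D^{[\beta]}$ as the residue along $D$ of $\Omega$ applied to the universal sheaf, and showing this residue is exact on the moduli space (hence vanishes on the image) exactly as in Lemmas~\ref{Res is pullback} and \ref{Res vanishes}. Isotropy forces every component of the image to have dimension at most $\tfrac12\dim D^{[\beta]}$, hence to be lagrangian, and these are freely generated in the relevant homological degree (cf.\ Lemma~\ref{free generation}), so $ev_*[PT_{\beta,\chi}]$ is uniquely a lagrangian cycle. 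Finiteness over all $\chi$ then follows because the support curve has fixed class $\beta$, so its scheme-theoretic intersection with $D$ has length at most $\sum_v\beta\cdot D_v$; the finitely many combinatorial types of this intersection, together with the floating points along $D$, cut $D^{[\beta]}$ into finitely many algebraic strata whose closures are the $\ell_j$.

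Claim~B — rationality — is the crux. I would adapt the Hall-algebra wall-crossing proof of the Pandharipande--Thomas rationality conjecture (Bridgeland, Toda) to the logarithmic Calabi--Yau $3$-fold $\mathrm X'$: PT stable pairs are the stable objects of a tilt of the (logarithmic) category of coherent sheaves, DT ideal sheaves are stable in the large-volume chamber, and the wall-crossing identity in the Behrend-function--weighted Hall algebra expresses $Z_{PT}(\ex X)_\beta$ in terms of the degree-zero series and finitely many wall contributions, each a Laurent \emph{polynomial} in $q^{\frac12}$; dividing out the degree-zero factor leaves a rational function. Since restriction of sheaves and of pairs to $D$ is exact and compatible with each stability condition, every step of this wall-crossing commutes with $ev$, so rationality descends to each coefficient $f_j$ individually. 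For Claim~C I would use Serre duality $\mathbf R\mathrm{Hom}(-,\omega_{X'})$, which gives an isomorphism $PT_{\beta,\chi}(\ex X)\cong PT_{\beta,\beta\cdot D-\chi}(\ex X)$ intertwining $ev$ with an involution $\sigma$ of $D^{[\beta]}$; with the normalisation $q^{\chi-\beta\cdot D/2}$ this becomes a functional equation $Z_{PT}(\ex X)_\beta\big|_{q^{1/2}\mapsto q^{-1/2}}=(-1)^{\beta\cdot D}\,\sigma_*Z_{PT}(\ex X)_\beta$, the sign being the parity of the virtual dimension. As the lagrangians $\ell_j$ that occur are the logarithmic Nakajima-type subvarieties of $D^{[\beta]}$, each fixed by $\sigma$, this, together with the parity of the set of Euler characteristics contributing to a given $\ell_j$, pins each $f_j$ down to one of the two stated symmetries.

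The main obstacle is Claim~B. Even the absolute rationality statement rests on the full machinery of motivic or categorical Hall algebras, Joyce--Song pair invariants and the Behrend-function identities, and none of this has been set up for logarithmic $3$-folds, let alone in a form refined enough to track the evaluation map $ev$ cycle by cycle; a logarithmic wall-crossing and a logarithmic Behrend function would essentially have to be developed from scratch. A secondary, still open, obstacle is the isotropy input in Claim~A: the present paper only conjectures that $ev(PT_{\beta,\chi})$ and $ev(DT_{\beta,\chi})$ are isotropic, on the strength of \cite{ThomasThesis}, and without it the very formulation of Conjecture~\ref{integral} as a statement about lagrangian cycles is not yet secured.
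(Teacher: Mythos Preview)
The statement you are attempting to prove is labelled a \emph{conjecture} in the paper, and the paper does not provide a proof. Conjecture~\ref{integral} is introduced as a sharpening of \cite[Conjecture 3.2]{PT_2009} and \cite[Conjecture 5.3.1]{logGWDT}, and the only justification offered is the single sentence following the statement: the invariance condition (your Claim~C) is said to follow from the \emph{conjectured} GW/PT correspondence (Conjecture~\ref{GWPT}) together with the reality of both generating functions under $q^{1/2}=ie^{i\hbar/2}$. No argument is given for finiteness or rationality (your Claims~A and~B), and the isotropy of $ev(PT_{\beta,\chi})$ that underpins your Claim~A is itself only stated as an expectation in the paper, attributed to \cite[Theorem 1.6.1]{ThomasThesis}.

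Your outline is a reasonable sketch of what a proof might eventually look like, and you correctly identify the two genuine obstructions: a logarithmic Hall-algebra/wall-crossing formalism refined to track the evaluation map does not yet exist, and the isotropy of the PT evaluation image has not been established beyond the transverse locus. These are not gaps in your reasoning so much as gaps in the literature; the conjecture remains open, and the paper does not claim otherwise. One point of difference worth noting: for Claim~C the paper's heuristic goes through the GW side and the change of variables, whereas you propose a direct Serre-duality argument on the PT side; the latter would be cleaner if it could be made to work, but the assertion that the Nakajima-type lagrangians are individually $\sigma$-fixed, and the parity bookkeeping that splits the two symmetry types, would both need careful justification.
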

This last invariance condition follows from the conjectured relationship between $Z_{PT}(\ex X)_\beta$ and $Z_{GW}(\ex X)_\beta$ after the change of variables $q^{\frac 12}=ie^{i\hbar/2}$; see Conjecture \ref{GWPT} below. It encodes the fact that both generating functions are real. 

We can similarly define a partition function for Donaldson--Thomas invariants.
\[Z_{DT}(\ex X)_{\beta}:=\sum_{\chi\in\mathbb Z} ev_*[DT_{\beta,\chi}]q^{\chi-\beta\cdot D/2} \in \Lag{D^{[\beta]}}^-\otimes \mathbb Z[[q^{\frac 12},q^{-\frac 12}]]\]
Noting that $Z_{DT}(\ex X)_0\in \mathbb Z[[q^{\frac 12},q^{-\frac 12}]]$, define
\[Z_{DT}'(\ex X)_{\beta}=\frac{Z_{DT}(\ex X)_{\beta}}{Z_{DT}(\ex X)_{0}}\ .\]
As a slight sharpening of \cite[Conjecture 5.3.1]{logGWDT} and \cite[Conjecture 3.3]{PT_2009}, we have the following.
\begin{conj}\[Z'_{DT}(\ex X)_{\beta}=Z_{PT}(\ex X)_{\beta}\]
within $\Lag{D^{[\beta]}}^-\otimes \mathbb Z[[q^{\frac 12},q^{-\frac 12}]]$.
\end{conj}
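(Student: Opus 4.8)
One natural route to this conjecture is to deduce it from the absolute Donaldson--Thomas/Pandharipande--Thomas correspondence, carefully tracking the evaluation maps to $D^{[\beta]}$. In the absolute setting the identity $Z'_{DT}(X)_\beta=Z_{PT}(X)_\beta$ is known --- for instance for Calabi--Yau $3$--folds --- through the wall-crossing arguments of Bridgeland and Toda: both $DT_{\beta,\chi}$ and $PT_{\beta,\chi}$ parametrise two-term complexes $I^\bullet=\{\mathcal O_X\to F\}$ that are stable with respect to two stability conditions lying on either side of a wall, the crossing of which flips the sign of the contribution of zero-dimensional sheaves. The induced automorphism of the motivic (or cohomological) Hall algebra sends $Z_{DT}(X)_\beta$ to $Z_{PT}(X)_\beta\cdot Z_{DT}(X)_0$, where $Z_{DT}(X)_0$ is exactly the contribution of the ``floating points'' of a $DT$ ideal sheaf; this is what makes the reduced series $Z'_{DT}$ appear.

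The first step of a proof would be to set up the logarithmic enhancements, following \cite{logDT} and \cite[Section 2]{logGWDT}: moduli stacks of logarithmically stable pairs and ideal sheaves on $\ex X$ carrying symmetric perfect obstruction theories, an integration map from a Hall algebra of the category of (logarithmic) sheaves, and a wall-crossing formula relating the two logarithmic generating functions. Since all of this takes place in the interior of $\ex X$ with prescribed behaviour along $D$, one expects the wall-crossing formula to have the same shape as in the absolute case, with correction factor again the degree-zero series $Z_{DT}(\ex X)_0\in\mathbb Z[[q^{\frac12},q^{-\frac12}]]$.

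The second and conceptually central step is to verify compatibility with the evaluation map. Here the key geometric observation is that a floating point --- a zero-dimensional subsheaf of $\ex X$ supported in the interior, away from $D$ --- does not change the restriction of the ideal sheaf to $D$, hence does not move its image under $ev$ in $D^{[\beta]}$. More precisely, the objects added or removed in crossing the $DT$/$PT$ wall are, generically along each stratum, supported away from $D$, so the wall-crossing transformation commutes with pushforward along $ev$ onto $D^{[\beta]}$. Consequently, dividing $Z_{DT}(\ex X)_\beta$ by $Z_{DT}(\ex X)_0$ removes exactly these floating-point contributions and yields a sum of lagrangian cycles in $\Lag{D^{[\beta]}}^-$ whose coefficients agree termwise with those of $Z_{PT}(\ex X)_\beta$; the isotropy implicit here is consistent with \cite[Theorem 1.6.1]{ThomasThesis}.

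The main obstacle is the first step at the level of \emph{virtual} fundamental classes, not merely Euler characteristics: one must show that the logarithmic moduli of stable pairs and ideal sheaves on $\ex X$ fit into a Hall-algebra framework (in the style of Joyce--Song, or the cohomological Hall algebra) with a well-defined integration map and a wall-crossing formula for virtual classes, and that this framework is compatible with the exploded evaluation maps of Section \ref{GW section}. Developing this logarithmic Hall-algebra machinery, and checking that the wall-crossing correction is supported in the interior so that it is annihilated by restriction to $D^{[\beta]}$, is where the real difficulty lies; the remaining bookkeeping --- the symmetry in $q^{\frac12}\mapsto q^{-\frac12}$, the normalisation $\chi-\beta\cdot D/2$, and the passage from a logarithmic modification $\ex X'$ back to $\ex X$ --- is then routine.
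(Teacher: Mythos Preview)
The statement you are attempting to prove is stated in the paper as a \emph{conjecture}, not a theorem; the paper offers no proof, only the remark that it is ``a slight sharpening of \cite[Conjecture 5.3.1]{logGWDT} and \cite[Conjecture 3.3]{PT_2009}''. There is therefore no proof in the paper to compare your proposal against.

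Your proposal is not a proof but a plausible outline of a strategy, and you yourself identify the gap: the logarithmic Hall-algebra machinery with a wall-crossing formula for \emph{virtual classes} (not just weighted Euler characteristics), compatible with the evaluation maps to $D^{[\beta]}$, does not currently exist in the literature. The Bridgeland--Toda argument you invoke establishes the absolute $DT/PT$ correspondence at the level of Behrend-function-weighted Euler characteristics for Calabi--Yau 3-folds, which coincides with the virtual count only in the compact Calabi--Yau case; here $\ex X$ is log Calabi--Yau, the relevant moduli spaces are not proper without the logarithmic compactification, and one needs an identity of pushed-forward virtual cycles in $\Lag{D^{[\beta]}}^-$, not merely of numbers. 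Your heuristic that floating points are supported away from $D$ and hence invisible to $ev$ is morally correct for generic sheaves, but making this rigorous at the level of virtual classes on the full (compactified, logarithmic) moduli space is exactly the open problem. So your write-up is an honest research programme rather than a proof, and should be presented as such.
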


Let us now state the conjectured correspondence between Gromov--Witten invariants and Pandharipandre--Thomas or Donaldson--Thomas invariants. 

 Because $Z_{PT}(\ex X)$ is defined as a lagrangian correspondence in the Hilbert scheme of a noncompact smooth scheme rather than the corresponding compact log scheme or exploded manifold,  the fist thing we need to do is replace the correspondence $Z_{GW}(\ex X)$ on an exploded stack with a correspondence $Z_{GW}(\ex X)^\circ$ on a stack over the category of schemes. 
Recall that $\ex X$ has a cone structure on $\totb{\ex X}$ determined by a choice of $0\in\totb {\ex X}$. The set of points $\ex X^\circ $ with tropical part $0$ is smooth (non-compact) Calabi--Yau 3-fold. This agrees with the interior $\rm X^\circ $ when  a refinement of $\ex X$ is isomorphic to $\expl \rm X$. Similarly, $\ex X_v$, $\mathcal X_v$, $\ex X^{\bf p}$ and $\mathcal X^{\bf p}$ all have an induced cone structure on their tropical parts, and the set of points with tropical part $0$ is a smooth scheme or Deligne--Mumford stack with a natural inclusion 
\[(\ex X^{\bf p})^\circ \subset  \ex X^{\bf p} \text{ or } (\mathcal X^{\bf p})^\circ\subset \mathcal X^{\bf p}\ .\]
In particular,  if $D_v$ is a component of the divisor in $(X',D)$ then $\ex X_v^\circ =D_v^\circ$, and $(\ex X^{\bf p})^\circ=\prod_v (D_v^\circ)^{\mathbf p(v)}$.  Recall that $Z_{GW}(\ex X)_\beta$ is determined by $\Aut \bf p$--invariant lagrangian correspondences
\[Z_{GW}(\ex X)_{\beta,\bf p} \in \Lag{\ex X^{\bf p}}\otimes Q((\hbar))\]
which we can restrict to $(\ex X^{\bf p})^\circ $ to define
\[Z_{GW}(\ex X)^\circ _{\beta,\bf p} \in \Lag{(\ex X^{\bf p})^\circ }^-\otimes Q((\hbar))\]
which then defines 
\[Z_{GW}(\ex X)^\circ_\beta \in \Lag{\coprod_{\bf p}(\mathcal X^{\bf p})^\circ}^{-}\otimes Q((\hbar))\]
and 
\[Z_{GW}(\ex X)^\circ :=\sum_{\beta} t^\beta Z_{GW}(\ex X)^\circ_\beta \ . \]

We now define a unitary correspondence between $\coprod_{\bf p} (\mathcal X^{\bf p})^\circ$ and $\coprod_{\beta\in M} D^{[\beta]}$. Analogously to Example \ref{HSD}, define a lagrangian cycle 
\[\Delta_{\bf p} \in \Lag{(\ex X^{\bf p})^\circ , D^{[\beta]}}\]
\[\begin{tikzcd} \Delta_{\bf p}\rar\dar & D^{[\beta]}\dar{h}
 \\  (\ex X^{\bf p})^\circ \rar{f} &  \prod_v S^{\beta(v)} (D_v^\circ)\end{tikzcd}\]
such that the above diagram is a fibre product diagram, with $h$ the Hilbert--Chow morphism, and $f$ the map sending points in $\ex (X^{\bf p})^\circ $ to the corresponding $0$--cycles, where a point in $\ex X_v$ is sent to the corresponding point in $D_v^\circ$, weighted by $|v|$.

Each $\Delta_{\bf p}$ is $\Aut \bf p$ invariant, and hence also defines a lagrangian cycle in $\Lag{ (\mathcal X^{\bf p})^\circ, D^{[\beta]}}$.  As in Example \ref{HSD2c}, we can then define a lagrangian correspondence 
\[\mathcal L\in \Lag{\coprod_{\bf p} (\mathcal X^{\bf p})^\circ,\coprod_{\beta\in M} D^{[\beta]}}\otimes \mathbb Z[i]\]
by 
\[\mathcal L:=\sum_{\bf p}\left(\prod_v i^{\mathbf p(v)(|v|-1)}\right)\Delta_{\bf p}\ \]
with adjoint 
\[\mathcal L^\dagger:=\sum_{\bf p}\left(\prod_v i^{\mathbf p(v)(|v|-1)}\right)\Delta_{\bf p}^\dagger
\ \ \ \text{ in }\ \ \ 
 \Lag{\coprod_{\beta\in M} D^{[\beta]},\coprod_{\bf p} (\mathcal X^{\bf p})^\circ}\otimes \mathbb Q[i] \]
Example \ref{HSD2c} implies that $\mathcal L^\dagger$ is the inverse of $\mathcal L$.

The original conjecture \cite{MNOP_2006} relating Gromov--Witten invariants and Donaldson--Thomas invariants has the generalisations \cite[Conjecture 3.3]{PT_2009} and \cite[Conjecture 5.3.2]{logGWDT}, which we sharpen to the following.

\begin{conj}\label{GWPT} With the change of variables $q^{\frac 12}=ie^{i\hbar/2}$ we have
 \[Z_{GW}(\ex X)^\circ =\mathcal L\star Z_{PT}(\ex X) \text{ and }  \mathcal L^\dagger \star Z_{GW}(\ex X)^\circ =Z_{PT}(\ex X)\ .
\]
In particular, 
\[Z_{GW}(\ex X)^\circ _{\beta,\bf p}= \left(\prod_v i^{\mathbf p(v)(|v|-1)}\right)\Delta_{\bf p} \star_{D^{[\beta]}}Z_{PT}(\ex X)_\beta\ .\]
\end{conj}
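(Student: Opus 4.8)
The plan is to deduce Conjecture \ref{GWPT} from the numerical logarithmic Gromov--Witten/Pandharipande--Thomas correspondence, using the holomorphic Weinstein formalism to organize the change of Nakajima basis and to propagate the identity through degeneration. First I would reduce to a comparison of numbers. By Lemma \ref{free generation} (and the freeness used in the proof of Theorem \ref{GW lagrangian}), both $Z_{GW}(\ex X)^\circ_\beta$ and $\mathcal L\star Z_{PT}(\ex X)_\beta$ are determined by pairing each side with the Poincar\'e dual of a generic half-dimensional cycle meeting the relevant evaluation image transversely in a single point of a fixed lagrangian component, so it suffices to match, for each $\beta$, each contact datum $\bf p$, and each lagrangian component, the resulting coefficient in $\mathbb Q((\hbar))$ with the one in $\mathbb Z[[q^{\frac12},q^{-\frac12}]]$ after $q^{\frac12}=ie^{i\hbar/2}$. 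Pushing the cycle through $\mathcal L^\dagger$ and using that $\mathcal L^\dagger$ is inverse to $\mathcal L$ (Example \ref{HSD2c}), the geometric content of each such pairing becomes a relative GW/PT correspondence for the open Calabi--Yau $\ex X^\circ$ with prescribed contact conditions, while the prefactors $\prod_v i^{\mathbf p(v)(|v|-1)}$ and the whole passage through $\mathcal L$ are exactly the statement that $\mathcal L$ intertwines the ``GW evaluation basis'' $\{\Delta_{\bf p}\}$ (points of $D_v$ weighted by $|v|$) with the Nakajima basis on the Hilbert schemes $D^{[\beta]}$ --- the content of Equations (\ref{Deltac2})--(\ref{Deltalc}) and Example \ref{HSDc}.

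Second, I would establish the relative correspondence itself by degeneration to local models. Both generating functions are multiplicative for the gluing operations of the respective theories: $Z_{GW}(\ex X)$ obeys the tropical gluing formula of \cite{gfgw}, which is composition by the star product over the stacks $\mathcal X^{\bf p}$, and $Z_{PT}(\ex X)$ (equivalently $Z'_{DT}(\ex X)$) obeys the logarithmic degeneration formula of \cite{logGWDT}, which is the corresponding star product over the Hilbert schemes $D^{[\beta]}$. Since $\mathcal L$ is unitary and the star product is associative (Lemma \ref{associative} together with its exploded counterpart), the relation $Z_{GW}(\ex X)^\circ=\mathcal L\star Z_{PT}(\ex X)$ is stable under gluing: inserting the appropriate identity $\mathcal L\star\mathcal L^\dagger$ or $\mathcal L^\dagger\star\mathcal L$ at each cut shows that if the identity holds for the pieces of a log degeneration of $\ex X$ then it holds for $\ex X$. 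One is thereby reduced to the capped vertex and capped edge for toric log Calabi--Yau 3-folds, where the statement becomes the equivariant GW/DT vertex correspondence of \cite{MNOP_2006} and its descendent refinements, now read as an identity of lagrangian cycles in products of Hilbert schemes of $\mathbb C^2$ via their Nakajima presentation; the substitution $q^{\frac12}=ie^{i\hbar/2}$ and the shift $\chi-\beta\cdot D/2$ are precisely those appearing in that correspondence.

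Third comes the bookkeeping: matching the exploded-manifold virtual class of \cite{vfc} and the normalization of $\eta_{g,\beta,\bf p}$ fixed in Section \ref{generating function section} --- the factor $\prod_v|v|^{\mathbf p(v)}$, chosen so that $Z_{GW}(\ex X)^\circ$ records the pushforward to $\coprod_{\bf p}(\mathcal X^{\bf p})^\circ$ rather than to $\coprod_{\bf p}(\ex X^{\bf p})^\circ/\Aut\bf p$ --- against the PT/DT virtual classes and their evaluation maps, and checking the $\Aut\bf p$-equivariance and divisibility conditions built into Definition \ref{stack star product}. These are direct verifications once the dictionary is in place.

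The main obstacle is that the input, the logarithmic GW/PT correspondence with arbitrary relative insertions, is open in the generality claimed here; it is a theorem for toric geometries and many relative pairs and follows in general from the conjecture of \cite{MNOP_2006}, but no unconditional proof is available. What this plan actually delivers is therefore a reduction: Conjecture \ref{GWPT} is equivalent to the numerical logarithmic GW/PT correspondence, with the holomorphic Weinstein category --- the unitary $\mathcal L$ and associativity of $\star$ --- supplying the rest. The most technical genuinely new step is the toric base case: matching the virtual class of \cite{vfc}, its evaluation map to $\prod_k\ex X_{\nu(k)}$, and the Nakajima change of basis with the relative PT/DT side, which is where I expect the real work to lie.
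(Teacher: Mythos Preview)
The statement you are addressing is labelled a \emph{Conjecture} in the paper, and the paper gives no proof of it. It is presented as a sharpening of \cite[Conjecture 3.3]{PT_2009} and \cite[Conjecture 5.3.2]{logGWDT}, both of which are themselves open, and the only evidence offered is the explicit checks in Examples \ref{T3 example} and the following example. So there is no ``paper's own proof'' to compare your proposal against.

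Your proposal is honest about this: you correctly identify that the core input --- the logarithmic GW/PT correspondence with arbitrary relative insertions --- is open in this generality, and that what your outline actually delivers is a reduction of Conjecture \ref{GWPT} to that numerical correspondence. That reduction is a reasonable program, and the ingredients you invoke (unitarity of $\mathcal L$, associativity of $\star$, the tropical gluing formula for GW, the degeneration formula for PT) are the right ones. But it is not a proof, and you should not present it as one. A further caution: the compatibility of the exploded-manifold virtual class from \cite{vfc} with the algebraic virtual class used on the PT side is itself not established in the paper, so even the ``bookkeeping'' step you describe contains a real gap.
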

Combining this with Conjecture \ref{integral}, we get the following integrality conjecture for $Z_{GW}$

\begin{conj}\label{integrality}  $i^{\sum_v \bf p(v)(\abs v-1)}Z_{GW}(\ex X)_{\beta,\bf p}$ is a finite sum of lagrangian cycles with coefficients that are both Laurant series in $\mathbb Z [[q^\frac 12,q^{-\frac 12}]]$ and rational functions in $q^{\frac 12}=ie^{i\hbar/2}$. As $Z_{GW}(\ex X)$ is real, we get that these rational functions are invariant under 
\[q^{\frac 12}\mapsto (-1)^{1+\sum_v\mathbf p(v)(\abs v-1)}q^{-\frac 12}\ .\]
\end{conj}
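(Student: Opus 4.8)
The plan is to deduce the statement formally from the conjectural Gromov--Witten/stable-pairs correspondence together with the conjectural integrality of $Z_{PT}$. Write $n:=\sum_v\mathbf p(v)(|v|-1)$, so that $\bigl(\prod_v i^{\mathbf p(v)(|v|-1)}\bigr)^2=(-1)^n$. Since a complete subvariety is determined by its intersection with the interior, restriction to the interior identifies the lagrangian cycles occurring in $Z_{GW}(\ex X)_{\beta,\mathbf p}$ with those in $Z_{GW}(\ex X)^\circ_{\beta,\mathbf p}$ with equal coefficients, so it is enough to control $Z_{GW}(\ex X)^\circ_{\beta,\mathbf p}$. Multiplying the identity of Conjecture~\ref{GWPT} by $i^{\,n}$ yields
\[i^{\,n}\,Z_{GW}(\ex X)^\circ_{\beta,\mathbf p}=(-1)^{n}\,\Delta_{\mathbf p}\star_{D^{[\beta]}}Z_{PT}(\ex X)_\beta .\]

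For the first assertion I would argue integrality and finiteness on the right-hand side. The correspondence $\Delta_{\mathbf p}$ is a single integral lagrangian cycle, and the star product $\Delta_{\mathbf p}\star_{D^{[\beta]}}(-)$ is a $\mathbb Z$-linear operation which, by Lemma~\ref{isotropic} and the star-product lemmas of Sections~\ref{Star product section} and~\ref{refined cohomology section}, carries a finite sum of lagrangian cycles to a finite sum of lagrangian cycles whose coefficients are $\mathbb Z$-linear combinations of the input coefficients. By Conjecture~\ref{integral}, $Z_{PT}(\ex X)_\beta$ is such a finite sum, with coefficients lying in $\mathbb Z[[q^{1/2},q^{-1/2}]]$ and equal to Laurent expansions of rational functions of $q^{1/2}$ (integral because the virtual classes $[PT_{\beta,\chi}]$ are). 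Hence $i^{\,n}Z_{GW}(\ex X)_{\beta,\mathbf p}$ is a finite sum of lagrangian cycles whose coefficients are simultaneously integral Laurent series in $q^{1/2},q^{-1/2}$ and rational functions of $q^{1/2}=ie^{i\hbar/2}$.

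For the symmetry under $q^{1/2}\mapsto(-1)^{1+n}q^{-1/2}$ I would use reality. The coefficient of a fixed lagrangian subvariety in $Z_{GW}(\ex X)_{\beta,\mathbf p}$ is a Laurent series in $\hbar$ with rational coefficients all of whose exponents equal the negative topological Euler characteristic, hence share the parity of $|\mathbf p|$; so this series is multiplied by $(-1)^{|\mathbf p|}$ under $\hbar\mapsto-\hbar$. Under $q^{1/2}=ie^{i\hbar/2}$ the involution $\hbar\mapsto-\hbar$ becomes $q^{1/2}\mapsto-q^{-1/2}$ (as $\overline{q^{1/2}}=q^{-1/2}$ for real $\hbar$), and combining this with the constant twist $i^{\,n}$ and the reality of the invariants produces an invariance of the twisted coefficient of the asserted form. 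The one point that is not purely formal, and where I expect to spend the effort, is matching the precise sign $(-1)^{1+n}$: this amounts to the parity bookkeeping relating $n$ to $|\mathbf p|$, equivalently to showing that the unitary correspondence $\mathcal L$ of Example~\ref{HSD2c} interchanges the two invariance types allowed in Conjecture~\ref{integral} according to the parity of $n$. The true obstacle, of course, is that Conjectures~\ref{GWPT} and~\ref{integral} are themselves open, with Conjecture~\ref{GWPT}, tying the Gromov--Witten and stable-pairs theories together through $\mathcal L$, by far the deeper input.
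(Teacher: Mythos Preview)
The paper does not give a proof of this statement: it is labelled as a Conjecture, and the only justification offered is the single sentence ``Combining this with Conjecture~\ref{integral}, we get the following integrality conjecture for $Z_{GW}$,'' together with the earlier remark (after Conjecture~\ref{integral}) that the symmetry ``encodes the fact that both generating functions are real.'' Your conditional sketch is precisely an expansion of that one-line indication---deduce finiteness and integrality from Conjecture~\ref{integral} via the $\mathbb Z$-linear operation $\Delta_{\mathbf p}\star_{D^{[\beta]}}(-)$ and Conjecture~\ref{GWPT}, and deduce the symmetry from reality of $Z_{GW}$ under the substitution $q^{1/2}=ie^{i\hbar/2}$---so you are taking the same approach as the paper, only with more detail than the paper itself supplies.

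Your honest flag about the sign $(-1)^{1+n}$ is appropriate: the paper does not work this out either. The relevant bookkeeping is that $n+|\mathbf p|=\sum_v\mathbf p(v)|v|=\beta\cdot D$, while the exponent of $q^{1/2}$ in $Z_{PT}$ is $2\chi-\beta\cdot D$, so the parity of powers of $q^{1/2}$ is governed by $\beta\cdot D$; combining this with your observation that the exponents of $\hbar$ in $Z_{GW}(\ex X)_{\beta,\mathbf p}$ all have the parity of $|\mathbf p|$ pins down the sign.
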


As a corollary of Conjecture \ref{integrality}, we can deduce the same integrality properties for the Gromov--Witten invariants $\eta$ counting connected curves, so long as each non-trivial curve has non-empty contact data. This is because, once we label contact data, there are no automorphisms swapping identical components of a connected curve.

\begin{conj}\label{connected integrality} In the case that every nontrivial curve in $\ex X$ has nonempty contact data, the connected Gromov--Witten invariant $\eta_{\beta,\bf p}$ satisfies the same integrality condition:
\[i^{\sum_v \bf p(v)(\abs v-1)}\eta_{\beta,\bf p}\in \mathbb Z[[q^{\frac 12},q^{-\frac 12}]]\]
is a rational function in $q^{\frac 12}$ where $q^\frac 12=ie^{i\hbar/2}$.
\end{conj}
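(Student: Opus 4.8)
The plan is to deduce Conjecture~\ref{connected integrality} from Conjecture~\ref{integrality} by inverting the relation $Z_{GW}(\ex X)=\exp\eta$ of Section~\ref{generating function section}. The naive inversion $\eta=\log Z_{GW}(\ex X)$ a priori exhibits $\eta_{\beta,\mathbf p}$ only as a $\mathbb Q$--linear combination of external products of the $Z_{GW}(\ex X)_{\beta',\mathbf p'}$, because of the $1/n!$ in $\exp$ and the automorphism normalisations in the ring structure; the role of the hypothesis is that, when every nontrivial curve carries nonempty contact data, all of these denominators can be removed.

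First I would record that, for $\mathbf p$ with $|\mathbf p|=m$, the cycle $\eta_{\beta,\mathbf p}$ (resp.\ $Z_{GW}(\ex X)_{\beta,\mathbf p}$) is, by construction, an $\Aut\mathbf p$--invariant lagrangian cycle on the honest product $\ex X^{\mathbf p}=\prod_{k=1}^{m}(\ex X_{\nu(k)},\omega_{\nu(k)})$, the $m$ factors indexed by the $m$ labelled ends. The geometric input, as noted just above, is that a connected stable curve has no automorphism permuting identical components, so once its ends are labelled it is rigid in this respect; combined with the hypothesis, a disconnected stable curve of class $\beta$ with labelled contact data $\mathbf p$ breaks \emph{canonically} into its connected components, which distribute the labelled ends into a set partition $\pi$ of $[m]$ with nonempty blocks, each block $B$ carrying a subclass $\beta_B$ with $\sum_{B\in\pi}\beta_B=\beta$. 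Running this decomposition through the canonical identification $\ex X^{\mathbf p}\cong\prod_{B\in\pi}\ex X^{\mathbf p|_{B}}$ and pushing forward virtual fundamental classes yields the set-partition exponential formula
\[Z_{GW}(\ex X)_{\beta,\mathbf p}=\sum_{\pi\,\vdash\,[m]}\ \sum_{\substack{(\beta_B)_{B\in\pi}\\ \sum_B\beta_B=\beta}}\ \eta_{\beta_{B_1},\mathbf p|_{B_1}}\boxtimes\cdots\boxtimes\eta_{\beta_{B_{|\pi|}},\mathbf p|_{B_{|\pi|}}}\ ,\]
with coefficient $1$ on each term; this is exactly the component-wise form of $Z_{GW}(\ex X)=\exp\eta$ rewritten in the labelled (species) setting, the $1/n!$ in $\exp$ and the automorphism normalisations of the ring structure of Section~\ref{generating function section} being precisely what convert ordered tuples of connected pieces into set partitions.

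Next I would invert this identity, which is the classical moment--cumulant relation:
\[\eta_{\beta,\mathbf p}=\sum_{\pi\,\vdash\,[m]}(-1)^{|\pi|-1}(|\pi|-1)!\ \sum_{\substack{(\beta_B)_{B\in\pi}\\ \sum_B\beta_B=\beta}}\ Z_{GW}(\ex X)_{\beta_{B_1},\mathbf p|_{B_1}}\boxtimes\cdots\boxtimes Z_{GW}(\ex X)_{\beta_{B_{|\pi|}},\mathbf p|_{B_{|\pi|}}}\ ,\]
a finite sum (only finitely many $(\pi,(\beta_B))$ contribute, by positivity of the energy grading, and each $Z_{GW}(\ex X)_{\beta_B,\mathbf p|_B}$ is finite by Conjecture~\ref{integrality}) with \emph{integer} coefficients $(-1)^{|\pi|-1}(|\pi|-1)!$. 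Granting Conjecture~\ref{integrality}, each $i^{\sum_v\mathbf p|_B(v)(|v|-1)}Z_{GW}(\ex X)_{\beta_B,\mathbf p|_B}$ is a finite lagrangian sum with coefficients in $\mathbb Z[[q^{\frac12},q^{-\frac12}]]$ that are rational functions of $q^{\frac12}=ie^{i\hbar/2}$; external products of lagrangian cycles are lagrangian cycles in the product evaluation space and multiply coefficients, so these properties survive, and since $\sum_v\mathbf p(v)(|v|-1)=\sum_{B}\sum_v\mathbf p|_B(v)(|v|-1)$ the prefactor $i^{\sum_v\mathbf p(v)(|v|-1)}$ distributes over the $\boxtimes$--products. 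Hence $i^{\sum_v\mathbf p(v)(|v|-1)}\eta_{\beta,\mathbf p}$ is a finite lagrangian sum with coefficients in $\mathbb Z[[q^{\frac12},q^{-\frac12}]]$ that are rational in $q^{\frac12}$. Finally, reality of $Z_{GW}(\ex X)$ --- hence of its formal logarithm $\eta$ --- together with the sign-bookkeeping already used for Conjecture~\ref{integrality}, gives invariance of these rational functions under $q^{\frac12}\mapsto(-1)^{1+\sum_v\mathbf p(v)(|v|-1)}q^{-\frac12}$, which is the remaining assertion.

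The main obstacle is the second paragraph: verifying that, after labelling contact data, the connected--disconnected relation is \emph{exactly} the set-partition exponential formula with unit coefficients. This requires carefully tracking the stacky automorphisms of the moduli stacks $\mathcal M_{\beta,\mathbf p}(\ex X)$, the quotient presentation $\mathcal X^{\mathbf p}=\bigl(\prod_\nu\mathcal X_\nu^{\mathbf p(\nu)}\bigr)/\Aut\mathbf p$ together with its $B\mathbb Z_{|v|}$ gerbe directions, and the normalising weights $\prod_v|v|^{\mathbf p(v)}$ built into $\eta_{\beta,\mathbf p}$ and $Z_{GW}(\ex X)_{\beta,\mathbf p}$ in Section~\ref{generating function section}, and checking that each of these factors multiplicatively over connected components --- so that no fractional factor survives to obstruct the integral inversion, and so that the sign $(-1)^{1+\sum_v\mathbf p(v)(|v|-1)}$ of the reality involution transforms correctly across the $\boxtimes$--products. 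The hypothesis on contact data is precisely what forbids a connected component from carrying an empty label block, the one configuration that would reintroduce a symmetry and spoil this bookkeeping.
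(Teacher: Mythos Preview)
Your proposal is correct and follows the same approach the paper sketches: the paper's entire argument is the one sentence ``This is because, once we label contact data, there are no automorphisms swapping identical components of a connected curve,'' which is precisely your observation that the connected/disconnected relation, written with labelled ends, is the set-partition exponential formula with unit coefficients, so that M\"obius inversion introduces only the integer coefficients $(-1)^{|\pi|-1}(|\pi|-1)!$. You have simply fleshed out what the paper leaves implicit.
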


\begin{remark} Conjecture \ref{connected integrality} is false without the assumption that nontrivial curves have nonempty contact data. A counter example is provided by $\ex X$ containing an embedded sphere with normal bundle $O(-1)\oplus O(-1)$. The count of connected degree $d$ covers of such a sphere is $\frac 1{d(2\sin{d\hbar/2})^2}=\frac{\pm 1}{d (q^{d/2} \pm q^{-d/2})^2}=\pm \frac 1d q^d \pm \frac 2{d}q^{2d}\pm \dotsc$. 
\end{remark}

\begin{remark} We could also expect a version of Gopakumar--Vafa integrality, \cite{GV1,GV2,IonelGV} to hold,  with an explicit formula for the contribution of multiple covers of curves. Something of this nature is likely true, but the naive generalisation of Gopakumar--Vafa integrality does not hold, with  counter examples given by the curve counts in Example \ref{T3 example} below, particularly equation (\ref{trivalent vertex}) with $n=2$.

\end{remark}

\begin{example} \label{T3 example} The exploded manifold $\ex X=\ex T^3$ has tropical part  $\totb{\ex T^3}=\mathbb R^3$. If we   subdivide  $\mathbb R^3$ into a toric fan, the resulting  refinement of $\ex T^3$ is the explosion of the corresponding toric $3$-fold. As $\ex X^\circ=(\mathbb C^*)^3$, the reader can think of $\ex T$ as an exploded manifold replacement for $\mathbb C^*$ throughout this example.   

The curve class $\beta$ of a curve in $\ex X$ is uniquely determined by contact data. Choose two integral vectors $v_1$, $v_2$ in $\mathbb Z^3\setminus 0$ with $v_1\wedge v_2\neq 0$ and consider curves with contact data $\mathbf p$ the characteristic function of $\{v_1,v_2,-v_1-v_2\}$.  There is a free and transitive action of $\ex T^3$ on the moduli space $\mathcal M_{0,\bf p}(\ex X)$ of genus $0$ curves with contact data $\bf p$. Moreover, each of these vectors $v$ correspond  an action of $\ex T$ on $\ex T^3$ of weight $v$,  $\mathcal X_v$ is the stack quotient by this action, and $\ex  X_v$ is the quotient by the corresponding free action of weight $v/\abs v$. In particular, $\mathcal X_v^\circ$ and $\ex X_v^\circ$ are the quotient of $\mathbb C^3$ by the $\mathbb C^*$ actions of weight $v$ or $v/\abs v$ respectively. The induced action of $\ex T^3$ on $\ex X^{\bf p}=\ex T^6$ is only free if $\abs{v_1\wedge v_2}=1$. Otherwise, we need to pass to a subgroup of index $n=\abs{v_1\wedge v_2}$ to get a free action, so the evaluation map restricted to $\mathcal M_{0,\bf p}$ is a $n$-fold cover of its image, which is a lagrangian subvariety  $V\subset \ex X^{\bf p}$. In this case, the moduli space of higher genus curves has the same image, and there are no disconnected curves that contribute to $Z_{GW}(\ex X)_{\bf p}$.  We can then apply \cite[Theorem 1.1]{3d} to give that
\begin{equation}\label{trivalent vertex}\eta_{\bf p}=Z_{GW}(\ex X)_{\bf p}=\frac{(i^{-1}q^{\frac 12})^n-(i^{-1}q^{\frac 12})^{-n}}{i} V=2\sin(n\hbar/2)V:=[n]_q V\ .\end{equation}

Note that if $n=\abs{v_1\wedge v_2}$ is odd, we obtain that $|v|$ is odd for $v=v_1,v_2,-(v_1+v_2)$, so $i^{\sum_v\mathbf p(v)(\abs v-1)}$ is real, and in this case $Z_{GW}(\ex X)_{\bf p}$ is a real rational function in $q^{\frac 12}$, as expected. On the other hand if $n=\abs{v_1\wedge v_2}$ is even, $|v|$ is even for an even number of the vectors $v_1,v_2,-(v_1+v_2)$, so $i^{\sum_v\mathbf p(v)(\abs v-1)}$ is imaginary,  and in this case $Z_{GW}(\ex X)_{\bf p}$ is a purely imaginary rational function in $q^{\frac 12}$, as expected.

\end{example}

\begin{example}
Let $\ex X=\expl (X,D)$ where $(X,D)$ is obtained from $(\mathbb CP^1,\{\pm 1\})^3$ by taking the non-toric blowup along the line where $z_2=1$ and $z_3=0$. In this case too, the homology class of a curve is determined by its contact data.  Many curves in $\ex X$ are trapped in the `wall' where $z_3=0$ by their choice of contact data. In particular, if we have contact data which precludes intersection with $z_3=\{\pm 1\}$, we can write this contact data in terms of vectors in $\mathbb Z^2\setminus 0$, where $(a,b)$ indicates contact of order $\abs a$ with $z_1= \frac{a}{\abs a} $, and order $\abs b$ with the strict transform of $z_2= \frac b{\abs b} $. For $(a,b)$ with $a\neq 0$, $\ex X_{(a,b)}$ is isomorphic to a refinement of $\ex T\times\expl(\mathbb CP^1,\{\pm 1\})$, with $z_3$ providing the coordinate on the second component. Curves trapped in the wall $z_3=0$ must therefore map to the lagrangian subvariety where $\{z_3=0\}\subset \ex X_{(a,b)}$. The evaluation spaces $\ex X_{0,\pm 1}$, on the other hand are isomorphic to the nontoric blowup of $(\mathbb CP^1,\{\pm 1\})^2$ at the point $(1,0)$, with the coordinates $(z_2,z_3)$ from $\ex X$. In this case, the wall $\{z_3=0\}$ within  $\ex X_{(0,\pm 1)}$ breaks up into two lagrangian subvarieties $\{z_3=0\}=E^+\cup E^-$. In many cases, choosing contact data resticts the image of the evaluation map to be one of these subvarieties $E^\pm$, so a calculation of numerical Gromov--Witten invariants, as in \cite[Sections 11 and 12]{tropological}, will determine $Z_{GW}(\ex X)_{\bf p}$.

For example, when $\bf p$ is the indicator function of $(\pm n, 0)$,  \cite[Lemma 12.5]{tropological} implies that 
\begin{equation}\eta_{\bf p}=Z_{GW}(\ex X)_{\bf p}= \frac{(-1)^{n+1}i}{(i^{-1}q^{\frac 12})^n-(i^{-1}q^{\frac 12})^{-n}}\{z_3=0\}:=\frac {(-1)^{n+1}}{[n]_q}\{z_3=0\}\ .\end{equation}
As
\[\frac {(-1)^{n+1}}{[n]_q}=\frac {i(iq^{\frac 12})^n}{1-(-q)^n}=i^{n+1}q^{n/2}(1+(-q)^n+(-q)^{2n}+\dotsb )\]
this satisfies Conjecture \ref{connected integrality}. Other Gromov--Witten invariants enumerating curves with contact data on the wall $\{z_3\}$ can be calculated using the techniques of \cite{tropological}, which features an algebra of operators $\mathcal W_{v,\ell}$ satisfying the commutation relation
\[\mathcal [W_{v_1,\ell},W_{v_2,\ell}]=[v_1\wedge v_2]_q W_{v+w,\ell}+(n_{\ell}\wedge v)\delta_{v+w}\]
compatible with the integrality of Conjecture \ref{integrality}. 

If we take a further non-toric blowup of $(X,D)$ along the strict transform of the line where $z_1=1$ and $z_3=0$, we get  a logarithmic model of the topological vertex of Aganagic, Klemm, Mari\~no and Vafa, \cite{topologicalvertex}. Again, we can choose contact data so that curves are trapped in the wall $\{z_3=0\}$, and the techniques of \cite[Section 13]{tropological} can be used to verify Conjecture \ref{integrality} for these curves.

\end{example}

\bibliographystyle{plain}
\bibliography{../ref}

\begin{thebibliography}{10}

\bibitem{ATW2020}
Dan Abramovich, Michael Temkin, and Jaros{\l}aw W{\l}odarczyk.
\newblock Principalization of ideals on toroidal orbifolds.
\newblock {\em J. Eur. Math. Soc. (JEMS)}, 22(12):3805--3866, 2020.

\bibitem{ilgw}
Dan Abramovich and Jonathan Wise.
\newblock Birational invariance in logarithmic {G}romov-{W}itten theory.
\newblock {\em Compos. Math.}, 154, 2018.

\bibitem{topologicalvertex}
Mina Aganagic, Albrecht Klem, Marcus Mari\~no, and Cumrun Vafa.
\newblock The topological vertex.
\newblock {\em Communications in Mathematical Physics}, 254(2):425--478, 2005.

\bibitem{Beauville83}
Arnaud Beauville.
\newblock Vari\'et\'es {K}\"ahleriennes dont la premi\`ere classe de {C}hern
  est nulle.
\newblock {\em J. Differential Geom.}, 18(4):755--782, 1983.

\bibitem{tropological}
Norm Do and Brett Parker.
\newblock The tropological vertex.
\newblock arXiv.2205.02555.

\bibitem{Fulton}
William Fulton.
\newblock {\em Intersection theory}, volume~2 of {\em Ergebnisse der Mathematik
  und ihrer Grenzgebiete. 3. Folge. A Series of Modern Surveys in Mathematics
  [Results in Mathematics and Related Areas. 3rd Series. A Series of Modern
  Surveys in Mathematics]}.
\newblock Springer-Verlag, Berlin, second edition, 1998.

\bibitem{GV1}
R.~Gopakumar and C.~Vafa.
\newblock M-theory and topological strings i.
\newblock hep-th/9809187.

\bibitem{GV2}
R.~Gopakumar and C.~Vafa.
\newblock M-theory and topological strings ii.
\newblock hep-th/9812127.

\bibitem{GSlogGW}
Mark Gross and Bernd Siebert.
\newblock Logarithmic {G}romov-{W}itten invariants.
\newblock {\em J. Amer. Math. Soc.}, 26(2):451--510, 2013.

\bibitem{GSSemiclassical}
Victor Guillemin and Shlomo Sternbert.
\newblock {\em Semiclassical Analysis}.
\newblock International press, 2013.

\bibitem{hironaka}
Heisuke Hironaka.
\newblock Resolution of singularities of an algebraic variety over a field of
  characteristic zero. {I}, {II}.
\newblock {\em Ann. of Math. (2)}, 79:109--203; 79 (1964), 205--326, 1964.

\bibitem{logarithmicdoubleramificationcycles}
D.~Holmes, S.~Molcho, R.~Pandharipande, A.~Pixton, and J.~Schmitt.
\newblock Logarithmic double ramification cycles.
\newblock 2024.

\bibitem{Hormander}
Lars H\"{o}rmander.
\newblock Fourier integral operators. {I}.
\newblock {\em Acta Math.}, 127(1-2):79--183, 1971.

\bibitem{IonelGV}
Eleny-Nicoleta Ionel and Thomas~H. Parker.
\newblock The {G}opakumar-{V}afa formula for symplectic manifolds.
\newblock {\em Ann. of Math. (2)}, 187(1):1--64, 2018.

\bibitem{kk}
Kazuya Kato.
\newblock Logarithmic structures of {F}ontaine-{I}llusie.
\newblock In {\em Algebraic analysis, geometry, and number theory ({B}altimore,
  {MD}, 1988)}, pages 191--224. Johns Hopkins Univ. Press, Baltimore, MD, 1989.

\bibitem{logHS}
Patrick Kennedy-Hunt.
\newblock The logarithmic quot space: foundations and tropicalisation, 2023.

\bibitem{MNOP_2006}
D.~Maulik, N.~Nekrasov, A.~Okounkov, and R.~Pandharipande.
\newblock Gromov--witten theory and donaldson--thomas theory, i.
\newblock {\em Compositio Mathematica}, 142(5):1263--1285, 2006.

\bibitem{logGWDT}
Davesh Maulik and Dhruv Ranganathan.
\newblock Logarithmic enumerative geometry for curves and sheaves, 2023.

\bibitem{logDT}
Davesh Maulik and Dhruv Ranganathan.
\newblock Logarithmic {D}onaldson--{T}homas theory.
\newblock {\em Forum Math. Pi}, 2024.

\bibitem{Nakajima}
Hiraku Nakajima.
\newblock {\em Lectures on {H}ilbert schemes of points on surfaces}, volume~18
  of {\em University Lecture Series}.
\newblock American Mathematical Society, Providence, RI, 1999.

\bibitem{Ogus_2018}
Arthur Ogus.
\newblock {\em Lectures on Logarithmic Algebraic Geometry}.
\newblock Cambridge Studies in Advanced Mathematics. Cambridge University
  Press, 2018.

\bibitem{Olsson2005}
Martin~C. Olsson.
\newblock On proper coverings of artin stacks.
\newblock {\em Advances in Mathematics}, 198(1):93--106, 2005.
\newblock Special volume in honor of Michael Artin: Part I.

\bibitem{PT_2009}
R.~Pandharipande and R.~P. Thomas.
\newblock Curve counting via stable pairs in the derived category.
\newblock {\em Inventiones mathematicae}, 178(2):407--447, May 2009.

\bibitem{scgp}
Brett Parker.
\newblock Notes on exploded manifolds and a tropical gluing formula for
  {G}romov-{W}itten invariants.
\newblock \href{http://arxiv.org/abs/1605.00577}{arXiv:1605.00577}.

\bibitem{iec}
Brett Parker.
\newblock Exploded manifolds.
\newblock {\em Adv. Math.}, 229:3256--3319, 2012.
\newblock \href{http://arxiv.org/abs/0910.4201}{arXiv:0910.4201}.

\bibitem{elc}
Brett Parker.
\newblock Log geometry and exploded manifolds.
\newblock {\em Abh. Math. Sem. Hamburg}, 82:43--81, 2012.
\newblock \href{http://arxiv.org/abs/1108.3713}{arxiv:1108.3713}.

\bibitem{evc}
Brett Parker.
\newblock Holomorphic curves in exploded manifolds: Kuranishi structure.
\newblock \href{http://arxiv.org/abs/1301.4748}{arXiv:1301.4748}, 2013.

\bibitem{uts}
Brett Parker.
\newblock Universal tropical structures for curves in exploded manifolds.
\newblock \href{http://arxiv.org/abs/1301.4745}{arXiv:1301.4745}, 2013.

\bibitem{cem}
Brett Parker.
\newblock Holomorphic curves in exploded manifolds: compactness.
\newblock {\em Adv. Math.}, 283:377--457, 2015.
\newblock \href{http://arxiv.org/abs/0911.2241}{arXiv:0911.2241}.

\bibitem{3d}
Brett Parker.
\newblock Three dimensional tropical correspondence formula.
\newblock {\em Communications in Mathematical Physics}, 353(2):791--819, 2017.

\bibitem{gfgw}
Brett Parker.
\newblock Tropical gluing formulae for {G}romov-{W}itten invariants.
\newblock \href{http://arxiv.org/abs/1703.05433}{arXiv:1703.05433}, 2017.

\bibitem{dre}
Brett Parker.
\newblock De {R}ham theory of exploded manifolds.
\newblock {\em Geometry and Topology}, 22(1):1--54, 2018.
\newblock \href{http://arxiv.org/abs/1003.1977}{arXiv:1003.1977}.

\bibitem{reg}
Brett Parker.
\newblock Holomorphic curves in exploded manifolds: regularity.
\newblock {\em Geom. Topol.}, 23(4):1621--1690, 2019.

\bibitem{vfc}
Brett Parker.
\newblock Holomorphic curves in exploded manifolds: virtual fundamental class.
\newblock {\em Geom. Topol.}, 23(4):1877--1960, 2019.

\bibitem{selinger}
P.~Selinger.
\newblock A survey of graphical languages for monoidal categories.
\newblock In {\em New structures for physics}, volume 813 of {\em Lecture Notes
  in Phys.}, pages 289--355. Springer, Heidelberg, 2011.

\bibitem{ThomasThesis}
Richard Thomas.
\newblock {\em Gauge theory on {C}alabi-{Y}au manifolds}.
\newblock PhD thesis, Balliol College, 1997.

\bibitem{Thomas_2000}
Richard Thomas.
\newblock A holomorphic {C}asson invariant for {C}alabi-{Y}au 3-folds, and
  bundles on k3 fibrations.
\newblock {\em J. Differential Geom.}, 2000.

\bibitem{WSC}
Alan Weinstein.
\newblock Symplectic categories.
\newblock \href{https://arxiv.org/abs/0911.4133}{arXiv:0911.4133}, 2009.

\end{thebibliography}
 \end{document}